\newtheorem*{assumption2}{Assumption}
\newenvironment{customlegend}[1][]{%
    \begingroup
    \csname pgfplots@init@cleared@structures\endcsname
    \pgfplotsset{#1}%
}{%
    \csname pgfplots@createlegend\endcsname
    \endgroup
}%
\def\addlegendimage{\csname pgfplots@addlegendimage\endcsname}
\newcommand{\bs}{\boldsymbol}
\newcommand{\vb}{\vspace{5.2mm}}
\renewcommand{\hat}{\widehat}
\DeclareMathOperator*{\argmax}{arg\,max}
\def\P{\mathrm{P}}
\def\E{\mathrm{E}}
\def\Var{\mathrm{Var}}
\newcommand{\apost }{'}
\newcommand{\parrow}{\:\to_{\mathbb P}\:}
\newcommand{\darrow}{\:\to_{\rm d}\:}
\newcommand{\asarrow}{\:\to_{\rm as}\:}
\newcommand*\diff{\mathop{}\!\mathrm{d}}
\newtheorem{lemma}{Lemma}
\newtheorem{theorem}{Theorem}
\newtheorem{remark}{Remark}
\newtheorem{example}{Example}
\begin{document}

\title[Estimating customer impatience in a service system with unobserved balking]{Estimating customer impatience \\in a service system with unobserved balking\footnote{To appear in Stochastic Systems.}}

\author[Y. Inoue, L. Ravner, M. Mandjes]{Yoshiaki Inoue, Liron Ravner, Michel Mandjes}

\begin{abstract} 
This paper studies a service system in which arriving customers are provided with information about the delay they will experience. Based on this information they decide to wait for service or to leave the system. Specifically, every customer has a patience threshold and they balk if the observed delay is above the threshold.
The main objective is to estimate the parameters of the customers' patience-level distribution and the corresponding potential arrival rate, using knowledge of the actual queue-length process only. The main complication, and distinguishing feature of our setup, lies in the fact that customers who decide not to join are {\it not} observed, remarkably, we manage to devise a procedure to estimate the underlying patience and arrival rate parameters.

\noindent 
 The model is a multi-server queue with a Poisson stream of customers, enabling evaluation of the corresponding likelihood function of the state-dependent effective arrival process. We establish strong consistency of the MLE and derive the asymptotic distribution of the estimation error. Several applications and extensions of the method are discussed. The performance is further assessed through a series of numerical experiments. By fitting parameters of hyperexponential and generalized-hyperexponential distributions our method provides a robust estimation framework for any continuous patience-level distribution.

\vb

\noindent
{\sc Keywords.} Service systems $\circ$ impatience $\circ$ balking $\circ$ incomplete information $\circ$ estimation

\vb

\noindent
{\sc Affiliations.} 
Yoshiaki Inoue is with Department of Information and Communications Technology, Graduate School of Engineering, Osaka University, 2-1 Yamadaoka, Suita, Osaka 565-0871, Japan ({\tt yoshiaki@comm.eng.osaka-u.ac.jp}).
Liron Ravner is with the Department of Statistics, University of Haifa, Israel ({\tt lravner@haifa.stat.ac.il}), and Korteweg-de Vries Institute for Mathematics, University of Amsterdam.
Michel Mandjes is with Korteweg-de Vries Institute for Mathematics, University of Amsterdam, Science Park 904, 1098 XH Amsterdam, the Netherlands ({\tt m.r.h.mandjes@uva.nl}), E{\sc urandom}, Eindhoven University of Technology, Eindhoven, the Netherlands, and Amsterdam Business School, Faculty of Economics and Business, University of Amsterdam, Amsterdam, the Netherlands.
The research of YI is supported in part by JSPS KAKENHI Grant Number JP18K18007. 
The research of LR and MM is partly funded by NWO Gravitation project N{\sc etworks}, grant number 024.002.003. Date: \today.

\vb

\noindent
{\sc Acknowledgments.} 
The authors are grateful to the associate editor and two referees for their helpful comments and feedback. The authors also wish to thank Onno Boxma (Eindhoven University of Technology, the Netherlands), Avi Mandelbaum (Technion, Haifa, Israel), Ran Snitkovsky (Columbia University \& CUHK Shenzhen), and Galit Yom-Tov (Technion, Haifa, Israel) for providing useful feedback. 

\end{abstract}

\maketitle

\newpage

\section{Introduction} 

In many service systems potential customers are provided with real-time information on the delay they will experience \cite{DONG,IBR}. This information, typically in terms of anticipated delay or current workload, is used by the potential customers to decide to either join the system or to balk. 
In general, one could view balking as an implicit form of admission control: the load on the service system does not explode as a consequence of the fact that during busy periods some of the potential customers decide to leave, thus mitigating the overall congestion. This means that the system is in effect self-regulating even if the \emph{potential service demand} is unknown to the administrator. To make sound decisions in designing the service system, however, it is crucial to have knowledge of the volume of the potential demand and the mechanism based on which potential customers decide to join or to leave. 
With this knowledge, the service provider can, for example, determine the optimal admission price or decide whether it is economically viable to increase the service capacity. 

This paper is motivated by the fact that it is  challenging to extract this knowledge from data when \textit{balking customers cannot be observed by the administrator}. Such complication arises naturally in many service systems. The most obvious case is a physical service facility where a visible queue (e.g., shops, parking lots, museums), sometimes even outside of the facility, will deter some of the potential customers. This has become even more prevalent recently with social distancing measures limiting the number of people that can be inside a public space at a given time. This form of data censorship occurs also in systems that provide real-time delay data via electronic means such as dedicated apps for the service. Some examples are expected travel times in transportation services (e.g., Google Maps and Uber) and expected delivery times in food delivery services (e.g.\ , UberEats in the US, or Deliveroo in Europe). Note that expected delay is often listed for the possible services (e.g., estimated arrival time of a taxi or delivery times for a list of Italian restaurants) even if a user does not indicate the specific service she is interested in, and therefore the system cannot know whether a user balked from the service due to the delay information. 

In this paper, we model such service systems with unobservable balking as an M/G/$s$\,+\,$H$ queue, where the last symbol stands for the cumulative distribution function of the customer patience. More specifically, we consider a first-come first-served (FCFS) queue with $s$ ($s\geqslant 1$) servers and potential customers arriving according to a Poisson process with rate $\lambda>0$. Potential customers have independent and identically distributed (i.i.d.) patience levels that are distributed as the non-negative random variable $Y \sim H$ and bring i.i.d.\ service requirements that are distributed as the non-negative random variable $B\sim G$.   
Customer $i$ joins the system if $Y_i$ is greater or equal to the virtual waiting time at the moment of their arrival and otherwise balks without being observed. Note that while customers are homogeneous in a statistical sense, they have individual (thus heterogeneous) patience levels, so that this model can clearly represent the types of customers that will effectively join the system. 

We set up an estimation problem by parametrizing the patience distribution as $H_{\theta}$ ($\theta \in \Theta$), where $\Theta$ denotes a compact set satisfying some regularity conditions.
The goal of this work is to estimate the pair $(\lambda,\theta)$ based on observation of the \textit{effective} queueing process, which is constructed from records of arrival and departure times not including balking customers. As some of the customers balk, the corresponding {\it effective} interarrival times are {\it not} the usual i.i.d.\ exponentially distributed random variables. The {\em effective} arrival process is in fact a non-homogeneous Poisson process whose rate depends on the virtual waiting time process.  In particular, the effective arrival rate to the system when the virtual waiting time $v\geqslant 0$ is $\lambda(1-H(v))$.  Note that the marginal effective arrival process is not a non-homogeneous Poisson process with respect to time. Therefore, the dependence structure of the observations of inter-arrival times is directly determined by that of the virtual waiting time process. The workload-dependent arrival dynamics are utilized to derive a maximum-likelihood estimator (MLE) based on the effective inter-arrival observations.  Specifically, the density of an inter-arrival time A conditional on initial virtual waiting time $v\geqslant 0$ is given by
\begin{align*}
f_{A \,|\, v}(t)
=
\lambda\, (1-H(v-t))
\exp\left({-\lambda \int_0^t (1-H(v-u))\diff u}\right),\quad t\geqslant 0.
\end{align*}
The Markovian dynamics of the queue further imply that the likelihood of the inter-arrival times conditional on the waiting times and job sizes is given by a product of densities with the above form for any sample of observations.

We further study the asymptotic properties of the proposed estimator. Importantly, the asymptotic performance of the estimator is not given by standard results due to the intricate underlying dependence structure. Indeed, so as to prove consistency and asymptotic normality, a subtle reasoning is required to make sure that the situation at hand fulfils specific regularity conditions. It is further shown that for homogeneous (deterministic) patience levels, i.e., $\P(Y=\theta)=1$ for some $\theta>0$, the asymptotic distribution of the errors is not normal, but rather exponential (with rate $n$ and not $\sqrt{n}$). The main contribution of this paper is the development of procedures for consistent estimation of the {\it total} arrival rate (corresponding to customers joining the system {\it and} balking customers, that is) and the patience-level distribution, observing the {queue-length} process only. Hence, somewhat counterintuitively, we can estimate load that has never been observed.

The main purpose of this paper is to lay foundations for statistical inference for congested service systems with unobserved balking. Although the queueing model considered in this paper is of course not a perfect replica of any specific real system, it does provide a methodological statistical approach for service systems where unobserved balking due to congestion is a key feature. Furthermore, the framework presented here can be extended to capture additional features, such as noisy waiting time information, as will be discussed in the concluding remarks.

It is also important to stress that the framework we develop in this paper is essentially different from the extensively studied problem of estimating patience parameters from observed abandonment data (see \cite{BGMSSZZ2005,MZ2013}). If balking customers are observed, then the data is comprised of waiting times and corresponding indicator variables stating whether a customer joined or balked. Specifically, when it is observed that a customer $i$  joined (resp.\ balked) given the waiting time $W_i$, the observation is of the form $Y\geqslant W_i$ (resp.\ $Y<W_i$). This data can be directly applied to construct an estimator for $\theta$, for example by applying an MLE method. If abandonment times are directly observed then even more information is available, namely the realization of $Y_i$ for every abandoning customer, and $Y\geqslant W_i$ for customers who obtained service. Note that in the above scenarios estimation of the total arrival rate is straightforward because all arrivals are observed. However, when balking is unobserved the only information available is that of inter-arrival and waiting times, and therefore indirect estimation methods are called for. Of course, there may be applications where some of the balking or reneging customers are observed, while others are not. For such systems it may be reasonable to adopt a hybrid approach that uses classical patience estimation methods together with those presented in this paper. For the sake of brevity we assume throughout that balking happens immediately and is not observed by the system operator, so that no data regarding the balking customers is available for estimation purposes.

\subsection{Contributions}
We proceed by detailing our paper's contributions. 
The focus is on developing estimators with provable performance guarantees. Notably, setting up our estimator is straightforward in the sense that it relies on a closed-form expression for the likelihood, and does not require any queueing-theoretic analysis. As a result, even for models for which stationary performance analysis is involved (or intractable), parameter inference can be done in a relatively direct manner.
Our contributions relate to (i)~our estimator and its asymptotic properties, (ii)~results and experiments for specific patience-level distributions, and (iii)~extensions and ramifications.

\begin{itemize}
\item[$\circ$]
Framing the system described above as a queueing model with impatient customers, we can evaluate the corresponding likelihood function (in terms of the observed quantities). This means that, in a parametric context, we can estimate the unknown parameters --- i.e., $\lambda$ and the parameter(s) $\theta$ corresponding to the patience level $Y$ --- relying on a maximum likelihood procedure. The estimation method has several attractive features, the most prominent one being that it does not require simultaneous estimation of the service requirement distribution or even making any distributional assumption about it. In addition, our methodology uses the fundamental queueing dynamics and does not, as is common in the literature, rely on any fluid and/or diffusion approximations. 
\item[$\circ$] In the case of a continuous patience-level distribution we prove, under appropriate regularity conditions, strong consistency and asymptotic normality of the resulting estimation errors (scaled by $\sqrt{n}$). The proof of the asymptotic normality relies on an application of a suitable version of the martingale central limit theorem. 
\item[$\circ$]
In the case that the patience level $Y$ is constant (i.e., $Y=\theta$ for some $\theta$), more refined results can be obtained. Most notably, we show that the estimation error (scaled by $n$) converges to an exponential distribution with a parameter that depends on the stationary workload density at $\theta$ and the loss probability of customers.
\item[$\circ$] A number of other special cases are dealt with, covering exponentially and (generalized) hyperexponentially distributed patience levels; we present closed-form expressions for the asymptotic variance of the estimation error. The class of generalized hyperexponentially distributions is highly relevant for various reasons, most importantly because these can be used to approximate any positive continuous distribution arbitrarily well \cite{BH1986}.
In addition, in the call center literature \cite{RJ2013} it has been observed that such distributions yield a good fit for observed customer patience {in terms of the time until abandonment.}  We provide several numerical examples that illustrate the robustness of our methodology. Indeed, our experiments show that even if the patience-level distribution is misspecified (i.e., is not generalized hyperexponential itself),
the distribution function of the estimated generalized hyperexponential distribution is still remarkably close to the true one. 

\item[$\circ$]
We also discuss several ramifications and model extensions that can be handled using our framework. These include extending the framework to {a system with} noisy delay information, estimating the proportion of impatient customers in the population, and estimating specific utility parameters such as service value and waiting time sensitivity.
\end{itemize}

\subsection{Related literature}\label{sec:lit}
There is a substantial body of literature on queues with impatient customers. Without attempting to provide an exhaustive overview, we include a brief account of the various research lines.
Naor's influential paper \cite{NA}  can be seen as a pioneering contribution to the field of `{behavioral queues}'. It presents a powerful stylized model for a queueing resource in which potential customers decide to join or to balk based on delay information. Since then performance and economic analysis of queueing systems with balking customers has been studied extensively. In particular, 
the effect of providing workload information to strategic customers was investigated in e.g.\  \cite{GZ2007,GZ2008}. Many other model variations are reviewed in \cite{H2016, HH2003}. More background on applied probability and queueing systems in a general context is provided in e.g.\ \cite{AS,DM}.
 
For various specific types of {queueing systems with impatient customers} explicit performance results are known; see, for example, \cite{BPS2011,BPSZ2010,LK2006,LK2008,MS2000,MS2004}. Importantly, however, the estimation techniques presented in our paper do not require knowledge of e.g.\ the queue's stationary distribution. It is also worth stressing that despite the fact that queues with impatient customers are well studied, to the best of our knowledge, hardly any workload-based estimation techniques have been developed. A notable exception is  \cite{GRH2009}, featuring an asymptotic analysis of statistical inference of waiting times truncated by abandonments; our setting, however, is crucially different from the one in \cite{GRH2009}, as in the latter
 abandoning customers and their sojourn times are observed.
 
There are various papers on estimation problems in the setting where all customers join, but leave when getting impatient; examples are \cite{AAES2013, AC, MZ2013, YEF}.  Importantly, this setting is crucially different from ours, in which the balking customers are not observed. In this branch of the literature the challenge lies in the fact that for customers who {\it are} served, we have just a lower bound on their impatience time, making this a `right-censored' estimation problem.

Another related research direction concerns the so-called {\it queue inference engine}: a setting in which one does not observe the arrival process of the customers, but rather the ordered service entry and service completion times (so-called `transactional data'); see for example \cite{DS98,L90}. This situation one comes across when considering e.g.\  automatic teller machines where the queue of customers waiting for the machine (and in particular their arrival times) is not observable, but the transactional data is recorded. In \cite{DS98} a multi-server queue is considered, but with the specific feature that balking is assumed to occur when all servers are busy upon arrival, with a fixed probability independent of the queue-length, making the underlying queueing dynamics fundamentally different from ours.

Strictly speaking, \cite{AN19,AN19a} do not deal with transactional data: it is assumed that the interarrival times corresponding to the non-balking customers are observed (so that its scope is somewhat similar to ours). At the methodological level, however, \cite{AN19,AN19a} crucially differ from our approach: the proposed estimation procedure relies on discrete-event simulation rather than a rigorously backed maximum likelihood procedure.
 
 Our work fits in the larger branch of the literature on inference for queues. This domain focuses on estimation problems where the queue is observed (for instance periodically), with the objective to estimate its input parameters. We refer to \cite{ANT2021} for a broad recent overview of this area. Examples are the procedures for the M/G/1 queue as presented in \cite{dBM, HP2006}. We also refer to the Poisson-probing based approach  \cite{RBM2019} for L\'evy-driven queues \cite{DM}, and  the general framework presented in \cite{DG}, as well as the corresponding hypothesis-testing problem \cite{MRav}.
 Some approaches rely on exploiting knowledge of the queue's tail probabilities \cite{CR,DMe,GZ}. A related line of research concerns the detection of stability or instability of queueing networks; see the Monte Carlo based methods of \cite{LM,MPW}.

The main conclusion from the above is that, despite the mature literature on queues with impatient customers and related estimation problems, there is a lack of methodologies that can deal with the situation in which balking customers are not observed. 

\vb

\subsection{Paper organization}\label{sec:organiz}
{This paper is organized as follows. In Section \ref{sec:model} we describe the model featuring in this paper and introduce the notation used. In addition,  we present a number of known results on queues with customer impatience that are relied on later. Then, in Section \ref{sec:arrival_workload}, we construct and analyze our maximum likelihood estimator.
For continuous patience-level distributions,  {we prove strong consistency and asymptotic normality of the estimation error. The case of deterministic patience is treated separately, leading to a similar conclusion regarding strong consistency, but with the notable difference that in this case the limiting distribution of the estimation error is exponential.} Section \ref{sec:special} provides detailed analysis of the MLE for exponential, hyperexponential and generalized hyperexponential patience distributions, {the latter cases being relevant as they can be used to approximate the distributions of arbitrary non-negative random variables}. This includes explicit analysis of the likelihood function and its corresponding asymptotic variance, as well as simulation experiments assessing the performance of the estimators. Section \ref{sec:app} illustrates how the method can be utilized for several applications. In Section \ref{sec:Conc} we provide concluding remarks and directions for followup research, including
a brief discussion of a non-parametric estimation procedure.}

\section{Model and preliminaries}\label{sec:model}

In this section we provide a detailed model description, introduce the notation used throughout this paper, state the objective of the paper, and present preliminaries. 

\subsection*{Model description}

We consider a service system with $s$ servers ($s\in{\mathbb N}$).
Potential customers arrive at the service system according to a Poisson process with rate $\lambda$.
Each customer has a service requirement with cumulative distribution function (cdf) $G(\cdot)$ and
a patience level with cdf $H(\cdot)$.  We define $B$ and $Y$ as the generic random variables corresponding to service requirements and impatience levels, respectively. The interarrival times $(T_i)_{i=1,2\ldots}$, the
service requirements $(B_i)_{i=1,2,\ldots}$, and patience levels $(Y_i)_{i=1,2,\ldots}$
are independent sequences of i.i.d.\ distributed random variables. Customer $i$ joins the queue if $Y_i$ is smaller than or equal to the waiting time she will experience. More specifically, let $V(t)$ (for $t \geqslant 0$) denote the virtual waiting time at time $t$, which is defined as the time it would take before at least one server becomes idle if no customers have joined the system after time $t$:
\begin{equation}
V(t) = \inf\{u \geqslant 0:\, Q(t) - (N_{\mathrm{D}}(t+u) - N_{\mathrm{D}}(t)) \leqslant s-1\},
\label{eq:VWT-def}
\end{equation}
where $Q(t)$ denotes the number of customers in the system (including those in service) at time $t$ and $N_{\mathrm{D}}(t)$ (for $t \geqslant 0$) denotes the total number of customers finishing their service in a time-interval $(0,t]$.
For the single-server case (i.e., if $s=1$), the virtual waiting time equals to the workload in system, i.e., the sum of remaining service requirements of customers. Note that the virtual waiting time in the multi-server case ($s = 2,3,\ldots$) does not coincide with the workload in system, unlike in the single-server case: 
\begin{itemize}
\item[(i)] When $i \in\{0,1,\ldots, s-1\}$ servers are busy, the virtual waiting time equals zero, while the workload equals some positive value and decreases with slope $i$ as time passes without effective arrivals.

\item[(ii)] When all $s$ servers are busy, the virtual waiting time and the workload decrease with slope $1$ and $s$, respectively, as time passes without effective arrivals.
\end{itemize}
Figure \ref{fig:vwtime} illustrates an example of the realization of the virtual waiting time process for $s=2$. The actual waiting time of a customer arriving at time $t$ equals $V(t-)$, so that a customer balks if the virtual waiting time just before the arrival instant exceeds her patience threshold.
The resulting system can be considered as an M/G/$s$ queue with impatient customers. 

\begin{figure}[t]
\centering
\includegraphics[width=0.95\textwidth]{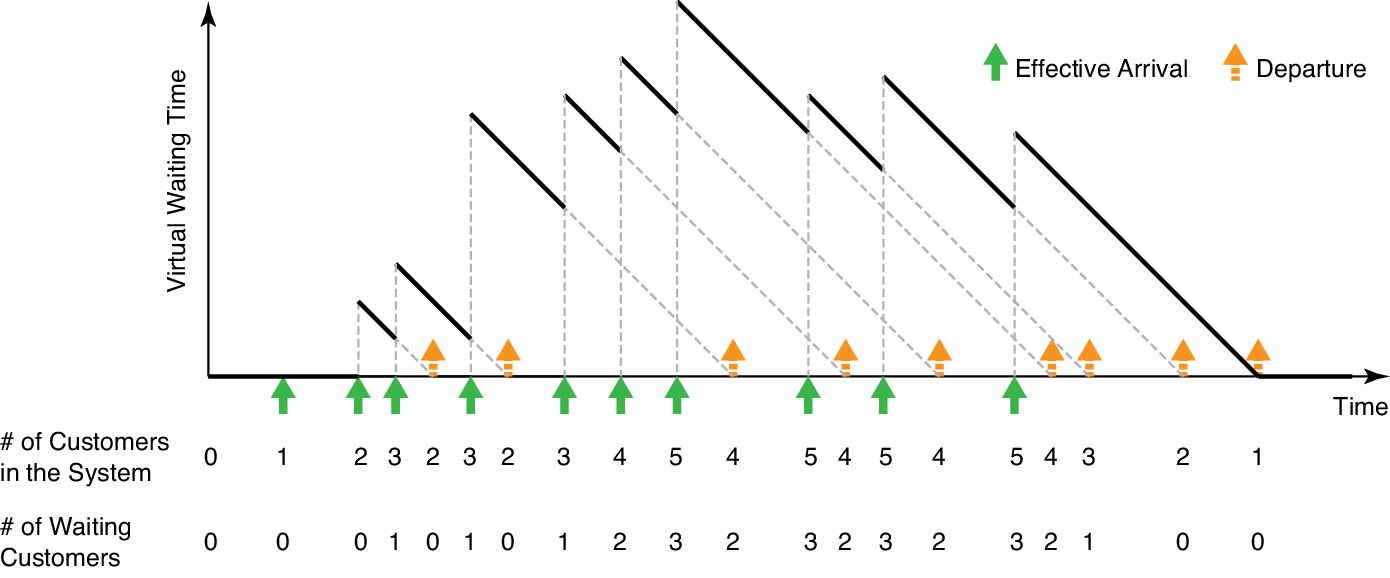}
\caption{An example of the virtual waiting time process ($s=2$).}
\label{fig:vwtime}
\end{figure}

As some customers will balk, there will be a difference between the interarrival times of the customers (covering both the balking and non-balking customers) on one hand, and the  \textit{effective} interarrival times (covering just the non-balking customers) on the other hand. 
Throughout this paper the sequence of effective interarrival times is denoted by $(A_i)_{i=1,2,\ldots}$.
Importantly, observe that the random variables 
$A_i$ are generally neither identically distributed nor independent.
 We can reconstruct them from the sequences $(T_i)_{i=1,2,\ldots}$,
$(B_i)_{i=1,2,\ldots}$, 
and $(Y_i)_{i=1,2,\ldots}$ in the following manner. For the following construction we let a non-balking customer arrive at time $t=0$, as will be further motivated in Remark~\ref{R1}. Denoting the virtual waiting time process at
time $t$ by $V(t)$, we have that $A_1=\sum_{k=1}^{j_1} T_k$ where 
\[
j_1:=\inf\left\{j\geqslant 1: \ V\left(\sum_{k=1}^j T_k-\right) \leqslant Y_j\right\}.
\] 
Similarly $A_i=\sum_{k=j_{i-1}+1}^{j_i} T_k$ where 
\[
j_i:=\inf\left\{j \geqslant 
{j_{i-1}+1}
: 
\ V\left(\sum_{k=1}^j T_k-\right)\leqslant Y_j\right\}.
\] 
Observe that $j_i$ has the interpretation of being the index of the $i$-th non-balking customer. 
Let $\tilde{A}_i := \sum_{k=1}^i A_k$ denote the $i$-th effective arrival time. 
We denote the upward-jump size in the virtual waiting time caused by the $i$-th joining customer by $X_i:= V(\tilde{A}_i) - V(\tilde{A}_i-)$.
In the single-server case ($s=1$), we have $X_i = B_{j_i}$. In the multi-server case ($s \geqslant 2$), on the other hand, $X_i$ takes a complicated form depending on both the queue-length and the residual service times of customers in service seen on the arrival instant.  For a single-server system ($s=1$),  Figure \ref{fig:workload} illustrates
the workload process during a single busy period for the case of deterministic patience, i.e., $Y=\theta$ for some $\theta>0$. The building blocks of our framework, viz.\ the interarrival and service requirements and their \textit{effective} counterparts, are highlighted in the figure. 
It is stressed that the distribution of the effective interarrival times can significantly deviate from the (exponential) distribution of the interarrival times. 
Figure \ref{fig:arrival_dist} 
illustrates this effect for two different patience-level distributions. In addition, as mentioned, the effective interarrival times are not independent, as opposed to the interarrival times.

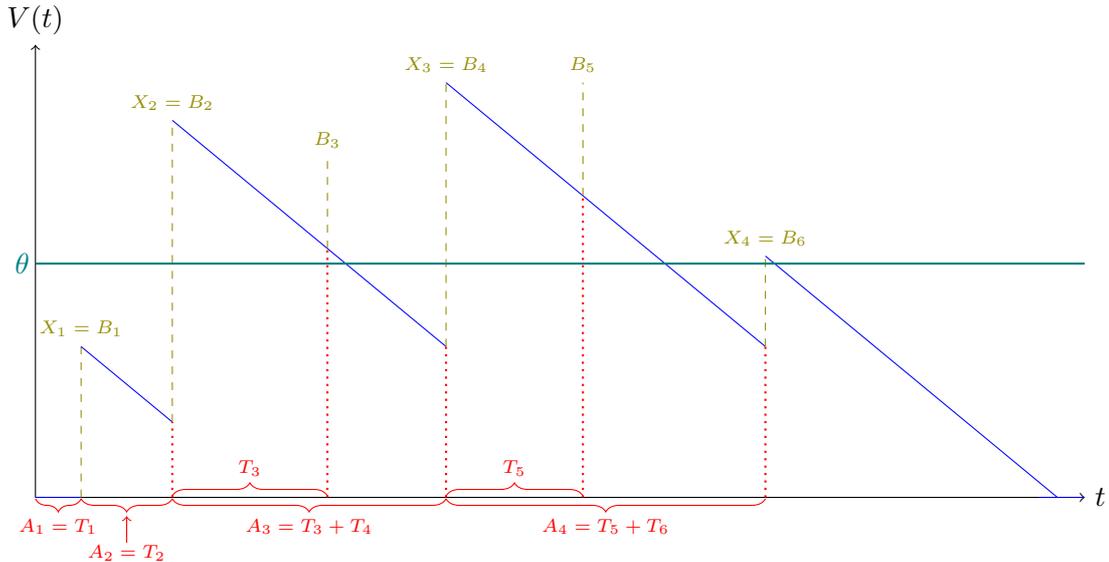
\begin{figure}[h]
\centering
\begin{tikzpicture}[x=1.2cm,y=1cm]
 \def\xmin{0}
 \def\xmax{11.5}
 \def\ymin{0}
 \def\ymax{6}
 \draw[->] (\xmin,\ymin) -- (\xmax,\ymin) node[right] {$t$} ;
 \draw[->] (\xmin,\ymin) -- (\xmin,\ymax) node[above] {$V(t)$} ;
\foreach \x in {} {
 \node at (\x,\ymin) [below] {\x};
 \draw[-] (\x,\ymin) -- (\x,{\ymin-0.05});
 }
\foreach \y in {} {
 \node at (\xmin,\y) [left] {\y};
 \draw[-] (\xmin,\y) -- ({\xmin-0.05},\y);
 }
 
\draw[- ,blue] (0,0) -- (0.5,0) ;
\draw[- ,dashed,olive]  (0.5,0) -- (0.5,2);
\draw[- ,blue] (0.5,2) -- (1.5,1) ;
\draw[- ,dashed,olive]  (1.5,1) -- (1.5,5);
\draw[- ,blue] (1.5,5) -- (4.5,2) ;
\draw[- ,dashed,olive]  (4.5,2) -- (4.5,5.5);
\draw[- ,blue] (4.5,5.5) -- (8,2) ;
\draw[- ,dashed,olive]  (8,2) -- (8,3.2);
\draw[- ,blue] (8,3.2) -- (11.2,0) ;
\draw[- ,blue]  (11,0) -- (11.5,0) ;

 \draw [ red,decorate,decoration={brace,amplitude=5pt,mirror},yshift=-0.5pt](0,0) -- (0.5,0);
 \node[draw=none,below,color=red] at (0.25,-0.15) {\tiny{$A_1=T_1$}};
 \node[draw=none,above,color=olive] at (0.5,2) {\tiny{$X_1=B_1$}};

 \draw[-,dotted,thick,red] (1.5,0) -- (1.5,1);
 \draw [ red,decorate,decoration={brace,amplitude=5pt,mirror},yshift=-0.5pt](0.5,0) -- (1.5,0);
 \node[draw=none,below,color=red] at (1,-0.5) {\tiny{$A_2=T_2$}};
 \draw[->,red] (1,-0.6) -- (1,-0.25);
 \node[draw=none,above,color=olive] at (1.5,5) {\tiny{$X_2=B_2$}};
 
  \draw[-,dotted,thick,red] (3.2,0) -- (3.2,3.3);
  \draw [ red,decorate,decoration={brace,amplitude=5pt},yshift=0.5pt](1.5,0) -- (3.2,0);
 \node[draw=none,above,color=red] at (2.35,0.15) {\tiny{$T_3$}};
 \draw[-,olive,dashed] (3.2,3.3) -- (3.2,4.5);
 \node[draw=none,above,color=olive] at (3.2,4.5) {\tiny{$B_3$}};
 
 \draw[-,dotted,thick,red] (4.5,0) -- (4.5,2);
 \draw [ red,decorate,decoration={brace,amplitude=5pt,mirror},yshift=-0.5pt](1.5,0) -- (4.5,0);
 \node[draw=none,below,color=red] at (3,-0.15) {\tiny{$A_3=T_3+T_4$}};
  \node[draw=none,above,color=olive] at (4.5,5.5) {\tiny{$X_3=B_4$}};
 
 \draw[-,dotted,thick,red] (6,0) -- (6,4);
 \draw [ red,decorate,decoration={brace,amplitude=5pt},yshift=0.5pt](4.5,0) -- (6,0);
 \node[draw=none,above,color=red] at (5.25,0.15) {\tiny{$T_5$}};
 \draw[-,olive,dashed] (6,4) -- (6,5.5);
 \node[draw=none,above,color=olive] at (6,5.5) {\tiny{$B_5$}};
 
 \draw[-,dotted,thick,red] (8,0) -- (8,2);
 \draw [ red,decorate,decoration={brace,amplitude=5pt,mirror},yshift=-0.5pt](4.5,0) -- (8,0);
 \node[draw=none,below,color=red] at (6.25,-0.15) {\tiny{$A_4=T_5+T_6$}};
 \node[draw=none,above,color=olive] at (8,3.2) {\tiny{$X_4=B_6$}};

 \draw[-, thick,teal] (0,3.1) -- (\xmax,3.1);
 \node[draw=none,left,color=teal] at (0.05,3.1) {$\theta$};
 
 \end{tikzpicture}
\caption{The virtual waiting time $V(t)$ during a busy period for a single-server system and deterministic patience levels. The dotted red lines mark arrival instants of new customers. Arrivals 3 and 5 observe a virtual waiting time level above $\theta$ and immediately balk, and thus the virtual waiting time continues to deplete as if there was no arrival. Although six customers have arrived during the busy period, only four effective interarrival times and service requirements are observed, namely
{$( (A_1,X_1), (A_2,X_2), (A_3,X_3),(A_4,X_4))$, being equal to $( (T_1,B_1), (T_2,B_2), (T_3+T_4,B_4),(T_5+T_6,B_6) )$}.}
\label{fig:workload}
\mbox{}
\end{figure}

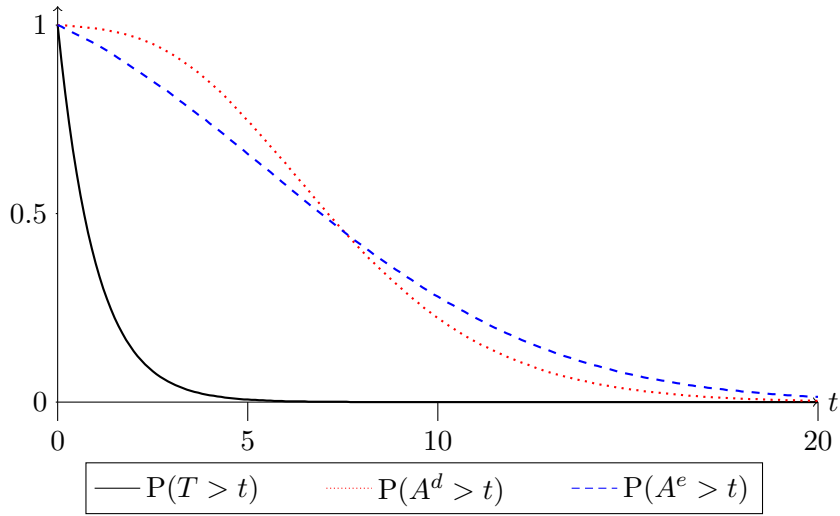
\begin{figure}
\centering
\begin{tikzpicture}[x=0.6cm,y=0.6cm]
 \def\xmin{0}
 \def\xmax{13.05}
 \def\ymin{0}
 \def\ymax{13.05}
 \draw[->] (\xmin,\ymin) -- (\xmax,\ymin) node[right] {$t$} ;
 \draw[->] (\xmin,\ymin) -- (\xmin,\ymax)  ;
 \foreach \x in {0,5,10} {
 \node at (\x,\ymin-0.05) [below] {\x};
 \draw[-] (\x,\ymin) -- (\x,{\ymin-0.05});
 }
\foreach \y in {0,5,10} {
 \node at (\xmin,\y) [left] {\y};
 \draw[-] (\xmin,\y) -- ({\xmin-0.05},\y);
 }


\draw[black,thick] ( 0 , 0 )-- ( 0.1 , 0.1 )-- ( 0.2 , 0.2 )-- ( 0.3 , 0.3 )-- ( 0.4 , 0.4 )-- ( 0.5 , 0.5 )-- ( 0.6 , 0.6 )-- ( 0.7 , 0.7 )-- ( 0.8 , 0.8 )-- ( 0.9 , 0.9 )-- ( 1 , 1 )-- ( 1.1 , 1.1 )-- ( 1.2 , 1.2 )-- ( 1.3 , 1.3 )-- ( 1.4 , 1.4 )-- ( 1.5 , 1.5 )-- ( 1.6 , 1.6 )-- ( 1.7 , 1.7 )-- ( 1.8 , 1.8 )-- ( 1.9 , 1.9 )-- ( 2 , 2 )-- ( 2.1 , 2.1 )-- ( 2.2 , 2.2 )-- ( 2.3 , 2.3 )-- ( 2.4 , 2.4 )-- ( 2.5 , 2.5 )-- ( 2.6 , 2.6 )-- ( 2.7 , 2.7 )-- ( 2.8 , 2.8 )-- ( 2.9 , 2.9 )-- ( 3 , 3 )-- ( 3.1 , 3.1 )-- ( 3.2 , 3.2 )-- ( 3.3 , 3.3 )-- ( 3.4 , 3.4 )-- ( 3.5 , 3.5 )-- ( 3.6 , 3.6 )-- ( 3.7 , 3.7 )-- ( 3.8 , 3.8 )-- ( 3.9 , 3.9 )-- ( 4 , 4 )-- ( 4.1 , 4.1 )-- ( 4.2 , 4.2 )-- ( 4.3 , 4.3 )-- ( 4.4 , 4.4 )-- ( 4.5 , 4.5 )-- ( 4.6 , 4.6 )-- ( 4.7 , 4.7 )-- ( 4.8 , 4.8 )-- ( 4.9 , 4.9 )-- ( 5 , 5 )-- ( 5.1 , 5.1 )-- ( 5.2 , 5.2 )-- ( 5.3 , 5.3 )-- ( 5.4 , 5.4 )-- ( 5.5 , 5.5 )-- ( 5.6 , 5.6 )-- ( 5.7 , 5.7 )-- ( 5.8 , 5.8 )-- ( 5.9 , 5.9 )-- ( 6 , 6 )-- ( 6.1 , 6.1 )-- ( 6.2 , 6.2 )-- ( 6.3 , 6.3 )-- ( 6.4 , 6.4 )-- ( 6.5 , 6.5 )-- ( 6.6 , 6.6 )-- ( 6.7 , 6.7 )-- ( 6.8 , 6.8 )-- ( 6.9 , 6.9 )-- ( 7 , 7 )-- ( 7.1 , 7.1 )-- ( 7.2 , 7.2 )-- ( 7.3 , 7.3 )-- ( 7.4 , 7.4 )-- ( 7.5 , 7.5 )-- ( 7.6 , 7.6 )-- ( 7.7 , 7.7 )-- ( 7.8 , 7.8 )-- ( 7.9 , 7.9 )-- ( 8 , 8 )-- ( 8.1 , 8.1 )-- ( 8.2 , 8.2 )-- ( 8.3 , 8.3 )-- ( 8.4 , 8.4 )-- ( 8.5 , 8.5 )-- ( 8.6 , 8.6 )-- ( 8.7 , 8.7 )-- ( 8.8 , 8.8 )-- ( 8.9 , 8.9 )-- ( 9 , 9 )-- ( 9.1 , 9.1 )-- ( 9.2 , 9.2 )-- ( 9.3 , 9.3 )-- ( 9.4 , 9.4 )-- ( 9.5 , 9.5 )-- ( 9.6 , 9.6 )-- ( 9.7 , 9.7 )-- ( 9.8 , 9.8 )-- ( 9.9 , 9.9 )-- ( 10 , 10 )-- ( 10.1 , 10.1 )-- ( 10.2 , 10.2 )-- ( 10.3 , 10.3 )-- ( 10.4 , 10.4 )-- ( 10.5 , 10.5 )-- ( 10.6 , 10.6 )-- ( 10.7 , 10.7 )-- ( 10.8 , 10.8 )-- ( 10.9 , 10.9 )-- ( 11 , 11 )-- ( 11.1 , 11.1 )-- ( 11.2 , 11.2 )-- ( 11.3 , 11.3 )-- ( 11.4 , 11.4 )-- ( 11.5 , 11.5 )-- ( 11.6 , 11.6 )-- ( 11.7 , 11.7 )-- ( 11.8 , 11.8 )-- ( 11.9 , 11.9 )-- ( 12 , 12 )-- ( 12.1 , 12.1 )-- ( 12.2 , 12.2 )-- ( 12.3 , 12.3 )-- ( 12.4 , 12.4 )-- ( 12.5 , 12.5 )-- ( 12.6 , 12.6 )-- ( 12.7 , 12.7 )-- ( 12.8 , 12.8 )-- ( 12.9 , 12.9 )-- ( 13 , 13 );

 \draw[red,dotted,thick] ( 0 , 0 )-- ( 0.1 , 0.01 )-- ( 0.2 , 0.02 )-- ( 0.3 , 0.02 )-- ( 0.4 , 0.03 )-- ( 0.5 , 0.04 )-- ( 0.6 , 0.05 )-- ( 0.7 , 0.06 )-- ( 0.8 , 0.07 )-- ( 0.9 , 0.08 )-- ( 1 , 0.1 )-- ( 1.1 , 0.11 )-- ( 1.2 , 0.12 )-- ( 1.3 , 0.14 )-- ( 1.4 , 0.16 )-- ( 1.5 , 0.18 )-- ( 1.6 , 0.2 )-- ( 1.7 , 0.22 )-- ( 1.8 , 0.24 )-- ( 1.9 , 0.26 )-- ( 2 , 0.29 )-- ( 2.1 , 0.31 )-- ( 2.2 , 0.34 )-- ( 2.3 , 0.37 )-- ( 2.4 , 0.4 )-- ( 2.5 , 0.44 )-- ( 2.6 , 0.47 )-- ( 2.7 , 0.51 )-- ( 2.8 , 0.54 )-- ( 2.9 , 0.58 )-- ( 3 , 0.62 )-- ( 3.1 , 0.66 )-- ( 3.2 , 0.71 )-- ( 3.3 , 0.75 )-- ( 3.4 , 0.79 )-- ( 3.5 , 0.84 )-- ( 3.6 , 0.89 )-- ( 3.7 , 0.94 )-- ( 3.8 , 0.99 )-- ( 3.9 , 1.04 )-- ( 4 , 1.09 )-- ( 4.1 , 1.15 )-- ( 4.2 , 1.2 )-- ( 4.3 , 1.26 )-- ( 4.4 , 1.31 )-- ( 4.5 , 1.37 )-- ( 4.6 , 1.43 )-- ( 4.7 , 1.49 )-- ( 4.8 , 1.55 )-- ( 4.9 , 1.61 )-- ( 5 , 1.68 )-- ( 5.1 , 1.74 )-- ( 5.2 , 1.8 )-- ( 5.3 , 1.87 )-- ( 5.4 , 1.94 )-- ( 5.5 , 2 )-- ( 5.6 , 2.07 )-- ( 5.7 , 2.14 )-- ( 5.8 , 2.21 )-- ( 5.9 , 2.28 )-- ( 6 , 2.35 )-- ( 6.1 , 2.42 )-- ( 6.2 , 2.49 )-- ( 6.3 , 2.57 )-- ( 6.4 , 2.64 )-- ( 6.5 , 2.71 )-- ( 6.6 , 2.79 )-- ( 6.7 , 2.87 )-- ( 6.8 , 2.94 )-- ( 6.9 , 3.02 )-- ( 7 , 3.1 )-- ( 7.1 , 3.17 )-- ( 7.2 , 3.25 )-- ( 7.3 , 3.33 )-- ( 7.4 , 3.41 )-- ( 7.5 , 3.49 )-- ( 7.6 , 3.57 )-- ( 7.7 , 3.65 )-- ( 7.8 , 3.73 )-- ( 7.9 , 3.82 )-- ( 8 , 3.9 )-- ( 8.1 , 3.98 )-- ( 8.2 , 4.07 )-- ( 8.3 , 4.15 )-- ( 8.4 , 4.23 )-- ( 8.5 , 4.31 )-- ( 8.6 , 4.4 )-- ( 8.7 , 4.49 )-- ( 8.8 , 4.57 )-- ( 8.9 , 4.66 )-- ( 9 , 4.74 )-- ( 9.1 , 4.83 )-- ( 9.2 , 4.91 )-- ( 9.3 , 5 )-- ( 9.4 , 5.08 )-- ( 9.5 , 5.17 )-- ( 9.6 , 5.26 )-- ( 9.7 , 5.35 )-- ( 9.8 , 5.44 )-- ( 9.9 , 5.53 )-- ( 10 , 5.62 )-- ( 10.1 , 5.7 )-- ( 10.2 , 5.79 )-- ( 10.3 , 5.88 )-- ( 10.4 , 5.97 )-- ( 10.5 , 6.07 )-- ( 10.6 , 6.16 )-- ( 10.7 , 6.25 )-- ( 10.8 , 6.35 )-- ( 10.9 , 6.44 )-- ( 11 , 6.54 )-- ( 11.1 , 6.62 )-- ( 11.2 , 6.71 )-- ( 11.3 , 6.81 )-- ( 11.4 , 6.91 )-- ( 11.5 , 7 )-- ( 11.6 , 7.09 )-- ( 11.7 , 7.19 )-- ( 11.8 , 7.28 )-- ( 11.9 , 7.37 )-- ( 12 , 7.47 )-- ( 12.1 , 7.57 )-- ( 12.2 , 7.66 )-- ( 12.3 , 7.76 )-- ( 12.4 , 7.85 )-- ( 12.5 , 7.94 )-- ( 12.6 , 8.03 )-- ( 12.7 , 8.13 )-- ( 12.8 , 8.23 )-- ( 12.9 , 8.33 )-- ( 13 , 8.42 );
 
\draw[blue,dashed,thick] ( 0 , 0 )-- ( 0.1 , 0.01 )-- ( 0.2 , 0.01 )-- ( 0.3 , 0.02 )-- ( 0.4 , 0.02 )-- ( 0.5 , 0.03 )-- ( 0.6 , 0.04 )-- ( 0.7 , 0.05 )-- ( 0.8 , 0.06 )-- ( 0.9 , 0.07 )-- ( 1 , 0.08 )-- ( 1.1 , 0.09 )-- ( 1.2 , 0.11 )-- ( 1.3 , 0.12 )-- ( 1.4 , 0.14 )-- ( 1.5 , 0.15 )-- ( 1.6 , 0.17 )-- ( 1.7 , 0.19 )-- ( 1.8 , 0.21 )-- ( 1.9 , 0.23 )-- ( 2 , 0.25 )-- ( 2.1 , 0.27 )-- ( 2.2 , 0.29 )-- ( 2.3 , 0.32 )-- ( 2.4 , 0.34 )-- ( 2.5 , 0.37 )-- ( 2.6 , 0.4 )-- ( 2.7 , 0.43 )-- ( 2.8 , 0.46 )-- ( 2.9 , 0.49 )-- ( 3 , 0.52 )-- ( 3.1 , 0.56 )-- ( 3.2 , 0.59 )-- ( 3.3 , 0.63 )-- ( 3.4 , 0.66 )-- ( 3.5 , 0.7 )-- ( 3.6 , 0.74 )-- ( 3.7 , 0.78 )-- ( 3.8 , 0.82 )-- ( 3.9 , 0.86 )-- ( 4 , 0.91 )-- ( 4.1 , 0.95 )-- ( 4.2 , 1 )-- ( 4.3 , 1.04 )-- ( 4.4 , 1.09 )-- ( 4.5 , 1.14 )-- ( 4.6 , 1.19 )-- ( 4.7 , 1.24 )-- ( 4.8 , 1.29 )-- ( 4.9 , 1.34 )-- ( 5 , 1.39 )-- ( 5.1 , 1.45 )-- ( 5.2 , 1.5 )-- ( 5.3 , 1.55 )-- ( 5.4 , 1.61 )-- ( 5.5 , 1.67 )-- ( 5.6 , 1.73 )-- ( 5.7 , 1.78 )-- ( 5.8 , 1.84 )-- ( 5.9 , 1.9 )-- ( 6 , 1.96 )-- ( 6.1 , 2.03 )-- ( 6.2 , 2.09 )-- ( 6.3 , 2.15 )-- ( 6.4 , 2.22 )-- ( 6.5 , 2.28 )-- ( 6.6 , 2.35 )-- ( 6.7 , 2.41 )-- ( 6.8 , 2.48 )-- ( 6.9 , 2.55 )-- ( 7 , 2.61 )-- ( 7.1 , 2.68 )-- ( 7.2 , 2.75 )-- ( 7.3 , 2.82 )-- ( 7.4 , 2.89 )-- ( 7.5 , 2.96 )-- ( 7.6 , 3.03 )-- ( 7.7 , 3.11 )-- ( 7.8 , 3.18 )-- ( 7.9 , 3.25 )-- ( 8 , 3.33 )-- ( 8.1 , 3.4 )-- ( 8.2 , 3.48 )-- ( 8.3 , 3.55 )-- ( 8.4 , 3.63 )-- ( 8.5 , 3.71 )-- ( 8.6 , 3.78 )-- ( 8.7 , 3.86 )-- ( 8.8 , 3.94 )-- ( 8.9 , 4.02 )-- ( 9 , 4.1 )-- ( 9.1 , 4.18 )-- ( 9.2 , 4.26 )-- ( 9.3 , 4.34 )-- ( 9.4 , 4.42 )-- ( 9.5 , 4.5 )-- ( 9.6 , 4.58 )-- ( 9.7 , 4.66 )-- ( 9.8 , 4.74 )-- ( 9.9 , 4.83 )-- ( 10 , 4.91 )-- ( 10.1 , 4.99 )-- ( 10.2 , 5.08 )-- ( 10.3 , 5.16 )-- ( 10.4 , 5.24 )-- ( 10.5 , 5.33 )-- ( 10.6 , 5.41 )-- ( 10.7 , 5.5 )-- ( 10.8 , 5.58 )-- ( 10.9 , 5.66 )-- ( 11 , 5.75 )-- ( 11.1 , 5.83 )-- ( 11.2 , 5.92 )-- ( 11.3 , 6.01 )-- ( 11.4 , 6.09 )-- ( 11.5 , 6.18 )-- ( 11.6 , 6.27 )-- ( 11.7 , 6.35 )-- ( 11.8 , 6.44 )-- ( 11.9 , 6.53 )-- ( 12 , 6.62 )-- ( 12.1 , 6.71 )-- ( 12.2 , 6.8 )-- ( 12.3 , 6.89 )-- ( 12.4 , 6.97 )-- ( 12.5 , 7.06 )-- ( 12.6 , 7.15 )-- ( 12.7 , 7.24 )-- ( 12.8 , 7.34 )-- ( 12.9 , 7.43 )-- ( 13 , 7.52 );
 
 \end{tikzpicture}
 \begin{tikzpicture}
    \begin{customlegend}
    [legend entries={ $-\log\P(T>t)$,$-\log\P(A^d>t)$,$-\log\P(A^e>t)$},legend columns=-1,legend style={/tikz/every even column/.append style={column sep=0.8cm}}]   
    \addlegendimage{black} 
    \addlegendimage{red,densely dotted}    
    \addlegendimage{blue,densely dashed}    
    \end{customlegend}
\end{tikzpicture}
\caption{Logarithmic scale tail distribution (smooth black) of the potential interarrival time $T$ is exponential with rate $\lambda=1$. Simulation of the tail distributions of the stationary effective interarrival times is illustrated for two cases: $A^d$ corresponds to deterministic patience levels with $\theta=3$ (dotted red), while $A^e$ corresponds to exponentially distributed patience levels with mean~$3$ (dashed blue). The service requirements in the example are Erlang distributed with parameters $(5,1.5)$. The plotted stationary effective inter-arrival distributions are given by the empirical distribution obtained by simulating $n=10^5$ effective arrivals for each of the cases.} \label{fig:arrival_dist}
\end{figure}

We focus on the case that the patience-level distribution is determined by a parameter $\theta$ that is a vector in $\mathbb{R}^p$ for some (known) $p\in{\mathbb N}$, entailing the estimation problem is parametric. In addition, the arrival rate $\lambda$ is to be estimated, which is evidently of a parametric nature as well. Alternatively, $\theta$ could be a function from some more general space, rendering the estimation problem non-parametric; in Section \ref{sec:Conc} 
we briefly discuss an approach that can be used in this setting. Throughout this paper, we assume that we observe the full queue-length process. This in particular means that we observe the arrival and departure epochs of the non-balking customers ({and that we do not observe the balking customers at all}). The objective is to use this information to somehow learn the true arrival rate and the parameters of the patience-level distribution. More concretely, we wish to devise statistical procedures for estimating the arrival rate $\lambda$ and the distribution of the patience $Y$ (both corresponding to non-observable quantities) with provable performance properties.

\subsection*{Stationary distribution}

We assume that the mean service time $\E[B]$ is finite, and that, with $\rho := \lambda\E[B]/s$ denoting the traffic intensity, 
\begin{equation}
\rho\lim_{x \to \infty}(1-H(x)) < 1.
\label{eq:stability}
\end{equation}

Under these assumptions, we have that the queue is stable and that a stationary queue-length $Q := Q(\infty)$ and a stationary virtual waiting time $V:=V(\infty)$ exist (see \cite{BH1981} and \cite{BBH1984}.) Note that $\lim_{x \to \infty}H(x) < 1$ holds only if there are customers with infinite patience, i.e., {$\P(Y=\infty)>0$. If $\P(Y < \infty) = 1$}, on the other hand, then we have $\lim_{x \to \infty}H(x) = 1$, so that the system is stable irrespective of the value of the traffic intensity $\rho$. As we will see, the MLE method to be developed relies only on the transient dynamics of the system, so that it can be applied even if the stability condition \eqref{eq:stability} is not satisfied. On the other hand, we will utilize the system stability to discuss the asymptotic properties of the proposed estimator.

For the M/G/$s$ queue with impatient customers, exact expressions for the distributions of the stationary queue-length $Q$ and virtual waiting time $V$ are not known in the literature. However, we can derive the following relations they satisfy, in a similar way to the single-server case \cite{K1961}. Let $q_n := \P(Q=n)$ ($n = 0,1,\ldots$) denote the probability mass function of the stationary queue-length. The stationary virtual waiting time distribution has probability mass $\pi_0 := \P(V=0) = \sum_{n=0}^{s-1}q_n$ at zero and it is absolutely continuous on $(0,\infty)$. 
Let $X_{\mid y}$ denote a generic random variable for the stationary upward-jump size $X$ conditioned that it takes a positive value and that the immediately preceding virtual waiting time equals $y$:
\[
\P(X_{\mid y} \leqslant x) = \lim_{t \to \infty}\P(V(t)-V(t-) \leqslant x \mid V(t-) = y, V(t) > y).
\]
With the level-crossing argument, we can verify that the density function $v(x)$ of the stationary virtual waiting time satisfies the following integral equation (cf.\ \cite{BPS2011}): 
\begin{equation}
v(x) 
= 
\lambda q_{s-1} \,\overline{J}_{\mid 0}(x)
+ \lambda \int_0^x v(y)\, \P(Y\geqslant y)\,\overline{J}_{\mid y}(x-y) \diff y,
\quad
x \geqslant 0,
\label{eq:v-Volterra-multi}
\end{equation}
where $\overline{J}_{\mid y}(x) := \P(X_{\mid y} > x)$
denotes the complementary cdf of the conditional upward-jump size $X_{\mid y}$.

Owing to the {\sc pasta} property, the virtual waiting time seen by an arriving customer has the same distribution as the stationary virtual waiting time $V$.
The stationary waiting time $W$ of a non-balking customer is thus given 
by a conditional random variable $[V \,|\, V \leqslant Y]$. 
We can thus identify $\P(W=0)$ and the probability density function (pdf) $w(\cdot)$ of $W$ in terms of the
stationary virtual waiting time distribution $v(\cdot)$: 
\begin{equation}
\P(W=0) = \frac{\pi_0 }{1-P_{\ell}},
\quad
w(x) = \frac{v(x)\,\P(Y\geqslant x)}{1-P_{\ell}},
\;\;
x \geqslant 0,
\label{eq:w-general}
\end{equation}
where $P_{\ell}$ denotes the loss probability given by
\begin{equation}
P_{\ell} 
= 
\int_0^{\infty} v(y)\,\P(Y < y)\diff y.
\label{eq:P_loss}
\end{equation}
For $s=1$, \eqref{eq:v-Volterra-multi} simplifies to
\begin{equation}
v(x) 
= 
\lambda \pi_0 \,\overline{G}(x) 
+ \lambda \int_0^x v(y) \,\P(Y\geqslant y)\,\overline{G}(x-y) \diff y,
\quad
x \geqslant 0,
\label{eq:v-Volterra-single}
\end{equation}
where $\overline{G}(x) := 1 - G(x)$ denotes the complementary cdf of service requirements.
A solution of the integral equation (\ref{eq:v-Volterra-single}) is, as can be found in \cite{BBH1984,IT2015}, given in terms of the pdf
$g_{\mathrm{e}}(x) := \overline{G}(x)/\E[X]$ of the equilibrium
distribution (i.e., the residual lifetime distribution) corresponding to the service requirements:
\begin{align}
v(x) &= \pi_0 \sum_{n=1}^{\infty} u_n(x),
\quad
x \geqslant 0,
\label{eq:v(x)-general}
\\
\pi_0 &= \left(
\sum_{n=1}^{\infty} \int_0^{\infty} u_n(x) \diff x
\right)^{-1},
\nonumber
\\
u_1(x) &= \rho g_{\mathrm{e}}(x),
\quad
u_n(x) = \rho \int_0^x u_{n-1}(y) \,\P(Y\geqslant y) \,g_{\mathrm{e}}(x-y)
\diff y,
\;\;
n = 2,3,\ldots.
\nonumber
\end{align}
In some special cases, the expression (\ref{eq:v(x)-general})
for the virtual waiting time density $v(\cdot)$ can be further simplified, as shown in the examples below.

\begin{example}[Constant patience levels]
\em 
We consider the case that the patience levels take a constant value $\theta_0$ (for some $\theta_0 > 0$).
Let $g_{\mathrm{e}}^{(n)}(\cdot)$ (for $x \geqslant 0$, $n = 1,2,\ldots$) denote
the $n$-fold convolution of the pdf $g_{\mathrm{e}}(\cdot)$ of the
equilibrium distribution of the service times, and
$G_{\mathrm{e}}^{(n)}(x)$ the corresponding cdf. 
Also, define $g_{\mathrm{e},\theta_0}^{(n)}(\cdot)$ by
\[
g_{\mathrm{e},\theta_0}^{(n)}(x)
:=
\left\{
\begin{array}{lll}
\displaystyle
g_{\mathrm{e}}^{(n)}(x)/G_{\mathrm{e}}^{(n)}(\theta_0), & & x \leqslant \theta_0,
\\
0, & & x > \theta_0.
\end{array}
\right.
\]
We then have, by \cite{DT1985,IT2015}, with as usual `$\star$' denoting the convolution operator,
\begin{align}
\pi_0 
&= 
\left(
1 + \rho + 
\rho \sum_{n=1}^{\infty}\rho^n
G_{\mathrm{e}}^{(n)}(\theta_0)
\right)^{-1},
\label{eq:pi_0-constant-patience}
\\
v(x)
&=
\left\{
\begin{array}{l@{\qquad}l}
\pi_0 \displaystyle \sum_{n=1}^{\infty} \rho^n g_{\mathrm{e}}^{(n)}(x), 
& 
0 \leqslant x \leqslant \theta_0,
\\[2ex]
\pi_0 \rho g_{\mathrm{e}}(x)
+ 
\pi_0 \displaystyle\sum_{n=2}^{\infty}
\rho^n G_{\mathrm{e}}^{(n-1)}(\theta_0)
\cdot
\big[g_{\mathrm{e},\theta_0}^{(n-1)} \star g_{\mathrm{e}}\big](x),
& 
x > \theta_0,
\end{array}
\right.
\label{eq:v-constant-patience}
\end{align}
%
\end{example}

\begin{example}[Exponential service times]
\em 
If the service times follow an exponential distribution with mean
$1/\mu$, we have by \cite{IT2015,S1979} that
\begin{align*}
\pi_0 
&= 
\left( 
1 
+ \lambda \int_0^{\infty} 
\exp\left[-\mu x + \lambda \int_0^x \P(Y\geqslant y) \diff y \right]
\diff x
\right)^{-1},
\\
v(x) 
&=
\pi_0 \lambda \exp\left[
-\mu x + \lambda \int_0^x \P(Y\geqslant y) \diff y 
\right],
\quad
x \geqslant 0.
\end{align*}
Observe that if the patience distribution is also exponential with rate $\theta$, then the expressions can be simplified by substituting $ \int_0^x \P(Y\geqslant y) \diff y=\exp(-\theta x)$.
\end{example}

\section{Parametric estimation procedure}\label{sec:arrival_workload}
In the setting considered, we focus on the first non-balking $n+1$ customers. 
We set the time origin $t=0$ to the time instant that a non-balking
customer joins the system.
We record the effective arrival times $\tilde{A}_1,\tilde{A}_2,\ldots,\tilde{A}_n$ and departure times. From this information, we can fully reconstruct the
virtual waiting time process using (\ref{eq:VWT-def}); in particular, we obtain the sequence of virtual waiting times $V(0), V(\tilde{A}_1),\ldots, V(\tilde{A}_n)$ observed 
by the non-balking customers and the sizes of upward jumps $X_{0},X_{1},\ldots,X_{n}$ caused by them. In this section the goal is to estimate, in a parametric context, the arrival rate and patience-level distribution from the data.

The crucial observation is that we can construct a likelihood
function of the patience-level distribution using an observed sample of
effective interarrival times $A_1,A_2,\ldots,A_n$, 
waiting times $W_0 = V(0),W_1 = V(\tilde{A}_1), W_2 = V(\tilde{A}_2),
\ldots,W_n = V(\tilde{A}_n)$, and upward-jump sizes
$X_0,X_1,\ldots,X_n$, which can be reconstructed from the observed 
sequences mentioned in the previous paragraph. We work under the
natural assumption that the observed system is in stationarity, so
that $W_0$ is the stationary waiting time of a non-balking
customer and $X_0$ is a stationary upward-jump size.

\begin{remark}\label{R1}{\em 
To make the sequence of waiting times $W_1,W_2,\ldots,W_n$
stationary, we should let the time origin coincide with an
arrival instant of a non-balking customer. The validity of this statement can be argued as follows.

We first point out that the sequence of waiting times is in general not a Markov process; only for $s=1$ it is.
We obtain a Markov process when considering the vector of residual service times at effective arrival instants.  The waiting times and upward jump sizes are in fact measurable functions of this process. Therefore, in what follows we let the underlying Markov process of residuals be stationary.

First it is observed that assuming the virtual waiting time at time $t=0$ being stationary does \emph{not} imply that the waiting time
$W_1$ of the first customer follows the stationary distribution $W$.
To see this, consider (for simplicity) an ordinary M/G/$s$ queue (without customer
impatience, that is). If the virtual waiting time at time $t=0$ follows the stationary
virtual waiting time distribution $V$, then the first customer arriving after time
$t=0$ finds an idle server with probability
\[\P(W_1 = 0) = \P(V=0)+\int_0^{\infty}e^{-\lambda x}v(x)\diff x > \P(V=0).\]
Since the stationary waiting time $W$ has the same distribution as
the stationary virtual waiting time $V$ in the ordinary M/G/$s$ queue (due to {\sc pasta}), we find 
the inequality \[\P(W_1 = 0) > \P(V=0) = \P(W=0).\] This shows that the
waiting time $W_1$ of the first-arriving customer is biased, in that it is not distributed as $W$.

Intuitively, the bias is a consequence of the fact that a customer arriving after 
a long interarrival time is more likely chosen as the first arriving
customer in our experiment (cf.\ the well-known inspection paradox in renewal theory), and such a
customer tends to observe the system less congested than time-average.
It is easily seen that letting time $t=0$ correspond to the arrival instant of a non-balking customer
makes the sequence of waiting times $W_1,W_2,\ldots,W_n$
stationary.   
$\hfill\diamond$}
\end{remark}

Let $\overline{H}(x) := 1-H(x)$ denote the complementary cdf of patience levels.
Also, we denote by $\tilde{H}(x)$ the probability of a customer joining when the virtual waiting time equals $x\geqslant 0$:
\begin{align*}
\tilde{H}(x) := \P(Y \geqslant x)
= \overline{H}(x)+\P(Y= x).
\end{align*}
Note that $\tilde{H}(x)=\overline{H}(x)=\P(Y>x)$ for continuous patience distributions.

We next describe the construction of the likelihood function.
We first recall that the effective interarrival times $A_1,A_2,\ldots,A_n$
are {\it not} exponentially distributed, due to the fact that between two effective arrival
instants there may have been arriving customers who observed a virtual waiting time level
exceeding their patience level (as depicted in Figure~\ref{fig:arrival_dist}). Despite this, we can still 
characterize the 
effective interarrival time
distribution in terms of the observed quantities.  To see this, suppose that, for some $t \geqslant 0$, there have been no
effective arrivals in the interval $(\tilde{A}_{i-1}, \tilde{A}_{i-1}+t]$.
An arrival in $(\tilde{A}_{i-1}+t, \tilde{A}_{i-1}+t+\varDelta t]$
then occurs with probability $\lambda \,\varDelta t+ o(\varDelta t)$, as $\varDelta t\downarrow 0$.
The corresponding customer joins the system (i.e., becomes effective) with probability
$\tilde{H}(V(\tilde{A}_{i-1}) - t)$ because (i)~the virtual waiting time seen on
the arrival equals $\max\{0,V(\tilde{A}_{i-1}) - t\}$ and (ii)~$\tilde{H}(x) =
1$ for $x < 0$. Therefore, the occurrence of the next
(i.e., the $i$-th) effective arrival follows a time-inhomogeneous
Poisson process with time-dependent intensity 
$\lambda\, \tilde{H}(V(\tilde{A}_{i-1}) - t)$.
Noting that $V(\tilde{A}_{i-1}) = W_{i-1}+X_{i-1}$ (for $i =
1,2,\ldots$), we thus conclude that
\begin{equation}
\P(A_i > t \,|\, W_{i-1}+X_{i-1}=v)
=
\exp\left({-\lambda \int_0^t \tilde{H}(v-u)\diff u}\right).
\label{eq:A_i-CCDF}
\end{equation}
This representation of the distribution of the effective interarrival times facilitates the evaluation of the likelihood. 
The (conditional) likelihood function for
${\boldsymbol A}:=(A_1,A_2,\ldots,A_n)$ given
${\boldsymbol W}:=(W_1,W_2,\ldots,W_n)$ and
${\boldsymbol X}:=(X_1,X_2,\ldots,X_n)$ is given
by the product of the conditional densities 
\begin{align}\label{eq:density_A}
f_{A_i \,|\, v}(t)
&=
-\frac{\diff}{\diff t} \P(A_i > t \,|\, W_{i-1}+X_{i-1}=v) 
\\
&=
\lambda\, \tilde{H}(v-t)
\exp\left({-\lambda \int_0^t \tilde{H}(v-u)\diff u}\right).
\end{align}
This yields the conditional likelihood function
\begin{align*}
L_n(H;{\boldsymbol A},{\boldsymbol W} \,|\, {\boldsymbol X})
&=
\prod_{i=1}^n 
\lambda\,
\tilde{H}(W_{i-1}+X_{i-1}-A_i)
\exp\left({-\lambda \int_0^{A_i} \tilde{H}(W_{i-1}+X_{i-1}-u)\diff u}\right)
\\
&=
\lambda^n
\prod_{i=1}^n 
\tilde{H}(W_i)
\exp\left({-\lambda \int_0^{A_i} \tilde{H}(W_{i-1}+X_{i-1}-u)\diff u}\right),
\end{align*}
where we used a Lindley-type recursion, generalized to the multi-server queue:
\begin{equation}\label{eq:Vk_recursion}
W_{k}=\max\{W_{k-1}+X_{k-1}- A_k,0\},
\quad
k = 1,2,\ldots.
\end{equation}

In the rest of this section, we assume that the patience-level distribution
is characterized by a finite-dimensional vector of parameters $\theta \in \Theta\subseteq\mathbb{R}^p$. For $\theta \in \Theta$, let $H_{\theta}(\cdot)$, $\overline{H}_{\theta}(\cdot)$, and $\tilde{H}_{\theta}(\cdot)$ denote $H(\cdot)$, $\overline{H}(\cdot)$, and $\tilde{H}(\cdot)$ given parameter $\theta$, respectively.
Let any $H_\theta(\cdot)$, with $\theta\in\Theta$, be identifiable in the conventional Kullback-Leibler sense. The maximum likelihood estimator (MLE) of $(\lambda,\theta)$ is
then given by
\[
(\hat\lambda_n,\hat{\theta}_n)\in\argmax_{(\lambda,\theta)\in\,{\mathbb R}_+\times \Theta}
L_n(\lambda,\theta;{\boldsymbol A},{\boldsymbol W} \,|\, {\boldsymbol X}),
\]
where
\begin{equation}
L_n(\lambda,\theta;{\boldsymbol A},{\boldsymbol W} \,|\, {\boldsymbol X})
:=
\lambda^n \prod_{i=1}^n 
\tilde{H}_\theta(W_i)
\exp\left({-\lambda \int_0^{A_i} \tilde{H}_\theta(W_{i-1}+X_{i-1}-u)\diff u}\right).
\label{eq:MLE-workload-general}
\end{equation}
In the sequel we provide an asymptotic analysis describing the performance of this MLE. 
We first focus, in Sections \ref{sec:mle_continuous}--\ref{sec:mle_constant}, on the parameters pertaining to the patience only; i.e., the estimation does not cover the arrival rate $\lambda$.
The object of study is, in self-evident notation, for $\lambda>0$ given,
\[
\hat{\theta}_n\in\argmax_{\theta\in\Theta}
L_n(\theta;{\boldsymbol A},{\boldsymbol W} \,|\, {\boldsymbol X}).
\]
Extending this to a procedure to also include estimation of the arrival rate is relatively straightforward; we get back to it in Section \ref{sec:lambda_estimation}.

\begin{remark}{\em 
Observe that $L$ is a conditional likelihood and not a full likelihood. This is due to the fact that the upward jump-sizes $X_i$ have an elaborate distribution, both marginally and jointly with the waiting times. The exception is the single-server case where $X_i\sim B$ and is independent of the previous waiting and arrival times. Hence, for $s=1$, the maximization of
$L_n(H;{\boldsymbol A},{\boldsymbol W} \,|\, {\boldsymbol X})$ with respect to them
is equivalent to that of the unconditional
likelihood function for $({\boldsymbol A},{\boldsymbol W},{\boldsymbol X})$. It is noted that the  procedure to estimate $\theta$ does not require the simultaneous estimation of the service-requirement or upward-jump size distributions. 
$\hfill\diamond$}
\end{remark}

In more detail, the remainder of this section is organized as follows. 
First we establish (in Section \ref{sec:mle_continuous}) conditions for strong consistency
and asymptotic normality of our estimator of the patience parameters, with the errors scaled by $\sqrt{n}$, focusing on
continuous patience-level distributions. We then consider (in Section \ref{sec:mle_constant}) the case of deterministic patience and establish consistency (independently) for
this case as well. It is noted that in the latter case the asymptotic errors
are {\it not} normally but rather exponentially distributed, with the errors scaled by $n$.
This is due to the fact that for deterministic patience the MLE is obtained on the boundary of
the sample data, much like in the well-known case of estimating the parameter $\theta$ of
a uniform distribution on $[0,\theta]$. As mentioned, Section \ref{sec:lambda_estimation} discusses
the estimation of the arrival rate.

\subsection{Continuous patience-level distribution}\label{sec:mle_continuous}

This subsection covers the asymptotic performance of the MLE for the case that
the patience-level distribution is continuous and parametric. 
Let $\theta_0$ denote the true parameter. Throughout the following
analysis the underlying probability measure is $\P_{\theta_0}$, i.e., the probability measure
corresponding to the true patience-level distribution. For the asymptotic
results we make the following assumptions, which will be discussed in Section \ref{sec:cont_mle_assumptions}.

\begin{assumption2}
The following assumptions are imposed:
\begin{enumerate} 
\item[{\rm (A1)}] The parameter space $\Theta\subset \mathbb{R}^p$ is a compact set such that the true parameter lies in the interior of the set, i.e., $\theta_0\in\Theta^\mathrm{o}$.
\item[{\rm (A2)}] The observation period commences at $t=0$ which corresponds to the stationary arrival instant of a non-balking customer, and thus the sequence of waiting times $W_1,W_2,\ldots,W_n$ is stationary.
\item[{\rm (A3)}] Let 
$\overline{H}_{\inf}(x) := \inf_{\theta \in \Theta} \overline{H}_{\theta}(x)$.
Depending on whether the right endpoint \[h_{\sup} := \inf\{x \geqslant 0: \overline{H}_{\inf}(x) = 0\}\]
takes a finite value or not, we assume one of the following properties:
{\rm (i)}~If $h_{\sup} = \infty$, then there exists a positive non-decreasing
function $f(\cdot)$ such that $\lim_{x \to \infty}f(x) \in (0,\infty]$ and for some constants $c_1, c_2 \in [0, \infty)$,
\begin{equation*}
\lim_{x \to \infty} \frac{e^{-f(x)}}{\overline{H}_{\inf}(x)} = c_1,
\qquad
\lim_{x \to \infty} f(x) \overline{H}_{\theta_0}(x) = c_2.
\end{equation*}
{\rm (ii)}~If $h_{\sup} < \infty$, then there exists a positive
non-decreasing function $f(\cdot)$ such that $\lim_{x \to h_{\sup}-}f(x) \in (0,\infty]$
and for some constants $c_1, c_2 \in [0, \infty)$,
\begin{equation*}
\lim_{x \to h_{\sup}-} 
\frac{e^{-f(x)}}{\overline{H}_{\inf}(x)} = c_1,
\qquad
\lim_{x \to h_{\sup}-} f(x) \overline{H}_{\theta_0}(x) = c_2.
\end{equation*}
\item[{\rm (A4)}] The gradient vector and Hessian matrix of ${H}_\theta$ are continuous with respect to $\theta$. With $\Psi_1(\theta)$ denoting the Hessian matrix {of} the log-likelihood corresponding to a single waiting time $($in stationarity$)$, $\E\Psi_1(\theta)$ has finite elements in all coordinates for any $\theta\in\Theta$.
\item[{\rm (A5)}] The collection of functions $\{H_\theta(x):\ \theta\in\Theta\}$ is equicontinuous: for any $x\geqslant 0$ and $\epsilon>0$ there exists a $\delta>0$ such that $|H_\theta(x)-H_\theta(y)|<\epsilon$ for any $y$ such that $|x-y|<\delta$, uniformly over all $\theta\in\Theta$.
\end{enumerate}
\end{assumption2}

We now state the main results of this subsection. In the sequel, $\mathrm{N}({\mu},\Sigma)$ 
denotes a normally distributed random variable with mean vector $\mu$ and covariance matrix $\Sigma.$

\begin{theorem}\label{thm:cont_mle_consistency}
If Assumptions {\rm (A1)--(A3)} and {\rm (A5)} are satisfied, then, as $n\to\infty$,
\begin{equation}\label{eq:V_MLE_consistent}
\hat{\theta}_n\asarrow \theta_0 .
\end{equation}
\end{theorem}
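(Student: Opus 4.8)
The plan is to treat $\hat\theta_n$ as an M-estimator and to prove strong consistency along the classical two-step route: first a uniform strong law of large numbers (SLLN) for the normalized log-likelihood over the compact parameter space, and then identification of the limiting criterion as being uniquely maximized at $\theta_0$. Discarding the $\theta$-free term $n\log\lambda$ (recall $\lambda$ is held fixed here), the relevant per-observation contribution is
\[
m_\theta(W_{i-1},X_{i-1},A_i)
:=
\log\overline{H}_\theta(W_i)
-\lambda\int_0^{A_i}\overline{H}_\theta(W_{i-1}+X_{i-1}-u)\diff u,
\]
with $W_i=\max\{W_{i-1}+X_{i-1}-A_i,0\}$ as in \eqref{eq:Vk_recursion} and $\tilde{H}_\theta=\overline{H}_\theta$ since we are in the continuous case. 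The object of study is the empirical criterion $M_n(\theta):=n^{-1}\sum_{i=1}^n m_\theta(W_{i-1},X_{i-1},A_i)$, and the goal is to show that its maximizer converges to $\theta_0$.

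First I would fix the probabilistic setup. As observed in Remark~\ref{R1}, the vector of residual service times sampled at effective arrival instants is a Markov process, of which the waiting times, jump sizes and effective interarrival times are measurable functions; by (A2) it is started in stationarity, so $(W_{i-1},X_{i-1},A_i)_{i\geqslant 1}$ is a stationary sequence. Ergodicity I would read off from the regenerative structure of the queue: the stability condition $\rho\lim_{x\to\infty}(1-H(x))<1$ forces the system to empty infinitely often, and the resulting idle instants are renewal epochs with finite mean cycle length, so the stationary process is ergodic. Birkhoff's ergodic theorem then yields, for each fixed $\theta$, that $M_n(\theta)\to M(\theta):=\E_{\theta_0}[m_\theta(W_0,X_0,A_1)]$ almost surely.

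The crux of the argument is to upgrade this pointwise convergence to convergence uniform in $\theta\in\Theta$. Compactness (A1) together with the equicontinuity of $\{H_\theta\}$ from (A5) would let me pass from a finite $\delta$-net to the whole of $\Theta$, but this step is only valid once the domination bound $\E_{\theta_0}\big[\sup_{\theta\in\Theta}|m_\theta(W_0,X_0,A_1)|\big]<\infty$ is secured. The integral term causes no trouble, being bounded by $\lambda A_1$, an integrable quantity in stationarity. The delicate term is $\log\overline{H}_\theta(W_i)$: as its argument approaches the right endpoint of the patience support it diverges to $-\infty$, and its supremum over $\Theta$ equals $-\log\overline{H}_{\inf}(W_i)$. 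This is exactly where (A3) is tailored to intervene. Writing the stationary waiting-time density as $w(x)\propto v(x)\overline{H}_{\theta_0}(x)$ (cf.\ \eqref{eq:w-general}), the required finiteness of $\E_{\theta_0}[-\log\overline{H}_{\inf}(W)]$ reduces to $\int -\log\overline{H}_{\inf}(x)\,v(x)\,\overline{H}_{\theta_0}(x)\diff x<\infty$; the two limit relations in (A3), through the comparison function $f$, give $-\log\overline{H}_{\inf}(x)\sim f(x)$ while keeping $f(x)\overline{H}_{\theta_0}(x)$ bounded, which is precisely what makes the integral converge. I expect this verification to be the main obstacle, since the log-likelihood is genuinely unbounded near the edge of the patience support and only the interplay between the uniform tail $\overline{H}_{\inf}$ and the true tail $\overline{H}_{\theta_0}$ (encoded in $w$) rescues integrability.

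With the uniform SLLN in hand, it remains to identify $\theta_0$ as the unique maximizer of the continuous limit $M$. I would write $M(\theta_0)-M(\theta)$ as the expectation, over the stationary law of $V_0:=W_0+X_0$, of the conditional Kullback--Leibler divergence between the effective-interarrival densities $f_{A_1\mid V_0}$ under $\theta_0$ and under $\theta$, using the closed form \eqref{eq:density_A}. By the information inequality this quantity is nonnegative and vanishes only if these conditional densities agree $\P_{\theta_0}$-almost surely; since $\overline{H}_\theta(v-\cdot)$ enters the density directly, such agreement over the (interval-valued) support of $V_0$ forces $\overline{H}_\theta\equiv\overline{H}_{\theta_0}$, whence $\theta=\theta_0$ by the standing Kullback--Leibler identifiability of $\{H_\theta\}$. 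Continuity of $M$ on the compact $\Theta$ then makes the maximizer well separated, and the standard argmax argument for M-estimators --- uniform convergence of $M_n$ to a continuous $M$ with a unique, well-separated maximizer --- delivers $\hat\theta_n\asarrow\theta_0$.
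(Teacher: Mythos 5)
Your proposal is correct and follows essentially the same route as the paper: a uniform strong law for the normalized log-likelihood over the compact $\Theta$ (the paper's Lemma~\ref{lemma:unifrom_ll}, proved by verifying the conditions of Ranga Rao's theorem \cite{R1962} --- stationarity/ergodicity, equicontinuity from (A5), and exactly your integrable envelope $-\log\overline{H}_{\inf}(W)$ rendered finite by (A3) via $w(x)\propto v(x)\overline{H}_{\theta_0}(x)$), combined with Kullback--Leibler identifiability and a Wald-type argmax argument. Your inlined $\delta$-net proof of the uniform law and your explicit conditional-KL identification step are just self-contained versions of what the paper delegates to \cite{R1962} and to its standing identifiability assumption, so the two arguments coincide in substance.
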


\begin{theorem}\label{thm:cont_mle_normal}
If Assumptions {\rm (A1)--(A5)} hold, then, as $n\to\infty$, \begin{equation}\label{eq:cont_mle_normal}
\sqrt{n}(\hat{\theta}_n-\theta_0)\darrow \mathrm{N}\left(0,I^{-1}(\theta_0)\right),
\end{equation}
where  {$I(\theta_0):=-\E \Psi_1(\theta_0)$} and $I^{-1}(\theta_0)$ denotes the inverse of $I(\theta_0)$.
\end{theorem}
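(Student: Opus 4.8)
The plan is to follow the classical score-expansion route for maximum likelihood, but with the i.i.d.\ assumption replaced by a martingale structure of the score, which is what the dependence in $({\boldsymbol A},{\boldsymbol W},{\boldsymbol X})$ forces upon us. Write $\ell_n(\theta):=\log L_n(\theta;{\boldsymbol A},{\boldsymbol W}\,|\,{\boldsymbol X})$ and, recalling that $\tilde{H}_\theta=\overline{H}_\theta$ in the continuous case and discarding the $\theta$-free term $n\log\lambda$, decompose $\ell_n(\theta)=\sum_{i=1}^n \ell_i(\theta)$ with
\[
\ell_i(\theta)=\log\overline{H}_\theta(W_i)-\lambda\int_0^{A_i}\overline{H}_\theta(W_{i-1}+X_{i-1}-u)\,\diff u .
\]
Let $\xi_i(\theta):=\nabla_\theta\ell_i(\theta)$ and $S_n(\theta):=\sum_{i=1}^n\xi_i(\theta)$ denote the score, and let $\mathcal F_i$ be the filtration generated by the stationary residual-service Markov chain (cf.\ Remark~\ref{R1}) together with the observations up to the $i$-th effective arrival. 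The crucial observation is that $\ell_i(\theta)=\log f_{A_i\,|\,v}(A_i;\theta)$ with $v=W_{i-1}+X_{i-1}$, so $\xi_i(\theta_0)$ is the score of a genuine conditional density; since $\int_0^\infty f_{A_i\,|\,v}(t;\theta)\,\diff t=1$ for every $\theta$ (this uses $\overline{H}_\theta(x)=1$ for $x<0$, which forces $\int_0^\infty\overline{H}_\theta(v-u)\,\diff u=\infty$), differentiating under the integral sign gives $\E_{\theta_0}[\xi_i(\theta_0)\mid\mathcal F_{i-1}]=0$. Hence $\{\xi_i(\theta_0)\}$ is a stationary sequence of martingale differences adapted to $\{\mathcal F_i\}$.

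With this in hand I would first localize. By Theorem~\ref{thm:cont_mle_consistency}, $\hat\theta_n\asarrow\theta_0$, and by (A1) $\theta_0\in\Theta^\mathrm{o}$, so eventually $\hat\theta_n$ is an interior stationary point, $S_n(\hat\theta_n)=0$. A mean-value Taylor expansion (using the continuity of the Hessian from (A4)) yields
\[
0=S_n(\hat\theta_n)=S_n(\theta_0)+\mathcal H_n(\tilde\theta_n)\,(\hat\theta_n-\theta_0),\qquad \tilde\theta_n\ \text{between}\ \hat\theta_n\ \text{and}\ \theta_0,
\]
where $\mathcal H_n(\theta):=\nabla_\theta S_n(\theta)=\sum_{i=1}^n\nabla_\theta\xi_i(\theta)$ is the observed information. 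Rearranging,
\[
\sqrt n\,(\hat\theta_n-\theta_0)=\Big(-\tfrac1n\mathcal H_n(\tilde\theta_n)\Big)^{-1}\tfrac1{\sqrt n}S_n(\theta_0),
\]
so the theorem reduces to (a)~a central limit theorem for $n^{-1/2}S_n(\theta_0)$ and (b)~a law of large numbers $-n^{-1}\mathcal H_n(\tilde\theta_n)\parrow I(\theta_0)$, together with invertibility of $I(\theta_0)$ (guaranteed by the assumed Kullback--Leibler identifiability).

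For (a) I would invoke the martingale central limit theorem for stationary ergodic martingale-difference sequences, for which finiteness of the second moment $\E\|\xi_1(\theta_0)\|^2$ suffices. Ergodicity is inherited from the stationary residual-service Markov chain, which is Harris ergodic under the stability condition of Section~\ref{sec:model}. Since the conditional covariances $\E[\xi_i\xi_i^\top\mid\mathcal F_{i-1}]$ are measurable functions of the Markov state alone, the ergodic theorem gives $n^{-1}\sum_{i=1}^n\E[\xi_i\xi_i^\top\mid\mathcal F_{i-1}]\asarrow\E[\xi_1\xi_1^\top]$; differentiating $\int f=1$ twice (the conditional second Bartlett identity, i.e.\ the information matrix equality) identifies this limit as $-\E\Psi_1(\theta_0)=I(\theta_0)$, matching the statement, whence $n^{-1/2}S_n(\theta_0)\darrow\mathrm N(0,I(\theta_0))$. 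For (b), the summands $\nabla_\theta\xi_i(\theta)$ again form a stationary sequence, so the ergodic theorem yields $-n^{-1}\mathcal H_n(\theta)\asarrow I(\theta)$ for fixed $\theta$; upgrading this to the random argument $\tilde\theta_n$ uses the continuity in (A4), the consistency $\tilde\theta_n\to\theta_0$, and a uniform (dominated) version of the ergodic theorem on a neighbourhood of $\theta_0$. Slutsky's theorem then assembles the two pieces into \eqref{eq:cont_mle_normal}.

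The main obstacle is not the martingale bookkeeping but the verification of the integrability conditions underpinning both (a) and (b), and this is precisely where Assumption~(A3) does its work. The score contains the ratio $\nabla_\theta\overline{H}_\theta(W_i)/\overline{H}_\theta(W_i)$, which can blow up as the observed waiting time approaches the right endpoint $h_{\sup}$ of the patience support, where $\overline{H}_\theta\downarrow0$. Establishing $\E\|\xi_1(\theta_0)\|^2<\infty$ (for the CLT) and a dominating function for $\{\nabla_\theta\xi_1(\theta):\theta\in\Theta\}$ (for the uniform Hessian law of large numbers) therefore requires controlling the joint tail of the stationary waiting time $W$ against the decay of $\overline{H}_\theta$ near $h_{\sup}$. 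The two limits in (A3), relating $\overline{H}_{\inf}$ to a non-decreasing envelope $e^{-f}$ and bounding $f(x)\overline{H}_{\theta_0}(x)$, are designed exactly to make these moments finite and to furnish the domination needed to pass the ergodic theorem through to $\tilde\theta_n$; checking that they suffice, uniformly over the compact $\Theta$ from (A1) and using the equicontinuity (A5), is the delicate part of the argument.
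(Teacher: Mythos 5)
Your proposal is correct and follows essentially the same route as the paper: expansion of the score around $\theta_0$, a martingale CLT for the stationary score differences (the paper invokes Heyde's Theorem 12.6 / Billingsley's Theorem 18.3, with the martingale property obtained from $\nabla\ell(\theta_0)=0$ and stationarity, while you obtain it more directly from the conditional Bartlett identity), an ergodic law of large numbers for the observed information at an intermediate point, identification of the limit as $I(\theta_0)$ via the information-matrix equality under (A2), and Slutsky's theorem. The only point where the paper is more careful is the expansion itself: it uses the integral form of the remainder, $\int_0^1 \frac{1}{n}\sum_{i=1}^n\Psi_i\bigl(\alpha\hat\theta_n+(1-\alpha)\theta_0\bigr)\,\diff\alpha$, which avoids the fact that a mean-value expansion with a single intermediate point $\tilde\theta_n$ is not literally valid for a vector-valued score when $p>1$; your step should be stated coordinatewise or in that integral form.
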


The proofs of the above theorems are provided in Section
\ref{sec:cont_mle_proofs}, after the detailed discussion of (A1)--(A5) that we give in Section \ref{sec:cont_mle_assumptions}.

\subsubsection{Discussion of assumptions}\label{sec:cont_mle_assumptions}
We next provide more background on the assumptions imposed. In addition we discuss their possible relaxation.
\begin{enumerate}
\item Assumption (A1) is natural, as it requires that the parameter space is big enough so that it contains the true parameter. In practice the parameter space can be adjusted on-the-fly, for example if for large $n$ we obtain boundary solutions for the first order conditions.
\item Assumption (A2) facilitates the use of known results on the convergence of stationary dependent sequences. Note that, as long as a stationary virtual waiting time distribution exists (for which we have given the condition as Eq.\ \eqref{eq:stability} in Section \ref{sec:model}), the same results should hold without making this assumption because of the regenerative nature of the process. However, without this assumption the conditions for both consistency and asymptotic normality are harder to verify. In particular, the crucial step for consistency is the uniform convergence of the log-likelihood, established in Lemma \ref{lemma:unifrom_ll} below. More elaborate conditions for uniform convergence are detailed in for example \cite{A1987,PP1989,R1962}.
\item If there are patient customers who do not balk regardless of the waiting time, that is, $\lim_{x \to \infty}\overline{H}_{\theta_0}(x) > 0$, then Assumption (A3) requires that the parameter space $\Theta$ is chosen so that the existence of patient customers ($\lim_{x \to \infty}\overline{H}_{\theta}(x) > 0$) is assumed for all $\theta \in \Theta$, which is a reasonable assumption in modeling a service system where both patient and impatient customers exist. In the case of $\lim_{x \to \infty}\overline{H}_{\theta_0}(x) = 0$, on the other hand, Assumption (A3) requires that the decay rate of
$\overline{H}_{\theta}(x)$ does not vary too strongly among the
distributions $H_{\theta}$, with $\theta$ in the parameter space $\Theta$. In
practice, this assumption is seldom violated. For example, 
if the true patience-level distribution decays exponentially, i.e., 
$\lim_{x \to \infty} e^{\nu x}\overline{H}_{\theta_0}(x) = c$ for some
$\nu > 0$ and $c > 0$, then (A3) is satisfied if the
infimum tail function $\overline{H}_{\inf}(x)$ does not decay faster than
the doubly exponential function $e^{-e^{\nu x}}$ (which evidently decays 
exceptionally fast).
As another example, supposing that the true patience-level distribution 
obeys a power law, i.e., $\lim_{x \to \infty} x^k
\overline{H}_{\theta_0}(x) = c$ for some $k > 0$ and $c > 0$,
then (A3) is satisfied if $\overline{H}_{\inf}(x)$ does not decay
faster than $e^{-x^k}$.

\item Assumption (A4) enables the construction of a standard martingale CLT for the asymptotic distribution of the estimation error of the MLE. It is not a necessary condition, but in cases where the assumption is not satisfied one is typically required to apply ad-hoc analysis to derive an asymptotic distribution. In Section \ref{sec:mle_constant} we show that for a deterministic patience level, that does not satisfy (A4), the asymptotic distribution of the error is exponential and not normal. Assumption (A2) implies that the expectation of the Hessian matrix of the log-likelihood $\Psi_1(\theta)$ is the covariance matrix of the gradient, and thus it is always positive definite. As a consequence, one just needs to verify that the coordinates are finite for (A4) to hold.
\item The equicontinuity assumption (A5) enables concise analysis and can be verified for many continuous patience-level  distributions. For example, if there exists a bounded density for every $H_\theta(\cdot)$, then it is Lipschitz continuous, and a uniform bound is given by the supremum of the constants in the compact set $\Theta$. The assumption does not hold if the support of the distribution depends on $\theta$. If $H_\theta(x)$ has some discontinuities (with respect to $x$), one could pursue replacing (A5) by an appropriate upper semi-continuity assumption; see \cite[Thm.  16b]{book_F1996} and \cite{PP1989}. We will present such an example in Section~\ref{sec:mle_types}. 
Our asymptotic results may hold by replacing (A5) by (A4) and verifying additional measurability conditions on $\inf_{\eta \in \mathcal{B}_\theta} \overline{H}_{\eta}(x)$ for a neighborhood $\mathscr{B}_\theta$ of any $\theta\in\Theta$ (see \cite[Corr. 2]{A1987}). 
\end{enumerate} 

\subsubsection{Proofs}\label{sec:cont_mle_proofs}

From (\ref{eq:MLE-workload-general}), first note that we can express the log-likelihood $\ell_n(\theta;{\boldsymbol A},{\boldsymbol W}\,|\,
{\boldsymbol X}):=\log L_n(\theta;{\boldsymbol A},{\boldsymbol W}\,|\, {\boldsymbol X})$ by
\begin{equation}\label{eq:V_log_likelihood}
n\log\lambda+\sum_{i=1}^n \left(\log\overline{H}_\theta(W_i)-\lambda \int_0^{A_i} \overline{H}_\theta(W_{i-1}+X_{i-1}-u)\diff u\right){\boldsymbol 1}_{\{\overline{H}_\theta(W_i)>0\}}.
\end{equation}
Observe that we replaced $\tilde{H}_\theta(\cdot)$ by $\overline{H}_\theta(\cdot)$ because $H_\theta(\cdot)$ is continuous.

With $W:=\lim_{i\to\infty}W_i$ existing, as $n\to\infty$, the continuous mapping theorem yields
\begin{equation}\label{eq:ell_theta_as}
\frac{1}{n}
\ell_n(\theta;{\boldsymbol A},{\boldsymbol W} \mid {\boldsymbol X})
\asarrow 
\E[\ell_1(\theta;A_1,W_0,W_1 \mid X_0)],
\end{equation}
where $A_1$ is the effective interarrival time when the initial virtual waiting time is $W_0+X_0$ and {$W_0=_{\rm d} W$, i.e., the stationary waiting time.}
From now on we use the compact notations 
$\ell_n(\theta):=\ell_n(\theta;{\boldsymbol A},{\boldsymbol W} \,|\, {\boldsymbol X})$ and $\ell(\theta):=\E[\ell_1(\theta;A_1,W_1,W_0 \,|\, X_0)]$. The density of the effective interarrival times \eqref{eq:density_A} is uniquely determined by the function $H_\theta(\cdot)$, and thus so is the likelihood \eqref{eq:MLE-workload-general}. This is because the function \[H_{\theta}(\cdot):=\left\{H_{\theta}(x):=\int_{[0,x]}\diff \nu_\theta(u): x\geqslant 0\right\}\] is uniquely determined by $\theta$, as a consequence of the fact that for every $\theta\in\Theta$
the measure $\nu_\theta(\cdot)$ corresponds to a different distribution (in the almost-everywhere sense, that is). As a consequence, $\{H_\theta(\cdot):\ \theta\in\Theta\}$ is a parametric collection of distribution functions such that there is no pair $\theta_1,\theta_2\in\Theta$ for which $H_{\theta_1}(\cdot)=H_{\theta_2}(\cdot)$ almost everywhere. This entails that the model is identifiable in the Kullback-Leibler sense (see e.g.\ \cite[Ch. 17]{book_F1996}), and hence
\begin{equation}\label{eq:cont_MLE_unique}
\ell(\theta)-\ell(\theta_0)<0, \ \forall \theta\neq\theta_0 .
\end{equation}

The key step in the proof of Theorem \ref{thm:cont_mle_consistency} is establishing a uniform version of \eqref{eq:ell_theta_as}; then strong consistency follows by the methodology of Wald \cite[Chapters 16--17]{book_F1996}. The virtual waiting time observations are not independent but by (A2) they are stationary, so that we can apply a uniform law for stationary sequences that is commonly used in the econometrics literature. We specifically rely on a theorem taken from \cite{R1962} and its extensions, in particular the ones developed in \cite{A1987,PP1989}. The uniform convergence is established in Lemma~\ref{lemma:unifrom_ll} (proven in the appendix).

\begin{lemma}\label{lemma:unifrom_ll}
If Assumptions {\rm (A1)--(A3)} and {\rm (A5)} hold, then, as $n\to\infty$,
\begin{equation}\label{eq:cont_uniform_ell_theta}
\sup_{\theta\in\Theta}\left|\frac{1}{n}\ell_n(\theta)-\ell(\theta)\right|\asarrow 0.
\end{equation}
\end{lemma}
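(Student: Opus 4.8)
The plan is to recognize \eqref{eq:cont_uniform_ell_theta} as a uniform strong law of large numbers for the stationary, ergodic sequence of per-customer log-likelihood contributions, and to verify the three standard hypotheses---compactness of the parameter set, almost-sure continuity in $\theta$, and an integrable envelope---under which the generic uniform ergodic theorem of \cite{R1962,A1987,PP1989} applies. Writing the single-observation contribution as
\[
\ell_1(\theta;A_1,W_0,W_1\mid X_0)=\log\lambda+\log\overline{H}_\theta(W_1)-\lambda\int_0^{A_1}\overline{H}_\theta(W_0+X_0-u)\diff u,
\]
we have $\ell_n(\theta)=\sum_{i=1}^n\ell_1(\theta;A_i,W_{i-1},W_i\mid X_{i-1})$, and the goal is to upgrade the pointwise convergence \eqref{eq:ell_theta_as} so that it holds uniformly over the compact set $\Theta$.

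First I would fix the probabilistic setting. By (A2) and Remark~\ref{R1}, the vector of residual service times at effective arrival instants is a stationary Markov process, and the quadruples $(A_i,W_{i-1},W_i,X_{i-1})$ are measurable functionals of it; hence they form a stationary sequence, which is moreover ergodic by the regenerative (Harris) structure of the underlying chain. This delivers Birkhoff pointwise convergence for each fixed $\theta$, matching \eqref{eq:ell_theta_as}. Continuity of $\theta\mapsto\ell_1(\theta)$ then follows from the equicontinuity assumption (A5) together with dominated convergence applied to the integral term (using $\overline{H}_\theta\le 1$ and $\E[A_1]<\infty$).

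The heart of the argument is to supply an integrable envelope $D:=\sup_{\theta\in\Theta}|\ell_1(\theta;A_1,W_0,W_1\mid X_0)|$ with $\E[D]<\infty$. The integral term is bounded by $\lambda A_1$, which is integrable because the stationary effective interarrival time has finite mean; the delicate term is $\log\overline{H}_\theta(W_1)$. Here $\sup_{\theta}(-\log\overline{H}_\theta(x))=-\log\overline{H}_{\inf}(x)$, and this is exactly what (A3) controls: the relation $e^{-f(x)}/\overline{H}_{\inf}(x)\to c_1$ gives $-\log\overline{H}_{\inf}(x)\sim f(x)$ as $x$ approaches the right endpoint, while $f(x)\overline{H}_{\theta_0}(x)\to c_2$ keeps the product $f(x)\overline{H}_{\theta_0}(x)$ bounded. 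Since the stationary waiting-time density is $w(x)=v(x)\overline{H}_{\theta_0}(x)/(1-P_{\ell})$ by \eqref{eq:w-general}, this bound yields
\[
\E\bigl[-\log\overline{H}_{\inf}(W_1)\bigr]\;\lesssim\;\int_0^\infty f(x)\,v(x)\,\overline{H}_{\theta_0}(x)\diff x\;\le\;C\int_0^\infty v(x)\diff x<\infty,
\]
so that $\E[D]<\infty$.

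The main obstacle is precisely this integrability of the logarithmic term near the right tail of the patience distribution, where $\overline{H}_\theta(W_1)$ can be arbitrarily close to zero and the indicator in \eqref{eq:V_log_likelihood} is active; everything else is routine once (A3) is invoked in the form above. With compactness (A1), almost-sure continuity, and the integrable envelope in hand, the uniform ergodic theorem of \cite{R1962}---together with its stationary-sequence extensions \cite{A1987,PP1989}---yields \eqref{eq:cont_uniform_ell_theta}.
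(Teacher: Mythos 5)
Your proposal is correct and takes essentially the same route as the paper: both reduce the claim to verifying the hypotheses of the generic uniform ergodic theorem of Ranga Rao/Andrews/P\"otscher--Prucha (compactness via (A1), stationarity--ergodicity via (A2), pointwise convergence, a continuity condition supplied by (A5), and an integrable envelope), and the key technical step---integrability of $-\log\overline{H}_{\inf}(W_1)$ obtained from (A3) together with the identity $w(x)=v(x)\,\overline{H}_{\theta_0}(x)/(1-P_{\ell})$---is the same computation the paper carries out. The only cosmetic difference is that you verify almost-sure continuity in $\theta$ (Wald-type bracketing), whereas the paper checks equicontinuity of $\{\ell_i(\theta):\theta\in\Theta\}$ with respect to the observations, which is the form of the condition required by the specific theorem it cites.
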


\begin{proof}[Proof of Theorem \ref{thm:cont_mle_consistency}]
Observe that the strong consistency follows from the identifiability property \eqref{eq:cont_MLE_unique} and from Lemma~\ref{lemma:unifrom_ll}. Note in particular that the proof of \cite[Thm.\ 17]{book_F1996} does not rely on i.i.d.\ observations once uniform convergence has been established, and so the same steps can be applied here. In particular, a sufficient condition (relying on uniform convergence for strong consistency when observations are dependent) can also be found in \cite{HM1986}: for every $\theta\neq\theta_0$ there exists a neighborhood $\mathscr{B}_\delta(\theta)$ such that
\begin{equation}\label{eq:cont_mle_condition2}
\lim_{n\to\infty}\sup_{\gamma\in\mathscr{B}_\delta(\theta)}\left(\frac{1}{n}\ell_n(\gamma)-\frac{1}{n}\ell_n(\theta_0)\right)<0,
\end{equation}
almost surely. This follows directly from \eqref{eq:cont_MLE_unique} and \eqref{eq:cont_uniform_ell_theta}.
\end{proof}

We continue by proving asymptotic normality that was stated in Theorem \ref{thm:cont_mle_normal}. This amounts to showing that the estimation error $\sqrt{n}(\hat{\theta}_n-\theta_0)$ is asymptotically normal as $n\to\infty$, assuming that (A1)--(A5) are satisfied. To this end we apply the well-known delta method and an appropriate version of the martingale CLT (see e.g.~\cite[Thm.~12.6]{book_H1997}). 

\begin{proof}[Proof of Theorem \ref{thm:cont_mle_normal}]
For any given $v$, we let $\nabla \overline{H}_\theta(v)\in\mathbb{R}^p$ and $\nabla^2 \overline{H}_{\theta}(v)\in\mathbb{R}^{p\times p}$ denote the gradient and Hessian, respectively, of $\overline{H}_\theta$. These are both continuous with respect to any coordinate $\theta_k$ of $\eta$, as a consequence of (A4). Thus, both the gradient and the Hessian of the log-likelihood are continuous functions. These are given by $\nabla\ell_n(\theta):=\dot{\ell}_n(\theta)$ and $\nabla^2\ell_n(\theta):=\sum_{i=1}^n\Psi_i(\theta)$, respectively, where, for $k=1,\ldots,p$,
\begin{equation}\label{eq:cont_mle_dLL}
(\dot{\ell}_n(\theta))_k=\sum_{i=1}^n\left(\frac{\nabla_k\overline{H}_\theta(W_i)}{\overline{H}_\theta(W_i)}-\lambda\int_0^{A_i}\nabla_k\overline{H}_\theta(W_{i-1}+X_{i-1}-u)\diff u\right){\boldsymbol 1}_{\{\overline{H}_\theta(W_i)>0\}},
\end{equation} 
and, for $k,l=1,\ldots,p$,
\begin{equation}\label{eq:cont_mle_dLL2}
\begin{split}
\left(\Psi_i(\theta)\right)_{kl} &= \Bigg(\frac{\overline{H}_\theta(W_i)\nabla_{kl}^2\overline{H}_\theta(W_i)-\nabla_k\overline{H}_\theta(W_i)\nabla_l\overline{H}_\theta(W_i)}{\overline{H}_\theta(W_i)^2} \\ 
& \quad \quad   -\lambda\int_0^{A_i}\nabla_{kl}^2\overline{H}_\theta(W_{i-1}+X_{i-1}-u)\diff u\Bigg){\boldsymbol 1}_{\{\overline{H}_\theta(W_i)>0\}}
\end{split}.
\end{equation} 

Following the lines of the standard delta method \cite[Ch.\ 3]{book_vdV1998}, we consider the expansion of $\dot{\ell}_n(\cdot)$ at the MLE $\hat{\theta}_n$ around the true parameter:
\begin{equation}\label{eq:cont_mle_taylor}
\frac{1}{n}\dot{\ell}_n(\hat{\theta}_n)=\frac{1}{n}\dot{\ell}_n(\theta_0)+(\hat{\theta}_n-\theta_0)\int_{0}^1\frac{1}{n}\sum_{i=1}^n\Psi_i(\alpha\hat{\theta}_n+(1-\alpha)\theta_0)\diff \alpha.
\end{equation}
If $\theta_0\in\Theta$, then due to the strong consistency that we found in Theorem \ref{thm:cont_mle_consistency} we have, as $n\to\infty$, that 
\[
\hat{\theta}_n\asarrow \theta_0.
\] 
Furthermore, as $\theta_0\in\Theta$ by assumption (A1), the smoothness assumption (A4) implies that, as $n$ grows large, $\ell_n(\theta)$ converges to a concave function and the MLE is given by a sequence of roots $\hat{\theta}_n$ satisfying $\dot{\ell}_n(\hat{\theta}_n)=0$, i.e., from some $N$ on there is no boundary solution for any $n\geqslant N$ with probability one. Therefore \eqref{eq:cont_mle_taylor} yields, with
\[
B_n:=-\int_{0}^1\frac{1}{n}\sum_{i=1}^n\Psi_i(\alpha\hat{\theta}_n+(1-\alpha)\theta_0)\diff \alpha,
\]
that
\begin{equation}\label{eq:cont_mle_delta}
\lim_{n\to\infty}\sqrt{n}(\hat{\theta}_n-\theta_0) B_n \overset{\mathrm{d}}{=}\lim_{n\to\infty}\frac{1}{\sqrt{n}}\dot{\ell}_n(\theta_0),
\end{equation}
i.e., both sides of \eqref{eq:cont_mle_delta} converge to the same distribution (if the limits exist).
In Lemma \ref{lemma:cont_mle_clt} we apply the stationarity of $W_i$ in (A2) to show that $B_n\asarrow I(\theta_0)$, where $I(\theta_0)=-\E \Psi_1(\theta_0)$. Furthermore, $\dot{\ell}_n(\theta_0)/\sqrt{n}$ is shown to satisfy a martingale CLT with asymptotic variance $I(\theta_0)$. As $\E \nabla\ell_1(\theta_0)=0$, Assumption (A2) implies that the asymptotic variance equals the stationary covariance of the gradient 
\begin{equation}\label{eq:psi_covariance}
I(\theta_0)=\E\nabla^2(\ell_1(\theta_0))=\E[\nabla(\ell_1(\theta_0))\nabla(\ell_1(\theta_0))^\top],
\end{equation}
hence $I(\theta_0)$ is a positive definite matrix (see \cite[Ch. 18]{book_F1996} for more details). Assumption (A4) further demands that the elements of $I(\theta_0)$ are all finite, and then combining the above and applying Slutsky's theorem to \eqref{eq:cont_mle_delta} yields Theorem \ref{thm:cont_mle_normal}. \end{proof}

{We are thus left with showing Lemma~\ref{lemma:cont_mle_clt} below; its proof is given in the appendix.}

\begin{lemma}\label{lemma:cont_mle_clt}
If {\rm (A1)--(A5)} hold and $I(\theta_0)=-\E \Psi_1(\theta_0)$ is a positive definite matrix with finite elements, then {\em (a)} as $n\to\infty$, $B_n\asarrow I(\theta_0)$, and {\em (b)} as $n\to\infty$,
\begin{equation}\label{eq:cont_dl_clt}
\frac{1}{\sqrt{n}}\dot{\ell}_n(\theta_0)\darrow \mathrm{N}(0,I(\theta_0)).
\end{equation}
\end{lemma}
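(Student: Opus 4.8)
The plan is to prove part~(b) first and then deduce part~(a), since the latter reuses the strong consistency of Theorem~\ref{thm:cont_mle_consistency} together with a uniform law of large numbers. The decisive structural observation is that the per-observation factors appearing in the conditional likelihood \eqref{eq:MLE-workload-general} are precisely the conditional densities \eqref{eq:density_A} of the effective interarrival time $A_i$ given the information available up to the previous effective arrival; indeed $W_{i-1}+X_{i-1}$ is measurable with respect to that past. Consequently the $\theta$-gradient $\dot\ell_n(\theta_0)$ of \eqref{eq:cont_mle_dLL} (the term $n\log\lambda$ drops out under $\nabla_\theta$) decomposes as $\dot\ell_n(\theta_0)=\sum_{i=1}^n s_i(\theta_0)$, where $s_i(\theta):=\nabla_\theta\log f_{A_i\mid W_{i-1}+X_{i-1}}(A_i)$ is the score contribution of the $i$-th observation. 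I would exploit that the $s_i(\theta_0)$ form a martingale difference sequence, which takes the place of the independence that would be available in a classical i.i.d.\ setting and around which the whole argument is organized.

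Concretely, let $\mathcal{F}_i$ denote the filtration generated by the stationary Markov process of residual service times at effective arrival instants (cf.\ Remark~\ref{R1}), with respect to which $A_i,W_i,X_i$ and $W_{i-1}+X_{i-1}$ are adapted. Writing $v=W_{i-1}+X_{i-1}$, the survival function $\P(A_i>t\mid v)=\exp(-\lambda\int_0^t\tilde H_{\theta_0}(v-u)\diff u)$ tends to $0$ as $t\to\infty$ (since $\tilde H_{\theta_0}(v-u)=1$ once $u>v$), so $f_{A_i\mid v}$ is a genuine density and differentiating the identity $\int_0^\infty f_{A_i\mid v}(t)\diff t=1$ under the integral sign yields $\E[s_i(\theta_0)\mid\mathcal{F}_{i-1}]=0$. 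Thus $\{s_i(\theta_0),\mathcal{F}_i\}$ is a martingale difference sequence, which is stationary by (A2) and ergodic by the regenerative structure of the underlying chain; its second moments are finite because $\E[s_1(\theta_0)s_1(\theta_0)^\top]=I(\theta_0)$ has finite entries by hypothesis. An ergodic theorem applied to the stationary sequence $\E[s_i(\theta_0)s_i(\theta_0)^\top\mid\mathcal{F}_{i-1}]$ then gives $\frac1n\sum_{i=1}^n\E[s_i(\theta_0)s_i(\theta_0)^\top\mid\mathcal{F}_{i-1}]\asarrow I(\theta_0)$, while a conditional Lindeberg condition follows from stationarity and square-integrability. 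Invoking the martingale central limit theorem \cite[Thm.~12.6]{book_H1997} (or, equivalently, the stationary ergodic martingale-difference CLT together with the Cram\'er--Wold device) establishes \eqref{eq:cont_dl_clt}, the limiting covariance being identified as $\E[s_1(\theta_0)s_1(\theta_0)^\top]=I(\theta_0)$ through the information identity \eqref{eq:psi_covariance}.

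For part~(a) I would first establish a uniform strong law for the Hessian contributions $\Psi_i$ of \eqref{eq:cont_mle_dLL2} over a closed ball $\mathscr{B}_\delta(\theta_0)\subset\Theta^{\mathrm o}$ (which exists by (A1)), namely $\sup_{\theta\in\mathscr{B}_\delta(\theta_0)}|\frac1n\sum_{i=1}^n\Psi_i(\theta)-\E\Psi_1(\theta)|\asarrow 0$. This is obtained by the same uniform ergodic-theorem machinery used for Lemma~\ref{lemma:unifrom_ll}, the continuity in $\theta$ being supplied by (A4) and the requisite equicontinuity and domination by (A4)--(A5); the map $\theta\mapsto\E\Psi_1(\theta)$ is continuous by (A4) and dominated convergence. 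Since $\hat\theta_n\asarrow\theta_0$ by Theorem~\ref{thm:cont_mle_consistency}, for every fixed $\delta>0$ the entire segment $\{\alpha\hat\theta_n+(1-\alpha)\theta_0:\alpha\in[0,1]\}$ is eventually contained in $\mathscr{B}_\delta(\theta_0)$ by convexity, so on this segment $\frac1n\sum_i\Psi_i(\cdot)$ is uniformly close to $\E\Psi_1(\cdot)$, which in turn is uniformly close to $\E\Psi_1(\theta_0)$ for $\delta$ small. Integrating the resulting uniform estimate over the bounded range $\alpha\in[0,1]$ then yields $\frac1n\sum_i\Psi_i(\alpha\hat\theta_n+(1-\alpha)\theta_0)\to\E\Psi_1(\theta_0)$ and hence $B_n\asarrow-\E\Psi_1(\theta_0)=I(\theta_0)$.

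The main obstacle is the rigorous verification of the martingale-difference property in part~(b): justifying the interchange of $\nabla_\theta$ and $\int_0^\infty\cdots\diff t$ that produces $\E[s_i(\theta_0)\mid\mathcal{F}_{i-1}]=0$ requires a dominating integrable bound on $\nabla_\theta\tilde H_\theta(v-u)$ that is uniform over a neighborhood of $\theta_0$ and over the relevant range of $v$, and one must separately confirm ergodicity of the residual-service-time chain. A closely related difficulty appears in part~(a): because the integral term in $\Psi_i$ involves $\overline H_\theta$ and its derivatives evaluated at the unbounded argument $W_{i-1}+X_{i-1}-u$, the domination needed for the uniform strong law must be shown to persist despite this unboundedness, which is exactly where the tail control of Assumption~(A3) and the equicontinuity of Assumption~(A5) do the work.
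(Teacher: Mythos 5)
Your proposal is correct and follows essentially the same route as the paper: the same decomposition of $\dot{\ell}_n(\theta_0)$ into per-observation score increments treated as stationary (ergodic) martingale differences, the same martingale CLT (\cite[Thm.~12.6]{book_H1997}) for part~(b), and for part~(a) the standard uniform-law-plus-consistency argument that the paper merely cites as ``standard'' (Ferguson Thm.~18, \cite{RBM2019}). If anything, your verification of the martingale-difference property --- differentiating $\int_0^\infty f_{A_i\mid v}(t)\,\diff t=1$ under the integral with respect to the filtration of the residual-service-time chain --- is cleaner than the paper's, which obtains conditional mean zero by conditioning on $W_{i-1}$ alone and appealing to $\dot{\ell}(\theta_0)=0$ via identifiability.
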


\subsection{Constant patience-level MLE}\label{sec:mle_constant}

We next consider the case where all customers have
the same patience level $\theta_0$, i.e., $H(y)={\boldsymbol 1}_{\{y\geqslant
\theta_0\}}$; observe that in this case the stability condition \eqref{eq:stability} is always satisfied, but the continuity assumptions of Section~\ref{sec:mle_continuous} do not apply. 
As we will see, in this case the properties of the MLE  are markedly different from those identified in Section \ref{sec:mle_continuous}.

In this setting, the likelihood function 
(\ref{eq:MLE-workload-general}) reduces to, with the event ${\mathscr E}_i$ denoting $\{W_{i-1}+X_{i-1} \leqslant \theta \}$ and ${\mathscr E}_i^{\rm c}$ its complement,
\begin{align*}
L(\theta;{\boldsymbol A},{\boldsymbol W} \,|\, {\boldsymbol X})
&=
\lambda^n
\prod_{i=1}^n 
{\boldsymbol 1}_{\{W_i \leqslant \theta\}}
\biggl(
{\boldsymbol 1}_{\{{\mathscr E}_i\}}\,
\mathrm{e}^{-\lambda \int_0^{A_i} \diff u}
+
{\boldsymbol 1}_{\{{\mathscr E}_i^{\rm c}\}}\,
\mathrm{e}^{-\lambda \int_{W_{i-1}+X_{i-1}-\theta}^{A_i} 
\diff u}
\Biggr)
\\
&=
\lambda^n
\prod_{i=1}^n 
{\boldsymbol 1}_{\{W_i \leqslant \theta\}}
\biggl(
{\boldsymbol 1}_{\{{\mathscr E}_i\}}\,
\mathrm{e}^{-\lambda A_i}+{\boldsymbol 1}_{\{{\mathscr E}_i^{\rm c}\}}\,\mathrm{e}^{-\lambda (A_i+\theta-W_{i-1}-X_{i-1})}
\Biggr).
\end{align*}
{Observe that $L(\theta;{\boldsymbol A},{\boldsymbol W} \,|\, {\boldsymbol X})$ (i) equals
zero for $\theta\in[0,\max_{i=1,2,\ldots,n}W_i)$, (ii) has an upward discontinuity at $\theta =\max_{i=1,2,\ldots,n}W_i$ and decreases for \[\theta\in\Big[\max_{i=1,2,\ldots,n}W_i,\max_{i=1,2,\ldots,n}\{W_i+X_i\}\Big),\] and (iii) takes a constant value for $\theta\geqslant \max_{i=1,2,\ldots,n}\{W_i+X_i\}$.}
We thus find the intuitively appealing property that the MLE is given by
\begin{equation}
\hat{\theta}_n
=
\max_{i=1,2,\ldots,n}W_i\ , 
\end{equation} 
i.e., the maximum virtual waiting time at jump times. Note that this estimator resembles the MLE of the parameter $\theta$ when the observations are uniformly distributed on $[0,\theta]$. 

While the estimator $\hat{\theta}_n$ is clearly biased (which follows from $\E_{\theta_0}[\hat{\theta}_n] < \theta_0$ for all $n$), we show that it converges almost surely to $\theta_0$ as $n\to\infty$. In addition, we prove that the estimation error scaled at rate $n$ converges to an exponential random variable.
\begin{theorem}\label{thm:mle_constant}
As $n\to\infty$,
\begin{equation}
\hat{\theta}_n \asarrow \theta_0 \,,
\label{eq:maxV-limit}
\end{equation}
and
\begin{equation}
\P\left(n(\theta_0 - \hat{\theta}_n) \geqslant x\right)\to \mathrm{e}^{-v(\theta_0)x/(1-P_{\ell})},
\label{eq:hat-theta-scaled-limit}
\end{equation}
where $P_{\ell}$ and $v(\theta_0)$ denote the stationary loss probability and the virtual waiting time density at level $\theta_0$.
In addition, the asymptotic variance of the estimation error agrees with the variance of the limiting exponential distribution, and is given by
\begin{align}
\lim_{n\to\infty}n^2
\Var\left[\theta_0 - \hat{\theta}_n\right]
&=
\lim_{n \to \infty}
\left\{
\E\left[\{n(\theta_0 - \hat{\theta}_n)\}^2\right]
-
\left(
\E\left[n(\theta_0 - \hat{\theta}_n)\right]
\right)^2
\label{eq:var_MLE-def}
\right\}\\
\label{eq:var_MLE}&
=
\left(
\frac{1-P_{\ell}}{v(\theta_0)}
\right)^2.
\end{align}
\end{theorem}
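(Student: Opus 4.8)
The plan is to recognise \eqref{eq:maxV-limit}--\eqref{eq:var_MLE} as statements in extreme-value theory for the running maximum of the stationary waiting-time sequence, and to reduce the analysis to the independent case by exploiting the regenerative structure of the queue. In the deterministic-patience case $\P(Y\geqslant x)=\mathbf{1}_{\{x\leqslant\theta_0\}}$, so \eqref{eq:w-general} shows that each $W_i$ has marginal density $w(x)=v(x)/(1-P_{\ell})$ supported on $[0,\theta_0]$; hence $W_i\leqslant\theta_0$ almost surely and, by continuity of $v$ at $\theta_0$, $w(\theta_0-)=v(\theta_0)/(1-P_{\ell})=:\beta$. Thus $\hat\theta_n=\max_{i\leqslant n}W_i$ is the running maximum of a \emph{stationary but dependent} sequence whose marginal has a positive continuous density at its right endpoint $\theta_0$; were the $W_i$ i.i.d., all three claims would be textbook extreme-value theory with the Weibull-type normalisation $u_n=\theta_0-x/n$. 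The whole difficulty is the dependence, which I neutralise by cutting the horizon into i.i.d.\ regeneration cycles of the underlying residual-service Markov process (cf.\ Remark~\ref{R1}), whose mean cycle length $m$ (in number of non-balking customers) is finite by stability.

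For \eqref{eq:maxV-limit}, note that $\hat\theta_n$ is non-decreasing in $n$ and bounded above by $\theta_0$, hence converges a.s.\ to some $\hat\theta_\infty\leqslant\theta_0$. For the reverse inequality, fix $\epsilon>0$; since $\P(W>\theta_0-\epsilon)>0$ by positivity of the density near $\theta_0$ and the regenerative sequence $(W_i)$ is ergodic, the ergodic theorem gives that the fraction of indices $i\leqslant n$ with $W_i>\theta_0-\epsilon$ converges to $\P(W>\theta_0-\epsilon)>0$; in particular infinitely many $W_i$ exceed $\theta_0-\epsilon$, so $\hat\theta_\infty\geqslant\theta_0-\epsilon$. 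Letting $\epsilon\downarrow0$ yields $\hat\theta_\infty=\theta_0$ a.s.

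For the distributional limit \eqref{eq:hat-theta-scaled-limit}, set $u_n:=\theta_0-x/n$ and let $S_n:=\#\{i\leqslant n:W_i>u_n\}$, so that $\{n(\theta_0-\hat\theta_n)\geqslant x\}=\{\hat\theta_n\leqslant u_n\}=\{S_n=0\}$. By stationarity $\E S_n=n\,\P(W>u_n)=n\int_{u_n}^{\theta_0}w(y)\diff y\to\beta x=v(\theta_0)x/(1-P_{\ell})=:\tau$, using continuity of $w$ at $\theta_0$. It then suffices to prove the Poisson limit $S_n\darrow\Pois(\tau)$, since this forces $\P(S_n=0)\to e^{-\tau}$, which is exactly \eqref{eq:hat-theta-scaled-limit}. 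I establish this limit through the regenerative decomposition: by renewal--reward the expected number of exceedances \emph{per cycle} equals $m\,\P(W>u_n)\sim m\beta x/n$, while the number of complete cycles among the first $n$ customers is $\sim n/m$; if at most one exceedance occurs per cycle asymptotically (the no-clustering property below), exceedances fall in distinct i.i.d.\ cycles, each carrying one with probability $\sim m\beta x/n$, so the total is asymptotically $\Pois(\tau)$ with $\tau=(n/m)\cdot m\beta x/n=\beta x$ --- note how the mean cycle length $m$ cancels. Equivalently, one may verify Leadbetter's mixing and anti-clustering conditions $D(u_n)$, $D'(u_n)$ for the stationary sequence.

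Finally, \eqref{eq:var_MLE} follows from \eqref{eq:hat-theta-scaled-limit} by moment convergence: the limit law is $\mathrm{Exp}(\beta)$, with variance $1/\beta^2=((1-P_{\ell})/v(\theta_0))^2$, so convergence in distribution of $n(\theta_0-\hat\theta_n)$ together with uniform integrability of its square upgrades \eqref{eq:hat-theta-scaled-limit} to convergence of the first two moments, hence of $n^2\Var[\theta_0-\hat\theta_n]=\Var[n(\theta_0-\hat\theta_n)]$ to $1/\beta^2$. The uniform integrability I obtain from a uniform-in-$n$ exponential tail bound $\sup_n\P(n(\theta_0-\hat\theta_n)\geqslant x)\leqslant Ce^{-cx}$, again via the regeneration argument (no exceedance across $\sim n/m$ independent cycles). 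The main obstacle throughout is the \textbf{no-clustering} step: showing that two distinct non-balking customers in the same cycle both have waiting time within $x/n$ of $\theta_0$ only with probability $o(1/n)$. I handle this via the strong Markov property at a first near-$\theta_0$ waiting time: such a customer lifts the virtual waiting time to $W_i+X_i>\theta_0$, after which every arrival balks until $V$ has descended below $\theta_0$; since arrivals form a Poisson process of rate $\lambda$ and $V$ decreases at bounded rate, the next non-balking customer sees $V$ a macroscopic distance below $\theta_0$ with probability bounded away from $0$, so a second waiting time within $x/n$ of $\theta_0$ in the same cycle has conditional probability $O(1/n)$, i.e.\ $O(1/n^2)$ jointly, which is negligible on the cycle scale.
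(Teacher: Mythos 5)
Your proposal is correct in outline but follows a genuinely different route from the paper. The paper sandwiches $\hat{\theta}_n$ between maxima of \emph{exactly} i.i.d.\ random variables: a lower bound over the index set $\mathcal{N}_n^+=\{k:\,W_{k-1}+X_{k-1}\geqslant\theta_0\}$, on which the waiting times are precisely distributed as $\max\{0,\theta_0-\mathrm{Exp}(\lambda)\}$ (the paper's \eqref{eq:Ak}), and an upper bound over the record set $\mathcal{N}_n^{\star}$, on which the waiting times are dominated by i.i.d.\ copies of the same law; it then identifies the common asymptotic density of both index sets via the ergodic theorem together with the Volterra equation \eqref{eq:v-Volterra-multi} evaluated at $\theta_0$ (see \eqref{eq:N_n^+-limit}), and obtains \eqref{eq:hat-theta-scaled-limit} and \eqref{eq:var_MLE} by explicit computation with these two bounds. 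You instead run classical extreme-value theory for the stationary sequence $(W_i)$: exceedance counting at level $u_n=\theta_0-x/n$, a Poisson limit with extremal index $1$, and reduction to independence via regenerative cycles. Your route is more modular (Leadbetter's $D(u_n)$, $D'(u_n)$ machinery, reusable for other extremal questions); the paper's route buys two things: it never needs $\theta_0$ to be a left Lebesgue point of $v$ --- your first-moment step $n\,\P(W>u_n)\to x\,v(\theta_0)/(1-P_{\ell})$ silently assumes this regularity of the density at the endpoint, whereas the paper's constant comes out of the level-crossing identity with no continuity assumption --- and it yields non-asymptotic sandwich bounds that turn the variance statement \eqref{eq:var_MLE} into a calculation.

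Two steps of your sketch need repair. First, the no-clustering paragraph contains a non sequitur: that the next non-balking customer sees $V$ ``a macroscopic distance below $\theta_0$ with probability bounded away from $0$'' does \emph{not} yield a conditional exceedance probability of $O(1/n)$. The correct (and stronger) statement, which is the same observation powering the paper's \eqref{eq:Ak}, is that whenever the post-jump level $W_{j-1}+X_{j-1}$ exceeds $\theta_0$, the next waiting time equals $\max\{0,\theta_0-E\}$ with $E\sim\mathrm{Exp}(\lambda)$, hence lies within $x/n$ of $\theta_0$ with probability $1-\mathrm{e}^{-\lambda x/n}\leqslant\lambda x/n$; in fact, conditionally on the past, \emph{every} $W_j$ exceeds $u_n$ with probability at most $1-\mathrm{e}^{-\lambda x/n}$ (if the post-jump level lies in $(u_n,\theta_0]$ the interarrival must fall below $x/n$; if it is $\leqslant u_n$ the probability is zero), and this uniform bound is what gives $D'(u_n)$ for all customers in a cycle, not merely the one immediately following the first exceedance. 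Second, the uniform tail bound $\sup_n\P\left(n(\theta_0-\hat{\theta}_n)\geqslant x\right)\leqslant C\mathrm{e}^{-cx}$ that you invoke for uniform integrability is not free: on the event that the first $n$ customers contain few completed cycles the per-cycle argument degenerates, so you must additionally control the probability of that event (available from the regenerative structure), or truncate using $n(\theta_0-\hat{\theta}_n)\leqslant n\theta_0$. The paper glosses over an analogous computation in its final step, but in your write-up this bound is the load-bearing claim for \eqref{eq:var_MLE}, so it should be spelled out.
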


{The proof of Theorem~\ref{thm:mle_constant}, which is given in the appendix, constructs lower and upper bounds for the MLE and
establishes that they both converge to $\theta_0$. The same bounds are
also used to characterize the asymptotic distribution of the estimation
error. }
%
%

\subsection{Estimating the arrival rate}\label{sec:lambda_estimation}

Where in the preceding subsections we focused on estimating the patience-level distribution (in a parametric context) for a given value of the arrival rate $\lambda$, we now discuss the estimation of $\lambda$. Let $\lambda_0$ denote the true value of $\lambda$. There are several ways to estimate it, the most basic one relying on the idle period observations.  Denote the duration of the $k$-th idle period by $I_k$, where in the multi-server setting ($s>1$) an idle period refers to time intervals such that at least one server is idle. Denote by $E_k$ the total number of effective arrivals during idle period $k=1,2,\ldots$ (meaning, for instance, $E_k=1$ for all $k= 1,2,\ldots$ when $s=1$). Let $C_n$ denote the number of idle periods observed up until (and including) the $n$-th effective arrival. Then we propose the estimator
\begin{equation}\label{eq:lambda_MLE}
\hat{\lambda}_n=\frac{\sum_{k=1}^{C_n} E_k}{\sum_{k=1}^{C_n} I_k}\ .
\end{equation}
The rationale behind this estimator is that there is no balking during idle periods as the virtual waiting time is zero. Hence, the arrival process during these idle times is homogeneous Poisson with rate $\lambda$.
Therefore, the estimator $\hat{\lambda}_n$ is a standard MLE of the rate parameter of an exponentially distributed random variable, and satisfies all desired asymptotic properties.

Of course, the above procedure does not exploit a substantial amount of potentially useful data that is collected during busy periods. For the case of a deterministic patience level $\theta_0$, the above estimator is easily improved upon. Recall that $\hat{\theta}_n$ was defined as $\max\{W_1,W_2,\ldots,W_n\}$, with the immediate consequence that $\hat\theta_n\leqslant \theta_0$.
Supposing we observe an arrival such that immediately after this arrival the virtual waiting time level is still below the current value of the estimator, then the next arrival is an effective arrival, and hence occurs after an exponentially distributed random variable with rate $\lambda$. In this way we generate more observations, thus allowing to estimate $\lambda$ with a better precision. Observe that the new observations and the idle times form an i.i.d.\ sequence. 

For the case of a continuous patience-level distribution one may use \eqref{eq:lambda_MLE}, or alternatively set up a joint MLE for $\lambda$ and $\theta$. The latter has clear advantages, in particular for small samples or heavily loaded systems. As the log-likelihood function \eqref{eq:V_log_likelihood} is a smooth and concave function with respect to $\lambda$, the MLE of the arrival rate for any estimator $\breve{\theta}_n$ of $\theta$ is
\begin{equation}\label{eq:MLE_lambda}
\hat{\lambda}_n(\breve{\theta}_n)=\breve{\theta}_n\left[\frac{1}{n}\sum_{i=1}^n \int_0^{A_i} \overline{H}_{\breve{\theta}_n}(W_{i-1}+X_{i-1}-u)\diff u\,{\boldsymbol 1}_{\{\overline{H}_{\breve{\theta}_n}(W_i)>0\}}\right]^{-1}.
\end{equation}
The asymptotic results of Section \ref{sec:mle_continuous} can therefore be extended in a straightforward manner, so as to cover the joint estimation of $(\lambda,\theta)$. We demonstrate this in Section \ref{sec:mle_exponential}, 
where we detail a procedure for jointly estimating the arrival rate and the patience parameter for the case of an exponentially distributed patience level.

\section{Exponential and generalized hyperexponential patience}\label{sec:special}

In this section we discuss a robust and practical approach for estimating continuous patience distributions. In our approach, this is achieved by fitting the MLE of a generalized hyperexponential (GHE) distribution. This approach is attractive because the class of GHE distributions is known to be dense in the space of non-negative continuous distributions (see for instance \cite{BH1986}), which, in practical terms, means that any non-negative continuous distribution can be approximated arbitrarily closely by a GHE distribution.  
In our simulation-based experiments  highly accurate estimates are obtained, even when the baseline patience distribution itself is {\it not} GHE. 

We first provide a detailed analysis of the joint MLE for the arrival rate and the single parameter of an exponential patience distribution, to then move to the cases of hyperexponential and generalized hyperexponential patience distributions.  We also present a heuristic search method that fits an approximate GHE distribution for any continuous patience distribution.

\subsection{Exponentially distributed patience}\label{sec:mle_exponential}

Suppose that the arrival rate $\lambda_0$ is unknown and that the patience-level distribution is exponential, i.e., $H_{\theta_0}(x)=1-\mathrm{e}^{-\theta_0 x}$ for an unknown $\theta_0$ (and $x\geqslant 0$). Due to \eqref{eq:V_log_likelihood} it can be verified, by distinguishing between the cases $W_0+X_0\geqslant A_1$ and $W_0+X_0< A_1$, that the log-likelihood for a single observation is
\[
\begin{split}
\ell_1(\lambda,\theta) &= \log\lambda-\theta W_1-\lambda\int_{0}^{A_1} e^{-\theta(W_0+X_0-u)^+}\diff u \\
&=  \log\lambda-\theta W_1-\frac{\lambda}{\theta}\left(\mathrm{e}^{-\theta W_1}-\mathrm{e}^{-\theta(W_0+X_0)}+\theta(A_1-W_0-X_0)^+\right)  ;
\end{split}
\]
here  the Lindley recursion \eqref{eq:Vk_recursion} has been used. 

Our objective is to analyze the MLE of both $\lambda$ and $\theta$ for a sample of size $n$, which we denote by $(\hat{\lambda}_n,\hat{\theta}_n)$. We assume (A1)--(A2) hold, i.e., compact parameter space and stationary $W_0$. In addition, (A3) holds because $H_\theta$ is exponential for all $\theta\in\Theta$. The log-likelihood $\ell_1(\lambda,\theta)$ is Lipschitz continuous (with respect to the observations) and so (A5) holds. We thus have that Theorem \ref{thm:cont_mle_consistency} holds and the MLE for $(\lambda,\theta)$ is strongly consistent. 

Clearly this is a smooth function. We proceed by computing the gradient and Hessian with respect to $(\theta,\lambda)$. It takes some elementary calculus to verify that gradient is given by
\begin{align}
\label{eq:exp_theta_gradient}
\nabla(\ell_1(\lambda,\theta))_\lambda&=\frac{1}{\lambda}-\frac{1}{\theta}\left(\mathrm{e}^{-\theta W_1}-\mathrm{e}^{-\theta(W_0+X_0)}+\theta(A_1-W_0-X_0)^+\right),\\
\nabla(\ell_1(\lambda,\theta))_\theta&=-W_1-\frac{\lambda}{\theta^2}\left[(\theta(W_0+X_0)+1)\mathrm{e}^{-\theta(W_0+X_0)}-(\theta W_1+1)\mathrm{e}^{-\theta W_1}\right].
\end{align}
For any given value of $\theta$, $\ell_1(\lambda,\theta)$ is a concave function in $\lambda$, and therefore the MLE of \eqref{eq:MLE_lambda} is given by
\begin{equation}\label{eq:exp_lambda_mle}
\hat{\lambda}_n(\theta)=\left({\frac{1}{n}\sum_{i=1}^n\left(\mathrm{e}^{-\theta W_i}-\mathrm{e}^{-\theta(W_{i-1}+X_{i-1})}+\theta(A_i-W_{i-1}-X_{i-1})^+\right)}\right)^{-1}\,{\theta}.
\end{equation}
The MLE $\hat{\theta}_n$ now follows by maximizing $\frac{1}{n}\sum_{i=1}^n \ell_1(\theta,\hat{\lambda}_n(\theta))$ for $\theta\in\Theta$. The optimizing $\theta$ is obtained either on the boundary of the parameter space $\Theta$ or by solving the first order condition $\nabla(\ell_1(\theta,\hat{\lambda}_n(\theta))_\theta=0$. As $n\to\infty$ we are guaranteed to find an interior solution as long as $\theta_0\in\Theta^{\mathrm{o}}$.

Taking second derivatives, we find that the entries of the Hessian matrix are given by
\begin{align*}
\nabla^2(\ell_1(\lambda,\theta))_{\lambda,\lambda} &= -\frac{1}{\lambda^2}, \\
\nabla^2(\ell_1(\lambda,\theta))_{\theta,\lambda} &=
\nabla^2(\ell_1(\lambda,\theta))_{\lambda,\theta} = -\frac{1}{\theta^2}\left[(\theta(W_0+X_0)+1)\mathrm{e}^{-\theta(W_0+X_0)}-(\theta W_1+1)\mathrm{e}^{-\theta W_1}\right], \\
\nabla^2(\ell_1(\lambda,\theta))_{\theta,\theta} &= -\frac{\lambda}{\theta^3}\Big[(1+(\theta W_1+1)^2)\mathrm{e}^{- \theta W_1} \nonumber   -(1+(\theta(W_0+X_0)+1)^2)\mathrm{e}^{- \theta (W_0+X_0)}\Big].
\end{align*}
As $w^k\mathrm{e}^{-\theta w}$ is a bounded function on $w\in[0,\infty)$ for any $\theta>0$ and $k\in{\mathbb N}$, we conclude that $-\E\nabla^2(\ell_1(\lambda_0,\theta_0))<\infty$ and therefore (A4) applies, which implies that Theorem \ref{thm:cont_mle_normal} holds. In particular, as $n\to\infty$,
\[
\sqrt{n}\left(\begin{array}{c}
\hat{\lambda}_n-\lambda_0 \\
\hat{\theta}_n-\theta_0
\end{array} \right)\darrow \mathrm{N}(0,I(\lambda_0,\theta_0)),
\]
and the asymptotic covariance is given by the inverse of the Hessian,
\[
I(\lambda_0,\theta_0)=-\E\nabla^2(\ell_1(\lambda_0,\theta_0)).
\]

We performed numerical experiments to assess the performance of the estimation procedure. 
Table \ref{tbl:exp_confidence} presents approximated confidence intervals for the maximum-likelihood estimators $(\hat{\lambda}_n,\hat{\theta}_n)$; in addition we also evaluated the estimator of the arrival rate based on idle periods, denoted by $\tilde{\lambda}_n$, as discussed in Section \ref{sec:lambda_estimation}. The confidence intervals are evaluated for three congestion levels, namely $\rho=\lambda\,\E X\in\{0.5,1,2\}$. In all experiments the sample size (of observed waiting times) was $n=1\hspace{0.3mm}000$. Evidently, however, the heavier the system load, the fewer the number of idle periods: on average $C_n=C_{1\hspace{0.23mm}000}$ equals $665, 457,$ and $267$ for the three congestion levels. 

The numerical output is summarized in Table \ref{tbl:exp_confidence}. 
A first observation is that the MLE for $\hat\theta_n$ is more accurate as the system load increases. This is because the patience threshold is reached more often and therefore the data is more informative. Furthermore, for a lightly loaded system the MLE $\hat\lambda_n$ and idle-period based estimator $\tilde\lambda_n$ yield similar (accurate) results, as opposed to the high-load regime in which  the two estimators behave quite differently: 
\begin{itemize}
\item[$\circ$]
As a consequence of the fact that idle periods are observed considerably less frequently for a high load, $\tilde\lambda_n$
becomes substantially less accurate. 
\item[$\circ$]
The accuracy of $\hat\lambda_n$, however, is only slightly reduced in the high-load regime. This is potentially due to the better accuracy of the estimation of $\theta$, being jointly estimated with $\lambda$ in the MLE procedure.
\end{itemize}
In the most heavily loaded example ($\rho=2$, that is) the 95\% confidence interval of the MLE is very similar to the 80\% confidence interval of the idle-period based estimator. 
We conclude that even though the observations of the waiting times and the effective interarrival observations are highly dependent, the MLE $\hat\lambda_n$ provides substantially better confidence intervals than the idle-period based estimator $\tilde\lambda_n$.

\begin{table}[h]
\centering
\footnotesize{
\begin{tabular}{|c|c|c|c|c|c|} \hline \hline
\multicolumn{2}{|l|}{Estimator $|$ Load}    & $80\%$  & $90\%$  & $95\%$ & $99\%$  \\ \hline 
 \multirow{3.3}*{$\hat{\theta}_n$}      & $\rho=0.5$ &  $[0.423,0.592]$  & $[0.403,0.620]$ & $[0.386,0.645]$ & $[0.352,0.691]$   \\
      & $\rho=1$ & $[0.454,0.555]$ & $[0.441,0.571]$ &$[0.431,0.586]$ & $[0.407,0.612]$  \\
      & $\rho=2$ & $[0.469,0.536]$ & $[0.460,0.546]$ &$[0.453,0.555]$ & $[0.440,0.573]$  \\ \hline 
  \multirow{3.3}*{$\hat{\lambda}_n$}      & $\rho=0.5$ & $[0.958,1.048]$  & $[0.948,1.061]$ & $[0.937,1.074]$ & $[0.918,1.099]$   \\
      & $\rho=1$ & $[0.955,1.053]$ & $[0.944,1.067]$ &$[0.933,1.081]$ & $[0.912,1.104]$  \\
      & $\rho=2$ & $[0.951,1.061]$ & $[0.936,1.079]$ &$[0.924,1.096]$ & $[0.901,1.128]$  \\ \hline
  \multirow{3.3}*{$\tilde{\lambda}_n$}      & $\rho=0.5$ & $[0.953,1.052]$  & $[0.941,1.069]$ & $[0.929,1.082]$ & $[0.909,1.110]$   \\
      & $\rho=1$ & $[0.944,1.063]$ & $[0.928,1.083]$ &$[0.915,1.101]$ & $[0.890,1.134]$  \\
      & $\rho=2$ & $[0.928,1.085]$ & $[0.908,1.110]$ &$[0.892,1.134]$ & $[0.862,1.182]$  \\ \hline \hline
\end{tabular}\vspace{2mm}
\caption{Confidence intervals for the MLE\,s $(\hat{\theta}_n,\hat{\lambda}_n)$, as well as the arrival rate estimated from idle periods $\tilde{\lambda}_n$, for different confidence levels. A total of $n=1\,000$ waiting observations were generated, $M=10\,000$ times for each parameter setting. The parameter of the exponential patience parameter was $\theta=0.5$, the arrival rate was $\lambda=1$, and the service requirements followed a Gamma distribution with parameters $(\eta,\mu)$ with $\mu=1$ and $\eta\in\{0.5,1,2\}$. }
\label{tbl:exp_confidence}}
\end{table}

Table \ref{tbl:exp_confidence_multi} presents empirical confidence intervals for the maximum likelihood estimators of $(\lambda,\theta)$ for a multi-server system with $s=5$.  The confidence intervals are wider than in the single-server case, even though the sample size was taken twice as big, being indicative of the fact that the variance of the estimation error in a system with multiple servers is higher. As in the single-server case, lower system load increases the variance of the estimation error for the patience parameter $\theta$. This effect is even stronger in the multi-server setting due to the fact that balking occurs only when all 5 servers are working, and such a state is not frequently observed. For higher load the estimation of the patience parameter is much more accurate, as was the case for the single-server system. For the arrival rate we observe high accuracy for all load levels.

\begin{table}[h]
\centering
\footnotesize{
\begin{tabular}{|c|c|c|c|c|c|} \hline \hline
\multicolumn{2}{|l|}{Estimator $|$ Load}    & $80\%$  & $90\%$  & $95\%$ & $99\%$  \\ \hline 
 \multirow{3.3}*{$\hat{\theta}_n$}      & $\rho=0.5$ &  $[0.282,0.568]$  & $[0.248, 0.618]$ & $[0.221, 0.664]$ & $[0.172,0.771]$   \\
      & $\rho=1$ & $[0.364, 0.443]$ & $[0.354, 0.455]$ &$[0.345, 0.466]$ & $[0.326, 0.486]$  \\
      & $\rho=2$ & $[0.382, 0.421 ]$ & $[ 0.376, 0.427]$ &$[ 0.371, 0.432]$ & $[0.363, 0.441 ]$  \\ \hline 
  \multirow{3.3}*{$\hat{\lambda}_n$}      & $\rho=0.5$ & $[0.972,1.032]$  & $[0.963,1.040$ & $[0.957,1.048]$ & $[0.945,1.061]$   \\
      & $\rho=1$ & $[0.970, 1.035]$ & $[0.961, 1.045]$ &$[0.953, 1.053]$ & $[0.939, 1.071]$  \\
      & $\rho=2$ & $[0.964, 1.041 ]$ & $[0.953, 1.053]$ &$[0.945, 1.063]$ & $[0.929, 1.084 ]$  \\ \hline
\end{tabular}\vspace{2mm}
\caption{Empirical confidence intervals for the MLE\,s $(\hat{\theta}_n,\hat{\lambda}_n)$ for different confidence levels. A total of $n=2\,000$ waiting observations were generated, $M=10\,000$ times for each parameter setting. The parameter of the exponential patience parameter was $\theta=0.4$, the arrival rate was $\lambda=1$. The data was simulated for a system with $s=5$ servers, and the service requirements followed a Gamma distribution with parameters $(\eta,\mu)$ with $\mu=0.8$ and $\eta\in\{2,4,8\}$, corresponding to loads of  $\rho=\frac{\eta}{s\mu}\in\{0.5,1,2\}$. }
\label{tbl:exp_confidence_multi}}
\end{table}

\subsection{(Generalized) hyperexponential patience}\label{sec:mle_hyper}
This subsection focuses on the case of the patience-level distribution being generalized hyperexponential (GHE), meaning that the corresponding cdf can be written as a mixture of exponential cdf\,s, with weights that sum to~1 but that are not necessarily positive --- this in contrast with the standard hyperexponential (HE) distribution, where the weights are assumed to be positive. This case is particularly relevant because,
as argued in \cite{RJ2013}, this distribution has a good empirical fit to patience data. Another motivation for considering this distribution lies in the known fact that the cdf of any continuous positive random variable can be approximated arbitrarily accurately (in terms of a suitably defined metric) by a GHE cdf \cite{BH1986}.
 
For convenience we now assume the arrival rate $\lambda$ to be known, but it is noted that it can be estimated in a similar manner as in the exponential case discussed in Section \ref{sec:mle_exponential}. Suppose the degree of the GHE distribution is $p\in{\mathbb N}$:
\[
\overline{H}_\theta(x)=\sum_{k=1}^p \alpha_k \mathrm{e}^{-\beta_k x}, \ x\geqslant 0,
\]
where $\theta=(\alpha_1,\ldots,\alpha_p,\beta_1,\ldots,\beta_p)\in{\mathbb R}^{2p}$, $\sum_{k=1}^p\alpha_k=1$ (where we, importantly, do not assume the $\alpha_k$ to be positive) and $\beta_k>0$ for $k=1,\ldots,p$. Without loss of generality we assume that $\alpha_1>\alpha_2>\ldots>\alpha_p$. Denote the coordinates of the true parameter $\theta_0$ by $(\alpha_{0k},\beta_{0k})$ for $k=1,\ldots,p$.
It means that we are to identify the $2p$-dimensional parameter vector
\[\theta_0=(\alpha_{01},\ldots,\alpha_{0p},\beta_{01},\ldots,\beta_{0p}).\] Similarly to the previous example of the exponential distribution, Assumptions (A1)--(A3) and (A5) are satisfied when assuming stationary waiting times and a compact parameter space $\Theta$. Therefore, the MLE is strongly consistent by Theorem \ref{thm:cont_mle_consistency}. {One should be cautious when fitting a GHE distribution because of further conditions to be imposed on the parameters to make sure $\overline{H}_\theta(\cdot)$ is a proper cdf. Even though convergence to the true parameters of the distribution is eventually guaranteed, for finite samples the estimated parameters may not yield a proper distribution. For an in-depth discussion on these conditions and the identifiability of GHE distributions, we refer \cite[Section 3]{BH1986}.}

Using the representation \eqref{eq:V_log_likelihood}, the log-likelihood pertaining to a single observation is calculated in a similar manner as in the exponential case, yielding 
\[
\ell_1(\theta) = \log\lambda+\log\left(\sum_{k=1}^p \alpha_k \mathrm{e}^{-\beta_k W_1}\right)-\lambda\sum_{k=1}^p \frac{\alpha_k}{\beta_k}\left(\mathrm{e}^{-\beta_k W_1}-\mathrm{e}^{-\beta_k(W_0+X_0)}\right)-\lambda(A_1-W_0-X_0)^+.\]
As follows with some elementary algebra, the gradient is then given by, for $j=1,\ldots,p$, 
\begin{align*}
\nabla(\ell_1(\theta))_{\alpha_j}&=\frac{\mathrm{e}^{-\beta_j W_1}}{\sum_{k=1}^p \alpha_k \mathrm{e}^{-\beta_k W_1}}- \frac{\lambda}{\beta_j}\left(\mathrm{e}^{-\beta_j W_1}-\mathrm{e}^{-\beta_j(W_0+X_0)}\right),\\
\nabla(\ell_1(\theta))_{\beta_j}&=-\frac{W_1\alpha_j\mathrm{e}^{-\beta_j W_1}}{\sum_{k=1}^p \alpha_k \mathrm{e}^{-\beta_k W_1}}-\frac{\lambda \alpha_j}{\beta_j^2}\left[(\beta_j(W_0+X_0)+1)\mathrm{e}^{-\beta_j(W_0+X_0)}-(\beta_j W_1+1)\mathrm{e}^{-\beta_j W_1}\right].
\end{align*}
The Hessian can be derived by computing the matrix of second derivatives. For the evaluation of the asymptotic covariance, a convenient alternative is to apply \eqref{eq:psi_covariance}, i.e.,
\[
I(\theta_0)=-\E\nabla^2(\ell_1(\theta_0))=\E[\nabla(\ell_1(\theta_0))\nabla(\ell_1(\theta_0))^\top],
\]
so as to avoid the symbolic evaluation of the matrix of second derivatives. The entries of $I(\theta_0)$ are finite because these are combinations of terms of the following types:
\begin{itemize}
\item[$\circ$]
Products of polynomials (of degree at most 2) and exponentials. For example, we come across  a term that is, up to a multiplicative constant, $W_0^2\mathrm{e}^{-\beta_j W_0}$. Observe that the mapping $x\mapsto x^k e^{-x}$ is bounded for $x\geqslant 0.$
\item[$\circ$]
Ratios of the form, e.g., up to a multiplicative constant
\[
\left|\frac{W_1\mathrm{e}^{-\beta_j W_1}}{\sum_{k=1}^p \alpha_k \mathrm{e}^{-\beta_k W_1}}\right|\leqslant W_1.
\]
\end{itemize}

In addition, it is verified that if the patience-level distribution is light-tailed (which is the case for the GHE distribution), then the stationary waiting time $W$ is also light-tailed and has finite moments. Indeed, note that (\ref{eq:v-Volterra-multi}) implies 
\[
v(x) \leqslant \lambda \pi_0 
+ \lambda \int_0^{\infty} v(y)\overline{H}(y)\diff y = \lambda(1-P_{\ell}),
\] 
so that we have from (\ref{eq:w-general}) that the density of $W$ is bounded as $w(x) \leqslant \lambda \overline{H}(x)$, i.e., if the patience level distribution is light-tailed, so is $W$.
As a consequence, from the above and Theorem~\ref{thm:cont_mle_normal}, the estimation errors (scaled by $\sqrt{n}$, that is) converge to a multivariate normal distribution. 

Although there are theoretical guarantees for the asymptotic performance of the MLE, computation of the MLE is not straightforward, even for small parameter spaces such as $p=3$. It requires maximizing $\ell_n(\theta)/n$, i.e., solving a non-linear and non-concave program in $p\times(p-1)$ variables, with both equality and inequality constraints. This is computationally highly challenging, and standard optimization methods may lead to local maxima. In particular, observe that $\nabla(\ell_1(\theta))_{\beta_j}$ is very small for large values of $\beta_j$, which implies that $\ell_n(\theta)/n$ displays very `flat' behavior for large values of these $\beta_j$. Our experiments revealed that a direct implementation using standard optimization packages often led to points that were even not local maxima. As a consequence, we decided to write dedicated code to compute the MLE.

In light of the inherent complexity of maximizing the likelihood function, in our numerical experiments we have applied the following nested two-step heuristic optimization method.
\begin{enumerate}
\item[(a)] For each vector of ${\boldsymbol \alpha}=(\alpha_1,\ldots,\alpha_{p})$ the objective functions $\ell_n(\theta)/n$ was maximized with respect to ${\boldsymbol \beta}=(\beta_1,\ldots,\beta_{p})$ using a conventional coordinate descent algorithm. This step was typically fast and accurate, as for given ${\boldsymbol \alpha}$ the objective function behaves nicely. Now we have reduced the problem to an optimization over ${\boldsymbol \alpha}.$
\item[(b)] Then a standard L-BFGS quasi-Newton method (see e.g.\ \cite{BLNZ1995}) is applied to $\ell_n(\theta)/n$, so as to search for the optimal vector ${\boldsymbol \alpha}$, with ${\boldsymbol \beta}$ being parameterized by ${\boldsymbol \alpha}$. The optimization is carried out with the following constraints that ensure that the parameters yield a proper distribution \cite{BH1986}:
\begin{align*}
\left\lbrace {\boldsymbol \alpha}: \alpha_1>\alpha_2>\ldots>\alpha_p, \ \sum_{i=1}^p\alpha_i = 1 , \
\sum_{i=1}^p \alpha_i \beta_i>0 \right\rbrace .
\end{align*}

\end{enumerate}Note that there is no firm guarantee that this heuristic method converges to the optimal parameters. To overcome this, the search for the optimizing vector ${\boldsymbol \alpha}$ was conducted multiple times for different initial values. The resulting method turned out time-consuming, especially for a relatively large sample sizes $n$ and/or a relatively large dimension $p$ of the parameter space. However, it typically returns considerably more robust results than off-the-shelf optimization routines (that in addition tended to converge very slowly).

Table \ref{tbl:hyper_confidence} presents the marginal confidence intervals obtained by the normal approximation for an example of a HE cdf $H_\theta(\cdot)$ with $p=2$. Observe that the variance of the estimation error is large, even for a substantial sample size. The accuracy is much higher for the low rate of $\beta_1=0.25$ than the higher rate of $\beta_2=1$. This may be explained by the fact that the likelihood function is almost flat for high values of $\beta_2$. 
\begin{table}
\centering
\footnotesize{
\begin{tabular}{|c|c|c|c|c|c|} \hline \hline
\multicolumn{2}{|l|}{Estimator $|$ Load}    & $80\%$  & $90\%$  & $95\%$ & $99\%$  \\ \hline 
 \multirow{3.3}*{$\hat{\alpha}_{1n}$}      & $\rho=0.5$ &  $[0.500,0.908]$  & $[0.500,0.967]$ & $[0.500,1.000]$ & $[0.500,1.000]$   \\
      & $\rho=1$ & $[0.609,0.791]$ & $[0.583,0.817]$ &$[0.560,0.840]$ & $[0.516,0.884]$  \\
      & $\rho=2$ & $[0.660,0.740]$ & $[0.649,0.750]$ &$[0.639,0.761]$ & $[0.620,0.780]$  \\ \hline 
  \multirow{3.3}*{$\hat{\beta}_{1n}$}      & $\rho=0.5$ &  $[0.183,0.317]$  & $[0.164,0.336]$ & $[0.147,0.353]$ & $[0.115,0.385]$   \\
      & $\rho=1$ & $[0.223,0.277]$ & $[0.216,0.284]$ &$[0.209,0.291]$ & $[0.196,0.304]$  \\
      & $\rho=2$ & $[0.240,0.260]$ & $[0.237,0.262]$ &$[0.235,0.265]$ & $[ 0.230,0.270]$  \\ \hline
  \multirow{3.3}*{$\hat{\beta}_{2n}$}      & $\rho=0.5$ &  $[0.539,1.460]$  & $[0.408,1.592]$ & $[0.295,1.705]$ & $[0.073,1.930]$   \\
      & $\rho=1$ & $[0.768,1.232]$ & $[0.702,1.298]$ &$[0.645,1.355]$ & $[0.533,1.467]$  \\
      & $\rho=2$ & $[0.870,1.130]$ & $[0.833,1.167]$ &$[0.801,1.199]$ & $[0.739,1.261]$  \\ \hline \hline
\end{tabular}\vspace{2mm}
\caption{Confidence intervals for the MLE\,s $(\hat{\alpha}_{1n},\hat{\beta}_{1n},\hat{\beta}_{2n})$ based on a normal approximation for a sample size of $n=100\,000$, for different confidence levels. The true values of the parameters $(\alpha_{1},\alpha_{2},\beta_1,\beta_2)$ of the HE patience were $(0.7,0.3,0.25,1)$, the arrival rate was $\lambda=1$, and the service requirements followed a Gamma distribution with parameters $(\eta,\mu)$ with $\mu=1$ and $\eta\in\{0.5,1,2\}$.
}
\label{tbl:hyper_confidence}}
\end{table}

The results presented in Table \ref{tbl:hyper_confidence} show that the estimates are concentrated around the true values. The variance of the estimation error, however, is high, even more so given the large number of observations used. It should be realized, though, that HE distributions are not always easily distinguishable, in the sense that seemingly different HE distributions may lead to a very similar cdf. Clearly, from a practical standpoint the main question is not whether the correct parameters are recovered, but rather whether the cdf corresponding to the estimated parameters provides a good fit with the true cdf. 
In this respect the main conclusion of our experiments is that our MLE procedure performs remarkably well. For example, in Figure~\ref{fig:H_MLE_fit} we illustrate this by presenting the true cdf and the estimated cdf based on four random samples (using $n=10\,000$ observations). We have used the same parameters as in Table \ref{tbl:hyper_confidence}, with $\rho=1$.
We observe that, although the fitted parameters greatly differ from the correct ones, the fit of the (complementary) cdf is highly accurate. 

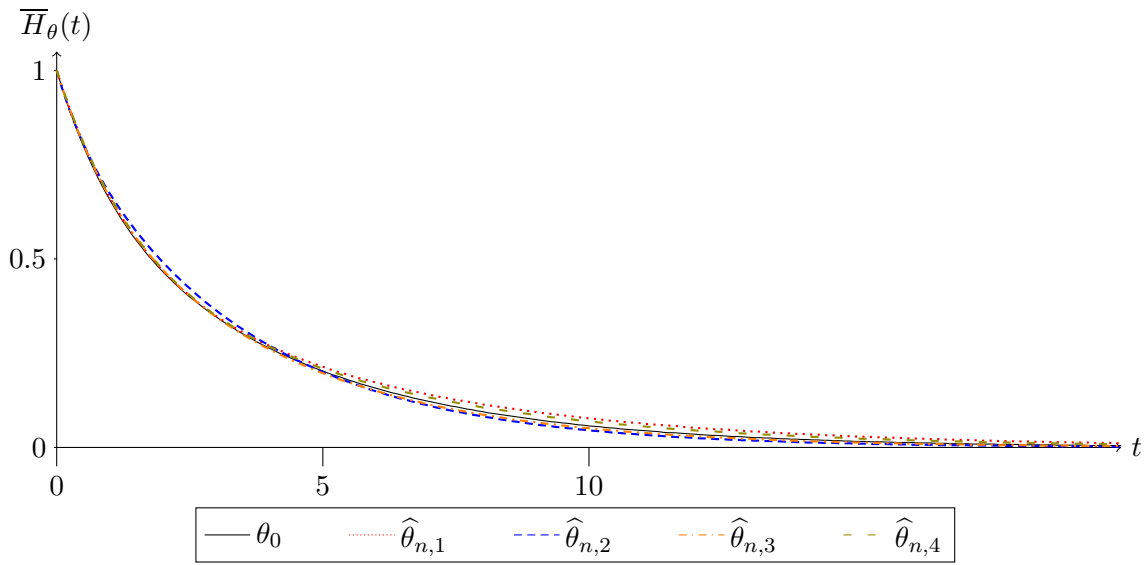
\begin{figure}[h]
\begin{subfigure}{.98\linewidth}
\centering
\begin{tikzpicture}[x=0.6cm,y=4cm]
 \def\xmin{0}
 \def\xmax{20}
 \def\ymin{0}
 \def\ymax{1.05}
 \draw[->] (\xmin,\ymin) -- (\xmax,\ymin) node[right] {$t$} ;
 \draw[->] (\xmin,\ymin) -- (\xmin,\ymax)  node[above] {$\overline  H_\theta(t)$};
 \foreach \x in {0,5,10} {
 \node at (\x,\ymin-0.05) [below] {\x};
 \draw[-] (\x,\ymin) -- (\x,{\ymin-0.05});
 }
\foreach \y in {0,0.5,1} {
 \node at (\xmin,\y) [left] {\y};
 \draw[-] (\xmin,\y) -- ({\xmin-0.05},\y);
 }


\draw[black] ( 0 , 1 )-- ( 0.1 , 0.954 )-- ( 0.2 , 0.911 )-- ( 0.3 , 0.872 )-- ( 0.4 , 0.834 )-- ( 0.5 , 0.8 )-- ( 0.6 , 0.767 )-- ( 0.7 , 0.737 )-- ( 0.8 , 0.708 )-- ( 0.9 , 0.681 )-- ( 1 , 0.656 )-- ( 1.1 , 0.632 )-- ( 1.2 , 0.609 )-- ( 1.3 , 0.588 )-- ( 1.4 , 0.567 )-- ( 1.5 , 0.548 )-- ( 1.6 , 0.53 )-- ( 1.7 , 0.512 )-- ( 1.8 , 0.496 )-- ( 1.9 , 0.48 )-- ( 2 , 0.465 )-- ( 2.1 , 0.451 )-- ( 2.2 , 0.437 )-- ( 2.3 , 0.424 )-- ( 2.4 , 0.411 )-- ( 2.5 , 0.399 )-- ( 2.6 , 0.388 )-- ( 2.7 , 0.377 )-- ( 2.8 , 0.366 )-- ( 2.9 , 0.356 )-- ( 3 , 0.346 )-- ( 3.1 , 0.336 )-- ( 3.2 , 0.327 )-- ( 3.3 , 0.318 )-- ( 3.4 , 0.309 )-- ( 3.5 , 0.301 )-- ( 3.6 , 0.293 )-- ( 3.7 , 0.285 )-- ( 3.8 , 0.277 )-- ( 3.9 , 0.27 )-- ( 4 , 0.263 )-- ( 4.1 , 0.256 )-- ( 4.2 , 0.249 )-- ( 4.3 , 0.243 )-- ( 4.4 , 0.237 )-- ( 4.5 , 0.231 )-- ( 4.6 , 0.225 )-- ( 4.7 , 0.219 )-- ( 4.8 , 0.213 )-- ( 4.9 , 0.208 )-- ( 5 , 0.203 )-- ( 5.1 , 0.197 )-- ( 5.2 , 0.192 )-- ( 5.3 , 0.188 )-- ( 5.4 , 0.183 )-- ( 5.5 , 0.178 )-- ( 5.6 , 0.174 )-- ( 5.7 , 0.169 )-- ( 5.8 , 0.165 )-- ( 5.9 , 0.161 )-- ( 6 , 0.157 )-- ( 6.1 , 0.153 )-- ( 6.2 , 0.149 )-- ( 6.3 , 0.145 )-- ( 6.4 , 0.142 )-- ( 6.5 , 0.138 )-- ( 6.6 , 0.135 )-- ( 6.7 , 0.131 )-- ( 6.8 , 0.128 )-- ( 6.9 , 0.125 )-- ( 7 , 0.122 )-- ( 7.1 , 0.119 )-- ( 7.2 , 0.116 )-- ( 7.3 , 0.113 )-- ( 7.4 , 0.11 )-- ( 7.5 , 0.108 )-- ( 7.6 , 0.105 )-- ( 7.7 , 0.102 )-- ( 7.8 , 0.1 )-- ( 7.9 , 0.097 )-- ( 8 , 0.095 )-- ( 8.1 , 0.092 )-- ( 8.2 , 0.09 )-- ( 8.3 , 0.088 )-- ( 8.4 , 0.086 )-- ( 8.5 , 0.084 )-- ( 8.6 , 0.082 )-- ( 8.7 , 0.08 )-- ( 8.8 , 0.078 )-- ( 8.9 , 0.076 )-- ( 9 , 0.074 )-- ( 9.1 , 0.072 )-- ( 9.2 , 0.07 )-- ( 9.3 , 0.068 )-- ( 9.4 , 0.067 )-- ( 9.5 , 0.065 )-- ( 9.6 , 0.064 )-- ( 9.7 , 0.062 )-- ( 9.8 , 0.06 )-- ( 9.9 , 0.059 )-- ( 10 , 0.057 )-- ( 10.1 , 0.056 )-- ( 10.2 , 0.055 )-- ( 10.3 , 0.053 )-- ( 10.4 , 0.052 )-- ( 10.5 , 0.051 )-- ( 10.6 , 0.049 )-- ( 10.7 , 0.048 )-- ( 10.8 , 0.047 )-- ( 10.9 , 0.046 )-- ( 11 , 0.045 )-- ( 11.1 , 0.044 )-- ( 11.2 , 0.043 )-- ( 11.3 , 0.042 )-- ( 11.4 , 0.04 )-- ( 11.5 , 0.039 )-- ( 11.6 , 0.039 )-- ( 11.7 , 0.038 )-- ( 11.8 , 0.037 )-- ( 11.9 , 0.036 )-- ( 12 , 0.035 )-- ( 12.1 , 0.034 )-- ( 12.2 , 0.033 )-- ( 12.3 , 0.032 )-- ( 12.4 , 0.032 )-- ( 12.5 , 0.031 )-- ( 12.6 , 0.03 )-- ( 12.7 , 0.029 )-- ( 12.8 , 0.029 )-- ( 12.9 , 0.028 )-- ( 13 , 0.027 )-- ( 13.1 , 0.026 )-- ( 13.2 , 0.026 )-- ( 13.3 , 0.025 )-- ( 13.4 , 0.025 )-- ( 13.5 , 0.024 )-- ( 13.6 , 0.023 )-- ( 13.7 , 0.023 )-- ( 13.8 , 0.022 )-- ( 13.9 , 0.022 )-- ( 14 , 0.021 )-- ( 14.1 , 0.021 )-- ( 14.2 , 0.02 )-- ( 14.3 , 0.02 )-- ( 14.4 , 0.019 )-- ( 14.5 , 0.019 )-- ( 14.6 , 0.018 )-- ( 14.7 , 0.018 )-- ( 14.8 , 0.017 )-- ( 14.9 , 0.017 )-- ( 15 , 0.016 )-- ( 15.1 , 0.016 )-- ( 15.2 , 0.016 )-- ( 15.3 , 0.015 )-- ( 15.4 , 0.015 )-- ( 15.5 , 0.015 )-- ( 15.6 , 0.014 )-- ( 15.7 , 0.014 )-- ( 15.8 , 0.013 )-- ( 15.9 , 0.013 )-- ( 16 , 0.013 )-- ( 16.1 , 0.013 )-- ( 16.2 , 0.012 )-- ( 16.3 , 0.012 )-- ( 16.4 , 0.012 )-- ( 16.5 , 0.011 )-- ( 16.6 , 0.011 )-- ( 16.7 , 0.011 )-- ( 16.8 , 0.01 )-- ( 16.9 , 0.01 )-- ( 17 , 0.01 )-- ( 17.1 , 0.01 )-- ( 17.2 , 0.009 )-- ( 17.3 , 0.009 )-- ( 17.4 , 0.009 )-- ( 17.5 , 0.009 )-- ( 17.6 , 0.009 )-- ( 17.7 , 0.008 )-- ( 17.8 , 0.008 )-- ( 17.9 , 0.008 )-- ( 18 , 0.008 )-- ( 18.1 , 0.008 )-- ( 18.2 , 0.007 )-- ( 18.3 , 0.007 )-- ( 18.4 , 0.007 )-- ( 18.5 , 0.007 )-- ( 18.6 , 0.007 )-- ( 18.7 , 0.007 )-- ( 18.8 , 0.006 )-- ( 18.9 , 0.006 )-- ( 19 , 0.006 )-- ( 19.1 , 0.006 )-- ( 19.2 , 0.006 )-- ( 19.3 , 0.006 )-- ( 19.4 , 0.005 )-- ( 19.5 , 0.005 )-- ( 19.6 , 0.005 )-- ( 19.7 , 0.005 )-- ( 19.8 , 0.005 )-- ( 19.9 , 0.005 )-- ( 20 , 0.005 );

 \draw[red,dotted,thick] ( 0 , 1 )-- ( 0.1 , 0.956 )-- ( 0.2 , 0.916 )-- ( 0.3 , 0.877 )-- ( 0.4 , 0.841 )-- ( 0.5 , 0.806 )-- ( 0.6 , 0.774 )-- ( 0.7 , 0.744 )-- ( 0.8 , 0.715 )-- ( 0.9 , 0.688 )-- ( 1 , 0.662 )-- ( 1.1 , 0.638 )-- ( 1.2 , 0.615 )-- ( 1.3 , 0.593 )-- ( 1.4 , 0.572 )-- ( 1.5 , 0.552 )-- ( 1.6 , 0.534 )-- ( 1.7 , 0.516 )-- ( 1.8 , 0.499 )-- ( 1.9 , 0.483 )-- ( 2 , 0.468 )-- ( 2.1 , 0.453 )-- ( 2.2 , 0.439 )-- ( 2.3 , 0.426 )-- ( 2.4 , 0.414 )-- ( 2.5 , 0.402 )-- ( 2.6 , 0.39 )-- ( 2.7 , 0.379 )-- ( 2.8 , 0.368 )-- ( 2.9 , 0.358 )-- ( 3 , 0.348 )-- ( 3.1 , 0.339 )-- ( 3.2 , 0.33 )-- ( 3.3 , 0.322 )-- ( 3.4 , 0.313 )-- ( 3.5 , 0.305 )-- ( 3.6 , 0.298 )-- ( 3.7 , 0.29 )-- ( 3.8 , 0.283 )-- ( 3.9 , 0.276 )-- ( 4 , 0.269 )-- ( 4.1 , 0.263 )-- ( 4.2 , 0.257 )-- ( 4.3 , 0.251 )-- ( 4.4 , 0.245 )-- ( 4.5 , 0.239 )-- ( 4.6 , 0.234 )-- ( 4.7 , 0.229 )-- ( 4.8 , 0.223 )-- ( 4.9 , 0.219 )-- ( 5 , 0.214 )-- ( 5.1 , 0.209 )-- ( 5.2 , 0.204 )-- ( 5.3 , 0.2 )-- ( 5.4 , 0.196 )-- ( 5.5 , 0.191 )-- ( 5.6 , 0.187 )-- ( 5.7 , 0.183 )-- ( 5.8 , 0.18 )-- ( 5.9 , 0.176 )-- ( 6 , 0.172 )-- ( 6.1 , 0.169 )-- ( 6.2 , 0.165 )-- ( 6.3 , 0.162 )-- ( 6.4 , 0.158 )-- ( 6.5 , 0.155 )-- ( 6.6 , 0.152 )-- ( 6.7 , 0.149 )-- ( 6.8 , 0.146 )-- ( 6.9 , 0.143 )-- ( 7 , 0.14 )-- ( 7.1 , 0.137 )-- ( 7.2 , 0.134 )-- ( 7.3 , 0.132 )-- ( 7.4 , 0.129 )-- ( 7.5 , 0.126 )-- ( 7.6 , 0.124 )-- ( 7.7 , 0.122 )-- ( 7.8 , 0.119 )-- ( 7.9 , 0.117 )-- ( 8 , 0.114 )-- ( 8.1 , 0.112 )-- ( 8.2 , 0.11 )-- ( 8.3 , 0.108 )-- ( 8.4 , 0.106 )-- ( 8.5 , 0.104 )-- ( 8.6 , 0.102 )-- ( 8.7 , 0.1 )-- ( 8.8 , 0.098 )-- ( 8.9 , 0.096 )-- ( 9 , 0.094 )-- ( 9.1 , 0.092 )-- ( 9.2 , 0.09 )-- ( 9.3 , 0.089 )-- ( 9.4 , 0.087 )-- ( 9.5 , 0.085 )-- ( 9.6 , 0.084 )-- ( 9.7 , 0.082 )-- ( 9.8 , 0.08 )-- ( 9.9 , 0.079 )-- ( 10 , 0.077 )-- ( 10.1 , 0.076 )-- ( 10.2 , 0.074 )-- ( 10.3 , 0.073 )-- ( 10.4 , 0.071 )-- ( 10.5 , 0.07 )-- ( 10.6 , 0.069 )-- ( 10.7 , 0.067 )-- ( 10.8 , 0.066 )-- ( 10.9 , 0.065 )-- ( 11 , 0.064 )-- ( 11.1 , 0.062 )-- ( 11.2 , 0.061 )-- ( 11.3 , 0.06 )-- ( 11.4 , 0.059 )-- ( 11.5 , 0.058 )-- ( 11.6 , 0.057 )-- ( 11.7 , 0.056 )-- ( 11.8 , 0.054 )-- ( 11.9 , 0.053 )-- ( 12 , 0.052 )-- ( 12.1 , 0.051 )-- ( 12.2 , 0.05 )-- ( 12.3 , 0.049 )-- ( 12.4 , 0.048 )-- ( 12.5 , 0.048 )-- ( 12.6 , 0.047 )-- ( 12.7 , 0.046 )-- ( 12.8 , 0.045 )-- ( 12.9 , 0.044 )-- ( 13 , 0.043 )-- ( 13.1 , 0.042 )-- ( 13.2 , 0.042 )-- ( 13.3 , 0.041 )-- ( 13.4 , 0.04 )-- ( 13.5 , 0.039 )-- ( 13.6 , 0.038 )-- ( 13.7 , 0.038 )-- ( 13.8 , 0.037 )-- ( 13.9 , 0.036 )-- ( 14 , 0.036 )-- ( 14.1 , 0.035 )-- ( 14.2 , 0.034 )-- ( 14.3 , 0.034 )-- ( 14.4 , 0.033 )-- ( 14.5 , 0.032 )-- ( 14.6 , 0.032 )-- ( 14.7 , 0.031 )-- ( 14.8 , 0.03 )-- ( 14.9 , 0.03 )-- ( 15 , 0.029 )-- ( 15.1 , 0.029 )-- ( 15.2 , 0.028 )-- ( 15.3 , 0.028 )-- ( 15.4 , 0.027 )-- ( 15.5 , 0.027 )-- ( 15.6 , 0.026 )-- ( 15.7 , 0.026 )-- ( 15.8 , 0.025 )-- ( 15.9 , 0.025 )-- ( 16 , 0.024 )-- ( 16.1 , 0.024 )-- ( 16.2 , 0.023 )-- ( 16.3 , 0.023 )-- ( 16.4 , 0.022 )-- ( 16.5 , 0.022 )-- ( 16.6 , 0.022 )-- ( 16.7 , 0.021 )-- ( 16.8 , 0.021 )-- ( 16.9 , 0.02 )-- ( 17 , 0.02 )-- ( 17.1 , 0.02 )-- ( 17.2 , 0.019 )-- ( 17.3 , 0.019 )-- ( 17.4 , 0.018 )-- ( 17.5 , 0.018 )-- ( 17.6 , 0.018 )-- ( 17.7 , 0.017 )-- ( 17.8 , 0.017 )-- ( 17.9 , 0.017 )-- ( 18 , 0.016 )-- ( 18.1 , 0.016 )-- ( 18.2 , 0.016 )-- ( 18.3 , 0.016 )-- ( 18.4 , 0.015 )-- ( 18.5 , 0.015 )-- ( 18.6 , 0.015 )-- ( 18.7 , 0.014 )-- ( 18.8 , 0.014 )-- ( 18.9 , 0.014 )-- ( 19 , 0.014 )-- ( 19.1 , 0.013 )-- ( 19.2 , 0.013 )-- ( 19.3 , 0.013 )-- ( 19.4 , 0.013 )-- ( 19.5 , 0.012 )-- ( 19.6 , 0.012 )-- ( 19.7 , 0.012 )-- ( 19.8 , 0.012 )-- ( 19.9 , 0.011 )-- ( 20 , 0.011 ) ;
 
\draw[blue,thick,densely dashed] ( 0 , 1 )-- ( 0.1 , 0.953 )-- ( 0.2 , 0.911 )-- ( 0.3 , 0.873 )-- ( 0.4 , 0.838 )-- ( 0.5 , 0.805 )-- ( 0.6 , 0.775 )-- ( 0.7 , 0.747 )-- ( 0.8 , 0.721 )-- ( 0.9 , 0.696 )-- ( 1 , 0.673 )-- ( 1.1 , 0.651 )-- ( 1.2 , 0.63 )-- ( 1.3 , 0.61 )-- ( 1.4 , 0.591 )-- ( 1.5 , 0.572 )-- ( 1.6 , 0.555 )-- ( 1.7 , 0.538 )-- ( 1.8 , 0.521 )-- ( 1.9 , 0.506 )-- ( 2 , 0.49 )-- ( 2.1 , 0.476 )-- ( 2.2 , 0.461 )-- ( 2.3 , 0.448 )-- ( 2.4 , 0.434 )-- ( 2.5 , 0.422 )-- ( 2.6 , 0.409 )-- ( 2.7 , 0.397 )-- ( 2.8 , 0.385 )-- ( 2.9 , 0.374 )-- ( 3 , 0.363 )-- ( 3.1 , 0.352 )-- ( 3.2 , 0.342 )-- ( 3.3 , 0.332 )-- ( 3.4 , 0.322 )-- ( 3.5 , 0.313 )-- ( 3.6 , 0.304 )-- ( 3.7 , 0.295 )-- ( 3.8 , 0.286 )-- ( 3.9 , 0.278 )-- ( 4 , 0.27 )-- ( 4.1 , 0.262 )-- ( 4.2 , 0.254 )-- ( 4.3 , 0.247 )-- ( 4.4 , 0.239 )-- ( 4.5 , 0.232 )-- ( 4.6 , 0.226 )-- ( 4.7 , 0.219 )-- ( 4.8 , 0.213 )-- ( 4.9 , 0.206 )-- ( 5 , 0.2 )-- ( 5.1 , 0.195 )-- ( 5.2 , 0.189 )-- ( 5.3 , 0.183 )-- ( 5.4 , 0.178 )-- ( 5.5 , 0.173 )-- ( 5.6 , 0.168 )-- ( 5.7 , 0.163 )-- ( 5.8 , 0.158 )-- ( 5.9 , 0.153 )-- ( 6 , 0.149 )-- ( 6.1 , 0.145 )-- ( 6.2 , 0.14 )-- ( 6.3 , 0.136 )-- ( 6.4 , 0.132 )-- ( 6.5 , 0.128 )-- ( 6.6 , 0.125 )-- ( 6.7 , 0.121 )-- ( 6.8 , 0.117 )-- ( 6.9 , 0.114 )-- ( 7 , 0.111 )-- ( 7.1 , 0.107 )-- ( 7.2 , 0.104 )-- ( 7.3 , 0.101 )-- ( 7.4 , 0.098 )-- ( 7.5 , 0.095 )-- ( 7.6 , 0.093 )-- ( 7.7 , 0.09 )-- ( 7.8 , 0.087 )-- ( 7.9 , 0.085 )-- ( 8 , 0.082 )-- ( 8.1 , 0.08 )-- ( 8.2 , 0.078 )-- ( 8.3 , 0.075 )-- ( 8.4 , 0.073 )-- ( 8.5 , 0.071 )-- ( 8.6 , 0.069 )-- ( 8.7 , 0.067 )-- ( 8.8 , 0.065 )-- ( 8.9 , 0.063 )-- ( 9 , 0.061 )-- ( 9.1 , 0.059 )-- ( 9.2 , 0.058 )-- ( 9.3 , 0.056 )-- ( 9.4 , 0.054 )-- ( 9.5 , 0.053 )-- ( 9.6 , 0.051 )-- ( 9.7 , 0.05 )-- ( 9.8 , 0.048 )-- ( 9.9 , 0.047 )-- ( 10 , 0.045 )-- ( 10.1 , 0.044 )-- ( 10.2 , 0.043 )-- ( 10.3 , 0.042 )-- ( 10.4 , 0.04 )-- ( 10.5 , 0.039 )-- ( 10.6 , 0.038 )-- ( 10.7 , 0.037 )-- ( 10.8 , 0.036 )-- ( 10.9 , 0.035 )-- ( 11 , 0.034 )-- ( 11.1 , 0.033 )-- ( 11.2 , 0.032 )-- ( 11.3 , 0.031 )-- ( 11.4 , 0.03 )-- ( 11.5 , 0.029 )-- ( 11.6 , 0.028 )-- ( 11.7 , 0.027 )-- ( 11.8 , 0.027 )-- ( 11.9 , 0.026 )-- ( 12 , 0.025 )-- ( 12.1 , 0.024 )-- ( 12.2 , 0.024 )-- ( 12.3 , 0.023 )-- ( 12.4 , 0.022 )-- ( 12.5 , 0.022 )-- ( 12.6 , 0.021 )-- ( 12.7 , 0.02 )-- ( 12.8 , 0.02 )-- ( 12.9 , 0.019 )-- ( 13 , 0.019 )-- ( 13.1 , 0.018 )-- ( 13.2 , 0.018 )-- ( 13.3 , 0.017 )-- ( 13.4 , 0.017 )-- ( 13.5 , 0.016 )-- ( 13.6 , 0.016 )-- ( 13.7 , 0.015 )-- ( 13.8 , 0.015 )-- ( 13.9 , 0.014 )-- ( 14 , 0.014 )-- ( 14.1 , 0.013 )-- ( 14.2 , 0.013 )-- ( 14.3 , 0.013 )-- ( 14.4 , 0.012 )-- ( 14.5 , 0.012 )-- ( 14.6 , 0.012 )-- ( 14.7 , 0.011 )-- ( 14.8 , 0.011 )-- ( 14.9 , 0.011 )-- ( 15 , 0.01 )-- ( 15.1 , 0.01 )-- ( 15.2 , 0.01 )-- ( 15.3 , 0.009 )-- ( 15.4 , 0.009 )-- ( 15.5 , 0.009 )-- ( 15.6 , 0.009 )-- ( 15.7 , 0.008 )-- ( 15.8 , 0.008 )-- ( 15.9 , 0.008 )-- ( 16 , 0.008 )-- ( 16.1 , 0.007 )-- ( 16.2 , 0.007 )-- ( 16.3 , 0.007 )-- ( 16.4 , 0.007 )-- ( 16.5 , 0.007 )-- ( 16.6 , 0.006 )-- ( 16.7 , 0.006 )-- ( 16.8 , 0.006 )-- ( 16.9 , 0.006 )-- ( 17 , 0.006 )-- ( 17.1 , 0.006 )-- ( 17.2 , 0.005 )-- ( 17.3 , 0.005 )-- ( 17.4 , 0.005 )-- ( 17.5 , 0.005 )-- ( 17.6 , 0.005 )-- ( 17.7 , 0.005 )-- ( 17.8 , 0.004 )-- ( 17.9 , 0.004 )-- ( 18 , 0.004 )-- ( 18.1 , 0.004 )-- ( 18.2 , 0.004 )-- ( 18.3 , 0.004 )-- ( 18.4 , 0.004 )-- ( 18.5 , 0.004 )-- ( 18.6 , 0.004 )-- ( 18.7 , 0.003 )-- ( 18.8 , 0.003 )-- ( 18.9 , 0.003 )-- ( 19 , 0.003 )-- ( 19.1 , 0.003 )-- ( 19.2 , 0.003 )-- ( 19.3 , 0.003 )-- ( 19.4 , 0.003 )-- ( 19.5 , 0.003 )-- ( 19.6 , 0.003 )-- ( 19.7 , 0.003 )-- ( 19.8 , 0.002 )-- ( 19.9 , 0.002 )-- ( 20 , 0.002 );
 
\draw[orange,thick,dashdotted] ( 0 , 1 )-- ( 0.1 , 0.956 )-- ( 0.2 , 0.914 )-- ( 0.3 , 0.875 )-- ( 0.4 , 0.839 )-- ( 0.5 , 0.805 )-- ( 0.6 , 0.773 )-- ( 0.7 , 0.743 )-- ( 0.8 , 0.714 )-- ( 0.9 , 0.688 )-- ( 1 , 0.662 )-- ( 1.1 , 0.638 )-- ( 1.2 , 0.616 )-- ( 1.3 , 0.594 )-- ( 1.4 , 0.574 )-- ( 1.5 , 0.554 )-- ( 1.6 , 0.536 )-- ( 1.7 , 0.518 )-- ( 1.8 , 0.501 )-- ( 1.9 , 0.485 )-- ( 2 , 0.47 )-- ( 2.1 , 0.455 )-- ( 2.2 , 0.441 )-- ( 2.3 , 0.427 )-- ( 2.4 , 0.414 )-- ( 2.5 , 0.402 )-- ( 2.6 , 0.39 )-- ( 2.7 , 0.378 )-- ( 2.8 , 0.367 )-- ( 2.9 , 0.356 )-- ( 3 , 0.346 )-- ( 3.1 , 0.336 )-- ( 3.2 , 0.326 )-- ( 3.3 , 0.317 )-- ( 3.4 , 0.308 )-- ( 3.5 , 0.299 )-- ( 3.6 , 0.291 )-- ( 3.7 , 0.283 )-- ( 3.8 , 0.275 )-- ( 3.9 , 0.267 )-- ( 4 , 0.26 )-- ( 4.1 , 0.253 )-- ( 4.2 , 0.246 )-- ( 4.3 , 0.239 )-- ( 4.4 , 0.232 )-- ( 4.5 , 0.226 )-- ( 4.6 , 0.22 )-- ( 4.7 , 0.214 )-- ( 4.8 , 0.208 )-- ( 4.9 , 0.202 )-- ( 5 , 0.197 )-- ( 5.1 , 0.192 )-- ( 5.2 , 0.186 )-- ( 5.3 , 0.181 )-- ( 5.4 , 0.176 )-- ( 5.5 , 0.172 )-- ( 5.6 , 0.167 )-- ( 5.7 , 0.163 )-- ( 5.8 , 0.158 )-- ( 5.9 , 0.154 )-- ( 6 , 0.15 )-- ( 6.1 , 0.146 )-- ( 6.2 , 0.142 )-- ( 6.3 , 0.138 )-- ( 6.4 , 0.134 )-- ( 6.5 , 0.131 )-- ( 6.6 , 0.127 )-- ( 6.7 , 0.124 )-- ( 6.8 , 0.121 )-- ( 6.9 , 0.117 )-- ( 7 , 0.114 )-- ( 7.1 , 0.111 )-- ( 7.2 , 0.108 )-- ( 7.3 , 0.105 )-- ( 7.4 , 0.103 )-- ( 7.5 , 0.1 )-- ( 7.6 , 0.097 )-- ( 7.7 , 0.095 )-- ( 7.8 , 0.092 )-- ( 7.9 , 0.09 )-- ( 8 , 0.087 )-- ( 8.1 , 0.085 )-- ( 8.2 , 0.083 )-- ( 8.3 , 0.081 )-- ( 8.4 , 0.078 )-- ( 8.5 , 0.076 )-- ( 8.6 , 0.074 )-- ( 8.7 , 0.072 )-- ( 8.8 , 0.07 )-- ( 8.9 , 0.069 )-- ( 9 , 0.067 )-- ( 9.1 , 0.065 )-- ( 9.2 , 0.063 )-- ( 9.3 , 0.062 )-- ( 9.4 , 0.06 )-- ( 9.5 , 0.058 )-- ( 9.6 , 0.057 )-- ( 9.7 , 0.055 )-- ( 9.8 , 0.054 )-- ( 9.9 , 0.052 )-- ( 10 , 0.051 )-- ( 10.1 , 0.05 )-- ( 10.2 , 0.048 )-- ( 10.3 , 0.047 )-- ( 10.4 , 0.046 )-- ( 10.5 , 0.045 )-- ( 10.6 , 0.043 )-- ( 10.7 , 0.042 )-- ( 10.8 , 0.041 )-- ( 10.9 , 0.04 )-- ( 11 , 0.039 )-- ( 11.1 , 0.038 )-- ( 11.2 , 0.037 )-- ( 11.3 , 0.036 )-- ( 11.4 , 0.035 )-- ( 11.5 , 0.034 )-- ( 11.6 , 0.033 )-- ( 11.7 , 0.032 )-- ( 11.8 , 0.031 )-- ( 11.9 , 0.031 )-- ( 12 , 0.03 )-- ( 12.1 , 0.029 )-- ( 12.2 , 0.028 )-- ( 12.3 , 0.027 )-- ( 12.4 , 0.027 )-- ( 12.5 , 0.026 )-- ( 12.6 , 0.025 )-- ( 12.7 , 0.025 )-- ( 12.8 , 0.024 )-- ( 12.9 , 0.023 )-- ( 13 , 0.023 )-- ( 13.1 , 0.022 )-- ( 13.2 , 0.022 )-- ( 13.3 , 0.021 )-- ( 13.4 , 0.02 )-- ( 13.5 , 0.02 )-- ( 13.6 , 0.019 )-- ( 13.7 , 0.019 )-- ( 13.8 , 0.018 )-- ( 13.9 , 0.018 )-- ( 14 , 0.017 )-- ( 14.1 , 0.017 )-- ( 14.2 , 0.016 )-- ( 14.3 , 0.016 )-- ( 14.4 , 0.016 )-- ( 14.5 , 0.015 )-- ( 14.6 , 0.015 )-- ( 14.7 , 0.014 )-- ( 14.8 , 0.014 )-- ( 14.9 , 0.014 )-- ( 15 , 0.013 )-- ( 15.1 , 0.013 )-- ( 15.2 , 0.013 )-- ( 15.3 , 0.012 )-- ( 15.4 , 0.012 )-- ( 15.5 , 0.012 )-- ( 15.6 , 0.011 )-- ( 15.7 , 0.011 )-- ( 15.8 , 0.011 )-- ( 15.9 , 0.01 )-- ( 16 , 0.01 )-- ( 16.1 , 0.01 )-- ( 16.2 , 0.01 )-- ( 16.3 , 0.009 )-- ( 16.4 , 0.009 )-- ( 16.5 , 0.009 )-- ( 16.6 , 0.009 )-- ( 16.7 , 0.008 )-- ( 16.8 , 0.008 )-- ( 16.9 , 0.008 )-- ( 17 , 0.008 )-- ( 17.1 , 0.008 )-- ( 17.2 , 0.007 )-- ( 17.3 , 0.007 )-- ( 17.4 , 0.007 )-- ( 17.5 , 0.007 )-- ( 17.6 , 0.007 )-- ( 17.7 , 0.006 )-- ( 17.8 , 0.006 )-- ( 17.9 , 0.006 )-- ( 18 , 0.006 )-- ( 18.1 , 0.006 )-- ( 18.2 , 0.006 )-- ( 18.3 , 0.005 )-- ( 18.4 , 0.005 )-- ( 18.5 , 0.005 )-- ( 18.6 , 0.005 )-- ( 18.7 , 0.005 )-- ( 18.8 , 0.005 )-- ( 18.9 , 0.005 )-- ( 19 , 0.005 )-- ( 19.1 , 0.004 )-- ( 19.2 , 0.004 )-- ( 19.3 , 0.004 )-- ( 19.4 , 0.004 )-- ( 19.5 , 0.004 )-- ( 19.6 , 0.004 )-- ( 19.7 , 0.004 )-- ( 19.8 , 0.004 )-- ( 19.9 , 0.004 )-- ( 20 , 0.003 ); 
 
\draw[olive,loosely dashed,thick] ( 0 , 1 )-- ( 0.1 , 0.957 )-- ( 0.2 , 0.917 )-- ( 0.3 , 0.879 )-- ( 0.4 , 0.843 )-- ( 0.5 , 0.809 )-- ( 0.6 , 0.778 )-- ( 0.7 , 0.747 )-- ( 0.8 , 0.719 )-- ( 0.9 , 0.692 )-- ( 1 , 0.666 )-- ( 1.1 , 0.642 )-- ( 1.2 , 0.619 )-- ( 1.3 , 0.597 )-- ( 1.4 , 0.576 )-- ( 1.5 , 0.556 )-- ( 1.6 , 0.538 )-- ( 1.7 , 0.52 )-- ( 1.8 , 0.503 )-- ( 1.9 , 0.487 )-- ( 2 , 0.471 )-- ( 2.1 , 0.456 )-- ( 2.2 , 0.442 )-- ( 2.3 , 0.429 )-- ( 2.4 , 0.416 )-- ( 2.5 , 0.404 )-- ( 2.6 , 0.392 )-- ( 2.7 , 0.38 )-- ( 2.8 , 0.37 )-- ( 2.9 , 0.359 )-- ( 3 , 0.349 )-- ( 3.1 , 0.339 )-- ( 3.2 , 0.33 )-- ( 3.3 , 0.321 )-- ( 3.4 , 0.313 )-- ( 3.5 , 0.304 )-- ( 3.6 , 0.296 )-- ( 3.7 , 0.289 )-- ( 3.8 , 0.281 )-- ( 3.9 , 0.274 )-- ( 4 , 0.267 )-- ( 4.1 , 0.261 )-- ( 4.2 , 0.254 )-- ( 4.3 , 0.248 )-- ( 4.4 , 0.242 )-- ( 4.5 , 0.236 )-- ( 4.6 , 0.23 )-- ( 4.7 , 0.225 )-- ( 4.8 , 0.219 )-- ( 4.9 , 0.214 )-- ( 5 , 0.209 )-- ( 5.1 , 0.204 )-- ( 5.2 , 0.199 )-- ( 5.3 , 0.195 )-- ( 5.4 , 0.19 )-- ( 5.5 , 0.186 )-- ( 5.6 , 0.182 )-- ( 5.7 , 0.177 )-- ( 5.8 , 0.173 )-- ( 5.9 , 0.17 )-- ( 6 , 0.166 )-- ( 6.1 , 0.162 )-- ( 6.2 , 0.158 )-- ( 6.3 , 0.155 )-- ( 6.4 , 0.151 )-- ( 6.5 , 0.148 )-- ( 6.6 , 0.145 )-- ( 6.7 , 0.142 )-- ( 6.8 , 0.138 )-- ( 6.9 , 0.135 )-- ( 7 , 0.132 )-- ( 7.1 , 0.13 )-- ( 7.2 , 0.127 )-- ( 7.3 , 0.124 )-- ( 7.4 , 0.121 )-- ( 7.5 , 0.119 )-- ( 7.6 , 0.116 )-- ( 7.7 , 0.114 )-- ( 7.8 , 0.111 )-- ( 7.9 , 0.109 )-- ( 8 , 0.106 )-- ( 8.1 , 0.104 )-- ( 8.2 , 0.102 )-- ( 8.3 , 0.1 )-- ( 8.4 , 0.098 )-- ( 8.5 , 0.096 )-- ( 8.6 , 0.093 )-- ( 8.7 , 0.091 )-- ( 8.8 , 0.09 )-- ( 8.9 , 0.088 )-- ( 9 , 0.086 )-- ( 9.1 , 0.084 )-- ( 9.2 , 0.082 )-- ( 9.3 , 0.08 )-- ( 9.4 , 0.079 )-- ( 9.5 , 0.077 )-- ( 9.6 , 0.075 )-- ( 9.7 , 0.074 )-- ( 9.8 , 0.072 )-- ( 9.9 , 0.071 )-- ( 10 , 0.069 )-- ( 10.1 , 0.068 )-- ( 10.2 , 0.066 )-- ( 10.3 , 0.065 )-- ( 10.4 , 0.064 )-- ( 10.5 , 0.062 )-- ( 10.6 , 0.061 )-- ( 10.7 , 0.06 )-- ( 10.8 , 0.058 )-- ( 10.9 , 0.057 )-- ( 11 , 0.056 )-- ( 11.1 , 0.055 )-- ( 11.2 , 0.054 )-- ( 11.3 , 0.052 )-- ( 11.4 , 0.051 )-- ( 11.5 , 0.05 )-- ( 11.6 , 0.049 )-- ( 11.7 , 0.048 )-- ( 11.8 , 0.047 )-- ( 11.9 , 0.046 )-- ( 12 , 0.045 )-- ( 12.1 , 0.044 )-- ( 12.2 , 0.043 )-- ( 12.3 , 0.042 )-- ( 12.4 , 0.042 )-- ( 12.5 , 0.041 )-- ( 12.6 , 0.04 )-- ( 12.7 , 0.039 )-- ( 12.8 , 0.038 )-- ( 12.9 , 0.037 )-- ( 13 , 0.037 )-- ( 13.1 , 0.036 )-- ( 13.2 , 0.035 )-- ( 13.3 , 0.034 )-- ( 13.4 , 0.034 )-- ( 13.5 , 0.033 )-- ( 13.6 , 0.032 )-- ( 13.7 , 0.032 )-- ( 13.8 , 0.031 )-- ( 13.9 , 0.03 )-- ( 14 , 0.03 )-- ( 14.1 , 0.029 )-- ( 14.2 , 0.028 )-- ( 14.3 , 0.028 )-- ( 14.4 , 0.027 )-- ( 14.5 , 0.027 )-- ( 14.6 , 0.026 )-- ( 14.7 , 0.026 )-- ( 14.8 , 0.025 )-- ( 14.9 , 0.024 )-- ( 15 , 0.024 )-- ( 15.1 , 0.023 )-- ( 15.2 , 0.023 )-- ( 15.3 , 0.022 )-- ( 15.4 , 0.022 )-- ( 15.5 , 0.022 )-- ( 15.6 , 0.021 )-- ( 15.7 , 0.021 )-- ( 15.8 , 0.02 )-- ( 15.9 , 0.02 )-- ( 16 , 0.019 )-- ( 16.1 , 0.019 )-- ( 16.2 , 0.019 )-- ( 16.3 , 0.018 )-- ( 16.4 , 0.018 )-- ( 16.5 , 0.017 )-- ( 16.6 , 0.017 )-- ( 16.7 , 0.017 )-- ( 16.8 , 0.016 )-- ( 16.9 , 0.016 )-- ( 17 , 0.016 )-- ( 17.1 , 0.015 )-- ( 17.2 , 0.015 )-- ( 17.3 , 0.015 )-- ( 17.4 , 0.014 )-- ( 17.5 , 0.014 )-- ( 17.6 , 0.014 )-- ( 17.7 , 0.014 )-- ( 17.8 , 0.013 )-- ( 17.9 , 0.013 )-- ( 18 , 0.013 )-- ( 18.1 , 0.012 )-- ( 18.2 , 0.012 )-- ( 18.3 , 0.012 )-- ( 18.4 , 0.012 )-- ( 18.5 , 0.011 )-- ( 18.6 , 0.011 )-- ( 18.7 , 0.011 )-- ( 18.8 , 0.011 )-- ( 18.9 , 0.01 )-- ( 19 , 0.01 )-- ( 19.1 , 0.01 )-- ( 19.2 , 0.01 )-- ( 19.3 , 0.01 )-- ( 19.4 , 0.009 )-- ( 19.5 , 0.009 )-- ( 19.6 , 0.009 )-- ( 19.7 , 0.009 )-- ( 19.8 , 0.009 )-- ( 19.9 , 0.008 )-- ( 20 , 0.008 ); 
 \end{tikzpicture}
 \end{subfigure}
 
 \begin{tikzpicture}
    \begin{customlegend}
    [legend entries={ $\theta_{0}$,$\hat{\theta}_{n,1}$,$\hat{\theta}_{n,2}$,$\hat{\theta}_{n,3}$,$\hat{\theta}_{n,4}$},legend columns=-1,legend style={/tikz/every even column/.append style={column sep=0.8cm}}]   
    \addlegendimage{black} 
    \addlegendimage{red,densely dotted}    
    \addlegendimage{blue,densely dashed}    
    \addlegendimage{orange,dashdotted}    
    \addlegendimage{olive, loosely dashed}  
    \end{customlegend}
\end{tikzpicture}

\caption{Tail distribution of HE customer patience compared with the estimated counterparts. The true values of the parameters $(\alpha_1,\alpha_2,\beta_1,\beta_2)$ are given by $(0.7,0.3,0.25,1)$. The fitted distributions $H_{\hat{\theta}_n^{(i)}}(t)$, with $i=1,2,3,4$, are based on independent samples of $n=10\,000$ observations with the HE MLE for $p=2$. The MLE parameters of the fitted functions are: $\hat{\theta}_{n,1}=( 0.529,0.471,0.193, 0.736),$ $\hat{\theta}_{n,2}=( 0.884, 0.116,0.297,1.969)$, $\hat{\theta}_{n,3}=( 0.75,0.25,0.269,1.031)$, and $\hat{\theta}_{n,4}=(0.573,0.427,0.212,0.746)$. The arrival rate in the example is $\lambda=1$, and the service requirements are Gamma distributed with parameters $(3,2)$.} \label{fig:H_MLE_fit}
\end{figure}

{In the remainder of this subsection we further study the performance of the HE MLE. Our experiments lead to the conclusion that HE MLE provides a good fit even in cases where the true distribution is {\it not} HE (i.e., in misspecified scenarios).}
{The examples mainly focus on {fitting the target distribution by a (conventional) HE distribution, rather than a GHE distribution, by computing the MLE}. 
The motivation behind this choice is that, when working with a GHE distribution, there is the additional complication that the estimated parameters may not yield a proper cdf. More specifically, the space of valid parameters is hard to characterize, let alone to be coded in terms of constraints of an optimization problem. Nevertheless, we would still like to {exploit} the extra versatility that the GHE class offers, so as to improve the fit (when compared to the HE MLE). This is especially {relevant for scenarios in which the model is misspecified, bearing in mind that, as mentioned earlier, in principle any distribution can be arbitrarily accurately approximated by a GHE distribution.}

With the above considerations in mind, we implemented the following (seemingly na\"{\i}ve) heuristic model selection method. We generate various {\it random} weight vectors ${\boldsymbol \alpha}$ (equipped with a random size $p\in{\mathbb N}$). Then, for each of them, we optimize the likelihood function over ${\boldsymbol \beta}$ using only step (a) above. The best model is then chosen by comparing the various combinations of ${\boldsymbol \alpha}$ and $p$ relying on the Akaike Information Criterion (AIC). The AIC encompasses both the log-likelihood and, in order to avoid overfitting, a penalty for the number of parameters \cite{BUR}. Because step (a) can be performed highly efficiently, a main advantage of this heuristic is that it works very fast, even for bigger values of $p$, thus providing us with a technique to fit general continuous distributions. Recall from \cite{BH1986} that if we order the components of the weight vector (i.e., $\alpha_1>\alpha_2>\ldots>\alpha_p$), then a sufficient condition for $({\boldsymbol \alpha},{\boldsymbol \beta})$ to yield a cdf is that $\alpha_1>0$ and $\sum_{i=1}^p \alpha_i \beta_i>0$; we use this principle to select feasible solutions produced by the above heuristic.}

Figures \ref{fig:H_MLE_hyper}-\ref{fig:H_gamma} illustrate the estimated survival function of using the HE MLE with varying weights for three examples of patience-level distributions. In the first example the patience-level distribution is indeed HE with $p=4$, whereas the second and third example are misspecified (corresponding to a lognormal and Gamma patience-level distribution, respectively). For each distribution the MLE was computed using $p=1,$ $2,$ and $4$ (where $p=3$ has been left out because it is barely distinguishable from $p=4$).
\begin{itemize}
\item[$\circ$]
As was the case in the setting of Figure \ref{fig:H_MLE_fit}, the experiments corresponding to the HE patience-level distribution show that, even for $p=4$ and as many as $n=10\,000$ waiting-time observations, the MLE does not accurately capture the true parameters. Nevertheless, the fit in terms of the cdf, as displayed in Figure \ref{fig:H_MLE_hyper}, is remarkably good. We in addition performed the GHE fitting heuristic, which also provide a highly accurate fit and is considerably faster to compute than the MLE for $p=4$. 
\item[$\circ$]
The case of lognormal patience, as illustrated by Figure \ref{fig:H_MLE_lognormal}, shows that the fit is quite good for all estimators except for $p=1$ (i.e., an exponential distribution). 
\item[$\circ$]Figure \ref{fig:H_gamma} presents the fitted distributions for Gamma patience. The fit of the HE MLE is decent, but, importantly, in this case the GHE heuristic performs considerably better than the HE MLE (for $p=1,2,4$). The GHE heuristic selects a model with $p=10$ weights.
\end{itemize}

\begin{figure}[h]
\centering
\begin{subfigure}{.48\linewidth}
\begin{tikzpicture}[xscale=0.35,yscale=3.5]
 \def\xmin{0}
 \def\xmax{20}
 \def\ymin{0}
 \def\ymax{1.05}
 \draw[->] (\xmin,\ymin) -- (\xmax,\ymin) node[right] {$t$} ;
 \draw[->] (\xmin,\ymin) -- (\xmin,\ymax)  ;
 \foreach \x in {0,5,10,20} {
 \node at (\x,\ymin-0.05) [below] {\x};
 \draw[-] (\x,\ymin) -- (\x,{\ymin-0.05});
 }
\foreach \y in {0,0.5,1} {
 \node at (\xmin,\y) [left] {\y};
 \draw[-] (\xmin,\y) -- ({\xmin-0.05},\y);
 }


\draw[black] ( 0 , 1 )-- ( 0.1 , 0.932 )-- ( 0.2 , 0.872 )-- ( 0.3 , 0.817 )-- ( 0.4 , 0.768 )-- ( 0.5 , 0.724 )-- ( 0.6 , 0.684 )-- ( 0.7 , 0.648 )-- ( 0.8 , 0.614 )-- ( 0.9 , 0.584 )-- ( 1 , 0.556 )-- ( 1.1 , 0.531 )-- ( 1.2 , 0.507 )-- ( 1.3 , 0.485 )-- ( 1.4 , 0.465 )-- ( 1.5 , 0.446 )-- ( 1.6 , 0.429 )-- ( 1.7 , 0.412 )-- ( 1.8 , 0.397 )-- ( 1.9 , 0.383 )-- ( 2 , 0.369 )-- ( 2.1 , 0.356 )-- ( 2.2 , 0.344 )-- ( 2.3 , 0.333 )-- ( 2.4 , 0.322 )-- ( 2.5 , 0.312 )-- ( 2.6 , 0.302 )-- ( 2.7 , 0.293 )-- ( 2.8 , 0.284 )-- ( 2.9 , 0.276 )-- ( 3 , 0.268 )-- ( 3.1 , 0.26 )-- ( 3.2 , 0.252 )-- ( 3.3 , 0.245 )-- ( 3.4 , 0.239 )-- ( 3.5 , 0.232 )-- ( 3.6 , 0.226 )-- ( 3.7 , 0.22 )-- ( 3.8 , 0.214 )-- ( 3.9 , 0.209 )-- ( 4 , 0.203 )-- ( 4.1 , 0.198 )-- ( 4.2 , 0.193 )-- ( 4.3 , 0.188 )-- ( 4.4 , 0.184 )-- ( 4.5 , 0.179 )-- ( 4.6 , 0.175 )-- ( 4.7 , 0.171 )-- ( 4.8 , 0.167 )-- ( 4.9 , 0.163 )-- ( 5 , 0.159 )-- ( 5.1 , 0.156 )-- ( 5.2 , 0.152 )-- ( 5.3 , 0.149 )-- ( 5.4 , 0.145 )-- ( 5.5 , 0.142 )-- ( 5.6 , 0.139 )-- ( 5.7 , 0.136 )-- ( 5.8 , 0.133 )-- ( 5.9 , 0.131 )-- ( 6 , 0.128 )-- ( 6.1 , 0.125 )-- ( 6.2 , 0.123 )-- ( 6.3 , 0.12 )-- ( 6.4 , 0.118 )-- ( 6.5 , 0.116 )-- ( 6.6 , 0.113 )-- ( 6.7 , 0.111 )-- ( 6.8 , 0.109 )-- ( 6.9 , 0.107 )-- ( 7 , 0.105 )-- ( 7.1 , 0.103 )-- ( 7.2 , 0.101 )-- ( 7.3 , 0.099 )-- ( 7.4 , 0.098 )-- ( 7.5 , 0.096 )-- ( 7.6 , 0.094 )-- ( 7.7 , 0.092 )-- ( 7.8 , 0.091 )-- ( 7.9 , 0.089 )-- ( 8 , 0.088 )-- ( 8.1 , 0.086 )-- ( 8.2 , 0.085 )-- ( 8.3 , 0.083 )-- ( 8.4 , 0.082 )-- ( 8.5 , 0.081 )-- ( 8.6 , 0.08 )-- ( 8.7 , 0.078 )-- ( 8.8 , 0.077 )-- ( 8.9 , 0.076 )-- ( 9 , 0.075 )-- ( 9.1 , 0.074 )-- ( 9.2 , 0.072 )-- ( 9.3 , 0.071 )-- ( 9.4 , 0.07 )-- ( 9.5 , 0.069 )-- ( 9.6 , 0.068 )-- ( 9.7 , 0.067 )-- ( 9.8 , 0.066 )-- ( 9.9 , 0.065 )-- ( 10 , 0.064 )-- ( 10.1 , 0.063 )-- ( 10.2 , 0.063 )-- ( 10.3 , 0.062 )-- ( 10.4 , 0.061 )-- ( 10.5 , 0.06 )-- ( 10.6 , 0.059 )-- ( 10.7 , 0.058 )-- ( 10.8 , 0.058 )-- ( 10.9 , 0.057 )-- ( 11 , 0.056 )-- ( 11.1 , 0.055 )-- ( 11.2 , 0.055 )-- ( 11.3 , 0.054 )-- ( 11.4 , 0.053 )-- ( 11.5 , 0.053 )-- ( 11.6 , 0.052 )-- ( 11.7 , 0.051 )-- ( 11.8 , 0.051 )-- ( 11.9 , 0.05 )-- ( 12 , 0.049 )-- ( 12.1 , 0.049 )-- ( 12.2 , 0.048 )-- ( 12.3 , 0.047 )-- ( 12.4 , 0.047 )-- ( 12.5 , 0.046 )-- ( 12.6 , 0.046 )-- ( 12.7 , 0.045 )-- ( 12.8 , 0.045 )-- ( 12.9 , 0.044 )-- ( 13 , 0.044 )-- ( 13.1 , 0.043 )-- ( 13.2 , 0.043 )-- ( 13.3 , 0.042 )-- ( 13.4 , 0.042 )-- ( 13.5 , 0.041 )-- ( 13.6 , 0.041 )-- ( 13.7 , 0.04 )-- ( 13.8 , 0.04 )-- ( 13.9 , 0.039 )-- ( 14 , 0.039 )-- ( 14.1 , 0.038 )-- ( 14.2 , 0.038 )-- ( 14.3 , 0.038 )-- ( 14.4 , 0.037 )-- ( 14.5 , 0.037 )-- ( 14.6 , 0.036 )-- ( 14.7 , 0.036 )-- ( 14.8 , 0.035 )-- ( 14.9 , 0.035 )-- ( 15 , 0.035 )-- ( 15.1 , 0.034 )-- ( 15.2 , 0.034 )-- ( 15.3 , 0.034 )-- ( 15.4 , 0.033 )-- ( 15.5 , 0.033 )-- ( 15.6 , 0.032 )-- ( 15.7 , 0.032 )-- ( 15.8 , 0.032 )-- ( 15.9 , 0.031 )-- ( 16 , 0.031 )-- ( 16.1 , 0.031 )-- ( 16.2 , 0.03 )-- ( 16.3 , 0.03 )-- ( 16.4 , 0.03 )-- ( 16.5 , 0.029 )-- ( 16.6 , 0.029 )-- ( 16.7 , 0.029 )-- ( 16.8 , 0.029 )-- ( 16.9 , 0.028 )-- ( 17 , 0.028 )-- ( 17.1 , 0.028 )-- ( 17.2 , 0.027 )-- ( 17.3 , 0.027 )-- ( 17.4 , 0.027 )-- ( 17.5 , 0.027 )-- ( 17.6 , 0.026 )-- ( 17.7 , 0.026 )-- ( 17.8 , 0.026 )-- ( 17.9 , 0.025 )-- ( 18 , 0.025 )-- ( 18.1 , 0.025 )-- ( 18.2 , 0.025 )-- ( 18.3 , 0.024 )-- ( 18.4 , 0.024 )-- ( 18.5 , 0.024 )-- ( 18.6 , 0.024 )-- ( 18.7 , 0.023 )-- ( 18.8 , 0.023 )-- ( 18.9 , 0.023 )-- ( 19 , 0.023 )-- ( 19.1 , 0.022 )-- ( 19.2 , 0.022 )-- ( 19.3 , 0.022 )-- ( 19.4 , 0.022 )-- ( 19.5 , 0.022 )-- ( 19.6 , 0.021 )-- ( 19.7 , 0.021 )-- ( 19.8 , 0.021 )-- ( 19.9 , 0.021 )-- ( 20 , 0.02 );

 \draw[red,dotted,thick] ( 0 , 1 )-- ( 0.1 , 0.958 )-- ( 0.2 , 0.917 )-- ( 0.3 , 0.878 )-- ( 0.4 , 0.841 )-- ( 0.5 , 0.806 )-- ( 0.6 , 0.772 )-- ( 0.7 , 0.739 )-- ( 0.8 , 0.708 )-- ( 0.9 , 0.678 )-- ( 1 , 0.649 )-- ( 1.1 , 0.622 )-- ( 1.2 , 0.595 )-- ( 1.3 , 0.57 )-- ( 1.4 , 0.546 )-- ( 1.5 , 0.523 )-- ( 1.6 , 0.501 )-- ( 1.7 , 0.48 )-- ( 1.8 , 0.459 )-- ( 1.9 , 0.44 )-- ( 2 , 0.421 )-- ( 2.1 , 0.404 )-- ( 2.2 , 0.386 )-- ( 2.3 , 0.37 )-- ( 2.4 , 0.354 )-- ( 2.5 , 0.34 )-- ( 2.6 , 0.325 )-- ( 2.7 , 0.311 )-- ( 2.8 , 0.298 )-- ( 2.9 , 0.286 )-- ( 3 , 0.274 )-- ( 3.1 , 0.262 )-- ( 3.2 , 0.251 )-- ( 3.3 , 0.24 )-- ( 3.4 , 0.23 )-- ( 3.5 , 0.22 )-- ( 3.6 , 0.211 )-- ( 3.7 , 0.202 )-- ( 3.8 , 0.194 )-- ( 3.9 , 0.185 )-- ( 4 , 0.178 )-- ( 4.1 , 0.17 )-- ( 4.2 , 0.163 )-- ( 4.3 , 0.156 )-- ( 4.4 , 0.149 )-- ( 4.5 , 0.143 )-- ( 4.6 , 0.137 )-- ( 4.7 , 0.131 )-- ( 4.8 , 0.126 )-- ( 4.9 , 0.12 )-- ( 5 , 0.115 )-- ( 5.1 , 0.11 )-- ( 5.2 , 0.106 )-- ( 5.3 , 0.101 )-- ( 5.4 , 0.097 )-- ( 5.5 , 0.093 )-- ( 5.6 , 0.089 )-- ( 5.7 , 0.085 )-- ( 5.8 , 0.082 )-- ( 5.9 , 0.078 )-- ( 6 , 0.075 )-- ( 6.1 , 0.072 )-- ( 6.2 , 0.069 )-- ( 6.3 , 0.066 )-- ( 6.4 , 0.063 )-- ( 6.5 , 0.06 )-- ( 6.6 , 0.058 )-- ( 6.7 , 0.055 )-- ( 6.8 , 0.053 )-- ( 6.9 , 0.051 )-- ( 7 , 0.049 )-- ( 7.1 , 0.047 )-- ( 7.2 , 0.045 )-- ( 7.3 , 0.043 )-- ( 7.4 , 0.041 )-- ( 7.5 , 0.039 )-- ( 7.6 , 0.037 )-- ( 7.7 , 0.036 )-- ( 7.8 , 0.034 )-- ( 7.9 , 0.033 )-- ( 8 , 0.032 )-- ( 8.1 , 0.03 )-- ( 8.2 , 0.029 )-- ( 8.3 , 0.028 )-- ( 8.4 , 0.027 )-- ( 8.5 , 0.025 )-- ( 8.6 , 0.024 )-- ( 8.7 , 0.023 )-- ( 8.8 , 0.022 )-- ( 8.9 , 0.021 )-- ( 9 , 0.02 )-- ( 9.1 , 0.02 )-- ( 9.2 , 0.019 )-- ( 9.3 , 0.018 )-- ( 9.4 , 0.017 )-- ( 9.5 , 0.016 )-- ( 9.6 , 0.016 )-- ( 9.7 , 0.015 )-- ( 9.8 , 0.014 )-- ( 9.9 , 0.014 )-- ( 10 , 0.013 )-- ( 10.1 , 0.013 )-- ( 10.2 , 0.012 )-- ( 10.3 , 0.012 )-- ( 10.4 , 0.011 )-- ( 10.5 , 0.011 )-- ( 10.6 , 0.01 )-- ( 10.7 , 0.01 )-- ( 10.8 , 0.009 )-- ( 10.9 , 0.009 )-- ( 11 , 0.009 )-- ( 11.1 , 0.008 )-- ( 11.2 , 0.008 )-- ( 11.3 , 0.008 )-- ( 11.4 , 0.007 )-- ( 11.5 , 0.007 )-- ( 11.6 , 0.007 )-- ( 11.7 , 0.006 )-- ( 11.8 , 0.006 )-- ( 11.9 , 0.006 )-- ( 12 , 0.006 )-- ( 12.1 , 0.005 )-- ( 12.2 , 0.005 )-- ( 12.3 , 0.005 )-- ( 12.4 , 0.005 )-- ( 12.5 , 0.005 )-- ( 12.6 , 0.004 )-- ( 12.7 , 0.004 )-- ( 12.8 , 0.004 )-- ( 12.9 , 0.004 )-- ( 13 , 0.004 )-- ( 13.1 , 0.003 )-- ( 13.2 , 0.003 )-- ( 13.3 , 0.003 )-- ( 13.4 , 0.003 )-- ( 13.5 , 0.003 )-- ( 13.6 , 0.003 )-- ( 13.7 , 0.003 )-- ( 13.8 , 0.003 )-- ( 13.9 , 0.002 )-- ( 14 , 0.002 )-- ( 14.1 , 0.002 )-- ( 14.2 , 0.002 )-- ( 14.3 , 0.002 )-- ( 14.4 , 0.002 )-- ( 14.5 , 0.002 )-- ( 14.6 , 0.002 )-- ( 14.7 , 0.002 )-- ( 14.8 , 0.002 )-- ( 14.9 , 0.002 )-- ( 15 , 0.002 )-- ( 15.1 , 0.001 )-- ( 15.2 , 0.001 )-- ( 15.3 , 0.001 )-- ( 15.4 , 0.001 )-- ( 15.5 , 0.001 )-- ( 15.6 , 0.001 )-- ( 15.7 , 0.001 )-- ( 15.8 , 0.001 )-- ( 15.9 , 0.001 )-- ( 16 , 0.001 )-- ( 16.1 , 0.001 )-- ( 16.2 , 0.001 )-- ( 16.3 , 0.001 )-- ( 16.4 , 0.001 )-- ( 16.5 , 0.001 )-- ( 16.6 , 0.001 )-- ( 16.7 , 0.001 )-- ( 16.8 , 0.001 )-- ( 16.9 , 0.001 )-- ( 17 , 0.001 )-- ( 17.1 , 0.001 )-- ( 17.2 , 0.001 )-- ( 17.3 , 0.001 )-- ( 17.4 , 0.001 )-- ( 17.5 , 0.001 )-- ( 17.6 , 0 )-- ( 17.7 , 0 )-- ( 17.8 , 0 )-- ( 17.9 , 0 )-- ( 18 , 0 )-- ( 18.1 , 0 )-- ( 18.2 , 0 )-- ( 18.3 , 0 )-- ( 18.4 , 0 )-- ( 18.5 , 0 )-- ( 18.6 , 0 )-- ( 18.7 , 0 )-- ( 18.8 , 0 )-- ( 18.9 , 0 )-- ( 19 , 0 )-- ( 19.1 , 0 )-- ( 19.2 , 0 )-- ( 19.3 , 0 )-- ( 19.4 , 0 )-- ( 19.5 , 0 )-- ( 19.6 , 0 )-- ( 19.7 , 0 )-- ( 19.8 , 0 )-- ( 19.9 , 0 )-- ( 20 , 0 );
 
\draw[blue,thick, densely dashed] ( 0 , 1 )-- ( 0.1 , 0.933 )-- ( 0.2 , 0.872 )-- ( 0.3 , 0.817 )-- ( 0.4 , 0.768 )-- ( 0.5 , 0.724 )-- ( 0.6 , 0.683 )-- ( 0.7 , 0.647 )-- ( 0.8 , 0.613 )-- ( 0.9 , 0.583 )-- ( 1 , 0.555 )-- ( 1.1 , 0.529 )-- ( 1.2 , 0.506 )-- ( 1.3 , 0.484 )-- ( 1.4 , 0.464 )-- ( 1.5 , 0.445 )-- ( 1.6 , 0.428 )-- ( 1.7 , 0.412 )-- ( 1.8 , 0.396 )-- ( 1.9 , 0.382 )-- ( 2 , 0.369 )-- ( 2.1 , 0.356 )-- ( 2.2 , 0.344 )-- ( 2.3 , 0.333 )-- ( 2.4 , 0.323 )-- ( 2.5 , 0.313 )-- ( 2.6 , 0.303 )-- ( 2.7 , 0.294 )-- ( 2.8 , 0.285 )-- ( 2.9 , 0.277 )-- ( 3 , 0.269 )-- ( 3.1 , 0.261 )-- ( 3.2 , 0.253 )-- ( 3.3 , 0.246 )-- ( 3.4 , 0.239 )-- ( 3.5 , 0.233 )-- ( 3.6 , 0.226 )-- ( 3.7 , 0.22 )-- ( 3.8 , 0.214 )-- ( 3.9 , 0.208 )-- ( 4 , 0.203 )-- ( 4.1 , 0.197 )-- ( 4.2 , 0.192 )-- ( 4.3 , 0.187 )-- ( 4.4 , 0.182 )-- ( 4.5 , 0.177 )-- ( 4.6 , 0.173 )-- ( 4.7 , 0.168 )-- ( 4.8 , 0.164 )-- ( 4.9 , 0.159 )-- ( 5 , 0.155 )-- ( 5.1 , 0.151 )-- ( 5.2 , 0.147 )-- ( 5.3 , 0.144 )-- ( 5.4 , 0.14 )-- ( 5.5 , 0.136 )-- ( 5.6 , 0.133 )-- ( 5.7 , 0.129 )-- ( 5.8 , 0.126 )-- ( 5.9 , 0.123 )-- ( 6 , 0.12 )-- ( 6.1 , 0.116 )-- ( 6.2 , 0.113 )-- ( 6.3 , 0.111 )-- ( 6.4 , 0.108 )-- ( 6.5 , 0.105 )-- ( 6.6 , 0.102 )-- ( 6.7 , 0.1 )-- ( 6.8 , 0.097 )-- ( 6.9 , 0.095 )-- ( 7 , 0.092 )-- ( 7.1 , 0.09 )-- ( 7.2 , 0.088 )-- ( 7.3 , 0.085 )-- ( 7.4 , 0.083 )-- ( 7.5 , 0.081 )-- ( 7.6 , 0.079 )-- ( 7.7 , 0.077 )-- ( 7.8 , 0.075 )-- ( 7.9 , 0.073 )-- ( 8 , 0.071 )-- ( 8.1 , 0.069 )-- ( 8.2 , 0.068 )-- ( 8.3 , 0.066 )-- ( 8.4 , 0.064 )-- ( 8.5 , 0.062 )-- ( 8.6 , 0.061 )-- ( 8.7 , 0.059 )-- ( 8.8 , 0.058 )-- ( 8.9 , 0.056 )-- ( 9 , 0.055 )-- ( 9.1 , 0.053 )-- ( 9.2 , 0.052 )-- ( 9.3 , 0.051 )-- ( 9.4 , 0.049 )-- ( 9.5 , 0.048 )-- ( 9.6 , 0.047 )-- ( 9.7 , 0.046 )-- ( 9.8 , 0.045 )-- ( 9.9 , 0.043 )-- ( 10 , 0.042 )-- ( 10.1 , 0.041 )-- ( 10.2 , 0.04 )-- ( 10.3 , 0.039 )-- ( 10.4 , 0.038 )-- ( 10.5 , 0.037 )-- ( 10.6 , 0.036 )-- ( 10.7 , 0.035 )-- ( 10.8 , 0.034 )-- ( 10.9 , 0.034 )-- ( 11 , 0.033 )-- ( 11.1 , 0.032 )-- ( 11.2 , 0.031 )-- ( 11.3 , 0.03 )-- ( 11.4 , 0.029 )-- ( 11.5 , 0.029 )-- ( 11.6 , 0.028 )-- ( 11.7 , 0.027 )-- ( 11.8 , 0.027 )-- ( 11.9 , 0.026 )-- ( 12 , 0.025 )-- ( 12.1 , 0.025 )-- ( 12.2 , 0.024 )-- ( 12.3 , 0.023 )-- ( 12.4 , 0.023 )-- ( 12.5 , 0.022 )-- ( 12.6 , 0.022 )-- ( 12.7 , 0.021 )-- ( 12.8 , 0.021 )-- ( 12.9 , 0.02 )-- ( 13 , 0.019 )-- ( 13.1 , 0.019 )-- ( 13.2 , 0.018 )-- ( 13.3 , 0.018 )-- ( 13.4 , 0.018 )-- ( 13.5 , 0.017 )-- ( 13.6 , 0.017 )-- ( 13.7 , 0.016 )-- ( 13.8 , 0.016 )-- ( 13.9 , 0.015 )-- ( 14 , 0.015 )-- ( 14.1 , 0.015 )-- ( 14.2 , 0.014 )-- ( 14.3 , 0.014 )-- ( 14.4 , 0.014 )-- ( 14.5 , 0.013 )-- ( 14.6 , 0.013 )-- ( 14.7 , 0.013 )-- ( 14.8 , 0.012 )-- ( 14.9 , 0.012 )-- ( 15 , 0.012 )-- ( 15.1 , 0.011 )-- ( 15.2 , 0.011 )-- ( 15.3 , 0.011 )-- ( 15.4 , 0.01 )-- ( 15.5 , 0.01 )-- ( 15.6 , 0.01 )-- ( 15.7 , 0.01 )-- ( 15.8 , 0.009 )-- ( 15.9 , 0.009 )-- ( 16 , 0.009 )-- ( 16.1 , 0.009 )-- ( 16.2 , 0.009 )-- ( 16.3 , 0.008 )-- ( 16.4 , 0.008 )-- ( 16.5 , 0.008 )-- ( 16.6 , 0.008 )-- ( 16.7 , 0.007 )-- ( 16.8 , 0.007 )-- ( 16.9 , 0.007 )-- ( 17 , 0.007 )-- ( 17.1 , 0.007 )-- ( 17.2 , 0.007 )-- ( 17.3 , 0.006 )-- ( 17.4 , 0.006 )-- ( 17.5 , 0.006 )-- ( 17.6 , 0.006 )-- ( 17.7 , 0.006 )-- ( 17.8 , 0.006 )-- ( 17.9 , 0.005 )-- ( 18 , 0.005 )-- ( 18.1 , 0.005 )-- ( 18.2 , 0.005 )-- ( 18.3 , 0.005 )-- ( 18.4 , 0.005 )-- ( 18.5 , 0.005 )-- ( 18.6 , 0.005 )-- ( 18.7 , 0.004 )-- ( 18.8 , 0.004 )-- ( 18.9 , 0.004 )-- ( 19 , 0.004 )-- ( 19.1 , 0.004 )-- ( 19.2 , 0.004 )-- ( 19.3 , 0.004 )-- ( 19.4 , 0.004 )-- ( 19.5 , 0.004 )-- ( 19.6 , 0.004 )-- ( 19.7 , 0.003 )-- ( 19.8 , 0.003 )-- ( 19.9 , 0.003 )-- ( 20 , 0.003 );
 
\draw[orange,thick,dashdotted] ( 0 , 1 )-- ( 0.1 , 0.93 )-- ( 0.2 , 0.869 )-- ( 0.3 , 0.814 )-- ( 0.4 , 0.765 )-- ( 0.5 , 0.72 )-- ( 0.6 , 0.681 )-- ( 0.7 , 0.645 )-- ( 0.8 , 0.612 )-- ( 0.9 , 0.582 )-- ( 1 , 0.555 )-- ( 1.1 , 0.53 )-- ( 1.2 , 0.506 )-- ( 1.3 , 0.485 )-- ( 1.4 , 0.465 )-- ( 1.5 , 0.446 )-- ( 1.6 , 0.429 )-- ( 1.7 , 0.413 )-- ( 1.8 , 0.398 )-- ( 1.9 , 0.383 )-- ( 2 , 0.37 )-- ( 2.1 , 0.357 )-- ( 2.2 , 0.345 )-- ( 2.3 , 0.334 )-- ( 2.4 , 0.323 )-- ( 2.5 , 0.313 )-- ( 2.6 , 0.303 )-- ( 2.7 , 0.293 )-- ( 2.8 , 0.285 )-- ( 2.9 , 0.276 )-- ( 3 , 0.268 )-- ( 3.1 , 0.26 )-- ( 3.2 , 0.253 )-- ( 3.3 , 0.245 )-- ( 3.4 , 0.238 )-- ( 3.5 , 0.232 )-- ( 3.6 , 0.226 )-- ( 3.7 , 0.219 )-- ( 3.8 , 0.214 )-- ( 3.9 , 0.208 )-- ( 4 , 0.202 )-- ( 4.1 , 0.197 )-- ( 4.2 , 0.192 )-- ( 4.3 , 0.187 )-- ( 4.4 , 0.182 )-- ( 4.5 , 0.178 )-- ( 4.6 , 0.173 )-- ( 4.7 , 0.169 )-- ( 4.8 , 0.165 )-- ( 4.9 , 0.161 )-- ( 5 , 0.157 )-- ( 5.1 , 0.154 )-- ( 5.2 , 0.15 )-- ( 5.3 , 0.146 )-- ( 5.4 , 0.143 )-- ( 5.5 , 0.14 )-- ( 5.6 , 0.136 )-- ( 5.7 , 0.133 )-- ( 5.8 , 0.13 )-- ( 5.9 , 0.127 )-- ( 6 , 0.125 )-- ( 6.1 , 0.122 )-- ( 6.2 , 0.119 )-- ( 6.3 , 0.116 )-- ( 6.4 , 0.114 )-- ( 6.5 , 0.111 )-- ( 6.6 , 0.109 )-- ( 6.7 , 0.107 )-- ( 6.8 , 0.104 )-- ( 6.9 , 0.102 )-- ( 7 , 0.1 )-- ( 7.1 , 0.098 )-- ( 7.2 , 0.096 )-- ( 7.3 , 0.094 )-- ( 7.4 , 0.092 )-- ( 7.5 , 0.09 )-- ( 7.6 , 0.088 )-- ( 7.7 , 0.086 )-- ( 7.8 , 0.085 )-- ( 7.9 , 0.083 )-- ( 8 , 0.081 )-- ( 8.1 , 0.079 )-- ( 8.2 , 0.078 )-- ( 8.3 , 0.076 )-- ( 8.4 , 0.075 )-- ( 8.5 , 0.073 )-- ( 8.6 , 0.072 )-- ( 8.7 , 0.07 )-- ( 8.8 , 0.069 )-- ( 8.9 , 0.068 )-- ( 9 , 0.066 )-- ( 9.1 , 0.065 )-- ( 9.2 , 0.064 )-- ( 9.3 , 0.062 )-- ( 9.4 , 0.061 )-- ( 9.5 , 0.06 )-- ( 9.6 , 0.059 )-- ( 9.7 , 0.058 )-- ( 9.8 , 0.057 )-- ( 9.9 , 0.055 )-- ( 10 , 0.054 )-- ( 10.1 , 0.053 )-- ( 10.2 , 0.052 )-- ( 10.3 , 0.051 )-- ( 10.4 , 0.05 )-- ( 10.5 , 0.049 )-- ( 10.6 , 0.048 )-- ( 10.7 , 0.047 )-- ( 10.8 , 0.046 )-- ( 10.9 , 0.046 )-- ( 11 , 0.045 )-- ( 11.1 , 0.044 )-- ( 11.2 , 0.043 )-- ( 11.3 , 0.042 )-- ( 11.4 , 0.041 )-- ( 11.5 , 0.041 )-- ( 11.6 , 0.04 )-- ( 11.7 , 0.039 )-- ( 11.8 , 0.038 )-- ( 11.9 , 0.038 )-- ( 12 , 0.037 )-- ( 12.1 , 0.036 )-- ( 12.2 , 0.035 )-- ( 12.3 , 0.035 )-- ( 12.4 , 0.034 )-- ( 12.5 , 0.033 )-- ( 12.6 , 0.033 )-- ( 12.7 , 0.032 )-- ( 12.8 , 0.032 )-- ( 12.9 , 0.031 )-- ( 13 , 0.03 )-- ( 13.1 , 0.03 )-- ( 13.2 , 0.029 )-- ( 13.3 , 0.029 )-- ( 13.4 , 0.028 )-- ( 13.5 , 0.028 )-- ( 13.6 , 0.027 )-- ( 13.7 , 0.027 )-- ( 13.8 , 0.026 )-- ( 13.9 , 0.026 )-- ( 14 , 0.025 )-- ( 14.1 , 0.025 )-- ( 14.2 , 0.024 )-- ( 14.3 , 0.024 )-- ( 14.4 , 0.023 )-- ( 14.5 , 0.023 )-- ( 14.6 , 0.022 )-- ( 14.7 , 0.022 )-- ( 14.8 , 0.022 )-- ( 14.9 , 0.021 )-- ( 15 , 0.021 )-- ( 15.1 , 0.02 )-- ( 15.2 , 0.02 )-- ( 15.3 , 0.02 )-- ( 15.4 , 0.019 )-- ( 15.5 , 0.019 )-- ( 15.6 , 0.019 )-- ( 15.7 , 0.018 )-- ( 15.8 , 0.018 )-- ( 15.9 , 0.018 )-- ( 16 , 0.017 )-- ( 16.1 , 0.017 )-- ( 16.2 , 0.017 )-- ( 16.3 , 0.016 )-- ( 16.4 , 0.016 )-- ( 16.5 , 0.016 )-- ( 16.6 , 0.015 )-- ( 16.7 , 0.015 )-- ( 16.8 , 0.015 )-- ( 16.9 , 0.015 )-- ( 17 , 0.014 )-- ( 17.1 , 0.014 )-- ( 17.2 , 0.014 )-- ( 17.3 , 0.013 )-- ( 17.4 , 0.013 )-- ( 17.5 , 0.013 )-- ( 17.6 , 0.013 )-- ( 17.7 , 0.012 )-- ( 17.8 , 0.012 )-- ( 17.9 , 0.012 )-- ( 18 , 0.012 )-- ( 18.1 , 0.012 )-- ( 18.2 , 0.011 )-- ( 18.3 , 0.011 )-- ( 18.4 , 0.011 )-- ( 18.5 , 0.011 )-- ( 18.6 , 0.011 )-- ( 18.7 , 0.01 )-- ( 18.8 , 0.01 )-- ( 18.9 , 0.01 )-- ( 19 , 0.01 )-- ( 19.1 , 0.01 )-- ( 19.2 , 0.009 )-- ( 19.3 , 0.009 )-- ( 19.4 , 0.009 )-- ( 19.5 , 0.009 )-- ( 19.6 , 0.009 )-- ( 19.7 , 0.009 )-- ( 19.8 , 0.008 )-- ( 19.9 , 0.008 )-- ( 20 , 0.008 );

\draw[olive,loosely dashed,thick] ( 0 , 1 )-- ( 0.1 , 0.921 )-- ( 0.2 , 0.853 )-- ( 0.3 , 0.796 )-- ( 0.4 , 0.747 )-- ( 0.5 , 0.704 )-- ( 0.6 , 0.667 )-- ( 0.7 , 0.634 )-- ( 0.8 , 0.605 )-- ( 0.9 , 0.578 )-- ( 1 , 0.554 )-- ( 1.1 , 0.532 )-- ( 1.2 , 0.511 )-- ( 1.3 , 0.492 )-- ( 1.4 , 0.474 )-- ( 1.5 , 0.457 )-- ( 1.6 , 0.441 )-- ( 1.7 , 0.426 )-- ( 1.8 , 0.411 )-- ( 1.9 , 0.397 )-- ( 2 , 0.384 )-- ( 2.1 , 0.371 )-- ( 2.2 , 0.359 )-- ( 2.3 , 0.347 )-- ( 2.4 , 0.335 )-- ( 2.5 , 0.324 )-- ( 2.6 , 0.313 )-- ( 2.7 , 0.303 )-- ( 2.8 , 0.292 )-- ( 2.9 , 0.283 )-- ( 3 , 0.273 )-- ( 3.1 , 0.264 )-- ( 3.2 , 0.255 )-- ( 3.3 , 0.246 )-- ( 3.4 , 0.238 )-- ( 3.5 , 0.23 )-- ( 3.6 , 0.222 )-- ( 3.7 , 0.215 )-- ( 3.8 , 0.207 )-- ( 3.9 , 0.2 )-- ( 4 , 0.194 )-- ( 4.1 , 0.187 )-- ( 4.2 , 0.181 )-- ( 4.3 , 0.174 )-- ( 4.4 , 0.169 )-- ( 4.5 , 0.163 )-- ( 4.6 , 0.157 )-- ( 4.7 , 0.152 )-- ( 4.8 , 0.147 )-- ( 4.9 , 0.142 )-- ( 5 , 0.137 )-- ( 5.1 , 0.132 )-- ( 5.2 , 0.128 )-- ( 5.3 , 0.123 )-- ( 5.4 , 0.119 )-- ( 5.5 , 0.115 )-- ( 5.6 , 0.111 )-- ( 5.7 , 0.107 )-- ( 5.8 , 0.104 )-- ( 5.9 , 0.1 )-- ( 6 , 0.097 )-- ( 6.1 , 0.093 )-- ( 6.2 , 0.09 )-- ( 6.3 , 0.087 )-- ( 6.4 , 0.084 )-- ( 6.5 , 0.081 )-- ( 6.6 , 0.078 )-- ( 6.7 , 0.076 )-- ( 6.8 , 0.073 )-- ( 6.9 , 0.071 )-- ( 7 , 0.068 )-- ( 7.1 , 0.066 )-- ( 7.2 , 0.064 )-- ( 7.3 , 0.061 )-- ( 7.4 , 0.059 )-- ( 7.5 , 0.057 )-- ( 7.6 , 0.055 )-- ( 7.7 , 0.053 )-- ( 7.8 , 0.052 )-- ( 7.9 , 0.05 )-- ( 8 , 0.048 )-- ( 8.1 , 0.046 )-- ( 8.2 , 0.045 )-- ( 8.3 , 0.043 )-- ( 8.4 , 0.042 )-- ( 8.5 , 0.04 )-- ( 8.6 , 0.039 )-- ( 8.7 , 0.038 )-- ( 8.8 , 0.036 )-- ( 8.9 , 0.035 )-- ( 9 , 0.034 )-- ( 9.1 , 0.033 )-- ( 9.2 , 0.032 )-- ( 9.3 , 0.031 )-- ( 9.4 , 0.029 )-- ( 9.5 , 0.028 )-- ( 9.6 , 0.027 )-- ( 9.7 , 0.027 )-- ( 9.8 , 0.026 )-- ( 9.9 , 0.025 )-- ( 10 , 0.024 )-- ( 10.1 , 0.023 )-- ( 10.2 , 0.022 )-- ( 10.3 , 0.022 )-- ( 10.4 , 0.021 )-- ( 10.5 , 0.02 )-- ( 10.6 , 0.019 )-- ( 10.7 , 0.019 )-- ( 10.8 , 0.018 )-- ( 10.9 , 0.017 )-- ( 11 , 0.017 )-- ( 11.1 , 0.016 )-- ( 11.2 , 0.016 )-- ( 11.3 , 0.015 )-- ( 11.4 , 0.015 )-- ( 11.5 , 0.014 )-- ( 11.6 , 0.014 )-- ( 11.7 , 0.013 )-- ( 11.8 , 0.013 )-- ( 11.9 , 0.012 )-- ( 12 , 0.012 )-- ( 12.1 , 0.011 )-- ( 12.2 , 0.011 )-- ( 12.3 , 0.011 )-- ( 12.4 , 0.01 )-- ( 12.5 , 0.01 )-- ( 12.6 , 0.01 )-- ( 12.7 , 0.009 )-- ( 12.8 , 0.009 )-- ( 12.9 , 0.009 )-- ( 13 , 0.008 )-- ( 13.1 , 0.008 )-- ( 13.2 , 0.008 )-- ( 13.3 , 0.008 )-- ( 13.4 , 0.007 )-- ( 13.5 , 0.007 )-- ( 13.6 , 0.007 )-- ( 13.7 , 0.007 )-- ( 13.8 , 0.006 )-- ( 13.9 , 0.006 )-- ( 14 , 0.006 )-- ( 14.1 , 0.006 )-- ( 14.2 , 0.006 )-- ( 14.3 , 0.005 )-- ( 14.4 , 0.005 )-- ( 14.5 , 0.005 )-- ( 14.6 , 0.005 )-- ( 14.7 , 0.005 )-- ( 14.8 , 0.004 )-- ( 14.9 , 0.004 )-- ( 15 , 0.004 )-- ( 15.1 , 0.004 )-- ( 15.2 , 0.004 )-- ( 15.3 , 0.004 )-- ( 15.4 , 0.004 )-- ( 15.5 , 0.004 )-- ( 15.6 , 0.003 )-- ( 15.7 , 0.003 )-- ( 15.8 , 0.003 )-- ( 15.9 , 0.003 )-- ( 16 , 0.003 )-- ( 16.1 , 0.003 )-- ( 16.2 , 0.003 )-- ( 16.3 , 0.003 )-- ( 16.4 , 0.003 )-- ( 16.5 , 0.002 )-- ( 16.6 , 0.002 )-- ( 16.7 , 0.002 )-- ( 16.8 , 0.002 )-- ( 16.9 , 0.002 )-- ( 17 , 0.002 )-- ( 17.1 , 0.002 )-- ( 17.2 , 0.002 )-- ( 17.3 , 0.002 )-- ( 17.4 , 0.002 )-- ( 17.5 , 0.002 )-- ( 17.6 , 0.002 )-- ( 17.7 , 0.002 )-- ( 17.8 , 0.002 )-- ( 17.9 , 0.002 )-- ( 18 , 0.001 )-- ( 18.1 , 0.001 )-- ( 18.2 , 0.001 )-- ( 18.3 , 0.001 )-- ( 18.4 , 0.001 )-- ( 18.5 , 0.001 )-- ( 18.6 , 0.001 )-- ( 18.7 , 0.001 )-- ( 18.8 , 0.001 )-- ( 18.9 , 0.001 )-- ( 19 , 0.001 )-- ( 19.1 , 0.001 )-- ( 19.2 , 0.001 )-- ( 19.3 , 0.001 )-- ( 19.4 , 0.001 )-- ( 19.5 , 0.001 )-- ( 19.6 , 0.001 )-- ( 19.7 , 0.001 )-- ( 19.8 , 0.001 )-- ( 19.9 , 0.001 )-- ( 20 , 0.001 ); 
 \end{tikzpicture}
\caption{HE$(0.5,0.3,0.15,0.05,0.4,1.5,0.1,1)$}\label{fig:H_MLE_hyper}
\end{subfigure}
\begin{subfigure}{.48\linewidth}
\begin{tikzpicture}[xscale=0.35,yscale=3.5]
\def\xmin{0}
 \def\xmax{20}
 \def\ymin{0}
 \def\ymax{1.05}
 \draw[->] (\xmin,\ymin) -- (\xmax,\ymin) node[right] {$t$} ;
 \draw[->] (\xmin,\ymin) -- (\xmin,\ymax)  ;
 \foreach \x in {0,5,10,20} {
 \node at (\x,\ymin-0.05) [below] {\x};
 \draw[-] (\x,\ymin) -- (\x,{\ymin-0.05});
 }
\foreach \y in {0,0.5,1} {
 \node at (\xmin,\y) [left] {\y};
 \draw[-] (\xmin,\y) -- ({\xmin-0.05},\y);
 }


\draw[black] ( 0 , 1 )-- ( 0.1 , 0.997 )-- ( 0.2 , 0.982 )-- ( 0.3 , 0.956 )-- ( 0.4 , 0.921 )-- ( 0.5 , 0.883 )-- ( 0.6 , 0.844 )-- ( 0.7 , 0.804 )-- ( 0.8 , 0.765 )-- ( 0.9 , 0.727 )-- ( 1 , 0.691 )-- ( 1.1 , 0.657 )-- ( 1.2 , 0.625 )-- ( 1.3 , 0.594 )-- ( 1.4 , 0.565 )-- ( 1.5 , 0.538 )-- ( 1.6 , 0.512 )-- ( 1.7 , 0.488 )-- ( 1.8 , 0.465 )-- ( 1.9 , 0.443 )-- ( 2 , 0.423 )-- ( 2.1 , 0.404 )-- ( 2.2 , 0.386 )-- ( 2.3 , 0.369 )-- ( 2.4 , 0.353 )-- ( 2.5 , 0.338 )-- ( 2.6 , 0.324 )-- ( 2.7 , 0.31 )-- ( 2.8 , 0.298 )-- ( 2.9 , 0.285 )-- ( 3 , 0.274 )-- ( 3.1 , 0.263 )-- ( 3.2 , 0.253 )-- ( 3.3 , 0.243 )-- ( 3.4 , 0.234 )-- ( 3.5 , 0.225 )-- ( 3.6 , 0.217 )-- ( 3.7 , 0.209 )-- ( 3.8 , 0.201 )-- ( 3.9 , 0.194 )-- ( 4 , 0.187 )-- ( 4.1 , 0.181 )-- ( 4.2 , 0.174 )-- ( 4.3 , 0.168 )-- ( 4.4 , 0.163 )-- ( 4.5 , 0.157 )-- ( 4.6 , 0.152 )-- ( 4.7 , 0.147 )-- ( 4.8 , 0.142 )-- ( 4.9 , 0.138 )-- ( 5 , 0.133 )-- ( 5.1 , 0.129 )-- ( 5.2 , 0.125 )-- ( 5.3 , 0.121 )-- ( 5.4 , 0.118 )-- ( 5.5 , 0.114 )-- ( 5.6 , 0.11 )-- ( 5.7 , 0.107 )-- ( 5.8 , 0.104 )-- ( 5.9 , 0.101 )-- ( 6 , 0.098 )-- ( 6.1 , 0.095 )-- ( 6.2 , 0.092 )-- ( 6.3 , 0.09 )-- ( 6.4 , 0.087 )-- ( 6.5 , 0.085 )-- ( 6.6 , 0.082 )-- ( 6.7 , 0.08 )-- ( 6.8 , 0.078 )-- ( 6.9 , 0.076 )-- ( 7 , 0.074 )-- ( 7.1 , 0.072 )-- ( 7.2 , 0.07 )-- ( 7.3 , 0.068 )-- ( 7.4 , 0.066 )-- ( 7.5 , 0.065 )-- ( 7.6 , 0.063 )-- ( 7.7 , 0.061 )-- ( 7.8 , 0.06 )-- ( 7.9 , 0.058 )-- ( 8 , 0.057 )-- ( 8.1 , 0.055 )-- ( 8.2 , 0.054 )-- ( 8.3 , 0.053 )-- ( 8.4 , 0.051 )-- ( 8.5 , 0.05 )-- ( 8.6 , 0.049 )-- ( 8.7 , 0.048 )-- ( 8.8 , 0.047 )-- ( 8.9 , 0.046 )-- ( 9 , 0.045 )-- ( 9.1 , 0.044 )-- ( 9.2 , 0.043 )-- ( 9.3 , 0.042 )-- ( 9.4 , 0.041 )-- ( 9.5 , 0.04 )-- ( 9.6 , 0.039 )-- ( 9.7 , 0.038 )-- ( 9.8 , 0.037 )-- ( 9.9 , 0.036 )-- ( 10 , 0.036 )-- ( 10.1 , 0.035 )-- ( 10.2 , 0.034 )-- ( 10.3 , 0.033 )-- ( 10.4 , 0.033 )-- ( 10.5 , 0.032 )-- ( 10.6 , 0.031 )-- ( 10.7 , 0.031 )-- ( 10.8 , 0.03 )-- ( 10.9 , 0.029 )-- ( 11 , 0.029 )-- ( 11.1 , 0.028 )-- ( 11.2 , 0.028 )-- ( 11.3 , 0.027 )-- ( 11.4 , 0.027 )-- ( 11.5 , 0.026 )-- ( 11.6 , 0.025 )-- ( 11.7 , 0.025 )-- ( 11.8 , 0.025 )-- ( 11.9 , 0.024 )-- ( 12 , 0.024 )-- ( 12.1 , 0.023 )-- ( 12.2 , 0.023 )-- ( 12.3 , 0.022 )-- ( 12.4 , 0.022 )-- ( 12.5 , 0.021 )-- ( 12.6 , 0.021 )-- ( 12.7 , 0.021 )-- ( 12.8 , 0.02 )-- ( 12.9 , 0.02 )-- ( 13 , 0.019 )-- ( 13.1 , 0.019 )-- ( 13.2 , 0.019 )-- ( 13.3 , 0.018 )-- ( 13.4 , 0.018 )-- ( 13.5 , 0.018 )-- ( 13.6 , 0.017 )-- ( 13.7 , 0.017 )-- ( 13.8 , 0.017 )-- ( 13.9 , 0.017 )-- ( 14 , 0.016 )-- ( 14.1 , 0.016 )-- ( 14.2 , 0.016 )-- ( 14.3 , 0.015 )-- ( 14.4 , 0.015 )-- ( 14.5 , 0.015 )-- ( 14.6 , 0.015 )-- ( 14.7 , 0.014 )-- ( 14.8 , 0.014 )-- ( 14.9 , 0.014 )-- ( 15 , 0.014 )-- ( 15.1 , 0.013 )-- ( 15.2 , 0.013 )-- ( 15.3 , 0.013 )-- ( 15.4 , 0.013 )-- ( 15.5 , 0.013 )-- ( 15.6 , 0.012 )-- ( 15.7 , 0.012 )-- ( 15.8 , 0.012 )-- ( 15.9 , 0.012 )-- ( 16 , 0.012 )-- ( 16.1 , 0.011 )-- ( 16.2 , 0.011 )-- ( 16.3 , 0.011 )-- ( 16.4 , 0.011 )-- ( 16.5 , 0.011 )-- ( 16.6 , 0.011 )-- ( 16.7 , 0.01 )-- ( 16.8 , 0.01 )-- ( 16.9 , 0.01 )-- ( 17 , 0.01 )-- ( 17.1 , 0.01 )-- ( 17.2 , 0.01 )-- ( 17.3 , 0.009 )-- ( 17.4 , 0.009 )-- ( 17.5 , 0.009 )-- ( 17.6 , 0.009 )-- ( 17.7 , 0.009 )-- ( 17.8 , 0.009 )-- ( 17.9 , 0.009 )-- ( 18 , 0.008 )-- ( 18.1 , 0.008 )-- ( 18.2 , 0.008 )-- ( 18.3 , 0.008 )-- ( 18.4 , 0.008 )-- ( 18.5 , 0.008 )-- ( 18.6 , 0.008 )-- ( 18.7 , 0.008 )-- ( 18.8 , 0.008 )-- ( 18.9 , 0.007 )-- ( 19 , 0.007 )-- ( 19.1 , 0.007 )-- ( 19.2 , 0.007 )-- ( 19.3 , 0.007 )-- ( 19.4 , 0.007 )-- ( 19.5 , 0.007 )-- ( 19.6 , 0.007 )-- ( 19.7 , 0.007 )-- ( 19.8 , 0.007 )-- ( 19.9 , 0.006 )-- ( 20 , 0.006 ) ;

 \draw[red,dotted,thick]( 0 , 1 )-- ( 0.1 , 0.967 )-- ( 0.2 , 0.935 )-- ( 0.3 , 0.904 )-- ( 0.4 , 0.874 )-- ( 0.5 , 0.845 )-- ( 0.6 , 0.818 )-- ( 0.7 , 0.791 )-- ( 0.8 , 0.764 )-- ( 0.9 , 0.739 )-- ( 1 , 0.715 )-- ( 1.1 , 0.691 )-- ( 1.2 , 0.668 )-- ( 1.3 , 0.646 )-- ( 1.4 , 0.625 )-- ( 1.5 , 0.604 )-- ( 1.6 , 0.584 )-- ( 1.7 , 0.565 )-- ( 1.8 , 0.546 )-- ( 1.9 , 0.528 )-- ( 2 , 0.511 )-- ( 2.1 , 0.494 )-- ( 2.2 , 0.478 )-- ( 2.3 , 0.462 )-- ( 2.4 , 0.447 )-- ( 2.5 , 0.432 )-- ( 2.6 , 0.418 )-- ( 2.7 , 0.404 )-- ( 2.8 , 0.391 )-- ( 2.9 , 0.378 )-- ( 3 , 0.365 )-- ( 3.1 , 0.353 )-- ( 3.2 , 0.341 )-- ( 3.3 , 0.33 )-- ( 3.4 , 0.319 )-- ( 3.5 , 0.309 )-- ( 3.6 , 0.299 )-- ( 3.7 , 0.289 )-- ( 3.8 , 0.279 )-- ( 3.9 , 0.27 )-- ( 4 , 0.261 )-- ( 4.1 , 0.252 )-- ( 4.2 , 0.244 )-- ( 4.3 , 0.236 )-- ( 4.4 , 0.228 )-- ( 4.5 , 0.221 )-- ( 4.6 , 0.213 )-- ( 4.7 , 0.206 )-- ( 4.8 , 0.2 )-- ( 4.9 , 0.193 )-- ( 5 , 0.187 )-- ( 5.1 , 0.18 )-- ( 5.2 , 0.174 )-- ( 5.3 , 0.169 )-- ( 5.4 , 0.163 )-- ( 5.5 , 0.158 )-- ( 5.6 , 0.153 )-- ( 5.7 , 0.148 )-- ( 5.8 , 0.143 )-- ( 5.9 , 0.138 )-- ( 6 , 0.133 )-- ( 6.1 , 0.129 )-- ( 6.2 , 0.125 )-- ( 6.3 , 0.121 )-- ( 6.4 , 0.117 )-- ( 6.5 , 0.113 )-- ( 6.6 , 0.109 )-- ( 6.7 , 0.105 )-- ( 6.8 , 0.102 )-- ( 6.9 , 0.099 )-- ( 7 , 0.095 )-- ( 7.1 , 0.092 )-- ( 7.2 , 0.089 )-- ( 7.3 , 0.086 )-- ( 7.4 , 0.083 )-- ( 7.5 , 0.081 )-- ( 7.6 , 0.078 )-- ( 7.7 , 0.075 )-- ( 7.8 , 0.073 )-- ( 7.9 , 0.07 )-- ( 8 , 0.068 )-- ( 8.1 , 0.066 )-- ( 8.2 , 0.064 )-- ( 8.3 , 0.062 )-- ( 8.4 , 0.06 )-- ( 8.5 , 0.058 )-- ( 8.6 , 0.056 )-- ( 8.7 , 0.054 )-- ( 8.8 , 0.052 )-- ( 8.9 , 0.05 )-- ( 9 , 0.049 )-- ( 9.1 , 0.047 )-- ( 9.2 , 0.046 )-- ( 9.3 , 0.044 )-- ( 9.4 , 0.043 )-- ( 9.5 , 0.041 )-- ( 9.6 , 0.04 )-- ( 9.7 , 0.039 )-- ( 9.8 , 0.037 )-- ( 9.9 , 0.036 )-- ( 10 , 0.035 )-- ( 10.1 , 0.034 )-- ( 10.2 , 0.033 )-- ( 10.3 , 0.031 )-- ( 10.4 , 0.03 )-- ( 10.5 , 0.029 )-- ( 10.6 , 0.028 )-- ( 10.7 , 0.028 )-- ( 10.8 , 0.027 )-- ( 10.9 , 0.026 )-- ( 11 , 0.025 )-- ( 11.1 , 0.024 )-- ( 11.2 , 0.023 )-- ( 11.3 , 0.023 )-- ( 11.4 , 0.022 )-- ( 11.5 , 0.021 )-- ( 11.6 , 0.02 )-- ( 11.7 , 0.02 )-- ( 11.8 , 0.019 )-- ( 11.9 , 0.018 )-- ( 12 , 0.018 )-- ( 12.1 , 0.017 )-- ( 12.2 , 0.017 )-- ( 12.3 , 0.016 )-- ( 12.4 , 0.016 )-- ( 12.5 , 0.015 )-- ( 12.6 , 0.015 )-- ( 12.7 , 0.014 )-- ( 12.8 , 0.014 )-- ( 12.9 , 0.013 )-- ( 13 , 0.013 )-- ( 13.1 , 0.012 )-- ( 13.2 , 0.012 )-- ( 13.3 , 0.011 )-- ( 13.4 , 0.011 )-- ( 13.5 , 0.011 )-- ( 13.6 , 0.01 )-- ( 13.7 , 0.01 )-- ( 13.8 , 0.01 )-- ( 13.9 , 0.009 )-- ( 14 , 0.009 )-- ( 14.1 , 0.009 )-- ( 14.2 , 0.008 )-- ( 14.3 , 0.008 )-- ( 14.4 , 0.008 )-- ( 14.5 , 0.008 )-- ( 14.6 , 0.007 )-- ( 14.7 , 0.007 )-- ( 14.8 , 0.007 )-- ( 14.9 , 0.007 )-- ( 15 , 0.006 )-- ( 15.1 , 0.006 )-- ( 15.2 , 0.006 )-- ( 15.3 , 0.006 )-- ( 15.4 , 0.006 )-- ( 15.5 , 0.005 )-- ( 15.6 , 0.005 )-- ( 15.7 , 0.005 )-- ( 15.8 , 0.005 )-- ( 15.9 , 0.005 )-- ( 16 , 0.005 )-- ( 16.1 , 0.004 )-- ( 16.2 , 0.004 )-- ( 16.3 , 0.004 )-- ( 16.4 , 0.004 )-- ( 16.5 , 0.004 )-- ( 16.6 , 0.004 )-- ( 16.7 , 0.004 )-- ( 16.8 , 0.004 )-- ( 16.9 , 0.003 )-- ( 17 , 0.003 )-- ( 17.1 , 0.003 )-- ( 17.2 , 0.003 )-- ( 17.3 , 0.003 )-- ( 17.4 , 0.003 )-- ( 17.5 , 0.003 )-- ( 17.6 , 0.003 )-- ( 17.7 , 0.003 )-- ( 17.8 , 0.003 )-- ( 17.9 , 0.002 )-- ( 18 , 0.002 )-- ( 18.1 , 0.002 )-- ( 18.2 , 0.002 )-- ( 18.3 , 0.002 )-- ( 18.4 , 0.002 )-- ( 18.5 , 0.002 )-- ( 18.6 , 0.002 )-- ( 18.7 , 0.002 )-- ( 18.8 , 0.002 )-- ( 18.9 , 0.002 )-- ( 19 , 0.002 )-- ( 19.1 , 0.002 )-- ( 19.2 , 0.002 )-- ( 19.3 , 0.002 )-- ( 19.4 , 0.001 )-- ( 19.5 , 0.001 )-- ( 19.6 , 0.001 )-- ( 19.7 , 0.001 )-- ( 19.8 , 0.001 )-- ( 19.9 , 0.001 )-- ( 20 , 0.001 )  ;
 
\draw[blue,thick, densely dashed]  ( 0 , 1 )-- ( 0.1 , 0.95 )-- ( 0.2 , 0.903 )-- ( 0.3 , 0.859 )-- ( 0.4 , 0.817 )-- ( 0.5 , 0.778 )-- ( 0.6 , 0.742 )-- ( 0.7 , 0.707 )-- ( 0.8 , 0.675 )-- ( 0.9 , 0.644 )-- ( 1 , 0.616 )-- ( 1.1 , 0.589 )-- ( 1.2 , 0.563 )-- ( 1.3 , 0.539 )-- ( 1.4 , 0.516 )-- ( 1.5 , 0.494 )-- ( 1.6 , 0.474 )-- ( 1.7 , 0.455 )-- ( 1.8 , 0.437 )-- ( 1.9 , 0.419 )-- ( 2 , 0.403 )-- ( 2.1 , 0.387 )-- ( 2.2 , 0.373 )-- ( 2.3 , 0.358 )-- ( 2.4 , 0.345 )-- ( 2.5 , 0.332 )-- ( 2.6 , 0.32 )-- ( 2.7 , 0.309 )-- ( 2.8 , 0.298 )-- ( 2.9 , 0.287 )-- ( 3 , 0.277 )-- ( 3.1 , 0.268 )-- ( 3.2 , 0.259 )-- ( 3.3 , 0.25 )-- ( 3.4 , 0.242 )-- ( 3.5 , 0.234 )-- ( 3.6 , 0.226 )-- ( 3.7 , 0.219 )-- ( 3.8 , 0.212 )-- ( 3.9 , 0.205 )-- ( 4 , 0.198 )-- ( 4.1 , 0.192 )-- ( 4.2 , 0.186 )-- ( 4.3 , 0.18 )-- ( 4.4 , 0.175 )-- ( 4.5 , 0.17 )-- ( 4.6 , 0.165 )-- ( 4.7 , 0.16 )-- ( 4.8 , 0.155 )-- ( 4.9 , 0.15 )-- ( 5 , 0.146 )-- ( 5.1 , 0.142 )-- ( 5.2 , 0.137 )-- ( 5.3 , 0.133 )-- ( 5.4 , 0.13 )-- ( 5.5 , 0.126 )-- ( 5.6 , 0.122 )-- ( 5.7 , 0.119 )-- ( 5.8 , 0.115 )-- ( 5.9 , 0.112 )-- ( 6 , 0.109 )-- ( 6.1 , 0.106 )-- ( 6.2 , 0.103 )-- ( 6.3 , 0.1 )-- ( 6.4 , 0.097 )-- ( 6.5 , 0.095 )-- ( 6.6 , 0.092 )-- ( 6.7 , 0.09 )-- ( 6.8 , 0.087 )-- ( 6.9 , 0.085 )-- ( 7 , 0.082 )-- ( 7.1 , 0.08 )-- ( 7.2 , 0.078 )-- ( 7.3 , 0.076 )-- ( 7.4 , 0.074 )-- ( 7.5 , 0.072 )-- ( 7.6 , 0.07 )-- ( 7.7 , 0.068 )-- ( 7.8 , 0.066 )-- ( 7.9 , 0.065 )-- ( 8 , 0.063 )-- ( 8.1 , 0.061 )-- ( 8.2 , 0.06 )-- ( 8.3 , 0.058 )-- ( 8.4 , 0.056 )-- ( 8.5 , 0.055 )-- ( 8.6 , 0.053 )-- ( 8.7 , 0.052 )-- ( 8.8 , 0.051 )-- ( 8.9 , 0.049 )-- ( 9 , 0.048 )-- ( 9.1 , 0.047 )-- ( 9.2 , 0.046 )-- ( 9.3 , 0.044 )-- ( 9.4 , 0.043 )-- ( 9.5 , 0.042 )-- ( 9.6 , 0.041 )-- ( 9.7 , 0.04 )-- ( 9.8 , 0.039 )-- ( 9.9 , 0.038 )-- ( 10 , 0.037 )-- ( 10.1 , 0.036 )-- ( 10.2 , 0.035 )-- ( 10.3 , 0.034 )-- ( 10.4 , 0.033 )-- ( 10.5 , 0.032 )-- ( 10.6 , 0.031 )-- ( 10.7 , 0.031 )-- ( 10.8 , 0.03 )-- ( 10.9 , 0.029 )-- ( 11 , 0.028 )-- ( 11.1 , 0.028 )-- ( 11.2 , 0.027 )-- ( 11.3 , 0.026 )-- ( 11.4 , 0.025 )-- ( 11.5 , 0.025 )-- ( 11.6 , 0.024 )-- ( 11.7 , 0.024 )-- ( 11.8 , 0.023 )-- ( 11.9 , 0.022 )-- ( 12 , 0.022 )-- ( 12.1 , 0.021 )-- ( 12.2 , 0.021 )-- ( 12.3 , 0.02 )-- ( 12.4 , 0.02 )-- ( 12.5 , 0.019 )-- ( 12.6 , 0.019 )-- ( 12.7 , 0.018 )-- ( 12.8 , 0.018 )-- ( 12.9 , 0.017 )-- ( 13 , 0.017 )-- ( 13.1 , 0.016 )-- ( 13.2 , 0.016 )-- ( 13.3 , 0.015 )-- ( 13.4 , 0.015 )-- ( 13.5 , 0.015 )-- ( 13.6 , 0.014 )-- ( 13.7 , 0.014 )-- ( 13.8 , 0.014 )-- ( 13.9 , 0.013 )-- ( 14 , 0.013 )-- ( 14.1 , 0.013 )-- ( 14.2 , 0.012 )-- ( 14.3 , 0.012 )-- ( 14.4 , 0.012 )-- ( 14.5 , 0.011 )-- ( 14.6 , 0.011 )-- ( 14.7 , 0.011 )-- ( 14.8 , 0.01 )-- ( 14.9 , 0.01 )-- ( 15 , 0.01 )-- ( 15.1 , 0.01 )-- ( 15.2 , 0.009 )-- ( 15.3 , 0.009 )-- ( 15.4 , 0.009 )-- ( 15.5 , 0.009 )-- ( 15.6 , 0.008 )-- ( 15.7 , 0.008 )-- ( 15.8 , 0.008 )-- ( 15.9 , 0.008 )-- ( 16 , 0.008 )-- ( 16.1 , 0.007 )-- ( 16.2 , 0.007 )-- ( 16.3 , 0.007 )-- ( 16.4 , 0.007 )-- ( 16.5 , 0.007 )-- ( 16.6 , 0.007 )-- ( 16.7 , 0.006 )-- ( 16.8 , 0.006 )-- ( 16.9 , 0.006 )-- ( 17 , 0.006 )-- ( 17.1 , 0.006 )-- ( 17.2 , 0.006 )-- ( 17.3 , 0.005 )-- ( 17.4 , 0.005 )-- ( 17.5 , 0.005 )-- ( 17.6 , 0.005 )-- ( 17.7 , 0.005 )-- ( 17.8 , 0.005 )-- ( 17.9 , 0.005 )-- ( 18 , 0.005 )-- ( 18.1 , 0.004 )-- ( 18.2 , 0.004 )-- ( 18.3 , 0.004 )-- ( 18.4 , 0.004 )-- ( 18.5 , 0.004 )-- ( 18.6 , 0.004 )-- ( 18.7 , 0.004 )-- ( 18.8 , 0.004 )-- ( 18.9 , 0.004 )-- ( 19 , 0.003 )-- ( 19.1 , 0.003 )-- ( 19.2 , 0.003 )-- ( 19.3 , 0.003 )-- ( 19.4 , 0.003 )-- ( 19.5 , 0.003 )-- ( 19.6 , 0.003 )-- ( 19.7 , 0.003 )-- ( 19.8 , 0.003 )-- ( 19.9 , 0.003 )-- ( 20 , 0.003 );
 
\draw[orange,thick,dashdotted] ( 0 , 1 )-- ( 0.1 , 0.955 )-- ( 0.2 , 0.911 )-- ( 0.3 , 0.871 )-- ( 0.4 , 0.832 )-- ( 0.5 , 0.795 )-- ( 0.6 , 0.76 )-- ( 0.7 , 0.726 )-- ( 0.8 , 0.694 )-- ( 0.9 , 0.664 )-- ( 1 , 0.636 )-- ( 1.1 , 0.608 )-- ( 1.2 , 0.582 )-- ( 1.3 , 0.558 )-- ( 1.4 , 0.534 )-- ( 1.5 , 0.512 )-- ( 1.6 , 0.49 )-- ( 1.7 , 0.47 )-- ( 1.8 , 0.451 )-- ( 1.9 , 0.432 )-- ( 2 , 0.415 )-- ( 2.1 , 0.398 )-- ( 2.2 , 0.382 )-- ( 2.3 , 0.367 )-- ( 2.4 , 0.353 )-- ( 2.5 , 0.339 )-- ( 2.6 , 0.326 )-- ( 2.7 , 0.313 )-- ( 2.8 , 0.301 )-- ( 2.9 , 0.29 )-- ( 3 , 0.279 )-- ( 3.1 , 0.268 )-- ( 3.2 , 0.258 )-- ( 3.3 , 0.249 )-- ( 3.4 , 0.24 )-- ( 3.5 , 0.231 )-- ( 3.6 , 0.223 )-- ( 3.7 , 0.215 )-- ( 3.8 , 0.207 )-- ( 3.9 , 0.2 )-- ( 4 , 0.193 )-- ( 4.1 , 0.186 )-- ( 4.2 , 0.18 )-- ( 4.3 , 0.174 )-- ( 4.4 , 0.168 )-- ( 4.5 , 0.163 )-- ( 4.6 , 0.157 )-- ( 4.7 , 0.152 )-- ( 4.8 , 0.147 )-- ( 4.9 , 0.143 )-- ( 5 , 0.138 )-- ( 5.1 , 0.134 )-- ( 5.2 , 0.13 )-- ( 5.3 , 0.126 )-- ( 5.4 , 0.122 )-- ( 5.5 , 0.118 )-- ( 5.6 , 0.115 )-- ( 5.7 , 0.111 )-- ( 5.8 , 0.108 )-- ( 5.9 , 0.105 )-- ( 6 , 0.102 )-- ( 6.1 , 0.099 )-- ( 6.2 , 0.096 )-- ( 6.3 , 0.093 )-- ( 6.4 , 0.091 )-- ( 6.5 , 0.088 )-- ( 6.6 , 0.086 )-- ( 6.7 , 0.084 )-- ( 6.8 , 0.081 )-- ( 6.9 , 0.079 )-- ( 7 , 0.077 )-- ( 7.1 , 0.075 )-- ( 7.2 , 0.073 )-- ( 7.3 , 0.071 )-- ( 7.4 , 0.069 )-- ( 7.5 , 0.068 )-- ( 7.6 , 0.066 )-- ( 7.7 , 0.064 )-- ( 7.8 , 0.063 )-- ( 7.9 , 0.061 )-- ( 8 , 0.06 )-- ( 8.1 , 0.058 )-- ( 8.2 , 0.057 )-- ( 8.3 , 0.056 )-- ( 8.4 , 0.055 )-- ( 8.5 , 0.053 )-- ( 8.6 , 0.052 )-- ( 8.7 , 0.051 )-- ( 8.8 , 0.05 )-- ( 8.9 , 0.049 )-- ( 9 , 0.048 )-- ( 9.1 , 0.047 )-- ( 9.2 , 0.046 )-- ( 9.3 , 0.045 )-- ( 9.4 , 0.044 )-- ( 9.5 , 0.043 )-- ( 9.6 , 0.042 )-- ( 9.7 , 0.041 )-- ( 9.8 , 0.04 )-- ( 9.9 , 0.039 )-- ( 10 , 0.038 )-- ( 10.1 , 0.038 )-- ( 10.2 , 0.037 )-- ( 10.3 , 0.036 )-- ( 10.4 , 0.035 )-- ( 10.5 , 0.035 )-- ( 10.6 , 0.034 )-- ( 10.7 , 0.033 )-- ( 10.8 , 0.033 )-- ( 10.9 , 0.032 )-- ( 11 , 0.032 )-- ( 11.1 , 0.031 )-- ( 11.2 , 0.03 )-- ( 11.3 , 0.03 )-- ( 11.4 , 0.029 )-- ( 11.5 , 0.029 )-- ( 11.6 , 0.028 )-- ( 11.7 , 0.028 )-- ( 11.8 , 0.027 )-- ( 11.9 , 0.027 )-- ( 12 , 0.026 )-- ( 12.1 , 0.026 )-- ( 12.2 , 0.025 )-- ( 12.3 , 0.025 )-- ( 12.4 , 0.024 )-- ( 12.5 , 0.024 )-- ( 12.6 , 0.023 )-- ( 12.7 , 0.023 )-- ( 12.8 , 0.023 )-- ( 12.9 , 0.022 )-- ( 13 , 0.022 )-- ( 13.1 , 0.021 )-- ( 13.2 , 0.021 )-- ( 13.3 , 0.021 )-- ( 13.4 , 0.02 )-- ( 13.5 , 0.02 )-- ( 13.6 , 0.02 )-- ( 13.7 , 0.019 )-- ( 13.8 , 0.019 )-- ( 13.9 , 0.019 )-- ( 14 , 0.018 )-- ( 14.1 , 0.018 )-- ( 14.2 , 0.018 )-- ( 14.3 , 0.017 )-- ( 14.4 , 0.017 )-- ( 14.5 , 0.017 )-- ( 14.6 , 0.017 )-- ( 14.7 , 0.016 )-- ( 14.8 , 0.016 )-- ( 14.9 , 0.016 )-- ( 15 , 0.016 )-- ( 15.1 , 0.015 )-- ( 15.2 , 0.015 )-- ( 15.3 , 0.015 )-- ( 15.4 , 0.015 )-- ( 15.5 , 0.014 )-- ( 15.6 , 0.014 )-- ( 15.7 , 0.014 )-- ( 15.8 , 0.014 )-- ( 15.9 , 0.013 )-- ( 16 , 0.013 )-- ( 16.1 , 0.013 )-- ( 16.2 , 0.013 )-- ( 16.3 , 0.013 )-- ( 16.4 , 0.012 )-- ( 16.5 , 0.012 )-- ( 16.6 , 0.012 )-- ( 16.7 , 0.012 )-- ( 16.8 , 0.012 )-- ( 16.9 , 0.011 )-- ( 17 , 0.011 )-- ( 17.1 , 0.011 )-- ( 17.2 , 0.011 )-- ( 17.3 , 0.011 )-- ( 17.4 , 0.011 )-- ( 17.5 , 0.01 )-- ( 17.6 , 0.01 )-- ( 17.7 , 0.01 )-- ( 17.8 , 0.01 )-- ( 17.9 , 0.01 )-- ( 18 , 0.01 )-- ( 18.1 , 0.009 )-- ( 18.2 , 0.009 )-- ( 18.3 , 0.009 )-- ( 18.4 , 0.009 )-- ( 18.5 , 0.009 )-- ( 18.6 , 0.009 )-- ( 18.7 , 0.009 )-- ( 18.8 , 0.008 )-- ( 18.9 , 0.008 )-- ( 19 , 0.008 )-- ( 19.1 , 0.008 )-- ( 19.2 , 0.008 )-- ( 19.3 , 0.008 )-- ( 19.4 , 0.008 )-- ( 19.5 , 0.008 )-- ( 19.6 , 0.007 )-- ( 19.7 , 0.007 )-- ( 19.8 , 0.007 )-- ( 19.9 , 0.007 )-- ( 20 , 0.007 );

\draw[olive,loosely dashed,thick] ( 0 , 1 )-- ( 0.1 , 0.978 )-- ( 0.2 , 0.949 )-- ( 0.3 , 0.916 )-- ( 0.4 , 0.88 )-- ( 0.5 , 0.842 )-- ( 0.6 , 0.804 )-- ( 0.7 , 0.766 )-- ( 0.8 , 0.729 )-- ( 0.9 , 0.693 )-- ( 1 , 0.659 )-- ( 1.1 , 0.627 )-- ( 1.2 , 0.596 )-- ( 1.3 , 0.567 )-- ( 1.4 , 0.539 )-- ( 1.5 , 0.513 )-- ( 1.6 , 0.489 )-- ( 1.7 , 0.466 )-- ( 1.8 , 0.445 )-- ( 1.9 , 0.425 )-- ( 2 , 0.407 )-- ( 2.1 , 0.389 )-- ( 2.2 , 0.373 )-- ( 2.3 , 0.358 )-- ( 2.4 , 0.343 )-- ( 2.5 , 0.33 )-- ( 2.6 , 0.317 )-- ( 2.7 , 0.305 )-- ( 2.8 , 0.294 )-- ( 2.9 , 0.284 )-- ( 3 , 0.273 )-- ( 3.1 , 0.264 )-- ( 3.2 , 0.255 )-- ( 3.3 , 0.246 )-- ( 3.4 , 0.238 )-- ( 3.5 , 0.23 )-- ( 3.6 , 0.223 )-- ( 3.7 , 0.216 )-- ( 3.8 , 0.209 )-- ( 3.9 , 0.203 )-- ( 4 , 0.196 )-- ( 4.1 , 0.19 )-- ( 4.2 , 0.185 )-- ( 4.3 , 0.179 )-- ( 4.4 , 0.174 )-- ( 4.5 , 0.169 )-- ( 4.6 , 0.164 )-- ( 4.7 , 0.159 )-- ( 4.8 , 0.154 )-- ( 4.9 , 0.15 )-- ( 5 , 0.146 )-- ( 5.1 , 0.142 )-- ( 5.2 , 0.138 )-- ( 5.3 , 0.134 )-- ( 5.4 , 0.13 )-- ( 5.5 , 0.126 )-- ( 5.6 , 0.123 )-- ( 5.7 , 0.119 )-- ( 5.8 , 0.116 )-- ( 5.9 , 0.113 )-- ( 6 , 0.11 )-- ( 6.1 , 0.107 )-- ( 6.2 , 0.104 )-- ( 6.3 , 0.101 )-- ( 6.4 , 0.098 )-- ( 6.5 , 0.095 )-- ( 6.6 , 0.093 )-- ( 6.7 , 0.09 )-- ( 6.8 , 0.088 )-- ( 6.9 , 0.085 )-- ( 7 , 0.083 )-- ( 7.1 , 0.081 )-- ( 7.2 , 0.079 )-- ( 7.3 , 0.076 )-- ( 7.4 , 0.074 )-- ( 7.5 , 0.072 )-- ( 7.6 , 0.07 )-- ( 7.7 , 0.068 )-- ( 7.8 , 0.067 )-- ( 7.9 , 0.065 )-- ( 8 , 0.063 )-- ( 8.1 , 0.061 )-- ( 8.2 , 0.06 )-- ( 8.3 , 0.058 )-- ( 8.4 , 0.056 )-- ( 8.5 , 0.055 )-- ( 8.6 , 0.053 )-- ( 8.7 , 0.052 )-- ( 8.8 , 0.05 )-- ( 8.9 , 0.049 )-- ( 9 , 0.048 )-- ( 9.1 , 0.046 )-- ( 9.2 , 0.045 )-- ( 9.3 , 0.044 )-- ( 9.4 , 0.043 )-- ( 9.5 , 0.042 )-- ( 9.6 , 0.041 )-- ( 9.7 , 0.039 )-- ( 9.8 , 0.038 )-- ( 9.9 , 0.037 )-- ( 10 , 0.036 )-- ( 10.1 , 0.035 )-- ( 10.2 , 0.034 )-- ( 10.3 , 0.033 )-- ( 10.4 , 0.032 )-- ( 10.5 , 0.032 )-- ( 10.6 , 0.031 )-- ( 10.7 , 0.03 )-- ( 10.8 , 0.029 )-- ( 10.9 , 0.028 )-- ( 11 , 0.028 )-- ( 11.1 , 0.027 )-- ( 11.2 , 0.026 )-- ( 11.3 , 0.025 )-- ( 11.4 , 0.025 )-- ( 11.5 , 0.024 )-- ( 11.6 , 0.023 )-- ( 11.7 , 0.023 )-- ( 11.8 , 0.022 )-- ( 11.9 , 0.021 )-- ( 12 , 0.021 )-- ( 12.1 , 0.02 )-- ( 12.2 , 0.02 )-- ( 12.3 , 0.019 )-- ( 12.4 , 0.019 )-- ( 12.5 , 0.018 )-- ( 12.6 , 0.018 )-- ( 12.7 , 0.017 )-- ( 12.8 , 0.017 )-- ( 12.9 , 0.016 )-- ( 13 , 0.016 )-- ( 13.1 , 0.015 )-- ( 13.2 , 0.015 )-- ( 13.3 , 0.015 )-- ( 13.4 , 0.014 )-- ( 13.5 , 0.014 )-- ( 13.6 , 0.013 )-- ( 13.7 , 0.013 )-- ( 13.8 , 0.013 )-- ( 13.9 , 0.012 )-- ( 14 , 0.012 )-- ( 14.1 , 0.012 )-- ( 14.2 , 0.011 )-- ( 14.3 , 0.011 )-- ( 14.4 , 0.011 )-- ( 14.5 , 0.011 )-- ( 14.6 , 0.01 )-- ( 14.7 , 0.01 )-- ( 14.8 , 0.01 )-- ( 14.9 , 0.009 )-- ( 15 , 0.009 )-- ( 15.1 , 0.009 )-- ( 15.2 , 0.009 )-- ( 15.3 , 0.008 )-- ( 15.4 , 0.008 )-- ( 15.5 , 0.008 )-- ( 15.6 , 0.008 )-- ( 15.7 , 0.008 )-- ( 15.8 , 0.007 )-- ( 15.9 , 0.007 )-- ( 16 , 0.007 )-- ( 16.1 , 0.007 )-- ( 16.2 , 0.007 )-- ( 16.3 , 0.006 )-- ( 16.4 , 0.006 )-- ( 16.5 , 0.006 )-- ( 16.6 , 0.006 )-- ( 16.7 , 0.006 )-- ( 16.8 , 0.006 )-- ( 16.9 , 0.005 )-- ( 17 , 0.005 )-- ( 17.1 , 0.005 )-- ( 17.2 , 0.005 )-- ( 17.3 , 0.005 )-- ( 17.4 , 0.005 )-- ( 17.5 , 0.005 )-- ( 17.6 , 0.004 )-- ( 17.7 , 0.004 )-- ( 17.8 , 0.004 )-- ( 17.9 , 0.004 )-- ( 18 , 0.004 )-- ( 18.1 , 0.004 )-- ( 18.2 , 0.004 )-- ( 18.3 , 0.004 )-- ( 18.4 , 0.004 )-- ( 18.5 , 0.003 )-- ( 18.6 , 0.003 )-- ( 18.7 , 0.003 )-- ( 18.8 , 0.003 )-- ( 18.9 , 0.003 )-- ( 19 , 0.003 )-- ( 19.1 , 0.003 )-- ( 19.2 , 0.003 )-- ( 19.3 , 0.003 )-- ( 19.4 , 0.003 )-- ( 19.5 , 0.003 )-- ( 19.6 , 0.003 )-- ( 19.7 , 0.003 )-- ( 19.8 , 0.002 )-- ( 19.9 , 0.002 )-- ( 20 , 0.002 ) ; 
\end{tikzpicture}

\caption{Lognormal$(0.5,1)$}\label{fig:H_MLE_lognormal}
\end{subfigure}

\centering
\begin{subfigure}{.48\linewidth}
\begin{tikzpicture}[xscale=0.35,yscale=3.5]
  \def\xmin{0}
 \def\xmax{20}
 \def\ymin{0}
 \def\ymax{1.05}
 \draw[->] (\xmin,\ymin) -- (\xmax,\ymin) node[right] {$t$} ;
 \draw[->] (\xmin,\ymin) -- (\xmin,\ymax)  ;
 \foreach \x in {0,5,10,20} {
 \node at (\x,\ymin-0.05) [below] {\x};
 \draw[-] (\x,\ymin) -- (\x,{\ymin-0.05});
 }
\foreach \y in {0,0.5,1} {
 \node at (\xmin,\y) [left] {\y};
 \draw[-] (\xmin,\y) -- ({\xmin-0.05},\y);
 }
 
\draw[black]  ( 0 , 1 )-- ( 0.1 , 0.992 )-- ( 0.2 , 0.977 )-- ( 0.3 , 0.96 )-- ( 0.4 , 0.94 )-- ( 0.5 , 0.919 )-- ( 0.6 , 0.896 )-- ( 0.7 , 0.873 )-- ( 0.8 , 0.849 )-- ( 0.9 , 0.826 )-- ( 1 , 0.802 )-- ( 1.1 , 0.777 )-- ( 1.2 , 0.753 )-- ( 1.3 , 0.73 )-- ( 1.4 , 0.706 )-- ( 1.5 , 0.683 )-- ( 1.6 , 0.66 )-- ( 1.7 , 0.637 )-- ( 1.8 , 0.615 )-- ( 1.9 , 0.593 )-- ( 2 , 0.572 )-- ( 2.1 , 0.552 )-- ( 2.2 , 0.532 )-- ( 2.3 , 0.512 )-- ( 2.4 , 0.494 )-- ( 2.5 , 0.475 )-- ( 2.6 , 0.458 )-- ( 2.7 , 0.44 )-- ( 2.8 , 0.424 )-- ( 2.9 , 0.407 )-- ( 3 , 0.392 )-- ( 3.1 , 0.376 )-- ( 3.2 , 0.362 )-- ( 3.3 , 0.348 )-- ( 3.4 , 0.334 )-- ( 3.5 , 0.321 )-- ( 3.6 , 0.308 )-- ( 3.7 , 0.296 )-- ( 3.8 , 0.284 )-- ( 3.9 , 0.273 )-- ( 4 , 0.262 )-- ( 4.1 , 0.251 )-- ( 4.2 , 0.241 )-- ( 4.3 , 0.231 )-- ( 4.4 , 0.222 )-- ( 4.5 , 0.213 )-- ( 4.6 , 0.204 )-- ( 4.7 , 0.195 )-- ( 4.8 , 0.187 )-- ( 4.9 , 0.179 )-- ( 5 , 0.172 )-- ( 5.1 , 0.165 )-- ( 5.2 , 0.158 )-- ( 5.3 , 0.151 )-- ( 5.4 , 0.145 )-- ( 5.5 , 0.139 )-- ( 5.6 , 0.133 )-- ( 5.7 , 0.127 )-- ( 5.8 , 0.122 )-- ( 5.9 , 0.117 )-- ( 6 , 0.112 )-- ( 6.1 , 0.107 )-- ( 6.2 , 0.102 )-- ( 6.3 , 0.098 )-- ( 6.4 , 0.094 )-- ( 6.5 , 0.09 )-- ( 6.6 , 0.086 )-- ( 6.7 , 0.082 )-- ( 6.8 , 0.079 )-- ( 6.9 , 0.075 )-- ( 7 , 0.072 )-- ( 7.1 , 0.069 )-- ( 7.2 , 0.066 )-- ( 7.3 , 0.063 )-- ( 7.4 , 0.06 )-- ( 7.5 , 0.057 )-- ( 7.6 , 0.055 )-- ( 7.7 , 0.052 )-- ( 7.8 , 0.05 )-- ( 7.9 , 0.048 )-- ( 8 , 0.046 )-- ( 8.1 , 0.044 )-- ( 8.2 , 0.042 )-- ( 8.3 , 0.04 )-- ( 8.4 , 0.038 )-- ( 8.5 , 0.036 )-- ( 8.6 , 0.035 )-- ( 8.7 , 0.033 )-- ( 8.8 , 0.032 )-- ( 8.9 , 0.031 )-- ( 9 , 0.029 )-- ( 9.1 , 0.028 )-- ( 9.2 , 0.027 )-- ( 9.3 , 0.026 )-- ( 9.4 , 0.024 )-- ( 9.5 , 0.023 )-- ( 9.6 , 0.022 )-- ( 9.7 , 0.021 )-- ( 9.8 , 0.02 )-- ( 9.9 , 0.019 )-- ( 10 , 0.018 )-- ( 10.1 , 0.018 )-- ( 10.2 , 0.017 )-- ( 10.3 , 0.016 )-- ( 10.4 , 0.015 )-- ( 10.5 , 0.015 )-- ( 10.6 , 0.014 )-- ( 10.7 , 0.013 )-- ( 10.8 , 0.013 )-- ( 10.9 , 0.012 )-- ( 11 , 0.012 )-- ( 11.1 , 0.011 )-- ( 11.2 , 0.011 )-- ( 11.3 , 0.01 )-- ( 11.4 , 0.01 )-- ( 11.5 , 0.009 )-- ( 11.6 , 0.009 )-- ( 11.7 , 0.008 )-- ( 11.8 , 0.008 )-- ( 11.9 , 0.008 )-- ( 12 , 0.007 )-- ( 12.1 , 0.007 )-- ( 12.2 , 0.007 )-- ( 12.3 , 0.006 )-- ( 12.4 , 0.006 )-- ( 12.5 , 0.006 )-- ( 12.6 , 0.006 )-- ( 12.7 , 0.005 )-- ( 12.8 , 0.005 )-- ( 12.9 , 0.005 )-- ( 13 , 0.005 )-- ( 13.1 , 0.004 )-- ( 13.2 , 0.004 )-- ( 13.3 , 0.004 )-- ( 13.4 , 0.004 )-- ( 13.5 , 0.004 )-- ( 13.6 , 0.003 )-- ( 13.7 , 0.003 )-- ( 13.8 , 0.003 )-- ( 13.9 , 0.003 )-- ( 14 , 0.003 )-- ( 14.1 , 0.003 )-- ( 14.2 , 0.003 )-- ( 14.3 , 0.003 )-- ( 14.4 , 0.002 )-- ( 14.5 , 0.002 )-- ( 14.6 , 0.002 )-- ( 14.7 , 0.002 )-- ( 14.8 , 0.002 )-- ( 14.9 , 0.002 )-- ( 15 , 0.002 )-- ( 15.1 , 0.002 )-- ( 15.2 , 0.002 )-- ( 15.3 , 0.002 )-- ( 15.4 , 0.002 )-- ( 15.5 , 0.001 )-- ( 15.6 , 0.001 )-- ( 15.7 , 0.001 )-- ( 15.8 , 0.001 )-- ( 15.9 , 0.001 )-- ( 16 , 0.001 )-- ( 16.1 , 0.001 )-- ( 16.2 , 0.001 )-- ( 16.3 , 0.001 )-- ( 16.4 , 0.001 )-- ( 16.5 , 0.001 )-- ( 16.6 , 0.001 )-- ( 16.7 , 0.001 )-- ( 16.8 , 0.001 )-- ( 16.9 , 0.001 )-- ( 17 , 0.001 )-- ( 17.1 , 0.001 )-- ( 17.2 , 0.001 )-- ( 17.3 , 0.001 )-- ( 17.4 , 0.001 )-- ( 17.5 , 0.001 )-- ( 17.6 , 0.001 )-- ( 17.7 , 0.001 )-- ( 17.8 , 0.001 )-- ( 17.9 , 0.001 )-- ( 18 , 0 )-- ( 18.1 , 0 )-- ( 18.2 , 0 )-- ( 18.3 , 0 )-- ( 18.4 , 0 )-- ( 18.5 , 0 )-- ( 18.6 , 0 )-- ( 18.7 , 0 )-- ( 18.8 , 0 )-- ( 18.9 , 0 )-- ( 19 , 0 )-- ( 19.1 , 0 )-- ( 19.2 , 0 )-- ( 19.3 , 0 )-- ( 19.4 , 0 )-- ( 19.5 , 0 )-- ( 19.6 , 0 )-- ( 19.7 , 0 )-- ( 19.8 , 0 )-- ( 19.9 , 0 )-- ( 20 , 0 );

 \draw[red,dotted,thick] ( 0 , 1 )-- ( 0.1 , 0.961 )-- ( 0.2 , 0.924 )-- ( 0.3 , 0.888 )-- ( 0.4 , 0.853 )-- ( 0.5 , 0.82 )-- ( 0.6 , 0.788 )-- ( 0.7 , 0.758 )-- ( 0.8 , 0.728 )-- ( 0.9 , 0.7 )-- ( 1 , 0.673 )-- ( 1.1 , 0.647 )-- ( 1.2 , 0.622 )-- ( 1.3 , 0.598 )-- ( 1.4 , 0.574 )-- ( 1.5 , 0.552 )-- ( 1.6 , 0.531 )-- ( 1.7 , 0.51 )-- ( 1.8 , 0.49 )-- ( 1.9 , 0.471 )-- ( 2 , 0.453 )-- ( 2.1 , 0.435 )-- ( 2.2 , 0.418 )-- ( 2.3 , 0.402 )-- ( 2.4 , 0.386 )-- ( 2.5 , 0.371 )-- ( 2.6 , 0.357 )-- ( 2.7 , 0.343 )-- ( 2.8 , 0.33 )-- ( 2.9 , 0.317 )-- ( 3 , 0.305 )-- ( 3.1 , 0.293 )-- ( 3.2 , 0.281 )-- ( 3.3 , 0.271 )-- ( 3.4 , 0.26 )-- ( 3.5 , 0.25 )-- ( 3.6 , 0.24 )-- ( 3.7 , 0.231 )-- ( 3.8 , 0.222 )-- ( 3.9 , 0.213 )-- ( 4 , 0.205 )-- ( 4.1 , 0.197 )-- ( 4.2 , 0.189 )-- ( 4.3 , 0.182 )-- ( 4.4 , 0.175 )-- ( 4.5 , 0.168 )-- ( 4.6 , 0.162 )-- ( 4.7 , 0.155 )-- ( 4.8 , 0.149 )-- ( 4.9 , 0.144 )-- ( 5 , 0.138 )-- ( 5.1 , 0.133 )-- ( 5.2 , 0.127 )-- ( 5.3 , 0.123 )-- ( 5.4 , 0.118 )-- ( 5.5 , 0.113 )-- ( 5.6 , 0.109 )-- ( 5.7 , 0.105 )-- ( 5.8 , 0.1 )-- ( 5.9 , 0.097 )-- ( 6 , 0.093 )-- ( 6.1 , 0.089 )-- ( 6.2 , 0.086 )-- ( 6.3 , 0.082 )-- ( 6.4 , 0.079 )-- ( 6.5 , 0.076 )-- ( 6.6 , 0.073 )-- ( 6.7 , 0.07 )-- ( 6.8 , 0.068 )-- ( 6.9 , 0.065 )-- ( 7 , 0.062 )-- ( 7.1 , 0.06 )-- ( 7.2 , 0.058 )-- ( 7.3 , 0.055 )-- ( 7.4 , 0.053 )-- ( 7.5 , 0.051 )-- ( 7.6 , 0.049 )-- ( 7.7 , 0.047 )-- ( 7.8 , 0.046 )-- ( 7.9 , 0.044 )-- ( 8 , 0.042 )-- ( 8.1 , 0.04 )-- ( 8.2 , 0.039 )-- ( 8.3 , 0.037 )-- ( 8.4 , 0.036 )-- ( 8.5 , 0.034 )-- ( 8.6 , 0.033 )-- ( 8.7 , 0.032 )-- ( 8.8 , 0.031 )-- ( 8.9 , 0.029 )-- ( 9 , 0.028 )-- ( 9.1 , 0.027 )-- ( 9.2 , 0.026 )-- ( 9.3 , 0.025 )-- ( 9.4 , 0.024 )-- ( 9.5 , 0.023 )-- ( 9.6 , 0.022 )-- ( 9.7 , 0.021 )-- ( 9.8 , 0.021 )-- ( 9.9 , 0.02 )-- ( 10 , 0.019 )-- ( 10.1 , 0.018 )-- ( 10.2 , 0.018 )-- ( 10.3 , 0.017 )-- ( 10.4 , 0.016 )-- ( 10.5 , 0.016 )-- ( 10.6 , 0.015 )-- ( 10.7 , 0.014 )-- ( 10.8 , 0.014 )-- ( 10.9 , 0.013 )-- ( 11 , 0.013 )-- ( 11.1 , 0.012 )-- ( 11.2 , 0.012 )-- ( 11.3 , 0.011 )-- ( 11.4 , 0.011 )-- ( 11.5 , 0.011 )-- ( 11.6 , 0.01 )-- ( 11.7 , 0.01 )-- ( 11.8 , 0.009 )-- ( 11.9 , 0.009 )-- ( 12 , 0.009 )-- ( 12.1 , 0.008 )-- ( 12.2 , 0.008 )-- ( 12.3 , 0.008 )-- ( 12.4 , 0.007 )-- ( 12.5 , 0.007 )-- ( 12.6 , 0.007 )-- ( 12.7 , 0.007 )-- ( 12.8 , 0.006 )-- ( 12.9 , 0.006 )-- ( 13 , 0.006 )-- ( 13.1 , 0.006 )-- ( 13.2 , 0.005 )-- ( 13.3 , 0.005 )-- ( 13.4 , 0.005 )-- ( 13.5 , 0.005 )-- ( 13.6 , 0.005 )-- ( 13.7 , 0.004 )-- ( 13.8 , 0.004 )-- ( 13.9 , 0.004 )-- ( 14 , 0.004 )-- ( 14.1 , 0.004 )-- ( 14.2 , 0.004 )-- ( 14.3 , 0.003 )-- ( 14.4 , 0.003 )-- ( 14.5 , 0.003 )-- ( 14.6 , 0.003 )-- ( 14.7 , 0.003 )-- ( 14.8 , 0.003 )-- ( 14.9 , 0.003 )-- ( 15 , 0.003 )-- ( 15.1 , 0.003 )-- ( 15.2 , 0.002 )-- ( 15.3 , 0.002 )-- ( 15.4 , 0.002 )-- ( 15.5 , 0.002 )-- ( 15.6 , 0.002 )-- ( 15.7 , 0.002 )-- ( 15.8 , 0.002 )-- ( 15.9 , 0.002 )-- ( 16 , 0.002 )-- ( 16.1 , 0.002 )-- ( 16.2 , 0.002 )-- ( 16.3 , 0.002 )-- ( 16.4 , 0.002 )-- ( 16.5 , 0.001 )-- ( 16.6 , 0.001 )-- ( 16.7 , 0.001 )-- ( 16.8 , 0.001 )-- ( 16.9 , 0.001 )-- ( 17 , 0.001 )-- ( 17.1 , 0.001 )-- ( 17.2 , 0.001 )-- ( 17.3 , 0.001 )-- ( 17.4 , 0.001 )-- ( 17.5 , 0.001 )-- ( 17.6 , 0.001 )-- ( 17.7 , 0.001 )-- ( 17.8 , 0.001 )-- ( 17.9 , 0.001 )-- ( 18 , 0.001 )-- ( 18.1 , 0.001 )-- ( 18.2 , 0.001 )-- ( 18.3 , 0.001 )-- ( 18.4 , 0.001 )-- ( 18.5 , 0.001 )-- ( 18.6 , 0.001 )-- ( 18.7 , 0.001 )-- ( 18.8 , 0.001 )-- ( 18.9 , 0.001 )-- ( 19 , 0.001 )-- ( 19.1 , 0.001 )-- ( 19.2 , 0 )-- ( 19.3 , 0 )-- ( 19.4 , 0 )-- ( 19.5 , 0 )-- ( 19.6 , 0 )-- ( 19.7 , 0 )-- ( 19.8 , 0 )-- ( 19.9 , 0 )-- ( 20 , 0 );
 
\draw[blue,thick,densely dashed] ( 0 , 1 )-- ( 0.1 , 0.964 )-- ( 0.2 , 0.93 )-- ( 0.3 , 0.897 )-- ( 0.4 , 0.865 )-- ( 0.5 , 0.834 )-- ( 0.6 , 0.804 )-- ( 0.7 , 0.776 )-- ( 0.8 , 0.748 )-- ( 0.9 , 0.721 )-- ( 1 , 0.696 )-- ( 1.1 , 0.671 )-- ( 1.2 , 0.647 )-- ( 1.3 , 0.624 )-- ( 1.4 , 0.602 )-- ( 1.5 , 0.58 )-- ( 1.6 , 0.559 )-- ( 1.7 , 0.539 )-- ( 1.8 , 0.52 )-- ( 1.9 , 0.502 )-- ( 2 , 0.484 )-- ( 2.1 , 0.467 )-- ( 2.2 , 0.45 )-- ( 2.3 , 0.434 )-- ( 2.4 , 0.418 )-- ( 2.5 , 0.404 )-- ( 2.6 , 0.389 )-- ( 2.7 , 0.375 )-- ( 2.8 , 0.362 )-- ( 2.9 , 0.349 )-- ( 3 , 0.337 )-- ( 3.1 , 0.325 )-- ( 3.2 , 0.313 )-- ( 3.3 , 0.302 )-- ( 3.4 , 0.291 )-- ( 3.5 , 0.281 )-- ( 3.6 , 0.271 )-- ( 3.7 , 0.261 )-- ( 3.8 , 0.252 )-- ( 3.9 , 0.243 )-- ( 4 , 0.234 )-- ( 4.1 , 0.226 )-- ( 4.2 , 0.218 )-- ( 4.3 , 0.21 )-- ( 4.4 , 0.202 )-- ( 4.5 , 0.195 )-- ( 4.6 , 0.188 )-- ( 4.7 , 0.182 )-- ( 4.8 , 0.175 )-- ( 4.9 , 0.169 )-- ( 5 , 0.163 )-- ( 5.1 , 0.157 )-- ( 5.2 , 0.151 )-- ( 5.3 , 0.146 )-- ( 5.4 , 0.141 )-- ( 5.5 , 0.136 )-- ( 5.6 , 0.131 )-- ( 5.7 , 0.126 )-- ( 5.8 , 0.122 )-- ( 5.9 , 0.117 )-- ( 6 , 0.113 )-- ( 6.1 , 0.109 )-- ( 6.2 , 0.105 )-- ( 6.3 , 0.102 )-- ( 6.4 , 0.098 )-- ( 6.5 , 0.094 )-- ( 6.6 , 0.091 )-- ( 6.7 , 0.088 )-- ( 6.8 , 0.085 )-- ( 6.9 , 0.082 )-- ( 7 , 0.079 )-- ( 7.1 , 0.076 )-- ( 7.2 , 0.073 )-- ( 7.3 , 0.071 )-- ( 7.4 , 0.068 )-- ( 7.5 , 0.066 )-- ( 7.6 , 0.063 )-- ( 7.7 , 0.061 )-- ( 7.8 , 0.059 )-- ( 7.9 , 0.057 )-- ( 8 , 0.055 )-- ( 8.1 , 0.053 )-- ( 8.2 , 0.051 )-- ( 8.3 , 0.049 )-- ( 8.4 , 0.047 )-- ( 8.5 , 0.046 )-- ( 8.6 , 0.044 )-- ( 8.7 , 0.043 )-- ( 8.8 , 0.041 )-- ( 8.9 , 0.04 )-- ( 9 , 0.038 )-- ( 9.1 , 0.037 )-- ( 9.2 , 0.035 )-- ( 9.3 , 0.034 )-- ( 9.4 , 0.033 )-- ( 9.5 , 0.032 )-- ( 9.6 , 0.031 )-- ( 9.7 , 0.03 )-- ( 9.8 , 0.029 )-- ( 9.9 , 0.027 )-- ( 10 , 0.027 )-- ( 10.1 , 0.026 )-- ( 10.2 , 0.025 )-- ( 10.3 , 0.024 )-- ( 10.4 , 0.023 )-- ( 10.5 , 0.022 )-- ( 10.6 , 0.021 )-- ( 10.7 , 0.021 )-- ( 10.8 , 0.02 )-- ( 10.9 , 0.019 )-- ( 11 , 0.018 )-- ( 11.1 , 0.018 )-- ( 11.2 , 0.017 )-- ( 11.3 , 0.017 )-- ( 11.4 , 0.016 )-- ( 11.5 , 0.015 )-- ( 11.6 , 0.015 )-- ( 11.7 , 0.014 )-- ( 11.8 , 0.014 )-- ( 11.9 , 0.013 )-- ( 12 , 0.013 )-- ( 12.1 , 0.012 )-- ( 12.2 , 0.012 )-- ( 12.3 , 0.012 )-- ( 12.4 , 0.011 )-- ( 12.5 , 0.011 )-- ( 12.6 , 0.01 )-- ( 12.7 , 0.01 )-- ( 12.8 , 0.01 )-- ( 12.9 , 0.009 )-- ( 13 , 0.009 )-- ( 13.1 , 0.009 )-- ( 13.2 , 0.008 )-- ( 13.3 , 0.008 )-- ( 13.4 , 0.008 )-- ( 13.5 , 0.007 )-- ( 13.6 , 0.007 )-- ( 13.7 , 0.007 )-- ( 13.8 , 0.007 )-- ( 13.9 , 0.006 )-- ( 14 , 0.006 )-- ( 14.1 , 0.006 )-- ( 14.2 , 0.006 )-- ( 14.3 , 0.006 )-- ( 14.4 , 0.005 )-- ( 14.5 , 0.005 )-- ( 14.6 , 0.005 )-- ( 14.7 , 0.005 )-- ( 14.8 , 0.005 )-- ( 14.9 , 0.004 )-- ( 15 , 0.004 )-- ( 15.1 , 0.004 )-- ( 15.2 , 0.004 )-- ( 15.3 , 0.004 )-- ( 15.4 , 0.004 )-- ( 15.5 , 0.004 )-- ( 15.6 , 0.003 )-- ( 15.7 , 0.003 )-- ( 15.8 , 0.003 )-- ( 15.9 , 0.003 )-- ( 16 , 0.003 )-- ( 16.1 , 0.003 )-- ( 16.2 , 0.003 )-- ( 16.3 , 0.003 )-- ( 16.4 , 0.003 )-- ( 16.5 , 0.003 )-- ( 16.6 , 0.002 )-- ( 16.7 , 0.002 )-- ( 16.8 , 0.002 )-- ( 16.9 , 0.002 )-- ( 17 , 0.002 )-- ( 17.1 , 0.002 )-- ( 17.2 , 0.002 )-- ( 17.3 , 0.002 )-- ( 17.4 , 0.002 )-- ( 17.5 , 0.002 )-- ( 17.6 , 0.002 )-- ( 17.7 , 0.002 )-- ( 17.8 , 0.002 )-- ( 17.9 , 0.002 )-- ( 18 , 0.001 )-- ( 18.1 , 0.001 )-- ( 18.2 , 0.001 )-- ( 18.3 , 0.001 )-- ( 18.4 , 0.001 )-- ( 18.5 , 0.001 )-- ( 18.6 , 0.001 )-- ( 18.7 , 0.001 )-- ( 18.8 , 0.001 )-- ( 18.9 , 0.001 )-- ( 19 , 0.001 )-- ( 19.1 , 0.001 )-- ( 19.2 , 0.001 )-- ( 19.3 , 0.001 )-- ( 19.4 , 0.001 )-- ( 19.5 , 0.001 )-- ( 19.6 , 0.001 )-- ( 19.7 , 0.001 )-- ( 19.8 , 0.001 )-- ( 19.9 , 0.001 )-- ( 20 , 0.001 ) ;
 
\draw[orange,thick,dashdotted]  ( 0 , 1 )-- ( 0.1 , 0.964 )-- ( 0.2 , 0.93 )-- ( 0.3 , 0.897 )-- ( 0.4 , 0.865 )-- ( 0.5 , 0.834 )-- ( 0.6 , 0.805 )-- ( 0.7 , 0.776 )-- ( 0.8 , 0.748 )-- ( 0.9 , 0.722 )-- ( 1 , 0.696 )-- ( 1.1 , 0.671 )-- ( 1.2 , 0.647 )-- ( 1.3 , 0.624 )-- ( 1.4 , 0.602 )-- ( 1.5 , 0.581 )-- ( 1.6 , 0.56 )-- ( 1.7 , 0.54 )-- ( 1.8 , 0.521 )-- ( 1.9 , 0.502 )-- ( 2 , 0.484 )-- ( 2.1 , 0.467 )-- ( 2.2 , 0.451 )-- ( 2.3 , 0.434 )-- ( 2.4 , 0.419 )-- ( 2.5 , 0.404 )-- ( 2.6 , 0.39 )-- ( 2.7 , 0.376 )-- ( 2.8 , 0.362 )-- ( 2.9 , 0.35 )-- ( 3 , 0.337 )-- ( 3.1 , 0.325 )-- ( 3.2 , 0.314 )-- ( 3.3 , 0.302 )-- ( 3.4 , 0.292 )-- ( 3.5 , 0.281 )-- ( 3.6 , 0.271 )-- ( 3.7 , 0.262 )-- ( 3.8 , 0.252 )-- ( 3.9 , 0.243 )-- ( 4 , 0.235 )-- ( 4.1 , 0.226 )-- ( 4.2 , 0.218 )-- ( 4.3 , 0.21 )-- ( 4.4 , 0.203 )-- ( 4.5 , 0.196 )-- ( 4.6 , 0.189 )-- ( 4.7 , 0.182 )-- ( 4.8 , 0.176 )-- ( 4.9 , 0.169 )-- ( 5 , 0.163 )-- ( 5.1 , 0.158 )-- ( 5.2 , 0.152 )-- ( 5.3 , 0.147 )-- ( 5.4 , 0.141 )-- ( 5.5 , 0.136 )-- ( 5.6 , 0.131 )-- ( 5.7 , 0.127 )-- ( 5.8 , 0.122 )-- ( 5.9 , 0.118 )-- ( 6 , 0.114 )-- ( 6.1 , 0.11 )-- ( 6.2 , 0.106 )-- ( 6.3 , 0.102 )-- ( 6.4 , 0.098 )-- ( 6.5 , 0.095 )-- ( 6.6 , 0.091 )-- ( 6.7 , 0.088 )-- ( 6.8 , 0.085 )-- ( 6.9 , 0.082 )-- ( 7 , 0.079 )-- ( 7.1 , 0.076 )-- ( 7.2 , 0.074 )-- ( 7.3 , 0.071 )-- ( 7.4 , 0.068 )-- ( 7.5 , 0.066 )-- ( 7.6 , 0.064 )-- ( 7.7 , 0.061 )-- ( 7.8 , 0.059 )-- ( 7.9 , 0.057 )-- ( 8 , 0.055 )-- ( 8.1 , 0.053 )-- ( 8.2 , 0.051 )-- ( 8.3 , 0.049 )-- ( 8.4 , 0.048 )-- ( 8.5 , 0.046 )-- ( 8.6 , 0.044 )-- ( 8.7 , 0.043 )-- ( 8.8 , 0.041 )-- ( 8.9 , 0.04 )-- ( 9 , 0.038 )-- ( 9.1 , 0.037 )-- ( 9.2 , 0.036 )-- ( 9.3 , 0.034 )-- ( 9.4 , 0.033 )-- ( 9.5 , 0.032 )-- ( 9.6 , 0.031 )-- ( 9.7 , 0.03 )-- ( 9.8 , 0.029 )-- ( 9.9 , 0.028 )-- ( 10 , 0.027 )-- ( 10.1 , 0.026 )-- ( 10.2 , 0.025 )-- ( 10.3 , 0.024 )-- ( 10.4 , 0.023 )-- ( 10.5 , 0.022 )-- ( 10.6 , 0.021 )-- ( 10.7 , 0.021 )-- ( 10.8 , 0.02 )-- ( 10.9 , 0.019 )-- ( 11 , 0.019 )-- ( 11.1 , 0.018 )-- ( 11.2 , 0.017 )-- ( 11.3 , 0.017 )-- ( 11.4 , 0.016 )-- ( 11.5 , 0.016 )-- ( 11.6 , 0.015 )-- ( 11.7 , 0.014 )-- ( 11.8 , 0.014 )-- ( 11.9 , 0.013 )-- ( 12 , 0.013 )-- ( 12.1 , 0.012 )-- ( 12.2 , 0.012 )-- ( 12.3 , 0.012 )-- ( 12.4 , 0.011 )-- ( 12.5 , 0.011 )-- ( 12.6 , 0.01 )-- ( 12.7 , 0.01 )-- ( 12.8 , 0.01 )-- ( 12.9 , 0.009 )-- ( 13 , 0.009 )-- ( 13.1 , 0.009 )-- ( 13.2 , 0.008 )-- ( 13.3 , 0.008 )-- ( 13.4 , 0.008 )-- ( 13.5 , 0.008 )-- ( 13.6 , 0.007 )-- ( 13.7 , 0.007 )-- ( 13.8 , 0.007 )-- ( 13.9 , 0.007 )-- ( 14 , 0.006 )-- ( 14.1 , 0.006 )-- ( 14.2 , 0.006 )-- ( 14.3 , 0.006 )-- ( 14.4 , 0.005 )-- ( 14.5 , 0.005 )-- ( 14.6 , 0.005 )-- ( 14.7 , 0.005 )-- ( 14.8 , 0.005 )-- ( 14.9 , 0.005 )-- ( 15 , 0.004 )-- ( 15.1 , 0.004 )-- ( 15.2 , 0.004 )-- ( 15.3 , 0.004 )-- ( 15.4 , 0.004 )-- ( 15.5 , 0.004 )-- ( 15.6 , 0.004 )-- ( 15.7 , 0.003 )-- ( 15.8 , 0.003 )-- ( 15.9 , 0.003 )-- ( 16 , 0.003 )-- ( 16.1 , 0.003 )-- ( 16.2 , 0.003 )-- ( 16.3 , 0.003 )-- ( 16.4 , 0.003 )-- ( 16.5 , 0.003 )-- ( 16.6 , 0.002 )-- ( 16.7 , 0.002 )-- ( 16.8 , 0.002 )-- ( 16.9 , 0.002 )-- ( 17 , 0.002 )-- ( 17.1 , 0.002 )-- ( 17.2 , 0.002 )-- ( 17.3 , 0.002 )-- ( 17.4 , 0.002 )-- ( 17.5 , 0.002 )-- ( 17.6 , 0.002 )-- ( 17.7 , 0.002 )-- ( 17.8 , 0.002 )-- ( 17.9 , 0.002 )-- ( 18 , 0.001 )-- ( 18.1 , 0.001 )-- ( 18.2 , 0.001 )-- ( 18.3 , 0.001 )-- ( 18.4 , 0.001 )-- ( 18.5 , 0.001 )-- ( 18.6 , 0.001 )-- ( 18.7 , 0.001 )-- ( 18.8 , 0.001 )-- ( 18.9 , 0.001 )-- ( 19 , 0.001 )-- ( 19.1 , 0.001 )-- ( 19.2 , 0.001 )-- ( 19.3 , 0.001 )-- ( 19.4 , 0.001 )-- ( 19.5 , 0.001 )-- ( 19.6 , 0.001 )-- ( 19.7 , 0.001 )-- ( 19.8 , 0.001 )-- ( 19.9 , 0.001 )-- ( 20 , 0.001 ); 

\draw[olive,loosely dashed,thick] ( 0 , 1 )-- ( 0.1 , 0.952 )-- ( 0.2 , 0.93 )-- ( 0.3 , 0.919 )-- ( 0.4 , 0.909 )-- ( 0.5 , 0.898 )-- ( 0.6 , 0.883 )-- ( 0.7 , 0.865 )-- ( 0.8 , 0.845 )-- ( 0.9 , 0.823 )-- ( 1 , 0.799 )-- ( 1.1 , 0.774 )-- ( 1.2 , 0.749 )-- ( 1.3 , 0.724 )-- ( 1.4 , 0.699 )-- ( 1.5 , 0.675 )-- ( 1.6 , 0.651 )-- ( 1.7 , 0.628 )-- ( 1.8 , 0.606 )-- ( 1.9 , 0.584 )-- ( 2 , 0.563 )-- ( 2.1 , 0.543 )-- ( 2.2 , 0.523 )-- ( 2.3 , 0.504 )-- ( 2.4 , 0.485 )-- ( 2.5 , 0.467 )-- ( 2.6 , 0.45 )-- ( 2.7 , 0.433 )-- ( 2.8 , 0.417 )-- ( 2.9 , 0.401 )-- ( 3 , 0.386 )-- ( 3.1 , 0.371 )-- ( 3.2 , 0.357 )-- ( 3.3 , 0.343 )-- ( 3.4 , 0.329 )-- ( 3.5 , 0.316 )-- ( 3.6 , 0.304 )-- ( 3.7 , 0.292 )-- ( 3.8 , 0.28 )-- ( 3.9 , 0.269 )-- ( 4 , 0.258 )-- ( 4.1 , 0.248 )-- ( 4.2 , 0.238 )-- ( 4.3 , 0.228 )-- ( 4.4 , 0.219 )-- ( 4.5 , 0.21 )-- ( 4.6 , 0.201 )-- ( 4.7 , 0.193 )-- ( 4.8 , 0.185 )-- ( 4.9 , 0.177 )-- ( 5 , 0.17 )-- ( 5.1 , 0.163 )-- ( 5.2 , 0.156 )-- ( 5.3 , 0.149 )-- ( 5.4 , 0.143 )-- ( 5.5 , 0.137 )-- ( 5.6 , 0.131 )-- ( 5.7 , 0.126 )-- ( 5.8 , 0.12 )-- ( 5.9 , 0.115 )-- ( 6 , 0.11 )-- ( 6.1 , 0.106 )-- ( 6.2 , 0.101 )-- ( 6.3 , 0.097 )-- ( 6.4 , 0.093 )-- ( 6.5 , 0.089 )-- ( 6.6 , 0.085 )-- ( 6.7 , 0.081 )-- ( 6.8 , 0.078 )-- ( 6.9 , 0.074 )-- ( 7 , 0.071 )-- ( 7.1 , 0.068 )-- ( 7.2 , 0.065 )-- ( 7.3 , 0.062 )-- ( 7.4 , 0.06 )-- ( 7.5 , 0.057 )-- ( 7.6 , 0.055 )-- ( 7.7 , 0.052 )-- ( 7.8 , 0.05 )-- ( 7.9 , 0.048 )-- ( 8 , 0.046 )-- ( 8.1 , 0.044 )-- ( 8.2 , 0.042 )-- ( 8.3 , 0.04 )-- ( 8.4 , 0.038 )-- ( 8.5 , 0.037 )-- ( 8.6 , 0.035 )-- ( 8.7 , 0.034 )-- ( 8.8 , 0.032 )-- ( 8.9 , 0.031 )-- ( 9 , 0.029 )-- ( 9.1 , 0.028 )-- ( 9.2 , 0.027 )-- ( 9.3 , 0.026 )-- ( 9.4 , 0.025 )-- ( 9.5 , 0.024 )-- ( 9.6 , 0.023 )-- ( 9.7 , 0.022 )-- ( 9.8 , 0.021 )-- ( 9.9 , 0.02 )-- ( 10 , 0.019 )-- ( 10.1 , 0.018 )-- ( 10.2 , 0.017 )-- ( 10.3 , 0.017 )-- ( 10.4 , 0.016 )-- ( 10.5 , 0.015 )-- ( 10.6 , 0.014 )-- ( 10.7 , 0.014 )-- ( 10.8 , 0.013 )-- ( 10.9 , 0.013 )-- ( 11 , 0.012 )-- ( 11.1 , 0.012 )-- ( 11.2 , 0.011 )-- ( 11.3 , 0.011 )-- ( 11.4 , 0.01 )-- ( 11.5 , 0.01 )-- ( 11.6 , 0.009 )-- ( 11.7 , 0.009 )-- ( 11.8 , 0.009 )-- ( 11.9 , 0.008 )-- ( 12 , 0.008 )-- ( 12.1 , 0.007 )-- ( 12.2 , 0.007 )-- ( 12.3 , 0.007 )-- ( 12.4 , 0.007 )-- ( 12.5 , 0.006 )-- ( 12.6 , 0.006 )-- ( 12.7 , 0.006 )-- ( 12.8 , 0.005 )-- ( 12.9 , 0.005 )-- ( 13 , 0.005 )-- ( 13.1 , 0.005 )-- ( 13.2 , 0.005 )-- ( 13.3 , 0.004 )-- ( 13.4 , 0.004 )-- ( 13.5 , 0.004 )-- ( 13.6 , 0.004 )-- ( 13.7 , 0.004 )-- ( 13.8 , 0.004 )-- ( 13.9 , 0.003 )-- ( 14 , 0.003 )-- ( 14.1 , 0.003 )-- ( 14.2 , 0.003 )-- ( 14.3 , 0.003 )-- ( 14.4 , 0.003 )-- ( 14.5 , 0.003 )-- ( 14.6 , 0.002 )-- ( 14.7 , 0.002 )-- ( 14.8 , 0.002 )-- ( 14.9 , 0.002 )-- ( 15 , 0.002 )-- ( 15.1 , 0.002 )-- ( 15.2 , 0.002 )-- ( 15.3 , 0.002 )-- ( 15.4 , 0.002 )-- ( 15.5 , 0.002 )-- ( 15.6 , 0.002 )-- ( 15.7 , 0.002 )-- ( 15.8 , 0.001 )-- ( 15.9 , 0.001 )-- ( 16 , 0.001 )-- ( 16.1 , 0.001 )-- ( 16.2 , 0.001 )-- ( 16.3 , 0.001 )-- ( 16.4 , 0.001 )-- ( 16.5 , 0.001 )-- ( 16.6 , 0.001 )-- ( 16.7 , 0.001 )-- ( 16.8 , 0.001 )-- ( 16.9 , 0.001 )-- ( 17 , 0.001 )-- ( 17.1 , 0.001 )-- ( 17.2 , 0.001 )-- ( 17.3 , 0.001 )-- ( 17.4 , 0.001 )-- ( 17.5 , 0.001 )-- ( 17.6 , 0.001 )-- ( 17.7 , 0.001 )-- ( 17.8 , 0.001 )-- ( 17.9 , 0.001 )-- ( 18 , 0.001 )-- ( 18.1 , 0.001 )-- ( 18.2 , 0.001 )-- ( 18.3 , 0 )-- ( 18.4 , 0 )-- ( 18.5 , 0 )-- ( 18.6 , 0 )-- ( 18.7 , 0 )-- ( 18.8 , 0 )-- ( 18.9 , 0 )-- ( 19 , 0 )-- ( 19.1 , 0 )-- ( 19.2 , 0 )-- ( 19.3 , 0 )-- ( 19.4 , 0 )-- ( 19.5 , 0 )-- ( 19.6 , 0 )-- ( 19.7 , 0 )-- ( 19.8 , 0 )-- ( 19.9 , 0 )-- ( 20 , 0 ); 
 \end{tikzpicture}
\caption{Gamma$(1.5,0.5)$}\label{fig:H_gamma}
\end{subfigure}

\vspace{0.5cm}
 \begin{tikzpicture}
    \begin{customlegend}
    [legend entries={ $\overline H_{\theta_0}(t)$,$\overline H_{\hat{\theta}_n^{(1)}}(t)$,$\overline H_{\hat{\theta}_n^{(2)}}(t)$,$\overline H_{\hat{\theta}_n^{(4)}}(t)$,$\overline H_{\hat{\theta}_n^{({\rm GHE})}}(t)$},legend columns=-1,legend style={/tikz/every even column/.append style={column sep=0.8cm}}]   
    \addlegendimage{black} 
    \addlegendimage{red,densely dotted}    
    \addlegendimage{blue,densely dashed}    
    \addlegendimage{orange,dashdotted}    
    \addlegendimage{olive, loosely dashed}  
    \end{customlegend}
\end{tikzpicture}
\caption{Tail distribution of the customer patience, compared with its estimated counterparts. The true patience distributions $H_{\theta_0}(t)$ are hyperexponential, lognormal and Gamma. The fitted distributions $H_{\hat{\theta}_n^{(p)}}(t)$ are based on a sample of $n=10\,000$ observations with the HE MLE for $p\in\{1,2,4\}$, and the GHE heuristic with $p=10$. The arrival rate is $\lambda=1$, and the service requirements follow a Gamma distribution with parameters $(3,2)$.} \label{fig:H_MLE_tails}
\end{figure}

{The above observations are replicated for a multi-server system with $s=10$. Figures \ref{fig:H_MLE_lognormal_multi} and~\ref{fig:H_MLE_gamma_multi} plot the true and estimated patience distribution for lognormal and gamma distributions, respectively,  using the misspecified hyperexponential MLE.  As in the single-server case, the GHE approximation yields an accurate fit. }

\begin{figure}[h]
\centering
\begin{subfigure}{.48\linewidth}
\begin{tikzpicture}[xscale=0.35,yscale=3.5]
 \def\xmin{0}
 \def\xmax{20}
 \def\ymin{0}
 \def\ymax{1.05}
 \draw[->] (\xmin,\ymin) -- (\xmax,\ymin) node[right] {$t$} ;
 \draw[->] (\xmin,\ymin) -- (\xmin,\ymax)  ;
 \foreach \x in {0,5,10,20} {
 \node at (\x,\ymin-0.05) [below] {\x};
 \draw[-] (\x,\ymin) -- (\x,{\ymin-0.05});
 }
\foreach \y in {0,0.5,1} {
 \node at (\xmin,\y) [left] {\y};
 \draw[-] (\xmin,\y) -- ({\xmin-0.05},\y);
 }


\draw[black]( 0 , 1 )-- ( 0.1 , 0.997 )-- ( 0.2 , 0.983 )-- ( 0.3 , 0.956 )-- ( 0.4 , 0.922 )-- ( 0.5 , 0.883 )-- ( 0.6 , 0.844 )-- ( 0.7 , 0.804 )-- ( 0.8 , 0.765 )-- ( 0.9 , 0.728 )-- ( 1 , 0.692 )-- ( 1.1 , 0.658 )-- ( 1.2 , 0.625 )-- ( 1.3 , 0.594 )-- ( 1.4 , 0.565 )-- ( 1.5 , 0.538 )-- ( 1.6 , 0.512 )-- ( 1.7 , 0.488 )-- ( 1.8 , 0.465 )-- ( 1.9 , 0.444 )-- ( 2 , 0.424 )-- ( 2.1 , 0.405 )-- ( 2.2 , 0.387 )-- ( 2.3 , 0.37 )-- ( 2.4 , 0.354 )-- ( 2.5 , 0.339 )-- ( 2.6 , 0.325 )-- ( 2.7 , 0.311 )-- ( 2.8 , 0.298 )-- ( 2.9 , 0.286 )-- ( 3 , 0.275 )-- ( 3.1 , 0.264 )-- ( 3.2 , 0.254 )-- ( 3.3 , 0.244 )-- ( 3.4 , 0.235 )-- ( 3.5 , 0.226 )-- ( 3.6 , 0.218 )-- ( 3.7 , 0.21 )-- ( 3.8 , 0.202 )-- ( 3.9 , 0.195 )-- ( 4 , 0.188 )-- ( 4.1 , 0.182 )-- ( 4.2 , 0.175 )-- ( 4.3 , 0.169 )-- ( 4.4 , 0.163 )-- ( 4.5 , 0.158 )-- ( 4.6 , 0.153 )-- ( 4.7 , 0.148 )-- ( 4.8 , 0.143 )-- ( 4.9 , 0.138 )-- ( 5 , 0.134 )-- ( 5.1 , 0.13 )-- ( 5.2 , 0.125 )-- ( 5.3 , 0.122 )-- ( 5.4 , 0.118 )-- ( 5.5 , 0.114 )-- ( 5.6 , 0.111 )-- ( 5.7 , 0.108 )-- ( 5.8 , 0.104 )-- ( 5.9 , 0.101 )-- ( 6 , 0.098 )-- ( 6.1 , 0.095 )-- ( 6.2 , 0.093 )-- ( 6.3 , 0.09 )-- ( 6.4 , 0.088 )-- ( 6.5 , 0.085 )-- ( 6.6 , 0.083 )-- ( 6.7 , 0.08 )-- ( 6.8 , 0.078 )-- ( 6.9 , 0.076 )-- ( 7 , 0.074 )-- ( 7.1 , 0.072 )-- ( 7.2 , 0.07 )-- ( 7.3 , 0.068 )-- ( 7.4 , 0.066 )-- ( 7.5 , 0.065 )-- ( 7.6 , 0.063 )-- ( 7.7 , 0.061 )-- ( 7.8 , 0.06 )-- ( 7.9 , 0.058 )-- ( 8 , 0.057 )-- ( 8.1 , 0.056 )-- ( 8.2 , 0.054 )-- ( 8.3 , 0.053 )-- ( 8.4 , 0.052 )-- ( 8.5 , 0.05 )-- ( 8.6 , 0.049 )-- ( 8.7 , 0.048 )-- ( 8.8 , 0.047 )-- ( 8.9 , 0.046 )-- ( 9 , 0.045 )-- ( 9.1 , 0.044 )-- ( 9.2 , 0.043 )-- ( 9.3 , 0.042 )-- ( 9.4 , 0.041 )-- ( 9.5 , 0.04 )-- ( 9.6 , 0.039 )-- ( 9.7 , 0.038 )-- ( 9.8 , 0.037 )-- ( 9.9 , 0.036 )-- ( 10 , 0.036 )-- ( 10.1 , 0.035 )-- ( 10.2 , 0.034 )-- ( 10.3 , 0.033 )-- ( 10.4 , 0.033 )-- ( 10.5 , 0.032 )-- ( 10.6 , 0.031 )-- ( 10.7 , 0.031 )-- ( 10.8 , 0.03 )-- ( 10.9 , 0.029 )-- ( 11 , 0.029 )-- ( 11.1 , 0.028 )-- ( 11.2 , 0.028 )-- ( 11.3 , 0.027 )-- ( 11.4 , 0.026 )-- ( 11.5 , 0.026 )-- ( 11.6 , 0.025 )-- ( 11.7 , 0.025 )-- ( 11.8 , 0.024 )-- ( 11.9 , 0.024 )-- ( 12 , 0.024 )-- ( 12.1 , 0.023 )-- ( 12.2 , 0.023 )-- ( 12.3 , 0.022 )-- ( 12.4 , 0.022 )-- ( 12.5 , 0.021 )-- ( 12.6 , 0.021 )-- ( 12.7 , 0.021 )-- ( 12.8 , 0.02 )-- ( 12.9 , 0.02 )-- ( 13 , 0.019 )-- ( 13.1 , 0.019 )-- ( 13.2 , 0.019 )-- ( 13.3 , 0.018 )-- ( 13.4 , 0.018 )-- ( 13.5 , 0.018 )-- ( 13.6 , 0.017 )-- ( 13.7 , 0.017 )-- ( 13.8 , 0.017 )-- ( 13.9 , 0.016 )-- ( 14 , 0.016 )-- ( 14.1 , 0.016 )-- ( 14.2 , 0.016 )-- ( 14.3 , 0.015 )-- ( 14.4 , 0.015 )-- ( 14.5 , 0.015 )-- ( 14.6 , 0.014 )-- ( 14.7 , 0.014 )-- ( 14.8 , 0.014 )-- ( 14.9 , 0.014 )-- ( 15 , 0.013 )-- ( 15.1 , 0.013 )-- ( 15.2 , 0.013 )-- ( 15.3 , 0.013 )-- ( 15.4 , 0.013 )-- ( 15.5 , 0.012 )-- ( 15.6 , 0.012 )-- ( 15.7 , 0.012 )-- ( 15.8 , 0.012 )-- ( 15.9 , 0.012 )-- ( 16 , 0.011 )-- ( 16.1 , 0.011 )-- ( 16.2 , 0.011 )-- ( 16.3 , 0.011 )-- ( 16.4 , 0.011 )-- ( 16.5 , 0.011 )-- ( 16.6 , 0.01 )-- ( 16.7 , 0.01 )-- ( 16.8 , 0.01 )-- ( 16.9 , 0.01 )-- ( 17 , 0.01 )-- ( 17.1 , 0.01 )-- ( 17.2 , 0.009 )-- ( 17.3 , 0.009 )-- ( 17.4 , 0.009 )-- ( 17.5 , 0.009 )-- ( 17.6 , 0.009 )-- ( 17.7 , 0.009 )-- ( 17.8 , 0.009 )-- ( 17.9 , 0.008 )-- ( 18 , 0.008 )-- ( 18.1 , 0.008 )-- ( 18.2 , 0.008 )-- ( 18.3 , 0.008 )-- ( 18.4 , 0.008 )-- ( 18.5 , 0.008 )-- ( 18.6 , 0.008 )-- ( 18.7 , 0.007 )-- ( 18.8 , 0.007 )-- ( 18.9 , 0.007 )-- ( 19 , 0.007 )-- ( 19.1 , 0.007 )-- ( 19.2 , 0.007 )-- ( 19.3 , 0.007 )-- ( 19.4 , 0.007 )-- ( 19.5 , 0.007 )-- ( 19.6 , 0.007 )-- ( 19.7 , 0.006 )-- ( 19.8 , 0.006 )-- ( 19.9 , 0.006 )-- ( 20 , 0.006 );

 \draw[red,dotted,thick] ( 0 , 1 )-- ( 0.1 , 0.96 )-- ( 0.2 , 0.922 )-- ( 0.3 , 0.885 )-- ( 0.4 , 0.85 )-- ( 0.5 , 0.816 )-- ( 0.6 , 0.783 )-- ( 0.7 , 0.752 )-- ( 0.8 , 0.722 )-- ( 0.9 , 0.693 )-- ( 1 , 0.666 )-- ( 1.1 , 0.639 )-- ( 1.2 , 0.614 )-- ( 1.3 , 0.589 )-- ( 1.4 , 0.566 )-- ( 1.5 , 0.543 )-- ( 1.6 , 0.521 )-- ( 1.7 , 0.501 )-- ( 1.8 , 0.481 )-- ( 1.9 , 0.461 )-- ( 2 , 0.443 )-- ( 2.1 , 0.425 )-- ( 2.2 , 0.408 )-- ( 2.3 , 0.392 )-- ( 2.4 , 0.376 )-- ( 2.5 , 0.361 )-- ( 2.6 , 0.347 )-- ( 2.7 , 0.333 )-- ( 2.8 , 0.32 )-- ( 2.9 , 0.307 )-- ( 3 , 0.295 )-- ( 3.1 , 0.283 )-- ( 3.2 , 0.272 )-- ( 3.3 , 0.261 )-- ( 3.4 , 0.251 )-- ( 3.5 , 0.241 )-- ( 3.6 , 0.231 )-- ( 3.7 , 0.222 )-- ( 3.8 , 0.213 )-- ( 3.9 , 0.204 )-- ( 4 , 0.196 )-- ( 4.1 , 0.188 )-- ( 4.2 , 0.181 )-- ( 4.3 , 0.174 )-- ( 4.4 , 0.167 )-- ( 4.5 , 0.16 )-- ( 4.6 , 0.154 )-- ( 4.7 , 0.148 )-- ( 4.8 , 0.142 )-- ( 4.9 , 0.136 )-- ( 5 , 0.131 )-- ( 5.1 , 0.125 )-- ( 5.2 , 0.12 )-- ( 5.3 , 0.116 )-- ( 5.4 , 0.111 )-- ( 5.5 , 0.107 )-- ( 5.6 , 0.102 )-- ( 5.7 , 0.098 )-- ( 5.8 , 0.094 )-- ( 5.9 , 0.091 )-- ( 6 , 0.087 )-- ( 6.1 , 0.083 )-- ( 6.2 , 0.08 )-- ( 6.3 , 0.077 )-- ( 6.4 , 0.074 )-- ( 6.5 , 0.071 )-- ( 6.6 , 0.068 )-- ( 6.7 , 0.065 )-- ( 6.8 , 0.063 )-- ( 6.9 , 0.06 )-- ( 7 , 0.058 )-- ( 7.1 , 0.056 )-- ( 7.2 , 0.053 )-- ( 7.3 , 0.051 )-- ( 7.4 , 0.049 )-- ( 7.5 , 0.047 )-- ( 7.6 , 0.045 )-- ( 7.7 , 0.044 )-- ( 7.8 , 0.042 )-- ( 7.9 , 0.04 )-- ( 8 , 0.039 )-- ( 8.1 , 0.037 )-- ( 8.2 , 0.036 )-- ( 8.3 , 0.034 )-- ( 8.4 , 0.033 )-- ( 8.5 , 0.031 )-- ( 8.6 , 0.03 )-- ( 8.7 , 0.029 )-- ( 8.8 , 0.028 )-- ( 8.9 , 0.027 )-- ( 9 , 0.026 )-- ( 9.1 , 0.025 )-- ( 9.2 , 0.024 )-- ( 9.3 , 0.023 )-- ( 9.4 , 0.022 )-- ( 9.5 , 0.021 )-- ( 9.6 , 0.02 )-- ( 9.7 , 0.019 )-- ( 9.8 , 0.019 )-- ( 9.9 , 0.018 )-- ( 10 , 0.017 )-- ( 10.1 , 0.016 )-- ( 10.2 , 0.016 )-- ( 10.3 , 0.015 )-- ( 10.4 , 0.015 )-- ( 10.5 , 0.014 )-- ( 10.6 , 0.013 )-- ( 10.7 , 0.013 )-- ( 10.8 , 0.012 )-- ( 10.9 , 0.012 )-- ( 11 , 0.011 )-- ( 11.1 , 0.011 )-- ( 11.2 , 0.01 )-- ( 11.3 , 0.01 )-- ( 11.4 , 0.01 )-- ( 11.5 , 0.009 )-- ( 11.6 , 0.009 )-- ( 11.7 , 0.009 )-- ( 11.8 , 0.008 )-- ( 11.9 , 0.008 )-- ( 12 , 0.008 )-- ( 12.1 , 0.007 )-- ( 12.2 , 0.007 )-- ( 12.3 , 0.007 )-- ( 12.4 , 0.006 )-- ( 12.5 , 0.006 )-- ( 12.6 , 0.006 )-- ( 12.7 , 0.006 )-- ( 12.8 , 0.005 )-- ( 12.9 , 0.005 )-- ( 13 , 0.005 )-- ( 13.1 , 0.005 )-- ( 13.2 , 0.005 )-- ( 13.3 , 0.004 )-- ( 13.4 , 0.004 )-- ( 13.5 , 0.004 )-- ( 13.6 , 0.004 )-- ( 13.7 , 0.004 )-- ( 13.8 , 0.004 )-- ( 13.9 , 0.003 )-- ( 14 , 0.003 )-- ( 14.1 , 0.003 )-- ( 14.2 , 0.003 )-- ( 14.3 , 0.003 )-- ( 14.4 , 0.003 )-- ( 14.5 , 0.003 )-- ( 14.6 , 0.003 )-- ( 14.7 , 0.003 )-- ( 14.8 , 0.002 )-- ( 14.9 , 0.002 )-- ( 15 , 0.002 )-- ( 15.1 , 0.002 )-- ( 15.2 , 0.002 )-- ( 15.3 , 0.002 )-- ( 15.4 , 0.002 )-- ( 15.5 , 0.002 )-- ( 15.6 , 0.002 )-- ( 15.7 , 0.002 )-- ( 15.8 , 0.002 )-- ( 15.9 , 0.002 )-- ( 16 , 0.001 )-- ( 16.1 , 0.001 )-- ( 16.2 , 0.001 )-- ( 16.3 , 0.001 )-- ( 16.4 , 0.001 )-- ( 16.5 , 0.001 )-- ( 16.6 , 0.001 )-- ( 16.7 , 0.001 )-- ( 16.8 , 0.001 )-- ( 16.9 , 0.001 )-- ( 17 , 0.001 )-- ( 17.1 , 0.001 )-- ( 17.2 , 0.001 )-- ( 17.3 , 0.001 )-- ( 17.4 , 0.001 )-- ( 17.5 , 0.001 )-- ( 17.6 , 0.001 )-- ( 17.7 , 0.001 )-- ( 17.8 , 0.001 )-- ( 17.9 , 0.001 )-- ( 18 , 0.001 )-- ( 18.1 , 0.001 )-- ( 18.2 , 0.001 )-- ( 18.3 , 0.001 )-- ( 18.4 , 0.001 )-- ( 18.5 , 0.001 )-- ( 18.6 , 0.001 )-- ( 18.7 , 0 )-- ( 18.8 , 0 )-- ( 18.9 , 0 )-- ( 19 , 0 )-- ( 19.1 , 0 )-- ( 19.2 , 0 )-- ( 19.3 , 0 )-- ( 19.4 , 0 )-- ( 19.5 , 0 )-- ( 19.6 , 0 )-- ( 19.7 , 0 )-- ( 19.8 , 0 )-- ( 19.9 , 0 )-- ( 20 , 0 );

\draw[blue,densely dashed,thick] ( 0 , 1 )-- ( 0.1 , 0.951 )-- ( 0.2 , 0.905 )-- ( 0.3 , 0.862 )-- ( 0.4 , 0.821 )-- ( 0.5 , 0.782 )-- ( 0.6 , 0.745 )-- ( 0.7 , 0.711 )-- ( 0.8 , 0.678 )-- ( 0.9 , 0.647 )-- ( 1 , 0.618 )-- ( 1.1 , 0.591 )-- ( 1.2 , 0.565 )-- ( 1.3 , 0.54 )-- ( 1.4 , 0.517 )-- ( 1.5 , 0.495 )-- ( 1.6 , 0.474 )-- ( 1.7 , 0.454 )-- ( 1.8 , 0.435 )-- ( 1.9 , 0.417 )-- ( 2 , 0.4 )-- ( 2.1 , 0.384 )-- ( 2.2 , 0.368 )-- ( 2.3 , 0.354 )-- ( 2.4 , 0.34 )-- ( 2.5 , 0.327 )-- ( 2.6 , 0.314 )-- ( 2.7 , 0.302 )-- ( 2.8 , 0.291 )-- ( 2.9 , 0.28 )-- ( 3 , 0.269 )-- ( 3.1 , 0.26 )-- ( 3.2 , 0.25 )-- ( 3.3 , 0.241 )-- ( 3.4 , 0.232 )-- ( 3.5 , 0.224 )-- ( 3.6 , 0.216 )-- ( 3.7 , 0.209 )-- ( 3.8 , 0.202 )-- ( 3.9 , 0.195 )-- ( 4 , 0.188 )-- ( 4.1 , 0.182 )-- ( 4.2 , 0.176 )-- ( 4.3 , 0.17 )-- ( 4.4 , 0.164 )-- ( 4.5 , 0.159 )-- ( 4.6 , 0.154 )-- ( 4.7 , 0.149 )-- ( 4.8 , 0.144 )-- ( 4.9 , 0.14 )-- ( 5 , 0.135 )-- ( 5.1 , 0.131 )-- ( 5.2 , 0.127 )-- ( 5.3 , 0.123 )-- ( 5.4 , 0.119 )-- ( 5.5 , 0.116 )-- ( 5.6 , 0.112 )-- ( 5.7 , 0.109 )-- ( 5.8 , 0.106 )-- ( 5.9 , 0.102 )-- ( 6 , 0.099 )-- ( 6.1 , 0.096 )-- ( 6.2 , 0.094 )-- ( 6.3 , 0.091 )-- ( 6.4 , 0.088 )-- ( 6.5 , 0.086 )-- ( 6.6 , 0.083 )-- ( 6.7 , 0.081 )-- ( 6.8 , 0.079 )-- ( 6.9 , 0.076 )-- ( 7 , 0.074 )-- ( 7.1 , 0.072 )-- ( 7.2 , 0.07 )-- ( 7.3 , 0.068 )-- ( 7.4 , 0.066 )-- ( 7.5 , 0.064 )-- ( 7.6 , 0.063 )-- ( 7.7 , 0.061 )-- ( 7.8 , 0.059 )-- ( 7.9 , 0.058 )-- ( 8 , 0.056 )-- ( 8.1 , 0.055 )-- ( 8.2 , 0.053 )-- ( 8.3 , 0.052 )-- ( 8.4 , 0.05 )-- ( 8.5 , 0.049 )-- ( 8.6 , 0.048 )-- ( 8.7 , 0.046 )-- ( 8.8 , 0.045 )-- ( 8.9 , 0.044 )-- ( 9 , 0.043 )-- ( 9.1 , 0.042 )-- ( 9.2 , 0.04 )-- ( 9.3 , 0.039 )-- ( 9.4 , 0.038 )-- ( 9.5 , 0.037 )-- ( 9.6 , 0.036 )-- ( 9.7 , 0.035 )-- ( 9.8 , 0.034 )-- ( 9.9 , 0.034 )-- ( 10 , 0.033 )-- ( 10.1 , 0.032 )-- ( 10.2 , 0.031 )-- ( 10.3 , 0.03 )-- ( 10.4 , 0.029 )-- ( 10.5 , 0.029 )-- ( 10.6 , 0.028 )-- ( 10.7 , 0.027 )-- ( 10.8 , 0.027 )-- ( 10.9 , 0.026 )-- ( 11 , 0.025 )-- ( 11.1 , 0.025 )-- ( 11.2 , 0.024 )-- ( 11.3 , 0.023 )-- ( 11.4 , 0.023 )-- ( 11.5 , 0.022 )-- ( 11.6 , 0.022 )-- ( 11.7 , 0.021 )-- ( 11.8 , 0.02 )-- ( 11.9 , 0.02 )-- ( 12 , 0.019 )-- ( 12.1 , 0.019 )-- ( 12.2 , 0.018 )-- ( 12.3 , 0.018 )-- ( 12.4 , 0.018 )-- ( 12.5 , 0.017 )-- ( 12.6 , 0.017 )-- ( 12.7 , 0.016 )-- ( 12.8 , 0.016 )-- ( 12.9 , 0.015 )-- ( 13 , 0.015 )-- ( 13.1 , 0.015 )-- ( 13.2 , 0.014 )-- ( 13.3 , 0.014 )-- ( 13.4 , 0.014 )-- ( 13.5 , 0.013 )-- ( 13.6 , 0.013 )-- ( 13.7 , 0.013 )-- ( 13.8 , 0.012 )-- ( 13.9 , 0.012 )-- ( 14 , 0.012 )-- ( 14.1 , 0.011 )-- ( 14.2 , 0.011 )-- ( 14.3 , 0.011 )-- ( 14.4 , 0.01 )-- ( 14.5 , 0.01 )-- ( 14.6 , 0.01 )-- ( 14.7 , 0.01 )-- ( 14.8 , 0.009 )-- ( 14.9 , 0.009 )-- ( 15 , 0.009 )-- ( 15.1 , 0.009 )-- ( 15.2 , 0.009 )-- ( 15.3 , 0.008 )-- ( 15.4 , 0.008 )-- ( 15.5 , 0.008 )-- ( 15.6 , 0.008 )-- ( 15.7 , 0.008 )-- ( 15.8 , 0.007 )-- ( 15.9 , 0.007 )-- ( 16 , 0.007 )-- ( 16.1 , 0.007 )-- ( 16.2 , 0.007 )-- ( 16.3 , 0.006 )-- ( 16.4 , 0.006 )-- ( 16.5 , 0.006 )-- ( 16.6 , 0.006 )-- ( 16.7 , 0.006 )-- ( 16.8 , 0.006 )-- ( 16.9 , 0.006 )-- ( 17 , 0.005 )-- ( 17.1 , 0.005 )-- ( 17.2 , 0.005 )-- ( 17.3 , 0.005 )-- ( 17.4 , 0.005 )-- ( 17.5 , 0.005 )-- ( 17.6 , 0.005 )-- ( 17.7 , 0.005 )-- ( 17.8 , 0.004 )-- ( 17.9 , 0.004 )-- ( 18 , 0.004 )-- ( 18.1 , 0.004 )-- ( 18.2 , 0.004 )-- ( 18.3 , 0.004 )-- ( 18.4 , 0.004 )-- ( 18.5 , 0.004 )-- ( 18.6 , 0.004 )-- ( 18.7 , 0.004 )-- ( 18.8 , 0.003 )-- ( 18.9 , 0.003 )-- ( 19 , 0.003 )-- ( 19.1 , 0.003 )-- ( 19.2 , 0.003 )-- ( 19.3 , 0.003 )-- ( 19.4 , 0.003 )-- ( 19.5 , 0.003 )-- ( 19.6 , 0.003 )-- ( 19.7 , 0.003 )-- ( 19.8 , 0.003 )-- ( 19.9 , 0.003 )-- ( 20 , 0.003 );

\draw[olive,dashdotted,thick] ( 0 , 1 )-- ( 0.1 , 0.96 )-- ( 0.2 , 0.921 )-- ( 0.3 , 0.884 )-- ( 0.4 , 0.848 )-- ( 0.5 , 0.814 )-- ( 0.6 , 0.78 )-- ( 0.7 , 0.748 )-- ( 0.8 , 0.716 )-- ( 0.9 , 0.686 )-- ( 1 , 0.656 )-- ( 1.1 , 0.628 )-- ( 1.2 , 0.601 )-- ( 1.3 , 0.575 )-- ( 1.4 , 0.55 )-- ( 1.5 , 0.526 )-- ( 1.6 , 0.502 )-- ( 1.7 , 0.48 )-- ( 1.8 , 0.459 )-- ( 1.9 , 0.439 )-- ( 2 , 0.42 )-- ( 2.1 , 0.402 )-- ( 2.2 , 0.385 )-- ( 2.3 , 0.368 )-- ( 2.4 , 0.352 )-- ( 2.5 , 0.337 )-- ( 2.6 , 0.323 )-- ( 2.7 , 0.31 )-- ( 2.8 , 0.297 )-- ( 2.9 , 0.285 )-- ( 3 , 0.274 )-- ( 3.1 , 0.263 )-- ( 3.2 , 0.253 )-- ( 3.3 , 0.243 )-- ( 3.4 , 0.233 )-- ( 3.5 , 0.225 )-- ( 3.6 , 0.216 )-- ( 3.7 , 0.208 )-- ( 3.8 , 0.201 )-- ( 3.9 , 0.193 )-- ( 4 , 0.187 )-- ( 4.1 , 0.18 )-- ( 4.2 , 0.174 )-- ( 4.3 , 0.168 )-- ( 4.4 , 0.162 )-- ( 4.5 , 0.157 )-- ( 4.6 , 0.152 )-- ( 4.7 , 0.147 )-- ( 4.8 , 0.142 )-- ( 4.9 , 0.138 )-- ( 5 , 0.133 )-- ( 5.1 , 0.129 )-- ( 5.2 , 0.125 )-- ( 5.3 , 0.121 )-- ( 5.4 , 0.118 )-- ( 5.5 , 0.114 )-- ( 5.6 , 0.111 )-- ( 5.7 , 0.107 )-- ( 5.8 , 0.104 )-- ( 5.9 , 0.101 )-- ( 6 , 0.098 )-- ( 6.1 , 0.096 )-- ( 6.2 , 0.093 )-- ( 6.3 , 0.09 )-- ( 6.4 , 0.088 )-- ( 6.5 , 0.085 )-- ( 6.6 , 0.083 )-- ( 6.7 , 0.081 )-- ( 6.8 , 0.078 )-- ( 6.9 , 0.076 )-- ( 7 , 0.074 )-- ( 7.1 , 0.072 )-- ( 7.2 , 0.07 )-- ( 7.3 , 0.068 )-- ( 7.4 , 0.067 )-- ( 7.5 , 0.065 )-- ( 7.6 , 0.063 )-- ( 7.7 , 0.061 )-- ( 7.8 , 0.06 )-- ( 7.9 , 0.058 )-- ( 8 , 0.057 )-- ( 8.1 , 0.055 )-- ( 8.2 , 0.054 )-- ( 8.3 , 0.052 )-- ( 8.4 , 0.051 )-- ( 8.5 , 0.05 )-- ( 8.6 , 0.048 )-- ( 8.7 , 0.047 )-- ( 8.8 , 0.046 )-- ( 8.9 , 0.045 )-- ( 9 , 0.044 )-- ( 9.1 , 0.042 )-- ( 9.2 , 0.041 )-- ( 9.3 , 0.04 )-- ( 9.4 , 0.039 )-- ( 9.5 , 0.038 )-- ( 9.6 , 0.037 )-- ( 9.7 , 0.036 )-- ( 9.8 , 0.035 )-- ( 9.9 , 0.034 )-- ( 10 , 0.034 )-- ( 10.1 , 0.033 )-- ( 10.2 , 0.032 )-- ( 10.3 , 0.031 )-- ( 10.4 , 0.03 )-- ( 10.5 , 0.03 )-- ( 10.6 , 0.029 )-- ( 10.7 , 0.028 )-- ( 10.8 , 0.027 )-- ( 10.9 , 0.027 )-- ( 11 , 0.026 )-- ( 11.1 , 0.025 )-- ( 11.2 , 0.025 )-- ( 11.3 , 0.024 )-- ( 11.4 , 0.023 )-- ( 11.5 , 0.023 )-- ( 11.6 , 0.022 )-- ( 11.7 , 0.022 )-- ( 11.8 , 0.021 )-- ( 11.9 , 0.021 )-- ( 12 , 0.02 )-- ( 12.1 , 0.02 )-- ( 12.2 , 0.019 )-- ( 12.3 , 0.019 )-- ( 12.4 , 0.018 )-- ( 12.5 , 0.018 )-- ( 12.6 , 0.017 )-- ( 12.7 , 0.017 )-- ( 12.8 , 0.016 )-- ( 12.9 , 0.016 )-- ( 13 , 0.016 )-- ( 13.1 , 0.015 )-- ( 13.2 , 0.015 )-- ( 13.3 , 0.014 )-- ( 13.4 , 0.014 )-- ( 13.5 , 0.014 )-- ( 13.6 , 0.013 )-- ( 13.7 , 0.013 )-- ( 13.8 , 0.013 )-- ( 13.9 , 0.012 )-- ( 14 , 0.012 )-- ( 14.1 , 0.012 )-- ( 14.2 , 0.011 )-- ( 14.3 , 0.011 )-- ( 14.4 , 0.011 )-- ( 14.5 , 0.011 )-- ( 14.6 , 0.01 )-- ( 14.7 , 0.01 )-- ( 14.8 , 0.01 )-- ( 14.9 , 0.01 )-- ( 15 , 0.009 )-- ( 15.1 , 0.009 )-- ( 15.2 , 0.009 )-- ( 15.3 , 0.009 )-- ( 15.4 , 0.008 )-- ( 15.5 , 0.008 )-- ( 15.6 , 0.008 )-- ( 15.7 , 0.008 )-- ( 15.8 , 0.008 )-- ( 15.9 , 0.007 )-- ( 16 , 0.007 )-- ( 16.1 , 0.007 )-- ( 16.2 , 0.007 )-- ( 16.3 , 0.007 )-- ( 16.4 , 0.006 )-- ( 16.5 , 0.006 )-- ( 16.6 , 0.006 )-- ( 16.7 , 0.006 )-- ( 16.8 , 0.006 )-- ( 16.9 , 0.006 )-- ( 17 , 0.006 )-- ( 17.1 , 0.005 )-- ( 17.2 , 0.005 )-- ( 17.3 , 0.005 )-- ( 17.4 , 0.005 )-- ( 17.5 , 0.005 )-- ( 17.6 , 0.005 )-- ( 17.7 , 0.005 )-- ( 17.8 , 0.005 )-- ( 17.9 , 0.004 )-- ( 18 , 0.004 )-- ( 18.1 , 0.004 )-- ( 18.2 , 0.004 )-- ( 18.3 , 0.004 )-- ( 18.4 , 0.004 )-- ( 18.5 , 0.004 )-- ( 18.6 , 0.004 )-- ( 18.7 , 0.004 )-- ( 18.8 , 0.004 )-- ( 18.9 , 0.003 )-- ( 19 , 0.003 )-- ( 19.1 , 0.003 )-- ( 19.2 , 0.003 )-- ( 19.3 , 0.003 )-- ( 19.4 , 0.003 )-- ( 19.5 , 0.003 )-- ( 19.6 , 0.003 )-- ( 19.7 , 0.003 )-- ( 19.8 , 0.003 )-- ( 19.9 , 0.003 )-- ( 20 , 0.003 ); 
\end{tikzpicture}
\caption{lognormal$(0.5,1)$}\label{fig:H_MLE_lognormal_multi}
\end{subfigure}
\begin{subfigure}{.48\linewidth}
\begin{tikzpicture}[xscale=0.35,yscale=3.5]
\def\xmin{0}
 \def\xmax{20}
 \def\ymin{0}
 \def\ymax{1.05}
 \draw[->] (\xmin,\ymin) -- (\xmax,\ymin) node[right] {$t$} ;
 \draw[->] (\xmin,\ymin) -- (\xmin,\ymax)  ;
 \foreach \x in {0,5,10,20} {
 \node at (\x,\ymin-0.05) [below] {\x};
 \draw[-] (\x,\ymin) -- (\x,{\ymin-0.05});
 }
\foreach \y in {0,0.5,1} {
 \node at (\xmin,\y) [left] {\y};
 \draw[-] (\xmin,\y) -- ({\xmin-0.05},\y);
 }


\draw[black] ( 0 , 1 )-- ( 0.1 , 0.992 )-- ( 0.2 , 0.977 )-- ( 0.3 , 0.96 )-- ( 0.4 , 0.94 )-- ( 0.5 , 0.919 )-- ( 0.6 , 0.896 )-- ( 0.7 , 0.873 )-- ( 0.8 , 0.849 )-- ( 0.9 , 0.825 )-- ( 1 , 0.801 )-- ( 1.1 , 0.777 )-- ( 1.2 , 0.753 )-- ( 1.3 , 0.729 )-- ( 1.4 , 0.706 )-- ( 1.5 , 0.682 )-- ( 1.6 , 0.659 )-- ( 1.7 , 0.637 )-- ( 1.8 , 0.615 )-- ( 1.9 , 0.593 )-- ( 2 , 0.572 )-- ( 2.1 , 0.552 )-- ( 2.2 , 0.532 )-- ( 2.3 , 0.512 )-- ( 2.4 , 0.493 )-- ( 2.5 , 0.475 )-- ( 2.6 , 0.457 )-- ( 2.7 , 0.44 )-- ( 2.8 , 0.423 )-- ( 2.9 , 0.407 )-- ( 3 , 0.391 )-- ( 3.1 , 0.376 )-- ( 3.2 , 0.361 )-- ( 3.3 , 0.347 )-- ( 3.4 , 0.333 )-- ( 3.5 , 0.32 )-- ( 3.6 , 0.307 )-- ( 3.7 , 0.295 )-- ( 3.8 , 0.283 )-- ( 3.9 , 0.272 )-- ( 4 , 0.261 )-- ( 4.1 , 0.25 )-- ( 4.2 , 0.24 )-- ( 4.3 , 0.231 )-- ( 4.4 , 0.221 )-- ( 4.5 , 0.212 )-- ( 4.6 , 0.203 )-- ( 4.7 , 0.195 )-- ( 4.8 , 0.187 )-- ( 4.9 , 0.179 )-- ( 5 , 0.171 )-- ( 5.1 , 0.164 )-- ( 5.2 , 0.157 )-- ( 5.3 , 0.151 )-- ( 5.4 , 0.144 )-- ( 5.5 , 0.138 )-- ( 5.6 , 0.132 )-- ( 5.7 , 0.127 )-- ( 5.8 , 0.121 )-- ( 5.9 , 0.116 )-- ( 6 , 0.111 )-- ( 6.1 , 0.107 )-- ( 6.2 , 0.102 )-- ( 6.3 , 0.098 )-- ( 6.4 , 0.093 )-- ( 6.5 , 0.09 )-- ( 6.6 , 0.086 )-- ( 6.7 , 0.082 )-- ( 6.8 , 0.079 )-- ( 6.9 , 0.075 )-- ( 7 , 0.072 )-- ( 7.1 , 0.069 )-- ( 7.2 , 0.066 )-- ( 7.3 , 0.063 )-- ( 7.4 , 0.06 )-- ( 7.5 , 0.057 )-- ( 7.6 , 0.055 )-- ( 7.7 , 0.053 )-- ( 7.8 , 0.05 )-- ( 7.9 , 0.048 )-- ( 8 , 0.046 )-- ( 8.1 , 0.044 )-- ( 8.2 , 0.042 )-- ( 8.3 , 0.04 )-- ( 8.4 , 0.038 )-- ( 8.5 , 0.037 )-- ( 8.6 , 0.035 )-- ( 8.7 , 0.034 )-- ( 8.8 , 0.032 )-- ( 8.9 , 0.031 )-- ( 9 , 0.029 )-- ( 9.1 , 0.028 )-- ( 9.2 , 0.027 )-- ( 9.3 , 0.026 )-- ( 9.4 , 0.025 )-- ( 9.5 , 0.023 )-- ( 9.6 , 0.022 )-- ( 9.7 , 0.021 )-- ( 9.8 , 0.02 )-- ( 9.9 , 0.019 )-- ( 10 , 0.019 )-- ( 10.1 , 0.018 )-- ( 10.2 , 0.017 )-- ( 10.3 , 0.016 )-- ( 10.4 , 0.015 )-- ( 10.5 , 0.015 )-- ( 10.6 , 0.014 )-- ( 10.7 , 0.014 )-- ( 10.8 , 0.013 )-- ( 10.9 , 0.012 )-- ( 11 , 0.012 )-- ( 11.1 , 0.011 )-- ( 11.2 , 0.011 )-- ( 11.3 , 0.01 )-- ( 11.4 , 0.01 )-- ( 11.5 , 0.009 )-- ( 11.6 , 0.009 )-- ( 11.7 , 0.009 )-- ( 11.8 , 0.008 )-- ( 11.9 , 0.008 )-- ( 12 , 0.008 )-- ( 12.1 , 0.007 )-- ( 12.2 , 0.007 )-- ( 12.3 , 0.007 )-- ( 12.4 , 0.006 )-- ( 12.5 , 0.006 )-- ( 12.6 , 0.006 )-- ( 12.7 , 0.005 )-- ( 12.8 , 0.005 )-- ( 12.9 , 0.005 )-- ( 13 , 0.005 )-- ( 13.1 , 0.005 )-- ( 13.2 , 0.004 )-- ( 13.3 , 0.004 )-- ( 13.4 , 0.004 )-- ( 13.5 , 0.004 )-- ( 13.6 , 0.004 )-- ( 13.7 , 0.003 )-- ( 13.8 , 0.003 )-- ( 13.9 , 0.003 )-- ( 14 , 0.003 )-- ( 14.1 , 0.003 )-- ( 14.2 , 0.003 )-- ( 14.3 , 0.002 )-- ( 14.4 , 0.002 )-- ( 14.5 , 0.002 )-- ( 14.6 , 0.002 )-- ( 14.7 , 0.002 )-- ( 14.8 , 0.002 )-- ( 14.9 , 0.002 )-- ( 15 , 0.002 )-- ( 15.1 , 0.002 )-- ( 15.2 , 0.002 )-- ( 15.3 , 0.002 )-- ( 15.4 , 0.001 )-- ( 15.5 , 0.001 )-- ( 15.6 , 0.001 )-- ( 15.7 , 0.001 )-- ( 15.8 , 0.001 )-- ( 15.9 , 0.001 )-- ( 16 , 0.001 )-- ( 16.1 , 0.001 )-- ( 16.2 , 0.001 )-- ( 16.3 , 0.001 )-- ( 16.4 , 0.001 )-- ( 16.5 , 0.001 )-- ( 16.6 , 0.001 )-- ( 16.7 , 0.001 )-- ( 16.8 , 0.001 )-- ( 16.9 , 0.001 )-- ( 17 , 0.001 )-- ( 17.1 , 0.001 )-- ( 17.2 , 0.001 )-- ( 17.3 , 0.001 )-- ( 17.4 , 0.001 )-- ( 17.5 , 0.001 )-- ( 17.6 , 0.001 )-- ( 17.7 , 0.001 )-- ( 17.8 , 0 )-- ( 17.9 , 0 )-- ( 18 , 0 )-- ( 18.1 , 0 )-- ( 18.2 , 0 )-- ( 18.3 , 0 )-- ( 18.4 , 0 )-- ( 18.5 , 0 )-- ( 18.6 , 0 )-- ( 18.7 , 0 )-- ( 18.8 , 0 )-- ( 18.9 , 0 )-- ( 19 , 0 )-- ( 19.1 , 0 )-- ( 19.2 , 0 )-- ( 19.3 , 0 )-- ( 19.4 , 0 )-- ( 19.5 , 0 )-- ( 19.6 , 0 )-- ( 19.7 , 0 )-- ( 19.8 , 0 )-- ( 19.9 , 0 )-- ( 20 , 0 );

 \draw[red,dotted,thick] ( 0 , 1 )-- ( 0.1 , 0.974 )-- ( 0.2 , 0.948 )-- ( 0.3 , 0.923 )-- ( 0.4 , 0.898 )-- ( 0.5 , 0.875 )-- ( 0.6 , 0.852 )-- ( 0.7 , 0.829 )-- ( 0.8 , 0.807 )-- ( 0.9 , 0.786 )-- ( 1 , 0.765 )-- ( 1.1 , 0.745 )-- ( 1.2 , 0.725 )-- ( 1.3 , 0.706 )-- ( 1.4 , 0.688 )-- ( 1.5 , 0.669 )-- ( 1.6 , 0.652 )-- ( 1.7 , 0.635 )-- ( 1.8 , 0.618 )-- ( 1.9 , 0.601 )-- ( 2 , 0.586 )-- ( 2.1 , 0.57 )-- ( 2.2 , 0.555 )-- ( 2.3 , 0.54 )-- ( 2.4 , 0.526 )-- ( 2.5 , 0.512 )-- ( 2.6 , 0.499 )-- ( 2.7 , 0.486 )-- ( 2.8 , 0.473 )-- ( 2.9 , 0.46 )-- ( 3 , 0.448 )-- ( 3.1 , 0.436 )-- ( 3.2 , 0.425 )-- ( 3.3 , 0.414 )-- ( 3.4 , 0.403 )-- ( 3.5 , 0.392 )-- ( 3.6 , 0.382 )-- ( 3.7 , 0.372 )-- ( 3.8 , 0.362 )-- ( 3.9 , 0.352 )-- ( 4 , 0.343 )-- ( 4.1 , 0.334 )-- ( 4.2 , 0.325 )-- ( 4.3 , 0.316 )-- ( 4.4 , 0.308 )-- ( 4.5 , 0.3 )-- ( 4.6 , 0.292 )-- ( 4.7 , 0.284 )-- ( 4.8 , 0.277 )-- ( 4.9 , 0.27 )-- ( 5 , 0.262 )-- ( 5.1 , 0.255 )-- ( 5.2 , 0.249 )-- ( 5.3 , 0.242 )-- ( 5.4 , 0.236 )-- ( 5.5 , 0.23 )-- ( 5.6 , 0.223 )-- ( 5.7 , 0.218 )-- ( 5.8 , 0.212 )-- ( 5.9 , 0.206 )-- ( 6 , 0.201 )-- ( 6.1 , 0.195 )-- ( 6.2 , 0.19 )-- ( 6.3 , 0.185 )-- ( 6.4 , 0.18 )-- ( 6.5 , 0.176 )-- ( 6.6 , 0.171 )-- ( 6.7 , 0.166 )-- ( 6.8 , 0.162 )-- ( 6.9 , 0.158 )-- ( 7 , 0.154 )-- ( 7.1 , 0.15 )-- ( 7.2 , 0.146 )-- ( 7.3 , 0.142 )-- ( 7.4 , 0.138 )-- ( 7.5 , 0.134 )-- ( 7.6 , 0.131 )-- ( 7.7 , 0.127 )-- ( 7.8 , 0.124 )-- ( 7.9 , 0.121 )-- ( 8 , 0.118 )-- ( 8.1 , 0.114 )-- ( 8.2 , 0.111 )-- ( 8.3 , 0.109 )-- ( 8.4 , 0.106 )-- ( 8.5 , 0.103 )-- ( 8.6 , 0.1 )-- ( 8.7 , 0.097 )-- ( 8.8 , 0.095 )-- ( 8.9 , 0.092 )-- ( 9 , 0.09 )-- ( 9.1 , 0.088 )-- ( 9.2 , 0.085 )-- ( 9.3 , 0.083 )-- ( 9.4 , 0.081 )-- ( 9.5 , 0.079 )-- ( 9.6 , 0.077 )-- ( 9.7 , 0.075 )-- ( 9.8 , 0.073 )-- ( 9.9 , 0.071 )-- ( 10 , 0.069 )-- ( 10.1 , 0.067 )-- ( 10.2 , 0.065 )-- ( 10.3 , 0.064 )-- ( 10.4 , 0.062 )-- ( 10.5 , 0.06 )-- ( 10.6 , 0.059 )-- ( 10.7 , 0.057 )-- ( 10.8 , 0.056 )-- ( 10.9 , 0.054 )-- ( 11 , 0.053 )-- ( 11.1 , 0.051 )-- ( 11.2 , 0.05 )-- ( 11.3 , 0.049 )-- ( 11.4 , 0.047 )-- ( 11.5 , 0.046 )-- ( 11.6 , 0.045 )-- ( 11.7 , 0.044 )-- ( 11.8 , 0.043 )-- ( 11.9 , 0.041 )-- ( 12 , 0.04 )-- ( 12.1 , 0.039 )-- ( 12.2 , 0.038 )-- ( 12.3 , 0.037 )-- ( 12.4 , 0.036 )-- ( 12.5 , 0.035 )-- ( 12.6 , 0.034 )-- ( 12.7 , 0.033 )-- ( 12.8 , 0.033 )-- ( 12.9 , 0.032 )-- ( 13 , 0.031 )-- ( 13.1 , 0.03 )-- ( 13.2 , 0.029 )-- ( 13.3 , 0.028 )-- ( 13.4 , 0.028 )-- ( 13.5 , 0.027 )-- ( 13.6 , 0.026 )-- ( 13.7 , 0.026 )-- ( 13.8 , 0.025 )-- ( 13.9 , 0.024 )-- ( 14 , 0.024 )-- ( 14.1 , 0.023 )-- ( 14.2 , 0.022 )-- ( 14.3 , 0.022 )-- ( 14.4 , 0.021 )-- ( 14.5 , 0.021 )-- ( 14.6 , 0.02 )-- ( 14.7 , 0.02 )-- ( 14.8 , 0.019 )-- ( 14.9 , 0.019 )-- ( 15 , 0.018 )-- ( 15.1 , 0.018 )-- ( 15.2 , 0.017 )-- ( 15.3 , 0.017 )-- ( 15.4 , 0.016 )-- ( 15.5 , 0.016 )-- ( 15.6 , 0.015 )-- ( 15.7 , 0.015 )-- ( 15.8 , 0.015 )-- ( 15.9 , 0.014 )-- ( 16 , 0.014 )-- ( 16.1 , 0.013 )-- ( 16.2 , 0.013 )-- ( 16.3 , 0.013 )-- ( 16.4 , 0.012 )-- ( 16.5 , 0.012 )-- ( 16.6 , 0.012 )-- ( 16.7 , 0.011 )-- ( 16.8 , 0.011 )-- ( 16.9 , 0.011 )-- ( 17 , 0.011 )-- ( 17.1 , 0.01 )-- ( 17.2 , 0.01 )-- ( 17.3 , 0.01 )-- ( 17.4 , 0.01 )-- ( 17.5 , 0.009 )-- ( 17.6 , 0.009 )-- ( 17.7 , 0.009 )-- ( 17.8 , 0.009 )-- ( 17.9 , 0.008 )-- ( 18 , 0.008 )-- ( 18.1 , 0.008 )-- ( 18.2 , 0.008 )-- ( 18.3 , 0.007 )-- ( 18.4 , 0.007 )-- ( 18.5 , 0.007 )-- ( 18.6 , 0.007 )-- ( 18.7 , 0.007 )-- ( 18.8 , 0.007 )-- ( 18.9 , 0.006 )-- ( 19 , 0.006 )-- ( 19.1 , 0.006 )-- ( 19.2 , 0.006 )-- ( 19.3 , 0.006 )-- ( 19.4 , 0.006 )-- ( 19.5 , 0.005 )-- ( 19.6 , 0.005 )-- ( 19.7 , 0.005 )-- ( 19.8 , 0.005 )-- ( 19.9 , 0.005 )-- ( 20 , 0.005 );

\draw[blue,densely dashed,thick] ( 0 , 1 )-- ( 0.1 , 0.973 )-- ( 0.2 , 0.947 )-- ( 0.3 , 0.922 )-- ( 0.4 , 0.898 )-- ( 0.5 , 0.874 )-- ( 0.6 , 0.851 )-- ( 0.7 , 0.828 )-- ( 0.8 , 0.806 )-- ( 0.9 , 0.785 )-- ( 1 , 0.765 )-- ( 1.1 , 0.745 )-- ( 1.2 , 0.725 )-- ( 1.3 , 0.706 )-- ( 1.4 , 0.688 )-- ( 1.5 , 0.67 )-- ( 1.6 , 0.653 )-- ( 1.7 , 0.636 )-- ( 1.8 , 0.619 )-- ( 1.9 , 0.603 )-- ( 2 , 0.588 )-- ( 2.1 , 0.573 )-- ( 2.2 , 0.558 )-- ( 2.3 , 0.544 )-- ( 2.4 , 0.53 )-- ( 2.5 , 0.517 )-- ( 2.6 , 0.504 )-- ( 2.7 , 0.491 )-- ( 2.8 , 0.478 )-- ( 2.9 , 0.466 )-- ( 3 , 0.455 )-- ( 3.1 , 0.443 )-- ( 3.2 , 0.432 )-- ( 3.3 , 0.421 )-- ( 3.4 , 0.411 )-- ( 3.5 , 0.4 )-- ( 3.6 , 0.39 )-- ( 3.7 , 0.381 )-- ( 3.8 , 0.371 )-- ( 3.9 , 0.362 )-- ( 4 , 0.353 )-- ( 4.1 , 0.344 )-- ( 4.2 , 0.336 )-- ( 4.3 , 0.328 )-- ( 4.4 , 0.32 )-- ( 4.5 , 0.312 )-- ( 4.6 , 0.304 )-- ( 4.7 , 0.297 )-- ( 4.8 , 0.29 )-- ( 4.9 , 0.283 )-- ( 5 , 0.276 )-- ( 5.1 , 0.269 )-- ( 5.2 , 0.263 )-- ( 5.3 , 0.256 )-- ( 5.4 , 0.25 )-- ( 5.5 , 0.244 )-- ( 5.6 , 0.238 )-- ( 5.7 , 0.232 )-- ( 5.8 , 0.227 )-- ( 5.9 , 0.221 )-- ( 6 , 0.216 )-- ( 6.1 , 0.211 )-- ( 6.2 , 0.206 )-- ( 6.3 , 0.201 )-- ( 6.4 , 0.196 )-- ( 6.5 , 0.192 )-- ( 6.6 , 0.187 )-- ( 6.7 , 0.183 )-- ( 6.8 , 0.178 )-- ( 6.9 , 0.174 )-- ( 7 , 0.17 )-- ( 7.1 , 0.166 )-- ( 7.2 , 0.162 )-- ( 7.3 , 0.158 )-- ( 7.4 , 0.155 )-- ( 7.5 , 0.151 )-- ( 7.6 , 0.148 )-- ( 7.7 , 0.144 )-- ( 7.8 , 0.141 )-- ( 7.9 , 0.138 )-- ( 8 , 0.134 )-- ( 8.1 , 0.131 )-- ( 8.2 , 0.128 )-- ( 8.3 , 0.125 )-- ( 8.4 , 0.122 )-- ( 8.5 , 0.12 )-- ( 8.6 , 0.117 )-- ( 8.7 , 0.114 )-- ( 8.8 , 0.112 )-- ( 8.9 , 0.109 )-- ( 9 , 0.107 )-- ( 9.1 , 0.104 )-- ( 9.2 , 0.102 )-- ( 9.3 , 0.099 )-- ( 9.4 , 0.097 )-- ( 9.5 , 0.095 )-- ( 9.6 , 0.093 )-- ( 9.7 , 0.091 )-- ( 9.8 , 0.089 )-- ( 9.9 , 0.087 )-- ( 10 , 0.085 )-- ( 10.1 , 0.083 )-- ( 10.2 , 0.081 )-- ( 10.3 , 0.079 )-- ( 10.4 , 0.077 )-- ( 10.5 , 0.076 )-- ( 10.6 , 0.074 )-- ( 10.7 , 0.072 )-- ( 10.8 , 0.071 )-- ( 10.9 , 0.069 )-- ( 11 , 0.068 )-- ( 11.1 , 0.066 )-- ( 11.2 , 0.065 )-- ( 11.3 , 0.063 )-- ( 11.4 , 0.062 )-- ( 11.5 , 0.06 )-- ( 11.6 , 0.059 )-- ( 11.7 , 0.058 )-- ( 11.8 , 0.056 )-- ( 11.9 , 0.055 )-- ( 12 , 0.054 )-- ( 12.1 , 0.053 )-- ( 12.2 , 0.052 )-- ( 12.3 , 0.051 )-- ( 12.4 , 0.049 )-- ( 12.5 , 0.048 )-- ( 12.6 , 0.047 )-- ( 12.7 , 0.046 )-- ( 12.8 , 0.045 )-- ( 12.9 , 0.044 )-- ( 13 , 0.043 )-- ( 13.1 , 0.042 )-- ( 13.2 , 0.041 )-- ( 13.3 , 0.04 )-- ( 13.4 , 0.04 )-- ( 13.5 , 0.039 )-- ( 13.6 , 0.038 )-- ( 13.7 , 0.037 )-- ( 13.8 , 0.036 )-- ( 13.9 , 0.035 )-- ( 14 , 0.035 )-- ( 14.1 , 0.034 )-- ( 14.2 , 0.033 )-- ( 14.3 , 0.033 )-- ( 14.4 , 0.032 )-- ( 14.5 , 0.031 )-- ( 14.6 , 0.03 )-- ( 14.7 , 0.03 )-- ( 14.8 , 0.029 )-- ( 14.9 , 0.029 )-- ( 15 , 0.028 )-- ( 15.1 , 0.027 )-- ( 15.2 , 0.027 )-- ( 15.3 , 0.026 )-- ( 15.4 , 0.026 )-- ( 15.5 , 0.025 )-- ( 15.6 , 0.025 )-- ( 15.7 , 0.024 )-- ( 15.8 , 0.023 )-- ( 15.9 , 0.023 )-- ( 16 , 0.023 )-- ( 16.1 , 0.022 )-- ( 16.2 , 0.022 )-- ( 16.3 , 0.021 )-- ( 16.4 , 0.021 )-- ( 16.5 , 0.02 )-- ( 16.6 , 0.02 )-- ( 16.7 , 0.019 )-- ( 16.8 , 0.019 )-- ( 16.9 , 0.019 )-- ( 17 , 0.018 )-- ( 17.1 , 0.018 )-- ( 17.2 , 0.017 )-- ( 17.3 , 0.017 )-- ( 17.4 , 0.017 )-- ( 17.5 , 0.016 )-- ( 17.6 , 0.016 )-- ( 17.7 , 0.016 )-- ( 17.8 , 0.015 )-- ( 17.9 , 0.015 )-- ( 18 , 0.015 )-- ( 18.1 , 0.014 )-- ( 18.2 , 0.014 )-- ( 18.3 , 0.014 )-- ( 18.4 , 0.013 )-- ( 18.5 , 0.013 )-- ( 18.6 , 0.013 )-- ( 18.7 , 0.013 )-- ( 18.8 , 0.012 )-- ( 18.9 , 0.012 )-- ( 19 , 0.012 )-- ( 19.1 , 0.012 )-- ( 19.2 , 0.011 )-- ( 19.3 , 0.011 )-- ( 19.4 , 0.011 )-- ( 19.5 , 0.011 )-- ( 19.6 , 0.01 )-- ( 19.7 , 0.01 )-- ( 19.8 , 0.01 )-- ( 19.9 , 0.01 )-- ( 20 , 0.01 );

\draw[olive,dashdotted,thick]  ( 0 , 1 )-- ( 0.1 , 0.99 )-- ( 0.2 , 0.978 )-- ( 0.3 , 0.962 )-- ( 0.4 , 0.944 )-- ( 0.5 , 0.925 )-- ( 0.6 , 0.904 )-- ( 0.7 , 0.882 )-- ( 0.8 , 0.859 )-- ( 0.9 , 0.836 )-- ( 1 , 0.811 )-- ( 1.1 , 0.787 )-- ( 1.2 , 0.763 )-- ( 1.3 , 0.738 )-- ( 1.4 , 0.713 )-- ( 1.5 , 0.689 )-- ( 1.6 , 0.665 )-- ( 1.7 , 0.642 )-- ( 1.8 , 0.618 )-- ( 1.9 , 0.595 )-- ( 2 , 0.573 )-- ( 2.1 , 0.551 )-- ( 2.2 , 0.53 )-- ( 2.3 , 0.509 )-- ( 2.4 , 0.489 )-- ( 2.5 , 0.47 )-- ( 2.6 , 0.451 )-- ( 2.7 , 0.433 )-- ( 2.8 , 0.415 )-- ( 2.9 , 0.398 )-- ( 3 , 0.381 )-- ( 3.1 , 0.365 )-- ( 3.2 , 0.35 )-- ( 3.3 , 0.335 )-- ( 3.4 , 0.321 )-- ( 3.5 , 0.307 )-- ( 3.6 , 0.294 )-- ( 3.7 , 0.281 )-- ( 3.8 , 0.269 )-- ( 3.9 , 0.257 )-- ( 4 , 0.246 )-- ( 4.1 , 0.235 )-- ( 4.2 , 0.225 )-- ( 4.3 , 0.215 )-- ( 4.4 , 0.205 )-- ( 4.5 , 0.196 )-- ( 4.6 , 0.188 )-- ( 4.7 , 0.179 )-- ( 4.8 , 0.171 )-- ( 4.9 , 0.164 )-- ( 5 , 0.156 )-- ( 5.1 , 0.149 )-- ( 5.2 , 0.143 )-- ( 5.3 , 0.136 )-- ( 5.4 , 0.13 )-- ( 5.5 , 0.124 )-- ( 5.6 , 0.118 )-- ( 5.7 , 0.113 )-- ( 5.8 , 0.108 )-- ( 5.9 , 0.103 )-- ( 6 , 0.098 )-- ( 6.1 , 0.094 )-- ( 6.2 , 0.09 )-- ( 6.3 , 0.086 )-- ( 6.4 , 0.082 )-- ( 6.5 , 0.078 )-- ( 6.6 , 0.074 )-- ( 6.7 , 0.071 )-- ( 6.8 , 0.068 )-- ( 6.9 , 0.065 )-- ( 7 , 0.062 )-- ( 7.1 , 0.059 )-- ( 7.2 , 0.056 )-- ( 7.3 , 0.054 )-- ( 7.4 , 0.051 )-- ( 7.5 , 0.049 )-- ( 7.6 , 0.046 )-- ( 7.7 , 0.044 )-- ( 7.8 , 0.042 )-- ( 7.9 , 0.04 )-- ( 8 , 0.039 )-- ( 8.1 , 0.037 )-- ( 8.2 , 0.035 )-- ( 8.3 , 0.033 )-- ( 8.4 , 0.032 )-- ( 8.5 , 0.03 )-- ( 8.6 , 0.029 )-- ( 8.7 , 0.028 )-- ( 8.8 , 0.026 )-- ( 8.9 , 0.025 )-- ( 9 , 0.024 )-- ( 9.1 , 0.023 )-- ( 9.2 , 0.022 )-- ( 9.3 , 0.021 )-- ( 9.4 , 0.02 )-- ( 9.5 , 0.019 )-- ( 9.6 , 0.018 )-- ( 9.7 , 0.017 )-- ( 9.8 , 0.016 )-- ( 9.9 , 0.016 )-- ( 10 , 0.015 )-- ( 10.1 , 0.014 )-- ( 10.2 , 0.014 )-- ( 10.3 , 0.013 )-- ( 10.4 , 0.012 )-- ( 10.5 , 0.012 )-- ( 10.6 , 0.011 )-- ( 10.7 , 0.011 )-- ( 10.8 , 0.01 )-- ( 10.9 , 0.01 )-- ( 11 , 0.009 )-- ( 11.1 , 0.009 )-- ( 11.2 , 0.008 )-- ( 11.3 , 0.008 )-- ( 11.4 , 0.008 )-- ( 11.5 , 0.007 )-- ( 11.6 , 0.007 )-- ( 11.7 , 0.007 )-- ( 11.8 , 0.006 )-- ( 11.9 , 0.006 )-- ( 12 , 0.006 )-- ( 12.1 , 0.006 )-- ( 12.2 , 0.005 )-- ( 12.3 , 0.005 )-- ( 12.4 , 0.005 )-- ( 12.5 , 0.005 )-- ( 12.6 , 0.004 )-- ( 12.7 , 0.004 )-- ( 12.8 , 0.004 )-- ( 12.9 , 0.004 )-- ( 13 , 0.004 )-- ( 13.1 , 0.003 )-- ( 13.2 , 0.003 )-- ( 13.3 , 0.003 )-- ( 13.4 , 0.003 )-- ( 13.5 , 0.003 )-- ( 13.6 , 0.003 )-- ( 13.7 , 0.003 )-- ( 13.8 , 0.002 )-- ( 13.9 , 0.002 )-- ( 14 , 0.002 )-- ( 14.1 , 0.002 )-- ( 14.2 , 0.002 )-- ( 14.3 , 0.002 )-- ( 14.4 , 0.002 )-- ( 14.5 , 0.002 )-- ( 14.6 , 0.002 )-- ( 14.7 , 0.002 )-- ( 14.8 , 0.002 )-- ( 14.9 , 0.001 )-- ( 15 , 0.001 )-- ( 15.1 , 0.001 )-- ( 15.2 , 0.001 )-- ( 15.3 , 0.001 )-- ( 15.4 , 0.001 )-- ( 15.5 , 0.001 )-- ( 15.6 , 0.001 )-- ( 15.7 , 0.001 )-- ( 15.8 , 0.001 )-- ( 15.9 , 0.001 )-- ( 16 , 0.001 )-- ( 16.1 , 0.001 )-- ( 16.2 , 0.001 )-- ( 16.3 , 0.001 )-- ( 16.4 , 0.001 )-- ( 16.5 , 0.001 )-- ( 16.6 , 0.001 )-- ( 16.7 , 0.001 )-- ( 16.8 , 0.001 )-- ( 16.9 , 0.001 )-- ( 17 , 0.001 )-- ( 17.1 , 0.001 )-- ( 17.2 , 0 )-- ( 17.3 , 0 )-- ( 17.4 , 0 )-- ( 17.5 , 0 )-- ( 17.6 , 0 )-- ( 17.7 , 0 )-- ( 17.8 , 0 )-- ( 17.9 , 0 )-- ( 18 , 0 )-- ( 18.1 , 0 )-- ( 18.2 , 0 )-- ( 18.3 , 0 )-- ( 18.4 , 0 )-- ( 18.5 , 0 )-- ( 18.6 , 0 )-- ( 18.7 , 0 )-- ( 18.8 , 0 )-- ( 18.9 , 0 )-- ( 19 , 0 )-- ( 19.1 , 0 )-- ( 19.2 , 0 )-- ( 19.3 , 0 )-- ( 19.4 , 0 )-- ( 19.5 , 0 )-- ( 19.6 , 0 )-- ( 19.7 , 0 )-- ( 19.8 , 0 )-- ( 19.9 , 0 )-- ( 20 , 0 ); 
 \end{tikzpicture}
\caption{Gamma$(1.5,0.5)$}\label{fig:H_MLE_gamma_multi}
\end{subfigure}

\vspace{0.5cm}
 \begin{tikzpicture}
    \begin{customlegend}
    [legend entries={ $\overline H_{\theta_0}(t)$,$\overline H_{\hat{\theta}_n^{(1)}}(t)$,$\overline H_{\hat{\theta}_n^{(3)}}(t)$,$\overline H_{\hat{\theta}_n^{({\rm GHE})}}(t)$},legend columns=-1,legend style={/tikz/every even column/.append style={column sep=0.8cm}}]   
    \addlegendimage{black} 
    \addlegendimage{red,densely dotted, thick}    
    \addlegendimage{blue,densely dashed,thick}    
    \addlegendimage{olive,dashdotted,thick}    
    \end{customlegend}
\end{tikzpicture}
\caption{Tail distribution of the customer patience, compared with its estimated counterparts. The true patience distributions $H_{\theta_0}$ are lognormal and gamma. The fitted distributions $H_{\hat{\theta}_n^{(p)}}(t)$ are based on a sample of $n=30\,000$ observations with the HE MLE for $p\in\{1,3\}$, and the GHE heuristic.  The system has $s=10$ has servers with an arrival rate of $\lambda=10$, and the service requirements follow a Gamma distribution with parameters $(3,2)$.} \label{fig:H_MLE_multi_tails}
\end{figure}


\section{Applications}\label{sec:app}

In this section we discuss a series of applications in which our estimation methodology can be directly used.

\subsection{Unknown proportion of impatient customers}\label{sec:mle_types}
In this subsection we consider a system where an unknown proportion $\theta\in(0,1)$ of the arriving customers has a known deterministic patience threshold $w>0$. The other customers are patient and always join the queue. Suppose one wishes to estimate the fraction $\theta$ from data. 

To this end, first observe that
\[
H_\theta(x)=\theta\,\mathbf{1}_{\{x \geqslant w\}}.
\] 
Assumption (A4) is not satisfied because $H_\theta(x)$ is not continuous (in $x$) at $x=w$. However, with respect to $\theta$ we {\it do} have that $H_\theta(x)$ is continuous. In addition, $\ell_n(\theta)$ is concave, and therefore the MLE is given by the first order condition $\ell_n\apost(\hat{\theta}_n)=0$. In this case we can apply results from \cite{HM1986} to establish strong consistency. In particular, the smoothness of $\ell_n\apost(\theta)$ enables direct verification of the sufficient condition \eqref{eq:cont_mle_condition2}. Asymptotic normality then follows by verifying the conditions of Theorem \ref{thm:cont_mle_normal} directly.

\subsection{Noisy delay messages}\label{sec:mle_noisy}

Suppose that the patience threshold is a constant $\theta$ but the customers do not observe their exact waiting time but rather some noisy estimate $W_e=e(W)$ for some random function $e$ such that $\E[W_e\,|\,W]=W$. We assume customers join based on this `perturbed delay' $W_e$, i.e., if $W_e\leqslant \theta$. Suppose we are in the context that the parameters underlying the noise distribution are known, but the threshold $\theta$ is not. 

The probability of joining at virtual waiting time level $W$ can be computed given the specific noise distribution.
A few examples are:
\begin{enumerate}
\item[$\circ$] {\it Additive perturbations}. In this case $W_e=W+\epsilon$. One could for instance consider normally distributed perturbations: $\epsilon\sim\mathrm{N}(0,\sigma^2)$, independent of $W$, with $\sigma>0$. Let customers facing $W_e<\theta$ join the system. We thus have, with $\Phi(\cdot)$ the cdf of the standard normal distribution,
\[
\overline{H}_\theta(v)=\P(W_e\leqslant \theta\,|\,W=v)=\P(\epsilon\leqslant \theta-v)=\Phi\left(\frac{\theta-v}{\sigma}\right).
\]
The asymptotic properties of Section~\ref{sec:mle_continuous} hold in this case because $\Phi(\cdot)$ satisfies the regularity conditions.
\item[$\circ$] {\it Multiplicative perturbations.} Now $W_e=WG$, with $G$ non-negative unit-mean, and independent of $W$.
In this case
\[
\overline{H}_\theta(v)=\P(W_e\leqslant \theta\,|\,W=v)=\P\left(G\leqslant \frac{\theta}{v}\right).
\]
Now if the random variable $G$ is such that the regularity conditions of Section~\ref{sec:mle_continuous} are met, then it follows that the asymptotic properties hold in this case as well (with an asymptotic variance that can be expressed in terms of $G$).
\end{enumerate}

\subsection{Admission pricing}\label{sec:mle_price}
Our estimation procedure can be exploited in the context of various problems rooted in operations research. Evidently, when having estimates of the arrival rate and the patience-level distribution, one could consider the option of increasing the service rate so as to potentially raise profits. Thus, the operational decision to be made is whether the increased revenues outweigh the cost of speeding up the service rate. In this subsection we consider another operations research related problem. 

Suppose that the queue has an admission price of $p$ and that customers are homogenous with a utility function as featuring in the Naor model \cite{NA}. More concretely, let there be constants $r$ and $c$ such that a customer will join the queue only if the virtual waiting time $w$ upon their arrival satisfies
\[
r-p-c w\geqslant 0,\]
or, alternatively, $w$ is smaller than the threshold value $(r-p)/c.$
Now observe that for any fixed price $p$ the threshold $\theta(p)=(r-p)/c$ can be estimated using the MLE procedure presented in Section \ref{sec:mle_constant}. If one of the cost function parameters $r$ and $c$ is known, then the other parameter can be estimated directly. If both are unknown, then their estimation can be performed by an exploration procedure: set two prices, say $p_1$ and $p_2$, and observe the system for each price. Supposing each of these two experiments is done with $n$ clients, we let $\hat{\theta}_n(p_i)$ be the MLE for price $i=1,2$. Then the estimators for the cost function parameters $r$ and $c$ are given by, respectively,
\[
\hat{r}_n=\frac{\hat{\theta}_n(p_1)\,p_2-\hat{\theta}_n(p_2)\,p_1}{\hat{\theta}_n(p_1)-\hat{\theta}_n(p_2)} , \quad \hat{c}_n=\frac{\hat{r}_n-p_1}{\hat{\theta}_n(p_1)} .
\]
Theorem \ref{thm:mle_constant}, in combination with the continuous mapping theorem, implies that, as $n\to\infty$, both $\hat{r}_n\asarrow r$ and $\hat{c}_n\asarrow c$. Furthermore, for large $n$ confidence intervals for the stationary average revenue per unit of time can be approximated using \eqref{eq:hat-theta-scaled-limit}. In particular, observing that the loss probability depends on the price, in self-evident notation,
\[
n(\theta(p_i)-\hat{\theta}_n(p_i)) \darrow \mathrm{Exp}\left[\frac{v_i(\theta(p_i))}{1-P_{\ell}(p_i)}\right], \ i=1,2,
\]
as $n\to\infty.$ 
This opens the possibility of approximating, for $n$ large, the (joint) distribution of $n\left(\hat{r}_n-r_0,\hat{c}_n-c_0\right))$. Note that the loss probability $P_{\ell}$ needs to be estimated as well. For the single-server case this can be done by estimating the idle probability upon arrival for a given price and using \eqref{eq:P_loss}.


\section{Concluding remarks}\label{sec:Conc}

This paper has considered a service system in which clients
potentially balk based on the virtual waiting time level they face at arrival. The
main objective concerned the development of a framework for estimating the
arrival rate and patience-level distribution. Our approach resolves the
complication that in our setup only non-balking clients are observed.
Distinguishing between the case of a continuous patience-level distribution
and constant patience, we developed MLE estimators and quantified
their asymptotic properties. Through a sequence of examples and
ramifications we have illustrated the performance and broad
applicability of our findings.

An important next step could concern the extension to a non-stationary arrival process.  For example, one may assume that the potential arrival process is a non-homogeneous Poisson process with an arrival rate function that depends on time, i.e., $\{\lambda(t)\!: t\geqslant 0\}$.  This means that, when the virtual waiting time is $v$, the effective arrival rate at time $t$  is given by $\lambda(t)(1-H(v))$, and that the likelihood function in Section~\ref{sec:model} can be updated accordingly.  If the arrival rate function is known, then our  estimation procedure for $\theta$ essentially carries over, including its performance guarantees. If the arrival rate function is unknown and parametric assumptions about it are made, then the joint estimation of the arrival rate and patience parameters is possible. Furthermore, in the practically relevant case that the time-dependent arrival rate is cyclic (i.e.,  $\lambda(t)=\lambda(t+s)$ for all $t\geqslant 0$ and some cycle length $s\geqslant 0$; think of daily or weekly patterns), then the queueing process will still have regenerative dynamics which can be exploited for asymptotic analysis. 
It is thus anticipated that the framework laid out in this paper can serve as a basis for developing estimation techniques for a wide class of non-stationary models.

While we have developed estimation procedures assuming the
independence between the customers' service requirements and patience levels, one could consider an extension that allows dependence. Such an extension could, for example,
model the situation in which customers with larger service requirements can be assumed to have more
patience. Multi-class queueing models can be used to
capture such dependence in service requirements and patience levels~\cite{S2017}. More
specifically, suppose that there are two customer classes $1$ and $2$,
where class $k$ ($k=1,2$) customers arrive with rate $\lambda_k$ and
have service requirements (resp.\ patience levels), i.i.d.\ according to a
class-dependent cdf $G_k(\cdot)$ (resp.\ $H_k(\cdot)$). It is natural to assume that
the customer class is unobservable by the estimator, thus yielding a
sequence of {\it dependent} service requirements and patience levels. In this setting,
we have to estimate the service-requirement cdf $G_k(\cdot)$ jointly with the
patience-level cdf $H_k(\cdot)$ (and the arrival rate $\lambda_k$) to
compute the likelihood function corresponding to an observed sequence 
$({\boldsymbol A}, {\boldsymbol W}, {\boldsymbol X})$. 

Another possible direction for future work concerns the situation in which 
each customer decides to balk based on her {\it sojourn time}, i.e., the virtual waiting time
seen by this customer just {\it after} (instead of before) her arrival. If 
the customer precisely knows her service requirement, this problem is easily reduced
to the one considered in this paper. 
The other obvious option is that she balks if the virtual waiting time 
just before her arrival increased by a `guess' of her service requirement exceeds
her patience threshold. 

An alternative to the approach followed in the paper, would be to pursue non-parametric estimation; cf.\ the results in \cite{HP2006} for the conventional case without balking. In the single-server case it may be of help that  we
can write, with $W(x)$ and $D(x)$ denoting the cdf\,s of the waiting times and sojourn times of non-balking customers respectively,
\begin{equation}
\label{INV}
\overline{H}(x) = \displaystyle
\frac{w(x)}{\lambda(W(x)-D(x))}
\end{equation}
if $w(x)>0$ and $0$ else; the validity of \eqref{INV} follows from combining \eqref{eq:w-general} with a level-crossing identity. While it is clear how to estimate $W(x)$ and $D(x)$ using the empirical distribution, estimation of $w(x)$ is more challenging; kernel-based techniques may turn out useful in this context.

A final issue to consider is that in many systems customers do not observe exact waiting times, but rather queue lengths.   Specifically,  this can be modelled as an M/G/$s$\,+\,H system with customers that join or balk after having observed the number of customers in the system.  In this case the patience level $Y$ is a discrete random variable indicating at what queue length a customer is willing to join. Therefore, the inter-arrival process at any given queue length $q\geqslant 0$ is a Poisson process with rate $\lambda_q=\lambda \bar{H}_\theta(q)$. As was done in this paper, and MLE for the arrival rates (and the corresponding parameter $\theta$) can be derived from a sample of inter-arrival times and the respective queue lengths.

\bibliographystyle{plain}
{\small }

\section*{Appendix: Proofs}\label{sec:appen}

\begin{proof}[Proof of Lemma~\ref{lemma:unifrom_ll}]
Assuming (A1)--(A3) and (A5), we verify the conditions for \eqref{eq:cont_uniform_ell_theta} as given in \cite[Thm. 6.4]{R1962}. These are: (i) $\Theta$ is compact, (ii) $(A_n,W_n,X_n)_{n\geqslant 0}$ is a stationary ergodic sequence, (iii) the function $\frac{1}{n}\ell_n(\theta)$ converges almost surely pointwise to $\ell(\theta)$, 
(iv) the set $\{\ell_i(\theta):\ \theta\in\Theta\}$ is equicontinuous with respect to $(A,X,W)$,
(v) there exists a function $K:[0,\infty)^4\to\mathbb{R}$ such that 
\begin{equation}\label{eq:K_uniform_condition}
\ell_1(\theta;a,z,y,x)\leqslant K(a,z,y,x), \ \forall \theta\in\Theta; \quad \E[K(A_1,W_0,W_1,X_0)]<\infty.
\end{equation}
Condition (i) is assumed in (A1). Assumption (A2) states that $W_n$ is a stationary ergodic sequence, which also implies that the same is true for $A_n$ and $X_n$, thus yielding the validity of (ii). Note that conditional on $W_{i}=z$ the distribution of $A_i$ is determined by $z$ and the external arrival process. Moreover, the upward-jump size is determined by the new service requirement together with the vector of residual service requirements in the system, which are assumed to be stationary and ergodic. Condition (iii) is \eqref{eq:ell_theta_as}. The equicontinuity assumption (A5) on $H_\theta$ implies that the same holds for the set of log-likelihood functions because both the log and the integral terms in \eqref{eq:V_log_likelihood} are continuous functions with respect to the observations $({\bs A},{\bs X},{\bs W})$, so that we have established (iv). In the remainder of the proof we verify (v).
 
Recall, from (A3), the definition $\overline{H}_{\inf}(y)=\inf_{\theta\in\Theta}\overline{H}_\theta(y)$ for any $y\geqslant 0$. As $\Theta$ is compact and all functions $H_\theta$ are continuous, we have that $\overline{H}_{\inf}(\cdot)$ is a continuous monotone non-increasing function such that $\overline{H}_{\inf}(0)\in(0,1]$. Defining
\[
K(a,y):=|\log \lambda\,|+|\log
\overline{H}_{\inf}(y)|{\boldsymbol 1}_{\{\overline{H}_{\inf}(y)>0\}}+\lambda a,
\]
by recalling that $H_\theta(y)\leqslant 1$ for any $y\geqslant 0$ we conclude from \eqref{eq:V_log_likelihood} and the triangle inequality that
\[
|\ell_1(\theta;a,z,y,x)|\leqslant K(a,y),\quad \forall a,z,y,x\geqslant 0.
\]
Clearly $\E A_1<\infty$ and so to complete the proof we need to show that $\E\,|\log \overline{H}_{\inf}(W_1)|<\infty$. 
Recall that $W_1\sim W_0$ is the stationary virtual waiting time seen by an effective arrival, whose pdf $w(\cdot)$ is given by (cf.\ (\ref{eq:w-general}))
\[
w(y) = \frac{v(y)\overline{H}_{\theta_0}(y)}{1-P_{\ell}}.
\]
As mentioned in Section \ref{sec:cont_mle_assumptions}, (A5)
ensures that the support of the distribution $H_{\theta}$ does not
depend on $\theta$. The above equation thus implies $W_0 < h_{\sup}$
with probability one.

Note that we have
\[
\E\,|\log \overline{H}_{\inf}(W_0)|
=
\P(W_0 = 0) (-\log (\overline{H}_{\inf}(0)))
+
\P(W_0 > 0) \E[-\log(\overline{H}_{\inf}(W_0)) \,|\, W_0 > 0],
\]
where the first term on the right-hand side is always finite.
We thus provide a proof of $\E[-\log(\overline{H}_{\inf}(W_0)) \,|\, W_0 > 0] < \infty$ below.

Regardless of whether $h_{\sup} < \infty$ or $h_{\sup} = \infty$,
it follows from (A3) that for any $ \epsilon \in(0, \infty)$,
there exist $0 < y_1^\star  < h_{\sup}$ and $0 < y_2^\star  < h_{\sup}$ such that 
\[
\frac{e^{-f(y)}}{\overline{H}_{\inf}(y)} 
< 
c_1 + \epsilon,
\;\;
y \geqslant y_1^\star ,
\qquad
0 < 
f(y) \overline{H}_{\theta_0}(y)
<
c_2 + \epsilon,
\;\;
y \geqslant y_2^\star .
\]
Using these inequalities, with $y^\star  := \max\{y_1^\star , y_2^\star \}$, we obtain
\begin{align*}
\lefteqn{
(1-P_{\ell})
\E[-\log(\overline{H}_{\inf}(W_0)) \,|\, W_0 > 0}]\quad
\\
&=
(1-P_{\ell})
\int_0^{h_{\sup}} (-\log(\overline{H}_{\inf}(y))) w(y) \diff y
\\
&=
\int_0^{h_{\sup}}
(-\log(\overline{H}_{\inf}(y)))
\overline{H}_{\theta_0}(y) v(y) \diff y
\\
&<
\int_0^{y^\star }
(-\log(\overline{H}_{\inf}(y)))
\overline{H}_{\theta_0}(y) v(y) \diff y
+
\int_{y^\star }^{h_{\sup}}
\left(-\log\left(\frac{e^{-f(y)}}{c_1+\epsilon}\right)\right)
\frac{c_2+\epsilon}{f(y)}
\cdot
v(y) \diff y
\\
&\leq
-\log(\overline{H}_{\inf}(y^\star ))
\int_0^{y^\star }
v(y) \diff y
+
\int_{y^\star }^{h_{\sup}}
\left(f(y) + |\log(c_1+\epsilon)|\right)
\cdot
\frac{c_2+\epsilon}{f(y)} 
\cdot
v(y) \diff y
\\
&\leq
-\log(\overline{H}_{\inf}(y^\star ))
+
(c_2+\epsilon) 
\int_{y^\star }^{h_{\sup}}
v(y) \diff y
+
\frac{|\log(c_1+\epsilon)| (c_2+\epsilon)}{f(y^\star )}
\int_{y^\star }^{h_{\sup}}
v(y) \diff y
\\
&\leq
-\log(\overline{H}_{\inf}(y^\star ))
+
(c_2+\epsilon) 
\left(
1 + \frac{|\log(c_1+\epsilon)|}{f(y^\star )}
\right)
< \infty,
\end{align*}
which implies $\E[-\log(\overline{H}_{\inf}(W_0)) \,|\, W_0 > 0] < \infty$. 
\end{proof}

\begin{proof}[Proof of Lemma~\ref{lemma:cont_mle_clt}]
(a) With Lemma \ref{lemma:unifrom_ll} and Theorem \ref{thm:cont_mle_consistency} at our disposal, the proof essentially follows standard arguments. For instance, see the proof of \cite[Thm.\ 18]{book_F1996} for i.i.d.\ observations, or the proof of \cite[Lemma 14]{RBM2019} that applies a martingale CLT for an MLE based on dependent workload observations in the M/G/1-queue context. \\
(b) We construct a martingale CLT for stationary sequences \cite[Thm.\ 18.3]{book_Billingsley1999} and verify that the corresponding conditions are satisfied. Let $Z_i:=\dot{\ell}_i(\theta_0)-\dot{\ell}_{i-1}(\theta_0)$ for $i\in\{1,2,\ldots\}$, where $\dot{\ell}_0(\theta_0):=0$. The identifiability condition \eqref{eq:cont_MLE_unique} implies that the smooth function $\ell(\theta)$ is maximized at $\theta_0$, hence $\dot{\ell}(\theta_0):=\nabla\ell(\theta_0)=0$, i.e., all coordinates equal zero. Therefore, if $W_{i-1}$ is stationary we have that 
\[
\E[(Z_i)_k\,|\,W_{i-1}]=\E[(\dot{\ell}_1(\theta_0))_k\,|\,W_{0}]=(\dot{\ell}(\theta_0))_k=0, \ \forall k\in\{1,\ldots,p \},
\]
and
\[
Z_{ni}:=K_n^{-1}Z_i, \  i\in \{1,\ldots, n\},
\]
is a martingale difference, where $K_nK_n^\top=\Var[\dot{\ell}_n(\theta_0)]$ is the Cholesky decomposition of the covariance matrix of $\dot{\ell}_n(\theta_0)$. Furthermore,
\[
\frac{1}{n}\Var\left[\dot{\ell}_n(\theta_0)\right]=\Var\left[\frac{1}{\sqrt{n}}\dot{\ell}_n(\theta_0)\right]\asarrow -\E \Psi_1(\theta_0)=I(\theta_0),
\]
as $n\to\infty$, and as $I(\theta_0)<\infty$ by (A4) we have that $\lim_{n\to\infty}(K_n)_k =\infty$ for all $k=1,\ldots,p$, and in addition
\[
\lim_{n\to\infty}\frac{K_n}{\sqrt{n}}=K,
\] 
where $K$ is defined through $K K^\top=I(\theta_0)$. Therefore by \cite[Thm.\ 12.6]{book_H1997} (which is the multi-dimensional counterpart of \cite[Thm.\ 18.1]{book_Billingsley1999}) we conclude that $\sum_{i=1}^n Z_{ni}$ converges to a standard normal random variable, and thus
\[
\frac{1}{\sqrt{n}}\dot{\ell}_n(\theta_0)=\frac{1}{\sqrt{n}}K_n\sum_{i=1}^n Z_{ni}\darrow K\,\mathrm{N}(0,1),
\]
which is equivalent to \eqref{eq:cont_dl_clt}.
\end{proof}

\begin{proof}[Proof of Theorem \ref{thm:mle_constant}]
This proof consists of a lower bound and an upper bound.

\noindent
\textit{$\rhd$ Lower bound.}
Note that for $\theta_0>W_{k-1}$, we have
\begin{equation}\label{eq:Ak}
A_k 
\sim
\left\lbrace
\begin{array}{ll}
\E_{\lambda,k}, & W_{k-1}+X_{k-1}\leqslant \theta_0, 
\\
W_{k-1}+X_{k-1}-\theta_0+{\rm E}_{\lambda,k}, &
W_{k-1}+X_{k-1} > \theta_0, \end{array}\right.
\end{equation}
where $(\E_{\lambda,k})_{k=1,2,\ldots}$ denotes a sequence of i.i.d.\ exponentially distributed random variables with parameter $\lambda$.
Let $\mathcal{N}_n^+ \subset \{1,2,\ldots,n\}$ denote the set of
indices of observed virtual waiting times such that
\[
\mathcal{N}_n^+ = \{k \in \{1,2,\ldots,n\}:\, W_{k-1}+X_{k-1} \geqslant \theta_0\}.
\]
We see from (\ref{eq:Vk_recursion}) and (\ref{eq:Ak}) that 
given $k \in \mathcal{N}_n^+$, the observed virtual waiting time $W_k$ is
stochastically identical to $\max\{0,\theta_0 - A_k'\}$, where 
$(A_k')_{k = 0,1,\ldots}$ denotes a sequence of i.i.d.\
random variables that are exponentially distributed with parameter
$\lambda$. Therefore, $(W_k)_{k \in \mathcal{N}_n^+}$ are i.i.d.\ with 
\begin{equation}
\P(W_k \leqslant x \,|\, k \in \mathcal{N}_n^+) = \mathrm{e}^{-\lambda (\theta_0 - x)},
\quad
x \in [0, \theta_0],
\label{eq:Z-cdf}
\end{equation}
so that
\begin{equation}
\P\left(\max_{i \in \mathcal{N}_n^+} W_i \leqslant x\right) 
= 
\E\left[
\prod_{i \in \mathcal{N}_n^+} {\boldsymbol 1}_{\{W_i \leqslant x\}}
\right]
=
\E\left[
\mathrm{e}^{-\lambda (\theta_0 - x) \cdot |\mathcal{N}_n^+| }
\right],
\label{eq:lower_n}
\end{equation}
where $|\mathcal{N}_n^+|$ denotes the number of elements in
$\mathcal{N}_n^+$. 

Obviously, we have $\theta_0\geqslant \hat\theta_n$ and 
\begin{equation}
\hat{\theta}_n \geqslant \max_{i \in \mathcal{N}_n^+} W_i,
\quad
\mbox{a.s.}
\label{eq:hat-theta-lower}
\end{equation}
Let, as before, $v(\cdot)$ and $w(\cdot)$ denote the pdf\,s of
the stationary virtual waiting time and the stationary waiting time of non-balking
customers, and let $P_{\ell}$ denote the stationary
loss probability. From the ergodicity, we have 
\begin{equation}
\frac{|\mathcal{N}_n^+|}{n} 
\asarrow 
\frac{q_{s-1}}{1-P_{\ell}} \cdot \overline{J}_{\mid 0}(\theta_0)
+ 
\int_0^{\theta_0} w(y)\overline{J}_{\mid y}(\theta_0-y) \diff y
=
\frac{v(\theta_0)}{\lambda (1-P_{\ell})},
\label{eq:N_n^+-limit}
\end{equation}
as $n \to \infty$, where the second equality follows from
(\ref{eq:v-Volterra-multi}) and (\ref{eq:w-general}).

Because (\ref{eq:N_n^+-limit}) implies $|\mathcal{N}_n^+| \asarrow
\infty$, we have for any $\epsilon > 0$, using \eqref{eq:lower_n} and \eqref{eq:hat-theta-lower},
\begin{align*}
\P\left(|\hat{\theta}_n - \theta_0| > \epsilon\right) 
=
\P\left(\hat{\theta}_n < \theta_0 - \epsilon\right) 
&\leq
\P\left(\max_{i \in \mathcal{N}_n^+} W_i \leqslant \theta_0 - \epsilon\right) 
\\
&=
\E\left[
\mathrm{e}^{-\lambda \epsilon \cdot |\mathcal{N}_n^+| }
\right]
\to 0
\end{align*}
as $n\to\infty$.
We therefore have that $\hat{\theta}_n \parrow \theta_0$ as $n \to \infty$.
Convergence in probability implies that there exists
a subsequence $(\hat{\theta}_{n_m})_{m=1}^\infty$ such that
$\hat{\theta}_{n_m}\asarrow \theta_0$ as $m \to \infty$, and as
$\hat{\theta}_n$ is a monotone non-decreasing sequence we conclude \eqref{eq:maxV-limit}, i.e., the MLE is strongly consistent.

Furthermore, it follows from (\ref{eq:lower_n}) and (\ref{eq:N_n^+-limit}) that
\begin{align}
\P\left(n\Bigl\{\theta_0 - \max_{i \in \mathcal{N}_n^+} W_i\Bigr\} \geqslant x\right) 
&=
\P\left(
\max_{i \in \mathcal{N}_n^+} W_i 
\leqslant 
\theta_0 
-\frac{x}{n}
\nonumber
\right) 
\\
&=
\left\{
\begin{array}{l@{\qquad}l}
\E\left[
\mathrm{e}^{-\lambda x \cdot |\mathcal{N}_n^+|/n }
\right],
&
0 \leqslant x < n\theta_0,
\\
0, & x \geqslant n\theta_0,
\end{array}
\right.
\label{eq:lower-scaled}
\end{align}
so that for each $x \geqslant 0$, we obtain from the continuous mapping theorem,
\begin{align}
\lim_{n \to \infty}
\P\left(n\Bigl\{\theta_0 - \max_{i \in \mathcal{N}_n^+} W_i\Bigr\} \geqslant x\right) 
=
\mathrm{e}^{-v(\theta_0) x/(1-P_{\ell})}.
\label{eq:lower-scaled-limit}
\end{align}

\smallskip
\noindent
\textit{$\rhd$ Upper bound.}
Let $\mathcal{N}_n^{\star} \subset \{1,2,\ldots,n\}$ denote the set of
indices of observed virtual waiting times such that
\[
\mathcal{N}_n^{\star} 
= 
\left\{
k \in \{1,2,\ldots,n\};\, W_{k-1}+X_{k-1} > \max_{i \in \{1,2,\ldots,k-1\}} W_i
\right\}.
\]
By definition, we have
\begin{equation}
\hat{\theta}_n = \max_{i \in \mathcal{N}_n^{\star}} W_i.
\label{eq:hat-theta-N-star}
\end{equation}
For $k = 1,2,\ldots,n$, we define 
\begin{align*}
Z_k :=& \max\{0, \theta_0 - [A_k - \max\{0,W_{k-1}+X_{k-1}-\theta_0\}] \}
\\
=&
\left\{
\begin{array}{l@{\quad}l}
\max\{0,\theta_0 - A_k\}, & W_{k-1}+X_{k-1} \leqslant \theta_0,
\\
\max\{0, W_{k-1}+X_{k-1} - A_k\}, & W_{k-1}+X_{k-1} > \theta_0.
\end{array}
\right.
\end{align*}
From \eqref{eq:Vk_recursion} and \eqref{eq:Ak} it
follows that $W_i \leqslant Z_i$ a.s.\ for $i=0,1,\ldots,n$. 
We then have from (\ref{eq:hat-theta-N-star}),
\begin{equation}
\hat{\theta}_n 
\leqslant 
\max_{i \in \mathcal{N}_n^{\star}} Z_i,
\quad
\mbox{a.s.}
\label{eq:hat-theta-upper}
\end{equation}
Furthermore, $Z_k$ is stochastically identical to $\max\{0,\theta_0 -
{\rm E}_{\lambda,k}\}$, where $({\rm E}_{\lambda,k})_{k=1,2,\ldots}$ are i.i.d.\ exponential random variables 
with parameter $\lambda$ as defined above. $\{Z_k\}_{k \in
\mathcal{N}_n^{\star}}$ thus forms a sequence of i.i.d.\ non-negative
random variables with the same cdf as (\ref{eq:Z-cdf}),
so that (cf.\ (\ref{eq:lower_n}))
\[
\P\left(
\max_{i \in \mathcal{N}_n^{\star}} Z_i \leqslant x
\right)
=
\E\left[
\mathrm{e}^{-\lambda(\theta_0 -x) \cdot |\mathcal{N}_n^{\star}|}
\right],
\]
which implies (cf.\ (\ref{eq:lower-scaled}))
\begin{equation}
\P\left(
n\left\{\theta_0 -\max_{i \in \mathcal{N}_n^{\star}} Z_i\right\} \geqslant x
\right)
=
\left\{
\begin{array}{l@{\qquad}l}
\E\left[\mathrm{e}^{-\lambda x\cdot  |\mathcal{N}_n^{\star}|/n}\right],
&
0 \leqslant x < n\theta_0,
\\
0, & x \geqslant n\theta_0.
\end{array}
\right.
\label{eq:upper-scaled}
\end{equation}

Let $\phi_{\theta_0}(x)$ ($x \in (0,\theta_0]$) denote the stationary probability
that the virtual waiting time just after an acceptance of a customer takes a value in
$(\theta_0-x,\theta_0)$:
\[
\phi_{\theta_0}(x) 
= 
\pi_0 \,\P(\theta_0 - x < X_{\mid 0} < \theta_0)
+
\int_0^{\theta_0 }
v(y)\P(\theta_0-y-x < X_{\mid y} < \theta_0-y)\diff y.
\]
Because we have $\int_0^{\theta_0}v(y)\diff y < \infty$,
it follows from the dominated convergence theorem that
\[
\lim_{x \to 0+}\phi_{\theta_0}(x) = 0,
\]
so that we have
\begin{equation}
\forall \delta > 0, \exists d_0 > 0:
\quad
\phi_{\theta_0}(d_0) < \frac{\delta}{4}.
\label{eq:d_0}
\end{equation}

Now define $\mathcal{N}_n^{\star\star} := \mathcal{N}_n^{\star} \setminus
\mathcal{N}_n^+$, which concretely means that
\[
\mathcal{N}_n^{\star\star} 
= 
\left\{
k \in \{1,2,\ldots,n\};\, \max_{i=1,2,\ldots,k-1} W_i <
W_{k-1}+X_{k-1} < \theta_0
\right\}.
\]
As $\mathcal{N}_n^+$ and $\mathcal{N}_n^{\star\star}$ are two disjoint sets whose union is $\mathcal{N}_n^{\star}$, we have that
\begin{equation}
|\mathcal{N}_n^{\star}| 
=
|\mathcal{N}_n^+|
+
|\mathcal{N}_n^{\star\star}|.
\label{eq:N_n^-star-by-sum}
\end{equation}
Note that (\ref{eq:maxV-limit}) implies
\begin{equation}
\exists n_0 \in \{0,1,\ldots\},
\,
\P(\hat{\theta}_n < \theta_0 - d_0) < \frac{\delta}{4}
\;\;
\mbox{for $n = n_0, n_0+1,\ldots$}.
\label{eq:n_0}
\end{equation}
Because $\hat{\theta}_n$ is non-decreasing in $n$ a.s., 
we have for $n = n_0, n_0+1,\ldots$,
\begin{align*}
\E\Bigl[|\mathcal{N}_n^{\star\star}|\Bigr]
&=
\E\Bigl[|\mathcal{N}_{n_0}^{\star\star}|
+
|\mathcal{N}_n^{\star\star} 
\setminus \mathcal{N}_{n_0}^{\star\star}|\Bigr]
\\
&=
\E\Bigl[|\mathcal{N}_{n_0}^{\star\star}|\Bigr]
+
\P(\hat{\theta}_{n_0} < \theta_0 - d_0)
\cdot
\E\biggl[
|\mathcal{N}_n^{\star\star} \setminus \mathcal{N}_{n_0}^{\star\star}|
\ \Bigr|\ 
\hat{\theta}_{n_0} < \theta_0 - d_0
\biggr]
\\
&\hspace{10em}
{}+
\P(\hat{\theta}_{n_0} \geqslant \theta_0 - d_0)
\cdot
\E\biggl[
|\mathcal{N}_n^{\star\star} \setminus \mathcal{N}_{n_0}^{\star\star}|
\ \Bigr|\
\hat{\theta}_{n_0} \geqslant \theta_0 - d_0
\biggr]
\\
&\leq
\E\Bigl[
|\mathcal{N}_{n_0}^{\star\star}|
\Bigr]
{}+
\P(\hat{\theta}_{n_0} < \theta_0 - d_0)
\cdot
(n-n_0+1)
\\
&\quad
{}+
\P(\hat{\theta}_{n_0} \geqslant \theta_0 - d_0)
\cdot
\E\Bigl[
\left|
\left\{
k \in \{n_0, n_0+1,\ldots,n\};\, d_0 < W_{k-1}+X_{k-1} < \theta_0
\right\}
\right|
\Bigr].
\end{align*}
It then follows from (\ref{eq:d_0}) and (\ref{eq:n_0}) that
\begin{align*}
\E\left[
\frac{|\mathcal{N}_n^{\star\star}|}{n}
\right]
&\leq
\E\left[
\frac{|\mathcal{N}_{n_0}^{\star\star}|}{n}
\right]
+
\frac{\delta}{4}
\cdot
\frac{n-n_0+1}{n}
\\
&\quad
{}+
\frac{n-n_0+1}{n}
\E\left[
\frac{
\left|
\left\{
k \in \{n_0, n_0+1,\ldots,n\};\, d_0 < W_{k-1}+X_{k-1} < \theta_0
\right\}
\right|
}{n-n_0+1}
\right]
\\
&\to
\frac{\delta}{4}
+
\phi_{\theta_0}(d_0)
\end{align*}
as $n\to\infty$, which is majorized by $\delta/2.$
We conclude that we have established that $\E[|\mathcal{N}_n^{\star\star}|/n]
\leqslant \delta$ for sufficiently large $n$.
Because $\delta > 0$ is arbitrary, we find
\begin{equation}
\lim_{n \to
\infty}\E\left[\frac{|\mathcal{N}_n^{\star\star}|}{n}\right] = 0.
\label{eq:EN_n^ss-limit}
\end{equation}
Using the Markov Inequality, we have for any $\epsilon > 0$, 
\[
\P\left(\frac{|\mathcal{N}_n^{\star\star}|}{n} > \epsilon\right)
\leq
\frac{1}{\epsilon}
\cdot
\E\left[\frac{|\mathcal{N}_n^{\star\star}|}{n}\right],
\]
so that (\ref{eq:EN_n^ss-limit}) immediately implies that ${|\mathcal{N}_n^{\star\star}|}/{n} \parrow 0$
as $n \to \infty$.
Upon combining the above, we have from (\ref{eq:N_n^+-limit}) and (\ref{eq:N_n^-star-by-sum}),
\begin{equation}
\frac{|\mathcal{N}_n^{\star}|}{n} \parrow \frac{v(\theta_0)}{\lambda(1-P_{\ell})}
\quad
(n \to \infty).
\label{eq:N_n^star-limit}
\end{equation}
Therefore, we obtain from (\ref{eq:upper-scaled}),
\begin{equation}
\lim_{n \to \infty}
\P\left(
n\left\{\theta_0 -\max_{i \in \mathcal{N}_n^{\star}} Z_i\right\} \geqslant x
\right)
=
\mathrm{e}^{-v(\theta_0)x/(1-P_{\ell})}.
\label{eq:upper-scaled-limit}
\end{equation}

Finally, noting that (\ref{eq:hat-theta-lower}) and
(\ref{eq:hat-theta-upper}) imply
\[
\P\left(
n\left\{\theta_0 -\max_{i \in \mathcal{N}_n^{\star}} Z_i\right\} \geqslant x
\right)
\leq
\P\left(n(\theta_0 - \hat{\theta}_n) \geqslant x\right)
\leq
\P\left(n\left\{\theta_0 - \max_{i \in \mathcal{N}_n^+} W_i\right\}
\geqslant x\right), 
\]
we have from (\ref{eq:lower-scaled-limit}) and (\ref{eq:upper-scaled-limit}),
\[
\lim_{n \to \infty}
\P\left(n(\theta_0 - \hat{\theta}_n) \geqslant x\right)
=
\mathrm{e}^{-v(\theta_0)x/(1-P_{\ell})}.
\]
Also, (\ref{eq:var_MLE-def}), (\ref{eq:hat-theta-lower}), and (\ref{eq:hat-theta-upper}) yield the following bounds for the scaled variance of the estimation error:
\begin{align*}
n^2\Var\left[\theta_0 - \hat{\theta}_n\right]
&\geqslant
\E\left[\left\{n\left(\theta_0 -\max_{i \in \mathcal{N}_n^{\star}} Z_i\right)\right\}^2\right]
-
\E\left[
\left\{n\left(\theta_0 - \max_{i \in \mathcal{N}_n^+} W_i\right)\right\}
\right]^2,
\\
n^2\Var\left[\theta_0 - \hat{\theta}_n\right]
&\leqslant
\E\left[
\left\{n\left(\theta_0 - \max_{i \in \mathcal{N}_n^+} W_i\right)\right\}^2
\right]
-
\E\left[n\left(\theta_0 -\max_{i \in \mathcal{N}_n^{\star}} Z_i\right)\right]^2.
\end{align*}
We can (i) explicitly calculate the right-hand sides of these inequalities using (\ref{eq:lower-scaled}) and (\ref{eq:upper-scaled}), and (ii) verify (with (\ref{eq:N_n^+-limit}) and (\ref{eq:N_n^star-limit})) that their limits (as $n \to \infty$) coincide and are given by $((1-P_{\ell})/v(\theta_0))^2$; we omit the details of these straightforward calculations.
\end{proof}

\end{document}